\newtheorem{lemma}{Lemma}[section]
\newtheorem{teo}[lemma]{Theorem}
\newtheorem{prop}[lemma]{Proposition}
\newtheorem{cor}[lemma]{Corollary}
\theoremstyle{definition}
\newtheorem{defn}[lemma]{Definition}
\newtheorem{example}[lemma]{Example}
\theoremstyle{remark}
\newtheorem{rem}[lemma]{Remark} 
\newcommand{\matr} [4] {\big({\tiny\begin{array}{@{}c@{\ }c@{}} #1 & #2 \\ #3 & #4 \\ \end{array}} \big)}
\newcommand{\interior}[1]{{\rm int}(#1)}
\newcommand{\Iso}{{\rm Isom}}
\newcommand{\p}[1]{\ensuremath{\boldsymbol{#1^+} }}
\newcommand{\m}[1]{\ensuremath{\boldsymbol{#1^-} } }
\renewcommand{\l}[1]{\ensuremath{\boldsymbol{#1}} }
\newcommand{\G}{\ensuremath{\boldsymbol{G} }}
\newcommand{\teta}{\includegraphics[width = .4 cm]{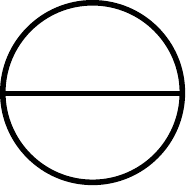}}
\newcommand{\tetra}{\includegraphics[width = .4 cm]{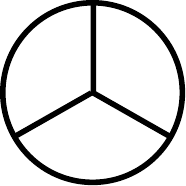}}
\newcommand{\matR} {\ensuremath {\mathbb{R}}}
\newcommand{\matQ} {\ensuremath {\mathbb{Q}}}
\newcommand{\matH} {\ensuremath {\mathbb{H}}}
\newcommand{\calF} {\ensuremath {\mathcal{F}}}
\newcommand{\Vol} {\ensuremath {{\rm Vol}}}
\newcommand{\Area} {\ensuremath {{\rm Area}}}
\newcommand{\nota} [1] {\caption{\footnotesize{#1}}}
\author{Bruno Martelli}
\address{Dipartimento di Matematica, Largo Pontecorvo 5, 56127 Pisa, Italy}
\email{martelli at dm dot unipi dot it}
\author{Stefano Riolo}
\address{Dipartimento di Matematica, Largo Pontecorvo 5, 56127 Pisa, Italy}
\email{riolo at mail dot dm dot unipi dot it}
\title{Hyperbolic Dehn filling in dimension four}
\begin{document}

\begin{abstract}
We introduce and study some deformations of complete finite-volume hyperbolic four-manifolds that may be interpreted as four-dimensional analogues of Thurston's hyperbolic Dehn filling.

We construct in particular an analytic path of complete, finite-volume cone four-manifolds $M_t$ that interpolates between two hyperbolic four-manifolds $M_0$ and $M_1$ with the same volume $\frac {8}3\pi^2$. The deformation looks like the familiar hyperbolic Dehn filling paths that occur in dimension three, where the cone angle of a core simple closed geodesic varies monotonically from $0$ to $2\pi$. Here, the singularity of $M_t$ is an immersed geodesic surface whose cone angles also vary monotonically from $0$ to $2\pi$. When a cone angle tends to $0$ a small core surface (a torus or Klein bottle) is drilled producing a new cusp.

We show that various instances of hyperbolic Dehn fillings may arise, including one case where a degeneration occurs when the cone angles tend to $2\pi$, like in the famous figure-eight knot complement example. 

The construction makes an essential use of a family of four-dimensional deforming hyperbolic polytopes recently discovered by Kerckhoff and Storm.
\end{abstract}

\maketitle
\section{Introduction}
By Mostow-Prasad rigidity \cite{Mos, Pra}, complete finite-volume hyperbolic manifolds can be deformed only in dimension two. Some deformations may arise also in higher dimension if one accepts to work in the more general setting of \emph{hyperbolic cone-manifolds}: the celebrated Thurston Hyperbolic Dehn filling theorem states that every cusped hyperbolic three-manifold may be deformed to a hyperbolic cone-manifold, whose singular locus consists of small simple closed geodesics with small cone angles. 
As the deformation goes on, both the geodesic length and the cone angle increase: if the cone angle reaches $2\pi$ we get a genuine hyperbolic manifold without singularities. 

The aim of this paper is to show that this phenomenon occurs sometimes also in dimension four. We prove this by constructing some examples explicitly.

\subsection*{Hyperbolic cone-manifolds}
Hyperbolic cone-manifolds were defined in every dimension by Thurston \cite{Th}, see also \cite{BLP, CHK, McM}. Hyperbolic cone surfaces and three-manifolds are widely studied, see for instance \cite{Br, HK, Ko, MM, W1, W2}.
The singular locus in an orientable hyperbolic cone three-manifold consists of closed geodesics or more complicated graphs. Not much seems to be known in dimension four or higher.

We construct here some hyperbolic cone four-manifolds $M$ whose singular locus $\Sigma$ is the image of a (possibly disconnected) geodesically immersed hyperbolic cone-surface $i\colon \tilde\Sigma \looparrowright M$ that self-intersects orthogonally at its cone points. This seems a natural kind of hyperbolic cone four-manifold to study: see Section \ref{cone:intro:subsection} for a precise definition. The image of every connected component of $\tilde\Sigma$ has some cone angle 
in $M$, and at each double point $p\in \Sigma$ two components of $\tilde\Sigma$ with (possibly different) cone angles $\alpha$ and $\beta$ meet orthogonally.  Note that every component of $\tilde \Sigma$ is a hyperbolic cone-surface and as such it can also be topologically a sphere or a torus.

\subsection*{Main result}
The main result of this paper is Theorem \ref{main:teo} below. It shows a number of new phenomena. First, it shows that complete finite-volume hyperbolic cone four-manifolds with singular locus a geodesically immersed surface exist. 
Then, it shows that these cone manifolds can sometimes be deformed, via a deformation that varies the cone angles of the strata, like in dimensions two and three. Finally, it displays an example where the deformation can be carried in both directions until a torus or Klein bottle is drilled, interpolating between two cusped hyperbolic four-manifolds.
Such a deformation may be interpreted as a four-dimensional hyperbolic Dehn filling (at both endpoints of the deformation path).

\begin{teo} \label{main:teo}
There is a compact smooth non-orientable four-manifold $M$ with $\partial M$ diffeomorphic to a three-torus, which contains a smooth two-torus and a smooth Klein bottle $T, K \subset \interior M$, 
both with trivial normal bundle, that intersect transversely in two points (see Figure \ref{curve:fig}), such that the following holds. 

There is an analytic path $\{M_t\}_{t\in (0,1)}$ of complete finite-volume hyperbolic cone-manifold structures on $\interior M$ with singular locus the immersed geodesic cone-surface $\Sigma = T \cup K$. The two cone-surfaces $T$ and $K$ have cone angles $0< \alpha < 2\pi$ and $0< \beta < 2\pi$ respectively. We have
$$\Area (T) = 4\pi -2\beta, \qquad \Area (K) =4\pi -2\alpha.$$
When $t$ varies from $0$ to $1$ the angle $\alpha$ goes from $0$ to $2\pi$ and $\beta$ goes from $2\pi$ to $0$. The path converges as $t\to 0$ and $t\to 1$ to two complete, finite-volume hyperbolic four-manifolds $M_0 = \interior M\setminus T$ and $M_1 = \interior M\setminus K$.
\end{teo}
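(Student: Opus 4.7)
The strategy is to realize $\interior M$ as a gluing of finitely many copies of a deforming hyperbolic polytope $P_t \subset \matH^4$ from the Kerckhoff--Storm family. The polytope $P_t$ is a combinatorial four-dimensional polytope whose dihedral angles vary analytically in $t \in (0,1)$, with only finitely many dihedral angles non-constant along the family. The key feature to exploit is that these variable angles are concentrated along two distinguished families of two-dimensional faces, whose dihedral angles around them will become the cone angles $\alpha(t)$ and $\beta(t)$ of the two components of the eventual singular locus.

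The first main task is to produce an explicit face pairing on a disjoint union of several copies of $P_t$ satisfying, simultaneously for every $t$: the pairings are hyperbolic isometries depending analytically on $t$, so the quotient carries an analytically varying hyperbolic cone-manifold structure; at each two-dimensional stratum the sum of dihedral angles is either $2\pi$ (giving a smooth stratum) or one of the prescribed variable angles $\alpha(t), \beta(t)\in(0,2\pi)$ (giving a cone stratum); the links of lower-dimensional strata are consistent with the cone-manifold models of Section \ref{cone:intro:subsection}, and in particular the two components of the singular locus meet orthogonally at their two double points. Using the symmetries of the Kerckhoff--Storm polytope, one would search for a pairing whose quotient is a non-orientable manifold with the desired topology: $M$ compact with $\partial M$ a three-torus, containing $T$ and $K$ intersecting transversely in exactly two points. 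This combinatorial step is the main obstacle, because a single pairing must simultaneously encode the correct global topology, the correct variable-angle pattern, and the correct cusp cross-sections.

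Once the family $\{M_t\}$ is in place, the remaining geometric claims follow by direct verification. By construction each of $T$ and $K$ is the image of an immersion of a union of totally geodesic two-faces of $P_t$, hence an immersed geodesic cone-surface. The two double points come from two selected pairs of faces meeting at the constant dihedral angle $\pi/2$, which is preserved throughout the deformation and descends to the orthogonal self-intersection. Using the local $\matR^2_\alpha\times\matR^2_\beta$ model at an orthogonal intersection, one reads off that intrinsically $T$ is a cone-surface with two cone points of cone angle $\beta$ and $K$ is a cone-surface with two cone points of cone angle $\alpha$. The area formulas $\Area(T)=4\pi-2\beta$ and $\Area(K)=4\pi-2\alpha$ then follow from Gauss--Bonnet applied to these cone-surfaces, both of Euler characteristic zero.

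The final step is the analysis of the endpoints. As $t\to 0$ one has $\alpha\to 0$ and $\beta\to 2\pi$, so the singular component $T$ (with cone angle $\alpha\to 0$) is pinched off into a new cusp, exactly as in Thurston's three-dimensional theorem where a core singular circle becomes a cusp when its cone angle shrinks to zero. A standard convergence argument for hyperbolic polyhedral gluings then yields, in the limit, a complete finite-volume hyperbolic structure on $\interior M\setminus T$, namely $M_0$; the case $t\to 1$ is symmetric. Analyticity of the whole path finally reduces to analyticity of $P_t$ and of the chosen face-pairing isometries as functions of $t$.
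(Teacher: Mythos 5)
Your high-level plan matches the paper's: glue analytically varying copies of a deforming polytope from the Kerckhoff--Storm family, arrange the face pairing so that the non-constant dihedral angles concentrate on two families of two-faces giving the cone strata, and then run Gauss--Bonnet and a convergence argument at the endpoints. But the content of the theorem is precisely the combinatorial step you flag as "the main obstacle" and then leave unsolved, and that step is not a routine search: a naive symmetric pairing does \emph{not} work. To be concrete, the paper first truncates the Kerckhoff--Storm polytopes $\calF_t$ by two extra walls $\l G, \l H$ to get finite-volume polytopes $P_t$, and then applies the obvious three-coloured mirroring (positive/negative/letter) to eight copies of $P_t$; the result $W_t$ interpolates between a cusped manifold at $t=1$ and only an \emph{orbifold} at $t=t_1$, because each red quadrilateral is shared by just two copies of $P_t$, so the total angle around it at $t=t_1$ is $2\theta = 2\pi/3$. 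Getting a \emph{manifold} at the other end forces each red face to have valence exactly six. The paper achieves this by gluing only four copies of $P_t$, still mirroring along N and L walls, but pairing the positive walls of the four copies via the face-pairing pattern of the figure-eight knot complement's two-tetrahedron ideal triangulation, whose defining feature is that every edge has valence six with trivial return map (Lemma~\ref{symm_positive:lemma} and Definition~\ref{4copie:def}). This produces $N_t$ whose red faces close up to angle $6\theta = 2\pi$ at $t_1$; one then quotients $N_t$ by a carefully chosen fixed-point-free involution $\iota = h\circ r$ to merge the two red cone-tori into the single torus $T$ and convert the green cone-torus into the Klein bottle $K$. None of these ideas — the finite-volume truncation, the obstruction that symmetric mirroring only yields an orbifold, the figure-eight valence-six pairing, and the involution quotient that manufactures the Klein bottle — appear in your sketch, and without them there is no reason the combinatorial search terminates, let alone with the stated topology ($\partial M$ a three-torus, $T,K$ intersecting transversely in exactly two points, both with trivial normal bundle).

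A secondary imprecision: the orthogonal intersection of $T$ and $K$ does not come from "two selected pairs of faces meeting at the constant dihedral angle $\pi/2$" (faces of a simple polytope meeting along an edge). In $P_t$ the red and green faces meet only at \emph{vertices}, with link the spherical join $I_\theta * I_\varphi$; after the gluing the unit tangent space at the image point becomes the join $C_{6\theta} * C_{4\varphi}$ (a singular Hopf link in $S^3$), and the orthogonality is read off from the join structure, not from a right dihedral angle along a common edge. Your Gauss--Bonnet derivation of the area formulas is correct once the intrinsic cone-surface structure of $T$ and $K$ is known, but that identification again depends on the explicit gluing you have not constructed.
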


\begin{figure}
\labellist
\small\hair 2pt
\pinlabel $T$ at 127 36
\pinlabel $K$ at 230 36
\pinlabel $M$ at 230 0
\endlabellist
\centering
\includegraphics[width=8 cm]{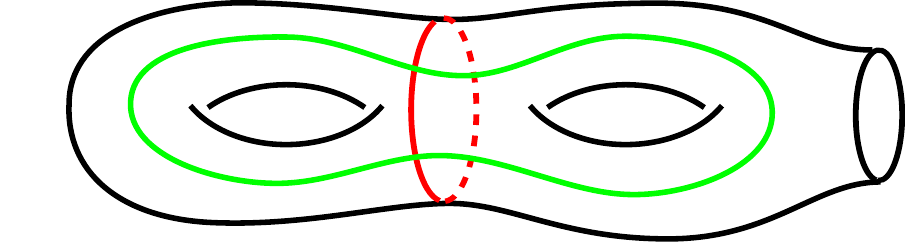}
\nota{A schematic picture of $M$ and the immersed surface $\Sigma=T\cup K$.}\label{curve:fig}
\end{figure}

The deformation interpolates analytically between two cusped hyperbolic four-manifolds $M_0$ and $M_1$. As opposite to $M_t$ with $t\in (0,1)$, the manifolds $M_0$ and $M_1$ are genuine hyperbolic manifolds, with no cone singularities. The boundary three-torus $\partial M$ gives rise to a cusp in $M_t$ for all $t\in [0,1]$ diffeomorphic to $S^1\times S^1\times S^1 \times [0,+\infty)$, whose Euclidean shape varies with $t$.
The manifolds $M_0$ and $M_1$ have also one additional cusp each, obtained by drilling $T$ or $K$ respectively, whose Euclidean section is diffeomorphic to $T\times S^1$ or $K\times S^1$. 

We recall that an important theorem of Garland and Raghunathan \cite{GR} implies that the holonomy of a complete finite-volume hyperbolic $n$-manifold cannot be perturbed when $n\geq 4$. Of course we are not
violating this theorem here, because the holonomy that is moving is that of the \emph{non-complete} hyperbolic manifold $M\setminus (T\cup K)$. When we say that the deformation varies analytically, we mean that this holonomy does.

The overall picture has some evident similarities with some familiar two- and three-dimensional deformations. The interpolation looks like an analytic path in the moduli or Teichm\"uller space of a surface connecting two points at infinity, where two intersecting simple closed curves as in Figure \ref{curve:fig} are shrunk respectively in opposite directions of the path. 

If we look at the deformation by starting at one extreme $t_0=0$ or $t_0=1$ and moving $t$ towards the other extreme $t_1=1-t_0$, we get a hyperbolic Dehn filling path as in dimension three: the topology of the manifold is modified as soon as we move away from $t_0$ by a topological Dehn filling (we close a cusp by adding a two-torus or a Klein bottle), and the metric changes by adding a small core geodesic cone-surface $S_{t_0}\in\{T,K\}$ with small cone angle. The deformation can be pursued until, at time $1-t_0$, the core geodesic cone-surface $S_{t_0}$ reaches a cone angle of $2\pi$. At the same time the other cone-surface $S_{t_1}$ disappears and the two cone-points of $S_{t_0}$ become two cusps.

The manifolds $M_0$ and $M_1$ have the same small Euler characteristic $\chi = 2$, and hence the same volume 
$$\Vol(M_0) = \Vol(M_1) = \frac{8\pi^2}3.$$ 
The volume of $M_t$ is easily expressed in terms of the cone angles $\alpha$ and $\beta$ as
$$\Vol(M_t) = \frac{8\pi^2}{3}\left(2-\frac{\alpha + \beta}{2\pi} + \frac{\alpha\beta}{4\pi^2}\right).$$
The volume of $M_t$ is shown in Figure \ref{vol_manifold:fig}.
As opposite to dimension three, in our case the volume \emph{increases} under hyperbolic Dehn filling (at both endpoints of the
deformation path).

\begin{figure}
\vspace{.5 cm}
\labellist
\small\hair 2pt
\endlabellist
\centering
\includegraphics[width=8 cm]{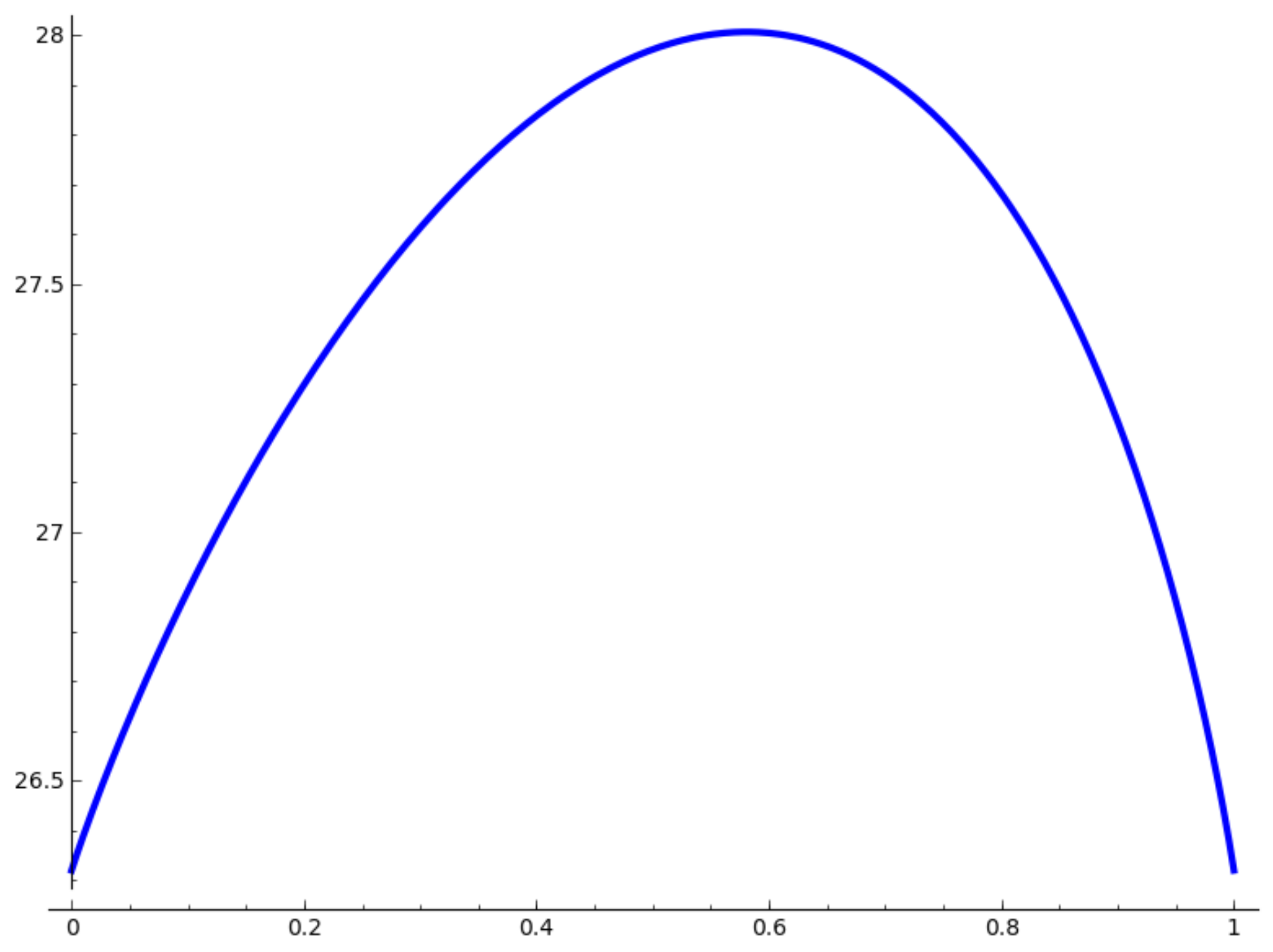}
\vspace{.3 cm}
\nota{The function $\Vol(M_t)$.}\label{vol_manifold:fig}
\end{figure}

The manifolds $M_0$ and $M_1$ are clearly not diffeomorphic; we show that they are not even commensurable: the manifold $M_0$ is commensurable with the integral lattice in O$(4,1)$, and $M_1$ appears to be at the time of writing the smallest known hyperbolic four-manifold that is not commensurable with that lattice. Both manifolds are arithmetic. 
Added: more recently, some more examples have been constructed in \cite{RiSl} using \cite{S}.)
We can in fact interpret $M_1$ as a new hyperbolic manifold constructed by deforming $M_0$. It would be interesting to understand in more generality whether one can vary the cone angles along immersed geodesic cone-surfaces in hyperbolic cone four-manifolds, as a tool to construct new hyperbolic manifolds. Some infinitesimal rigidity and existence results were obtained by Montcouquiol \cite{Mon, Mon2} for (non-singular) closed surfaces in the wider context of Einstein deformations. 

We note that the manifolds $M_t$ that we construct here are non-orientable. One may build a similar family of orientable deforming cone-manifolds by taking the orientable double cover $\tilde M_t$. The cone-surfaces $T$ and $K$ lift to three cone-tori in $\tilde M_t$, two of cone angle $\alpha$ lying above $T$ and one of cone angle $\beta$ above $K$. The manifolds $\tilde M_0$ and $\tilde M_1$ have three and two cusps respectively, all of three-torus type.

\subsection*{Sketch of the proof}
Theorem \ref{main:teo} is proved by constructing the family of hyperbolic cone-manifolds $M_t$ explicitly. 

The construction goes as follows. The fundamental ingredient is a deforming family $\calF_t\subset \matH^4$ of infinite-volume polytopes built by Kerckhoff and Storm in \cite{KS}. We truncate here $\calF_t$ via two additional hyperplanes to get a deforming family of \emph{finite}-volume polytopes $P_t\subset \matH^4$. These polytopes are quite remarkable, because they have for all times $t$ only few non-right dihedral angles. In particular, for the times $t$ that are relevant for the proof of Theorem \ref{main:teo}, the (two-dimensional) faces with non-right dihedral angles intersect pairwise only at some vertices.

The family $P_t$ interpolates between two Coxeter polytopes of the same volume: the familiar ideal right-angled 24-cell and another interesting polytope with dihedral angles $\frac \pi 2$ and $\frac \pi 3$. We then employ some mirroring and assembling techniques similar to the ones used in \cite{KM} to promote each polytope $P_t$ to a hyperbolic cone-manifold $M_t$. Since $P_t$ has few non-right dihedral angles, the manifold $M_t$ has few controlled singularities.

\subsection*{More hyperbolic Dehn fillings}
In the Dehn fillings that we have considered in Theorem \ref{main:teo}, the cusp shape is a flat three-manifold that fibers over a torus or a Klein bottle, and the filling collapses
the $S^1$ fibers. 
In the deforming cone-manifolds context, more different kinds of Dehn fillings may arise that are also interesting. For instance, one may close a cusp of type $S^1 \times S^1 \times S^1$ by collapsing
a $S^1 \times S^1$ factor: in this case we add a closed curve instead of a two-torus, and the resulting space is not a topological manifold. This kind of topological Dehn filling was considered by Fujiwara and Manning in \cite{FM, FM2}. 

Another variation occurs when the Euclidean cusp section is not a three-torus. For instance, a Euclidean cusp section of a hyperbolic cone four-manifold may be one of the following types:
$$S^2 \times S^1, \qquad S^3$$
where we see $S^n$ as the \emph{Euclidean} cone-manifold obtained by doubling the regular Euclidean $n$-simplex along its boundary. In this case one may Dehn fill this cusp by collapsing
one of the spheres $S^1, S^2,$ or $S^3$. This corresponds to adding a core $S^2$, $S^1$, or a couple of points.

We will show in this paper that all the examples of Dehn fillings mentioned in the above paragraphs arise geometrically as hyperbolic Dehn fillings of some hyperbolic cone-manifolds. It is also possible to perform a hyperbolic Dehn \emph{surgery}, the concatenation of a hyperbolic drilling and a hyperbolic filling along an analytic path, that substitutes a small geodesic $S^k$ with a small geodesic $S^{3-k}$. Topologically, this is just the usual surgery along $k$-spheres with trivial normal bundles, that is the substitution of a $S^k\times D^{4-k}$ with a $D^{k+1} \times S^{3-k}$.
See Theorem \ref{main2:teo} below.

\subsection*{Degeneration}
An important phenomenon that arises in dimension three, first described by Thurston \cite{bibbia}, is that of a hyperbolic Dehn filling that degenerates when the cone angle tends to $2\pi$ into a Seifert manifold with hyperbolic base.

We show here a similar phenomenon: a four-dimensional hyperbolic Dehn filling $W_t$ that degenerates as the cone angle tends to $2\pi$ into a product $C\times S^1$ where $C$ is a cusped hyperbolic 3-manifold. (The manifold $C$ found here is tessellated into four copies of the ideal right-angled cuboctahedron, and we call it the \emph{cuboctahedral manifold}.) In the following theorem, we think of the time $t$ running backwards from $t=1$ to $t=0$, in accordance with \cite{KS}.

\begin{teo} \label{main2:teo}
There is an analytic path $\{W_t\}_{t \in (0,1]}$ of complete finite-volume hyperbolic cone four-manifolds with cone angles $<2\pi$, with some times $1>t_1>t_2>\bar t > 0$, such that $W_1$ is a manifold, and $W_{t_1}$, $W_{\bar t}$ are orbifolds. At the critical times $1,t_1, t_2,0$ the topology of $W_t$ changes as follows:
\begin{itemize}
\item at $t=1$ by hyperbolic Dehn filling 12 three-torus cusps by adding 12 tori; 
\item at $t=t_1$ by hyperbolic Dehn surgerying 8 small $S^2$ with 8 small $S^1$;
\item at $t=t_2$ by hyperbolic Dehn surgerying 4 small $S^3$ with four $S^0$;
\item at $t=0$, the cone angles tend to $2 \pi$ and $W_t$ degenerates into $C\times S^1$.
\end{itemize}
When $t\in (t_1,1)$  the singular set of $W_t$ is an immersed geodesic surface made of 12 cone-tori and 8 cone-spheres. When $t\in(0,t_1)$ the singular set is a 2-complex with generic singularities.
\end{teo}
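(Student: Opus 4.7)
The plan is to construct $W_t$ by applying a mirroring and assembling procedure to a family of (possibly differently truncated) Kerckhoff--Storm polytopes $\calF_t\subset \matH^4$, analogously to the construction used for Theorem \ref{main:teo}. The polytope $\calF_t$ has only a controlled number of non-right dihedral angles, all varying analytically in $t$; a careful choice of truncating hyperplanes and facet pairings transforms $\calF_t$ into a complete finite-volume hyperbolic cone four-manifold $W_t$ whose singular locus is dictated by those non-right angles. I would design the scheme so that at $t=1$ all non-right angles take values making $W_1$ a smooth hyperbolic manifold with exactly $12$ three-torus cusps; as $t$ decreases from $1$, these cusps are filled by $12$ core cone-tori whose cone angles grow monotonically away from $0$, while $8$ additional cone-spheres appear at the strata corresponding to other edges of $\calF_t$.

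The transitions at $t=t_1$ and $t=t_2$ occur when certain cone angles reach critical values. Near each collapsing stratum, I would write down an explicit local model for a hyperbolic cone four-manifold: a small neighborhood of a geodesic $S^k$ whose cone angle is approaching $2\pi$ is isometric (up to a reparametrization in $t$) to a neighborhood of a transverse $S^{3-k}$ obtained by the dual collapse. For $k=2$, the collapse of a cone-sphere replaces $S^2\times D^2$ by $D^3\times S^1$, producing a new core circle, which accounts for the $8$ surgeries at $t=t_1$; for a cusp of cross-section a spherical $3$-manifold, the collapse fills it by a point ($S^0$), which accounts for the $4$ surgeries at $t=t_2$. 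The orbifold times $t_1$ and $\bar t$ occur precisely when the relevant cone angles coincide with integer submultiples of $2\pi$, so that the whole cone structure is locally modeled on a Euclidean rotation of finite order.

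For the degeneration at $t\to 0$, the Kerckhoff--Storm family $\calF_t$ collapses in one direction, and an inspection of its combinatorics shows that the three-dimensional cross-section limits to an ideal right-angled cuboctahedron. After rescaling, the collapsing direction becomes a circle factor, and the assembling scheme realizes the limit cone manifold as $C\times S^1$, where $C$ is the cuboctahedral manifold tessellated by four copies of this cuboctahedron. The cone angles along the $12$ core tori must tend to $2\pi$ in the limit because the monodromy around each collapsing tube becomes trivial in a product hyperbolic$\,\times\,S^1$ geometry.

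\textbf{Main obstacle.} The hardest point will be justifying the surgery description at the critical times $t_1$ and $t_2$. A priori, as a cone angle reaches $2\pi$, the cone manifold $W_t$ could develop worse singularities: the singular locus might self-intersect non-generically, adjacent strata could collide, or the holonomy representation could fail to extend analytically across the critical time. I would need explicit local hyperbolic cone models for each collapsing $S^k$, together with a verification that the holonomy of the thickened complement varies analytically (not merely continuously) across $t_1$ and $t_2$, in order to conclude that the path is analytic across the surgeries and that the topological change is the clean Dehn surgery described. A secondary delicate point is the precise identification of the degenerate limit as a \emph{product} $C\times S^1$, rather than a more general collapse, which requires a careful rescaling argument on $\calF_t$ near $t=0$.
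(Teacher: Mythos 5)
Your high-level strategy matches the paper's: assemble copies of a truncated Kerckhoff--Storm polytope by mirroring along its walls, and read the singular locus and cusps of $W_t$ off the polytope's non-right dihedral angles. However, there are two factual slips and one genuine gap.

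First, $W_1$ does not have $12$ cusps, it has $24$ (the mirrored $24$-cell $P_1$ has $24$ ideal vertices); only $12$ of these are Dehn-filled as $t$ decreases past $1$, while the other $12$ persist for all $t\in(0,1]$. This distinction matters for the cusp count throughout the deformation. Second, your description of the transitions as a cone angle of a geodesic $S^k$ "approaching $2\pi$" does not match what actually happens. At $t=t_1$ the green cone-spheres have cone angle $4\varphi$, which tends to $0$ (not $2\pi$), and the spheres themselves shrink; what happens is a drilling (the spheres become cusps of type $S^2(\tfrac{2\pi}3)\times S^1$), followed by a filling with $8$ small closed geodesics. Similarly at $t=t_2$ the four geodesic copies of $S^3(2\theta)$ coming from the walls $\l G,\l H$ shrink to ideal vertices (cusps of type $S^3(2\arccos\tfrac13)$) and then to finite vertices; again nothing approaches $2\pi$. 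So the clean "dual collapse at cone angle $2\pi$" local model you invoke is not the mechanism here, and relying on it would send the verification in the wrong direction.

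The more substantive gap is that the entire combinatorial content of the proof, which is the bulk of the paper's argument, is left implicit in your plan. The proof works by pinning down the polytope $P_t$ exactly: its walls, faces, edges, finite and ideal vertices, and the spherical or Euclidean links at each vertex, in each of the intervals $(t_1,1)$, $(t_2,t_1)$, $(0,t_2]$ and at the critical times (Propositions on the combinatorics of $P_t$). The description of $\Sigma$, the identification of the unit tangent spaces as $C_{2\theta}*C_{4\varphi}$, $S^0*S^2(2\theta)$, $S^3(2\theta)$, the cusp types, and the exact surgeries all fall out of that analysis. Writing "an explicit local model" and "verify the holonomy varies analytically" names the task but does not carry it out; the analyticity in fact comes for free once the reflections are seen to depend analytically on $t$, and the surgeries come from the links, not from an abstract local model. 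Also, your degeneration argument at $t\to0$ via rescaling a collapsing direction is more elaborate than needed: the paper establishes $W_t\cong C\times S^1$ for $t\in(0,t_2)$ by a direct combinatorial identification of $P_t$ with a prism over the cuboctahedron, and the degeneration of the holonomy is shown by inspecting the limit of the defining hyperplanes, with no rescaling required.
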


The manifolds or orbifolds $W_t$ at the times $t=1, t_1, \bar t$ have Euler characteristic 8, 8, and 5. The volume of $W_t$ is shown in Figure \ref{vol_Wt:fig}. 
In the degeneration, the holonomy of $W_t$ tends algebraically to the holonomy of $C$.

The behaviour of $W_t$ when $t\in [t_1,1]$ is much similar to the one of $M_t$ from Theorem \ref{main:teo} when $t\in [0,1]$, as
will be evident from the construction. The cone-manifolds $W_t$ are also constructed using the Kerckhoff--Storm deforming polytopes mentioned above.

\begin{figure}
\vspace{.5 cm}
\labellist
\small\hair 2pt
\pinlabel $t_1$ at 480 0 
\pinlabel $t_2$ at 430 0 
\pinlabel $\bar t$ at 350 0 
\pinlabel $\frac {32}3\pi^2$ at -8 420 
\pinlabel $\frac {20}3\pi^2$ at -8 270 
\endlabellist
\centering
\includegraphics[width=8 cm]{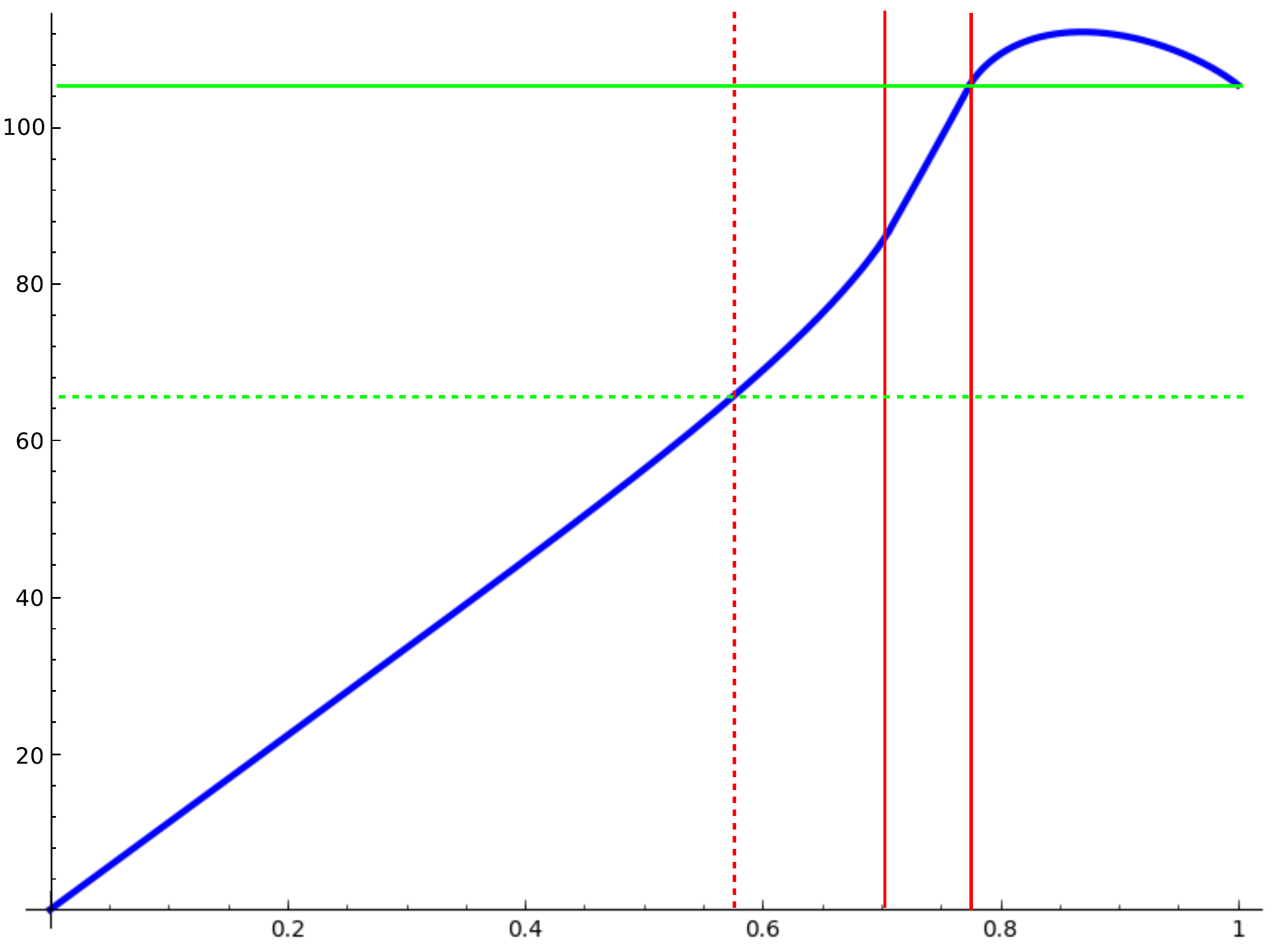}
\vspace{.3 cm}
\nota{The function $\Vol(W_t)$.}\label{vol_Wt:fig}
\end{figure}

\subsection*{Acknowledgements} We thank Joan Porti and the anonymous referee for pointing out a mistake in an earlier version of Theorem \ref{main:teo}.

\subsection*{Structure of the paper} 
The paper is organized as follows. In Section \ref{preliminaries:section} we recall some well-known facts about (acute-angled) polytopes, Coxeter diagrams, and cone manifolds. The main references are the seminal papers of Vinberg \cite{Vin} and McMullen \cite{McM}.

In Section \ref{polytopes:section} we define and study the family of finite-volume polytopes $P_t \subset \matH^4$. The quite long section is almost entirely self-contained: many arguments were taken from the paper of Kerckhoff and Storm \cite{KS}, which is fundamental for our constructions, and are included for the sake of completeness.

Finally, the deforming cone-manifolds $W_t, N_t, M_t$ are constructed in Section \ref{manifolds:section} by assembling carefully some copies of $P_t$. Theorems \ref{main:teo} and \ref{main2:teo} are proved there.

\section{Preliminaries} \label{preliminaries:section}

We introduce in this section some preliminaries on polytopes and cone-manifolds, focusing mostly on dimension four.

\subsection{Polytopes}\label{polytopes_prelim:section}
We represent the hyperbolic four-space $\matH^4$ as the upper sheet of the hyperboloid $\langle v,v \rangle =-1$ in $\matR^5$ with respect to the Lorentzian product
$$\langle v,w \rangle = -v_0w_0 + v_1w_1+v_2w_2+v_3w_3+v_4w_4.$$

\subsection*{Half-spaces}
Every space-like vector $v$ determines a half-space in $\matH^4$, that consists of all $w\in\matH^4$ with $\langle v,w\rangle \leq 0$. We are interested in the case where two space-like vectors $v$ and $v'$ determine two half-spaces whose intersection is non-empty and is a proper subset of both half-spaces. There are three possible configurations to consider, easily determined by the number
\begin{equation}  \label{formula:eqn}
\alpha = \frac{-\langle v,v'\rangle}{\sqrt{\langle v,v\rangle \langle v',v'\rangle}}
\end{equation}
as follows:
\begin{itemize}
\item if $-1< \alpha < 1$, the boundary hyperplanes of the two half-spaces intersect with a dihedral angle $\theta$ such that $\cos \theta = \alpha$;
\item if $\alpha = 1$, the boundary hyperplanes are asymptotically parallel;
\item if $\alpha > 1$, the boundary hyperplanes are ultra-parallel, and their distance $d$ is such that $\cosh d = \alpha$.
\end{itemize}

\subsection*{Finite polytopes}
We define as usual a (\emph{finite convex}) \emph{polytope} to be the intersection $P$ of finitely many half-spaces in $\matH^4$, with the additional hypothesis that $\interior P \neq \emptyset$. The boundary $\partial P$ is naturally stratified into \emph{vertices}, \emph{edges}, \emph{faces}, and \emph{walls} (also called \emph{facets}). 

If the closure $\overline P$ of $P$ in the compactification $\overline{\matH^4}$ intersects $\partial \matH^4$ in finitely many (possibly zero) points,
the volume of $P$ is finite; otherwise it is infinite. These points in $\partial \matH^4$ are called \emph{ideal vertices}.

\subsection*{Volume}
To compute the volume of a finite-volume even dimensional polytope $P$ there is a formula due to Poincar\'e (see \cite[page 120]{Vi}).
Denoting by $L_S$ the spherical link of the stratum $S$ and by $\alpha_F$ the dihedral angle at the (two-dimensional) face $F$, in dimension four the formula is
$$\mathrm{Vol}(P) = \frac{4\pi^2}{3}\!
\left(\!1-\frac 12 N
 +
 \frac{1}{2\pi}\sum_{F\, \mathrm{face}}\alpha_F 
 - 
\frac{1}{4\pi}\sum_{E\, \mathrm{edge}}\!\mathrm{Area}(L_E)
 +  
\frac{1}{2\pi^2}\sum_{V\, \mathrm{vertex}}\!\!\mathrm{Vol}(L_V)\!\right)$$
where $N$ is the number of walls.

In any dimension, there is also the well-known Schl\"afli formula (also on \cite[page 122]{Vi}) that expresses the variation of the volume of a deforming polytope $P$ (whose combinatorics stays constant) in terms of the area of the faces and of the variation of the dihedral angles. In dimension four, it is
$$d\mathrm{Vol}(P)=-\frac 13\sum_{F\ \mathrm{face}}\mathrm{Area}(F)d\alpha_F.$$
To apply that formula, recall that the area of a hyperbolic $k$-gon $F$ with inner angles $\alpha_1,\ldots,\alpha_k$ is
$$\mathrm{Area}(F)=(k-2)\pi-\sum_{i=1}^k\alpha_i.$$

\subsection*{Topology}
Let $X$ be a compact metric space. Recall that the Hausdorff distance defines a topology on the closed subsets of $X$ which depends only on the topology of $X$.

Every polytope and more generally every closed subset $C\subset \matH^n$ has a compactification $\overline C \subset \overline{\matH^n}$. We endow the family of all closed subsets $C\subset \matH^n$ with the Hausdorff distance topology of their compactifications in $\overline{\matH^n}$ (here $\overline{\matH^n}$ is equipped with any compatible metric). Note that the volume function on this family is not continuous.

This topology will be used tacitly throughout all the paper. The situation that is relevant here is when a family of polytopes is defined as the intersection of some moving half-spaces determined by some space-like vectors $v_1, \ldots, v_m$. If the vectors $v_1,\ldots, v_m$ move continuously, the polytope deforms continuously. 

\subsection{Acute-angled polytopes} \label{acute:subsection}
The theory of acute-angled hyperbolic polytopes is beautifully introduced in a paper of Vinberg \cite{Vin} and we briefly recall some of the facts described in that paper. We stick to dimension four for simplicity, although everything applies to any dimension.

\subsection*{Gram matrix}
Let $P\subset \matH^4$ be a polytope, defined as the intersection of the half-spaces dual to some unit space-like vectors $v_1,\ldots, v_m$. We calculate $\alpha_{ij}$ from $v_i, v_j$ using (\ref{formula:eqn}) for any $i, j$. The $m\times m$ matrix $-\alpha_{ij}$ is the \emph{Gram matrix} of $P$, see \cite{Vin}. 

We say that $P$ is \emph{acute-angled} if $\alpha_{ij} \geq 0$ for all $i\neq j$. Acute-angled polytopes have many nice properties. In this section, we will always suppose that $P$ is acute-angled.

\begin{rem}
By a theorem of Andreev \cite{A} a generic polytope $P$ is acute-angled if and only if all its dihedral angles are $\leq \frac \pi 2$, and this explains the terminology. 
\end{rem}

\subsection*{Generalised Coxeter diagrams}
The Gram matrix of an acute-angled polytope $P$ is nicely encoded via the \emph{generalised Coxeter diagram} $D$ of $P$, which is constructed as follows: every vertex of $D$ represents a vector $v_i$, and every edge between two distinct vertices $v_i$ and $v_j$ has a label that depends on $\alpha_{ij}\geq 0$ as follows:
\begin{itemize}
\item if $\alpha_{ij} > 1$ the edge is dashed (and sometimes labeled with the number $d>0$ such that $\cosh d = \alpha_{ij}$, but we will not do that);
\item if $\alpha_{ij} = 1$ the edge is thickened;
\item if $0\leq \alpha_{ij} < 1$ the edge is labeled with the angle $\frac \pi 2 \geq \theta> 0$ such that $\cos \theta = \alpha_{ij}$.
\end{itemize}
To simplify the picture, the edges labeled with an angle $\frac \pi 2$ are not drawn, and in those with $\frac \pi 3$ the label is omitted.

\subsection*{Strata}
The following facts are proved in \cite[Section 3]{Vin}. Every acute-angled polytope $P$ is \emph{simple}, that is each stratum $S$ of $P$ of codimension $k$ is contained in exactly $k$ walls. All the strata of $P$ may be easily determined from $D$ as follows: 
\begin{itemize}
\item the vertices $v_i$ represent the walls of $P$;
\item the pairs of vertices connected by an edge labeled with some angle $\theta$ represent the faces of $P$; the angle $\theta$ is the dihedral angle of that face;
\item more generally, the strata $S$ of codimension $k$ correspond to the $k$-uples of vertices of $D$ whose subdiagram represents a $(k-1)$-dimensional spherical simplex $L_S$; the spherical simplex $L_S$ is geometrically the link of $S$.
\end{itemize}

In particular, the set of vectors $v_1,\ldots, v_m$ defining $P$ is minimal (no proper subset defines $P$), and $k$ walls in $P$ intersect if and only if the hyperplanes containing them do. 
These nice facts are not true in general for non acute-angled polytopes.

\subsection*{Diagrams of the strata}
Every stratum $S$ of an acute-angled polytope $P$ is also acute-angled, and one can deduce a Coxeter diagram $D_S$ for $S$ from that $D$ of $P$. We explain how this works in the easier case when $S$ is a wall, the procedure can then be applied iteratively.

The diagram $D_S$ is formed by all the vertices of $D$ that represent walls that are incident to $S$; that is, $D_S$ is constructed from $D$ by removing the vertex $v_i$ corresponding to $S$ and all the vertices $v_j$ that are connected to $v_i$ by either a dashed or a thickened edge.

The resulting diagram $D_S$ is \emph{not} yet a generalised Coxeter diagram for $S$, because the value of $\alpha$ from formula (\ref{formula:eqn}) needs to be recomputed for every edge. To do so we must substitute each space-like vector $v_j$ with its projection $P(v_j)$ in the time-like hyperplane $v_i^\perp$ containing $S$, using the formula
$$P(v_j)=v_j-\frac{\langle v_j, v_i\rangle}{\langle v_i, v_i\rangle} v_i.$$
The new $\alpha \geq 0$ is computed using the projections $P(v_j)$ and is equal or bigger than the original one (in particular $S$ is still acute-angled).

\subsection*{Ideal vertices}
The ideal vertices $v$ of $P$ are also detected in a similar fashion: they correspond to the subdiagrams of $D$ that represent some compact $3$-dimensional Euclidean acute-angled polyhedron $Q$, which is in fact the link of $v$. The polyhedron $Q$ must be a product of simplexes, so the subdiagram is a disjoint union of diagrams representing Euclidean simplexes. (In all dimensions, every acute-angled spherical polytope without antipodal points is a simplex, and every acute-angled compact Euclidean polytope is a product of Euclidean simplexes.)

There is a combinatorial criterion that one can use to check from $D$ whether $P$ is compact and/or has finite volume, see \cite[Proposition 4.2]{Vin}. We suppose that $P$ contains at least one (finite or ideal) vertex.

\begin{teo} \label{Vinberg:teo}
The polytope $P$ is compact (has finite volume) if and only if each of its edges joins exactly two finite (finite or ideal) vertices.
\end{teo}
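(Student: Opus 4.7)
\emph{Plan.} I would establish the two biconditionals in parallel. The only-if directions are straightforward. If $P$ is compact, every stratum is compact, so each edge is a segment with two finite endpoints. If $P$ has finite volume, then by the definition recalled above $\overline P\cap\partial\matH^4$ is finite (these are the ideal vertices), and the closure in $\overline{\matH^4}$ of each edge ends either at a finite vertex in $P$ or at a point of this finite set, hence at an ideal vertex.

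For the if directions I would work in the Klein projective model, where $\matH^4$ becomes the open unit ball $B^4$, hyperbolic half-spaces are intersections of $B^4$ with Euclidean closed half-spaces, and geodesics are Euclidean segments. The closure $\overline P\subset \overline B^4$ is then a closed convex subset defined by the wall half-spaces together with $\|x\|\leq 1$. I would proceed by induction on the dimension, the two-dimensional case being transparent. Each wall $W$ of $P$ is an acute-angled polytope in $\matH^3$, and the edges of $W$ are exactly the edges of $P$ contained in $W$; thus the hypothesis on edges passes to $W$, and the inductive step gives that every wall has finite volume (respectively is compact).

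The crux is then to upgrade this information to $P$ itself by showing that $\overline P\cap\partial B^4$ is a finite set of points. I would argue by contradiction: a positive-dimensional component of $\overline P\cap\partial B^4$ would be a spherical polytope of dimension $\geq 1$ with at least one edge, and combining the simplicity of $P$ (from Section \ref{acute:subsection}) with convexity of $\overline P$, this would force some edge of $P$ to be a geodesic ray or line whose ideal endpoint lies inside a positive-dimensional piece of $\overline P\cap\partial B^4$, hence is not an ideal vertex in the sense defined above --- contradicting the hypothesis. Once $\overline P\cap\partial B^4$ is known to be finite, each of its points admits a horoball neighborhood in which $P$ looks like a cusp over a compact product of Euclidean simplexes, as described in Section \ref{acute:subsection}, contributing finite volume. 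The complement of these neighborhoods in $\overline P$ avoids $\partial B^4$ and is therefore compact, hence of finite volume. Summing gives $\Vol(P)<\infty$. The analogous argument with no ideal vertices at all yields compactness in the other biconditional.

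\emph{Main obstacle.} The delicate step is ruling out positive-dimensional components in $\overline P\cap\partial B^4$: controlling $\overline P$ at infinity from a purely combinatorial hypothesis on its $1$-skeleton. The acute-angled assumption is essential here, through the simplicity of $P$ and the combinatorial characterisation of ideal vertices by Euclidean-simplex-product subdiagrams of the Coxeter diagram. Without this hypothesis the conclusion would fail, and this is where the bulk of a careful proof (for which one would ultimately refer to \cite{Vin}) would be concentrated.
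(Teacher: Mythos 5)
The paper does not prove this statement: it is quoted verbatim from Vinberg, see \cite[Proposition 4.2]{Vin}, with only the remark that the criterion is designed to exclude hyper-ideal vertices. There is thus no proof in the paper against which to compare your sketch; it has to stand on its own.

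Your outline isolates the right difficulty, and the setup (Klein model, induction over walls, horoball neighbourhoods of isolated ideal points) is sensible, but the crux step as written is a genuine gap. You claim that a positive-dimensional component $C$ of $\overline P\cap\partial B^4$ ``would be a spherical polytope of dimension $\geq 1$ with at least one edge'' and that convexity and simplicity of $P$ then ``force some edge of $P$'' to run into $C$. Neither implication is substantiated, and the second one is not even plausibly automatic: the faces of $C$ are traced out on $\partial B^4$ by the closures at infinity of the bounding hyperplanes, and such a face can arise from hyperplane intersections that meet \emph{only} at $\partial \matH^4$ and bound no stratum of $P$ whatsoever, so there is no reason for an edge of $P$ to point toward $C$. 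It is also telling that this step never invokes the inductive hypothesis (finite volume of the walls), which is the only place the edge condition was used --- so the contradiction you aim for cannot actually be exploiting the hypothesis. A repairable version of your strategy would instead use the inductive finiteness of $\overline W\cap\partial B^4$ for each wall $W$ to show that the relative boundary of $C$ in $S^3$ is a finite set, and then derive a contradiction from elementary topology of spheres (with care in low dimensions and in the base case $n=2$, where the boundary of an arc really is finite); alternatively one follows Vinberg's own algebraic argument through the Gram matrix and its elliptic/parabolic subdiagrams. As written, the proposal correctly names where the hard work lies but does not do it.
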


This condition is designed to exclude the presence of hyper-ideal vertices, see \cite{Vin}. In this paper we will only deal with finite-volume polytopes.

\subsection*{Coxeter polytopes}
If all the dihedral angles of $P$ are of type $\frac \pi n$ for some $n\geq 2$, then $P$ is a \emph{Coxeter polytope}. In this case the group $\Gamma < \Iso(\matH^4)$  generated by the reflections along its walls is discrete and has $P$ as a fundamental domain, so that $P = \matH^4/_\Gamma$ may be interpreted as an orbifold.

Recall that the orbifold Euler characteristic of a Coxeter polytope $P$ is given by the formula
$$\chi(P)=\sum_s\frac{(-1)^{\mathrm{dim} (s)}}{|\mathrm{Stab}(s)|},$$
where the sum is over all the strata $s$ of the polytope (ideal vertices are excluded) and $\mathrm{Stab}(s)$ is the stabilizer of a stratum inside the Coxeter reflection group of $P$.

\subsection{Cone-manifolds} \label{cone:intro:subsection}
Constant curvature cone-manifolds (and more generally $(X,G)$-cone-manifolds)
were defined by Thurston \cite{Th} inductively on the dimension as follows: a cone 1-manifold is an ordinary Riemannian 1-manifold, and a hyperbolic (or Euclidean, spherical) cone $n$-manifold is locally a hyperbolic (or Euclidean, spherical) cone over a compact connected spherical cone $(n-1)$-manifold.

Every point $p\in M$ in a hyperbolic (or Euclidean, spherical) cone $n$-manifold $M$ is locally a cone over 
a compact spherical cone $(n-1)$-manifold $S_p(M)$, called the \emph{unit tangent space} to $M$ at $p$. If $S_p(M)$ is isometric to $S^{n-1}$ the point is \emph{regular}, and it is \emph{singular} otherwise. The singular points form the \emph{singular set} $\Sigma \subset M$. McMullen defined a natural stratification on $\Sigma$ that we now recall, see \cite{McM} for more details (and proofs).

Let $EA$ denote the Euclidean cone over a spherical cone-manifold $A$. The \emph{join} $A*B$ of two spherical cone manifolds $A$ and $B$ is defined as
$$A*B = S_{(0,0)} (EA \times EB).$$
In particular we have $S^m * S^n \cong S^{m+n+1}$. We set $S^{-1} = \emptyset$.
It is proved in \cite[Theorem 5.1]{McM} that every compact spherical cone-manifold $N$ decomposes uniquely as a join 
$$N\cong S^{k-1} * B$$ 
for some $k\geq 0$ and some \emph{prime} $B$, that is a $B$ that does not decompose further as $B=S^0*C$. Let now $M$ be a hyperbolic (or Euclidean, spherical) cone $n$-manifold. We define
$$M[k] = \big\{p \in M\ \big|\ S_p(M) \cong S^{k-1}*B \ {\rm with}\ B \ {\rm prime}\big\}.$$
A \emph{$k$-stratum} of $M$ is a connected component of $M[k]$. It is a totally geodesic 
$k$-dimensional hyperbolic (or Euclidean, spherical) manifold. Points lying in the same $k$-stratum have isometric unit tangent spaces.

The regular points form the open dense set $M[n]$, and $M[n-1]$ is empty. The singular set $\Sigma=\cup_{k<n} M[k]$ has codimension at least two. If $M$ is complete (as it will always be the case in this paper) then $M$ is the metric completion of $M[n] = M \setminus \Sigma$.

We denote by $C_\theta$ the Riemannian circle of length $\theta$. The unit tangent space of a point $p\in M[n-2]$ is a join $S^{n-3} * C_\theta$ for some number $\theta \neq 2\pi$ that depends only on the stratum containing $p$, called the \emph{cone angle} of that stratum.

We list some examples of constant curvature cone manifolds. 

\subsection*{Cone-surfaces}
A hyperbolic (or Euclidean, spherical) cone-surface $S$ has some isolated singularities, each with a cone angle $\theta \neq 2 \pi$. Simple examples may be constructed by doubling polygons along their boundaries. 

\begin{figure}
\labellist
\small\hair 2pt

\pinlabel $S^0*C_\theta$ at 17 -15
\pinlabel $\theta$ at 30 0
\pinlabel $\theta$ at 30 90

\pinlabel $S^2(\alpha,\beta,\gamma)$ at 137 -15
\pinlabel $\alpha$ at 85 0
\pinlabel $\beta$ at 190 0
\pinlabel $\gamma$ at 145 90

\endlabellist
\centering
\includegraphics[width=5 cm]{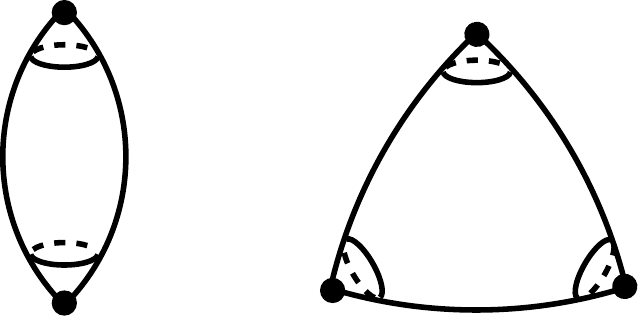}
\vspace{.3 cm}
\nota{The elliptic cone surfaces $S^0 * C_\theta$ and $S^2(\alpha, \beta, \gamma)$.}
\label{cone_surfaces:fig}
\end{figure}

If we double a spherical bigon with inner angles $\frac \theta 2$ we get a cone-sphere with two singular points of angle $\theta$, which is isometric to the join $S^0*C_{\theta}$. If we double a spherical triangle with inner angles $\frac \alpha 2,\frac \beta 2,\frac \gamma 2$ we get a cone-sphere with three singular points of cone angle $\alpha,\beta,\gamma$. This is a prime spherical cone-surface and we denote it by $S^2(\alpha,\beta,\gamma)$. See Figure \ref{cone_surfaces:fig}.

By Gauss-Bonnet, every compact connected orientable spherical cone-surface with cone-angles $< 2\pi$ is a sphere with some singular points (possibly none). 

\subsection*{Cone three-manifolds} On a hyperbolic (or Euclidean, spherical) cone 3-manifold $M$ the singular set $\Sigma = M[0] \cup M[1]$ has dimension $\leq 1$. Each 1-stratum has some cone angle $\theta$, while the unit tangent space at every point $p\in M[0]$ is some prime spherical cone-surface. For instance, it may be $S^2(\alpha,\beta,\gamma)$. 

Some spherical cone 3-manifolds are shown in Figure \ref{cone_3_manifolds:fig}. The join $S^1 * C_\theta$ is $S^3$ with an unknotted closed geodesic of length $2\pi$ and of cone angle $\theta$. The join $S^0 * S^2(\alpha,\beta,\gamma)$ is $S^3$ with singular set $\teta$ and cone angles $\alpha,\beta,\gamma$. If we double a spherical tetrahedron with dihedral angles $\frac \alpha 2, \ldots, \frac \zeta 2$ we get $S^3$ with singular set the 1-skeleton \tetra\ of a tetrahedron and cone angles $\alpha, \ldots, \zeta$: this is a prime spherical cone 3-manifold. 

\begin{figure}
\labellist
\small\hair 2pt

\pinlabel $S^1*C_\theta$ at 40 -15
\pinlabel $\theta$ at 40 65

\pinlabel $S^0*S^2(\alpha,\beta,\gamma)$ at 150 -20
\pinlabel $\alpha$ at 112 35
\pinlabel $\beta$ at 183 12
\pinlabel $\gamma$ at 183 60

\pinlabel $\alpha$ at 225 47
\pinlabel $\beta$ at 250 2
\pinlabel $\delta$ at 263 22
\pinlabel $\zeta$ at 244 24
\pinlabel $\varepsilon$ at 277 50
\pinlabel $\gamma$ at 248 50

\pinlabel $C_\theta*C_\varphi$ at 360 -20
\pinlabel $\theta$ at 312 35
\pinlabel $\varphi$ at 402 60

\endlabellist
\centering
\includegraphics[width=10 cm]{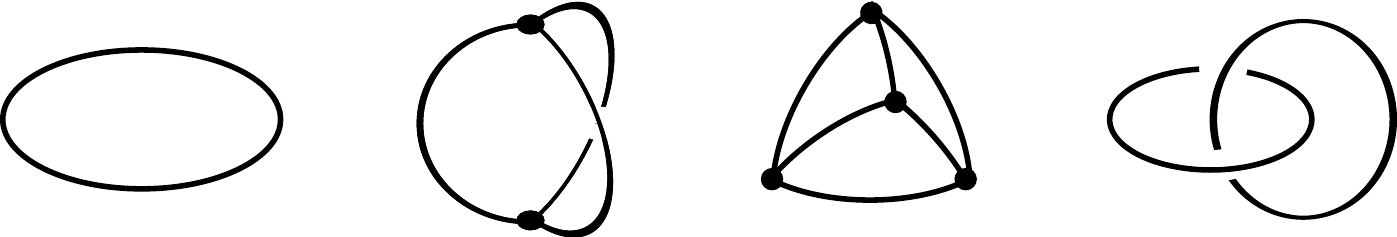}
\vspace{.5 cm}
\nota{Some simple spherical cone 3-manifolds. In all cases the underlying manifold is $S^3$.}
\label{cone_3_manifolds:fig}
\end{figure}

A spherical cone 3-manifold that is crucial in this paper is the join $C_\theta * C_\varphi$ with $\theta,\varphi \neq 2\pi$ shown in Figure \ref{cone_3_manifolds:fig}-(right). This is $S^3$ with singular set the Hopf link: one component of the Hopf link has length $\theta$ and cone angle $\varphi$, while the other has length $\varphi$ and cone angle $\theta$. This is a prime spherical cone 3-manifold (although it decomposes non-trivially as a join). 

If we assume that all cone-angles are $< 2\pi$, then every orientable hyperbolic (or Euclidean, spherical) cone 3-manifold is supported on a manifold.

\subsection*{Cone four-manifolds} 
On a hyperbolic (or Euclidean, spherical) cone 4-manifold $M$ the singular set $\Sigma = M[0] \cup M[1] \cup M[2]$ has dimension $\leq 2$. Each 2-stratum has some cone angle $\theta$. In each 1-stratum the unit tangent space of a point is $S^0 * B$ for some prime spherical cone-surface $B$. At each 0-stratum the unit tangent space is a prime spherical cone 3-manifold.

Figure \ref{foams:fig} shows the types of singularities in a cone 4-manifold that we will encounter in this paper: they are obtained by coning the spherical cone-manifolds shown in Figure \ref{cone_3_manifolds:fig}, and are in some sense the simplest kind of singularities that may occur in dimension four. A hyperbolic cone four-manifold with these types of singularities is topologically a manifold.

\begin{figure}
\labellist
\small\hair 2pt

\pinlabel $\theta$ at 35 40

\pinlabel $\alpha$ at 69 35
\pinlabel $\beta$ at 103 12
\pinlabel $\gamma$ at 98 60

\pinlabel $\alpha$ at 177 22
\pinlabel $\beta$ at 148 24
\pinlabel $\gamma$ at 177 50
\pinlabel $\delta$ at 148 50
\pinlabel $\varepsilon$ at 163 39

\pinlabel $\theta$ at 246 40
\pinlabel $\varphi$ at 228 60

\endlabellist

\begin{center}
\includegraphics[width=10 cm]{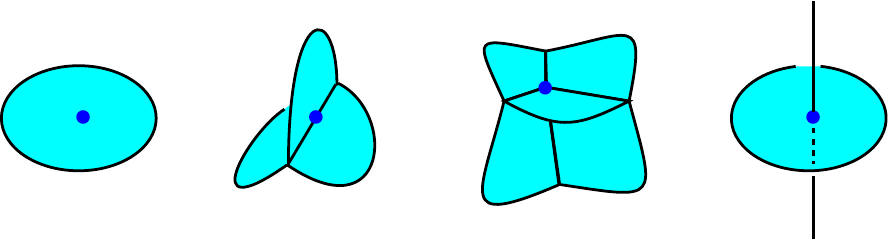}
\nota{Singular points in a cone four-manifold with unit tangent space as in Figure \ref{cone_3_manifolds:fig}. In the right picture we have two discs intersecting transversely in a point, with cone angles $\theta$ and $\varphi$. In all cases the singularity lies locally in a topological four-ball: a cone-manifold with this kind of singularities is topologically a manifold.}
\label{foams:fig}
\end{center}
\end{figure}

\begin{example}
If we pick a compact acute-angled (hence simple) polytope $P\subset \matH^4$ and double it along its boundary, we get a hyperbolic cone-manifold with underlying space $S^4$ and singularities of the first three kinds shown in Figure \ref{foams:fig}. A 2-complex $\Sigma$ with these generic local singularities is sometimes called a \emph{foam}. 
\end{example}

If $M[1] = \emptyset$ and the unit tangent space at every point in $M[0]$ is isometric to $C_\theta * C_\varphi$ (that is, if the only singularities in $M$ are like the first and the last one in Figure \ref{foams:fig}) we say that $\Sigma$ is an \emph{immersed geodesic cone-surface}. In this case we may see $\Sigma$ as the image of a geodesic immersion $\tilde\Sigma \looparrowright M$ of a hyperbolic cone surface $\tilde\Sigma$ obtained by resolving the double points of $\Sigma$ lying in $M[0]$. Every point $p$ in $M[0]$ with unit tangent space $C_\theta * C_\varphi$ is the image of two singular points in $\tilde\Sigma$ with cone angles $\theta$ and $\varphi$. The hyperbolic cone four-manifolds that arise in Theorem \ref{main:teo} are of this kind.

\section{The polytopes.}  \label{polytopes:section}
We now introduce a family of finite-volume polytopes $P_t\subset \matH^4$ that depend on a parameter $t\in (0,1]$, obtained by deforming the \emph{ideal right-angled 24-cell} $P_1$. The family is constructed by truncating the infinite-volume polytopes $\mathcal{F}_t$ built by Kerckhoff and Storm in \cite{KS} with two additional hyperplanes. We try to follow \cite{KS} as much as we can, reproducing all the notation used there. As in \cite{KS}, we will think of
this deformation running backwards from $t=1$, starting with the ideal 24-cell $P_1$ and eventually degenerating to a three-dimensional polyhedron (an ideal cuboctahedron) when $t \to 0$.

In Section \ref{manifolds:section} we will use $P_t$ to construct the deforming hyperbolic cone-manifolds $M_t$ and $W_t$ needed to prove Theorems \ref{main:teo} and \ref{main2:teo}. We warn the reader that the time parameter $t$ used for $P_t$ and $W_t$ differ from that employed to define $M_t$ by a linear rescaling: the manifold $M_t$ of Theorem \ref{main:teo} will be constructed by employing $P_t$ within the segment 
$$t \in \left[\sqrt {\frac 35}, 1 \right].$$
The times $t < \sqrt {\frac 35}$ will not be used to prove Theorem \ref{main:teo}, but only to prove Theorem \ref{main2:teo}. The reader interested only in Theorem \ref{main:teo} may thus ignore our discussion on $P_t$ when $t < \sqrt{ \frac 35}$. 

There are in fact two very important times in the deformation $P_t$ where the polytope changes its combinatorics. These are:

$$t_2 = \sqrt{\frac 12}, \qquad t_1 = \sqrt{ \frac 35}.$$

The combinatorics also changes at the initial time $t=1$, and at the final time $t=0$ where $P_t$ degenerates to a three-dimensional polyhedron. We will sometimes call $0,t_2,t_1,1$ the \emph{critical times} of the family $P_t$.

Many of the results presented in this section were first proved in \cite{KS} and we include them here only for the sake of completeness.

\begin{table}
\begin{eqnarray*}
\p{0} = \left( \sqrt{2},1,1,1,1/t \right) , & &
\m{0} = \left( \sqrt{2},1,1,1,-t \right),\\
\p{1} = \left( \sqrt{2},1,-1,1,-1/t\right),& &
\m{1} = \left( \sqrt{2},1,-1,1,t\right),\\
\p{2} = \left( \sqrt{2},1,-1,-1,1/t\right),& &
\m{2} = \left( \sqrt{2},1,-1,-1,-t\right),\\
\p{3} = \left( \sqrt{2},1,1,-1,-1/t\right),& &
\m{3} = \left( \sqrt{2},1,1,-1,t\right),\\
\p{4} =\left(\sqrt{2},-1,1,-1,1/t\right),& &
\m{4} =\left(\sqrt{2},-1,1,-1,-t\right),\\
\p{5} =\left(\sqrt{2},-1,1,1,-1/t\right),& &
\m{5} = \left( \sqrt{2},-1,1,1,t\right),\\
\p{6} =\left(\sqrt{2},-1,-1,1,1/t\right),& &
\m{6} =\left(\sqrt{2},-1,-1,1,-t\right),\\
\p{7} =\left(\sqrt{2},-1,-1,-1,-1/t\right),& &
\m{7} =\left(\sqrt{2},-1,-1,-1,t\right),\\
\l{A} =\left(1,\sqrt{2},0,0,0\right),& &
\l{B} =\left(1,0,\sqrt{2},0,0\right),\\
\l{C} =\left(1,0,0,\sqrt{2},0\right),& &
\l{D} =\left(1,0,0,-\sqrt{2},0\right),\\
\l{E} =\left(1,0,-\sqrt{2},0,0\right),& &
\l{F} =\left(1,-\sqrt{2},0,0,0\right), \\
\l{G} =\left(1,0,0,0,-\sqrt{2}t\right),& &
\l{H} =\left(1,0,0,0,\sqrt 2 t\right).
\end{eqnarray*}
\caption{The half-spaces that define $P_t$ are the duals of these space-like vectors: we denote vectors and half-spaces by the same letters. These vectors are indeed space-like for all $t\in (0,1]$, except $\l G$ and $\l H$ that are space-like only for $t\in (t_2,1]$.}\label{walls:table}
\end{table}

\subsection{The family $P_t$}\label{poly_def:sec}
We define
$$t_2 = \sqrt{\frac 12}$$
and we consider the 24 half-spaces $\p 0, \m 0, \ldots, \l G, \l H$ listed in Table \ref{walls:table}, that depend on some parameter $t$. The parameter $t$ varies in $(0,1]$ for $\p 0, \m 0, \ldots, \l E, \l F$ and only in $(t_2,1]$ for 
$\l G$ and $\l H$. The reader may check that for these values the vectors listed in the table are indeed space-like and hence determine some half-spaces in $\matH^4$.

For every $t\in (0,1]$ we define $P_t$ as the intersection of all the half-spaces in the table that are present at the time $t$. That is, 
\begin{defn} Let $P_t$ be the intersection of the 24 half-spaces $\p 0, \m 0, \ldots ,\l G, \l H$ when $t\in (t_2,1]$, and of the 22 half-spaces $\p 0, \m 0, \ldots, \l E, \l F$ when $t \in (0,t_2]$. 
\end{defn}

\begin{prop}
The set $P_t$ is a polytope for all $t\in (0,1]$, that deforms continuously in $t\in (0,1]$.
\end{prop}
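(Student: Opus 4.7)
The plan is to verify three things: first, that each vector listed in Table~\ref{walls:table} is genuinely space-like in the asserted range of $t$; second, that $P_t$ has nonempty interior, so that it qualifies as a polytope in the sense of Section~\ref{polytopes_prelim:section}; third, that the family deforms continuously on all of $(0,1]$, paying special attention to the transition at $t=t_2$ where two of the defining inequalities are added or removed.

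For the first point, I would compute the Lorentzian self-products directly:
$$\langle \p i,\p i\rangle = 1 + 1/t^2,\qquad \langle \m i,\m i\rangle = 1 + t^2,$$
$$\langle \l A,\l A\rangle = \cdots = \langle \l F,\l F\rangle = 1,\qquad \langle \l G,\l G\rangle = \langle \l H,\l H\rangle = 2t^2 - 1.$$
The first three families are space-like on all of $(0,1]$, while $\l G$ and $\l H$ are space-like precisely when $t>t_2$ and become light-like exactly at $t=t_2$; so the dichotomy in the definition of $P_t$ is forced by this computation.

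For the nonempty interior I would exhibit the explicit point $o=(1,0,0,0,0)\in \matH^4$. Every defining vector $v$ in Table~\ref{walls:table} has strictly positive zeroth coordinate ($\sqrt{2}$ or $1$), hence $\langle o,v\rangle = -v_0<0$; so $o$ lies in the open interior of each half-space and therefore in $\interior P_t$ for every $t\in(0,1]$. Since $P_t$ is a finite intersection of half-spaces with nonempty interior, it is a polytope.

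For continuity, on compact subintervals of $(0,t_2)\cup(t_2,1]$ the defining vectors are analytic in $t$ and uniformly space-like, so continuity of the intersection in the Hausdorff topology on compactifications recalled in Section~\ref{polytopes_prelim:section} is automatic. The only delicate case is $t = t_2$, where the list of defining inequalities jumps from 24 to 22. Here I would argue that as $t \to t_2^+$ the vectors $\l G,\l H$ degenerate to future-pointing light-like vectors; for any such $v$ one has $\langle v,w\rangle<0$ for every $w\in\matH^4$ (using $w_0>|w_4|$ on the upper hyperboloid), so the formal half-space fills up all of $\matH^4$, and the bounding hyperplane escapes to infinity, meeting $\overline{\matH^4}$ only in a single ideal point. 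Hence the compactified $\l G$- and $\l H$-half-spaces converge to $\overline{\matH^4}$ in Hausdorff distance, intersecting with them becomes trivial in the limit, and $P_t\to P_{t_2}$ from the right. The mild obstacle of the proof is exactly this matching of the $24$- and $22$-inequality definitions at the critical time $t_2$, and it is neutralized by the light-like limit of $\l G$ and $\l H$.
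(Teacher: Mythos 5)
Your proof is correct and follows essentially the same approach as the paper: both exhibit the point $(1,0,0,0,0)\in\matH^4$ as an interior point (using the positivity of the zeroth coordinate of every defining vector), and both handle the transition at $t=t_2$ by observing that $\l G$ and $\l H$ become light-like, so the corresponding half-spaces tend to all of $\matH^4$. You are more explicit in checking the Lorentzian self-products and in spelling out why a future-pointing light-like vector gives the trivial ``half-space'', but this is detail the paper leaves as a check for the reader rather than a different route.
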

\begin{proof} 
To prove that $P_t$ is a polytope we only need to check that its interior is non-empty. The set $P_t$ contains a small ball centred at the point $(1,0,0,0,0)$, because the first entry of each vector in Table \ref{walls:table} is positive, for every $t\in (0,1]$.

The deformation is clearly continuous, also at the singular time $t=t_2$ because the half-spaces $\l G$ and $\l H$ tend to the full $\matH^4$ as $t\to t_2$ (the space-like vertices defining them tend to light-like vertices).
\end{proof} 

\subsection*{The walls} The walls of $P_t$ are easily determined. We prove that the set of half-spaces that defines $P_t$ is minimal.

\begin{prop}
The boundary of each half-space $\p 0, \m 0 \ldots, \l G, \l H$ intersects $P_t$ in a wall, for all $t\in (0,1]$ for $\p 0, \m 0, \ldots, \l E, \l F$ and for all $t\in (t_2,1]$ for $\l G$ and $\l H$.
\end{prop}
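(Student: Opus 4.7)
The plan is to exploit the large symmetry group of the configuration and reduce to a handful of explicit sign checks. The group generated by signed permutations of the spatial coordinates $(x_1,x_2,x_3)$ together with the involution $x_4\mapsto -x_4$ acts isometrically on $\matH^4$ and permutes the defining half-spaces of Table \ref{walls:table}. Under this action the twenty-four half-spaces split into three orbits: the sixteen vectors $\p i$ and $\m i$ with $i\in\{0,\ldots,7\}$, the six vectors $\l A,\l B,\l C,\l D,\l E,\l F$, and the two vectors $\l G,\l H$. Since isometries send walls to walls, it suffices to establish the conclusion for one representative of each orbit, say $\m 0$, $\l A$, and $\l G$.

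For each such representative $v$ I would exhibit an explicit point $p\in\matH^4$ with $\langle v,p\rangle=0$ and $\langle w,p\rangle<0$ for every other defining vector $w$ that is present at time $t$; this is enough to guarantee that the hyperplane $v^\perp$ meets $\interior P_t$, hence cuts out a wall. Natural candidates are points of the form $(\cosh r,0,0,0,\sinh r)$ for $v=\m 0$ and $v=\l G$ (tuning $r$ so that $p$ lies on the appropriate hyperplane), and a point of the form $(\cosh r,\sinh r,0,0,0)$ for $v=\l A$. Substituting into the entries of Table \ref{walls:table}, the pairings $\langle w,p\rangle$ become simple expressions in $t$, $1/t$ and the single parameter $r$, and the required strict inequalities reduce to elementary polynomial or rational inequalities on the interval in question.

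The main obstacle is the verification for $v=\l G$ (and, by symmetry, $\l H$), which is a defining half-space only on $t\in (t_2,1]$: as $t\to t_2^+$ the vector $\l G$ becomes light-like, the hyperplane $\l G^\perp$ recedes towards an ideal point, and the test point must be placed inside a suitable horoball based there. The inequalities $\langle w,p\rangle<0$ then become tight at the endpoint $t=t_2$, so they must be checked uniformly on $(t_2,1]$. The cases $v=\m 0$ and $v=\l A$ are in contrast uniform on the whole interval $t\in(0,1]$ and boil down to comparing the signs of a few expressions linear in $t$ and $1/t$. Propagating the three verifications through the symmetry group finishes the proof.
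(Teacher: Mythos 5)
Your plan — exhibit, for each wall up to symmetry, a point on its boundary hyperplane that lies strictly inside all other defining half-spaces — is exactly the paper's strategy, but two of the concrete ingredients fail.

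First, the symmetry group you invoke does not permute the defining half-spaces, and even the correct group gives more orbits than you claim. The involution $x_4\mapsto -x_4$ alone sends $\p 0 = (\sqrt 2,1,1,1,1/t)$ to $(\sqrt 2,1,1,1,-1/t)$, which for $t\neq 1$ is \emph{not} in Table~\ref{walls:table}: the positives with last entry $-1/t$ all have an odd number of $-1$'s among coordinates $1$--$3$ (there is a hidden parity constraint linking the sign of $x_4$ to the signs of $x_1,x_2,x_3$), and the negatives carry $\pm t$ rather than $\pm 1/t$. Likewise a single sign change $x_1\mapsto -x_1$ sends $\p 0$ outside the list. The actual symmetry group is $K=\langle\l L,\l M,\l N,R\rangle$ of Section~\ref{symmetries:sec}, of order $48$, and its orbits on the walls are \emph{four}: $\{\p 3,\m 0,\l A,\l G\}$. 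The positives and negatives lie in different orbits when $t<1$ (for instance $\langle \p i,\p j\rangle$ and $\langle \m i,\m j\rangle$ differ), so you must treat a positive and a negative representative separately.

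Second, your proposed test point for $\m 0$ cannot lie on $\m 0^\perp$. With $p=(\cosh r,0,0,0,\sinh r)$ and $\m 0=(\sqrt 2,1,1,1,-t)$, the equation $\langle \m 0,p\rangle=0$ reads $\tanh r=-\sqrt 2/t$, impossible since $\sqrt 2/t\geq\sqrt 2>1$. The paper instead takes a test point with $x_4=0$: the projective point $\big(\sqrt 2,\tfrac 23,\tfrac 23,\tfrac 23,0\big)$ lies simultaneously on $\p 0^\perp$ and $\m 0^\perp$ and strictly inside every other half-space, disposing of both a positive and a negative representative at once; sign changes in the middle entries handle the remaining numbered walls. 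Your candidates for $\l A$ and $\l G$ are fine (after normalization they coincide with the paper's $(\sqrt 2,1,0,0,0)$ and $(\sqrt 2 t,0,0,0,-1)$), and the worry about tightness at $t\to t_2^+$ is unfounded: the relevant products are $-2t-\tfrac 1t$, $-t$, $-\sqrt 2 t$, $-2\sqrt 2 t$, which stay strictly negative on all of $(t_2,1]$ with no extra argument.

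So the idea is right, but you need to correct the symmetry group, add the missing fourth representative (a positive wall), and replace the non-working ansatz for $\m 0$ with a point in the slice $x_4=0$.
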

\begin{proof}
The point $\big(\sqrt 2, \frac 23, \frac 23, \frac 23,0\big)$ belongs to the boundaries of both $\p 0$ and $\m 0$ and lies in the interior of all the other half-spaces: this proves the assertion for $\p 0$ and $\m 0$. By changing the signs of the $\frac 23$ entries we obtain the same for the other positive and negative faces.

The point $(\sqrt 2,1,0,0,0)$ belongs to the boundary of $\l A$ and lies in the interior of the other half-spaces. Similar points work for $\l B, \ldots, \l F$. The points $(\sqrt 2 t, 0,0,0, \mp 1)$ work for $\l G$ and $\l H$ when $t>t_2$.
\end{proof}

The polytope $P_t$ has 24 walls if $t\in (t_2, 1]$ and 22 walls if $t\in (0,t_2]$. We denote the walls of $P_t$ by the same symbols $\p 0, \m 0, \ldots, \l G, \l H$ of the corresponding half-spaces. 

\begin{rem}
Kerckhoff and Storm define for every $t\in (0,1]$ a bigger polytope $\calF_t$ as the intersection of the 22 half-spaces $\p 0, \m 0, \ldots ,\l E, \l F$. The polytope $\calF_t$ coincides with $P_t$ for $t\in (0,t_2]$, it has infinite volume for $t\in (t_2,1]$ and finite volume for $t \in (0,t_2]$. We will soon check that $P_t$ has finite volume for all $t\in (0,1]$.
\end{rem}

\subsection*{The right-angled ideal regular 24-cell}
As remarked in \cite[Section 3]{KS}, the polytope $P_1$ is the regular right-angled ideal 24-cell. The adjacencies between the walls $\p 0, \m 0, \ldots, \l{G}, \l H$ of $P_1$ are nicely codified in \cite[Figure 3.1]{KS}. 

The 24 walls of $P_1$ are subdivided into three octets: the \emph{positive}, the \emph{negative}, and the \emph{letter} walls, see Table \ref{walls:table}. Two walls of the same octet are never adjacent: this is the standard three-colouring of the 24-cell that was used in \cite{KM} to construct many hyperbolic four-manifolds.

\subsection*{The right-angled ideal cuboctahedron}
What happens as $t\to 0$? When $t=0$ the negative $\m 0, \ldots, \m 7$ and letter half-spaces $\l A,\ldots \l F$ are still defined. As $t\to 0$, every positive half-space converges to $(0,0,0,0,\pm 1)$, so they are also still defined (we keep identifying space-like vectors and half-spaces). We may still set $P_0$ to be the intersection of the half-spaces $\p 0, \m 0, \ldots, \l E, \l F$. As $t\to 0$, the polytope $P_t$ converges to $P_0$.

Among the half-spaces defining $P_0$ we find both $(0,0,0,0,1)$ and $(0,0,0,0,-1)$, hence $P_0$ is contained in the hyperbolic hyperplane $\{x_4=0\}$ isometric to $\matH^3$. Therefore $P_0$ is some lower-dimensional object. It is proved in \cite{KS} that $P_0 \subset \matH^3$ is a three-dimensional ideal polyhedron, and more precisely a right-angled ideal cuboctahedron, see also Proposition \ref{cuboct:prop} below. It has 14 faces, defined by the intersections of the 14 walls $\m 0, \m 1, \ldots, \l E, \l F$ with $\matH^3$.

Summing up, the family $P_t$ is a continuous deformation of polytopes that starts with the ideal regular right-angled 24-cell $P_1$ and eventually degenerates to the ideal right-angled cuboctahedron $P_0$.

\subsection{Symmetries}\label{symmetries:sec}
In the next sections, we will determine the combinatorics of the polytope $P_t$ for all times $t\in(0,1)$. Luckily, each $P_t$ has a big group of symmetries that will simplify our arguments significantly.

Consider the half-spaces determined by the space-like vectors
$$\l L=(0,-1,1,0,0),\ \l M=(0,0,-1,1,0),\ \l N=(0,0,-1,-1,0).$$
We denote by the same symbols the half-spaces and the reflections in the corresponding hyperplanes.
These reflections act as follows:
\begin{align*}
\l L\colon & (x_0,x_1,x_2,x_3,x_4) \longmapsto (x_0,x_2,x_1,x_3,x_4), \\
\l M\colon & (x_0,x_1,x_2,x_3,x_4) \longmapsto (x_0,x_1,x_3,x_2,x_4), \\
\l N\colon & (x_0,x_1,x_2,x_3,x_4) \longmapsto (x_0,x_1,-x_3,-x_2,x_4).
\end{align*}
Consider the group
$$H=\langle\l L, \l M,\l N\rangle.$$
The group $H$ is isomorphic to the symmetric group $\mathcal{S}_4$ (note that $(\l M\l N)^2=(\l L\l N)^3=(\l L\l M)^3=1$).
Moreover, in \cite[Section 4]{KS} it is shown that $H$ is the group of symmetries of the 24-cell $P_1$ that preserve:
\begin{itemize}
\item the positive/negative/letter colours of the walls;
\item the even/odd parity of the numbered walls;
\item the walls $\l G$ and $\l H$ (individually).
\end{itemize}
The group $H$ acts on the set of four positive (or negative) even (or odd) walls as its full permutation group.
Up to the action of $H$, the 24 walls $\{\p 0, \m 0, \ldots ,\l G, \l H \}$ reduce to the set
$$\{\p 0, \m 0, \p 3, \m 3, \l A, \l G, \l H\}.$$
Now, consider the order-two rotation 
$$R\colon (x_0,x_1,x_2,x_3,x_4) \longmapsto (x_0,x_1,x_2,-x_3,-x_4).$$ 
This rotation is called the \emph{roll symmetry} in \cite{KS}.
It still preserves $P_1$ and the positive/negative/letter colours of the walls, but it changes the parity of any numbered wall and it exchanges the walls $\l G$ and $\l H$. 
Kerckhoff and Storm prove that the extension
$$K=\langle\l L, \l M,\l N,R\rangle$$
has order 48 and consists precisely of the symmetries of $P_1$ that preserve the colours of the walls and the pair $\{ \l G, \l H \}$. Up to the action of $K$ the set of walls is further reduced to
$$\{\p 3, \m 0, \l A, \l G\}.$$

It is immediate to note that $K$ is also a group of symmetries of $P_t$ for every $t$
(in fact, it will be clear later that $K$ is the full group of symmetries of $P_t$ when $t<1$).
Up to symmetries the polytope $P_t$ has only four types of walls.

\subsection{The quotient polytope $Q_t$}\label{quotient:section}
As in \cite{KS}, we can quotient $P_t$ by the group $H$ of symmetries, and obtain an interesting smaller polytope $Q_t$ with a smaller number of walls. (If we quotient $P_t$ by $K$ we do not get a polytope!)

The quotient polytope $Q_t$ may be identified with the intersection of $P_t$ with the half-spaces $\l L$, $\l M$ and $\l N$. The walls of $Q_t$ are
$$\{\p 0, \m 0, \p 3, \m 3, \l A, \l G, \l H, \l L, \l M, \l N\}$$
when $t \in (t_2,1]$, and the same list with $\l G$ and $\l H$ removed when $t\in (0,t_2]$. The roll symmetry $R$ is a symmetry of $Q_t$ that permutes each pair
$$\{\p 0, \p 3\}, \ \{\m 0, \m 3\}, \ \{\l G, \l H\},\ \{\l M, \l N\}$$
and preserves the walls $\l L$ and $\l A$.  
We introduce another critical time:
$$t_1=\sqrt{\frac 35}.$$

\begin{figure}
\centering
\includegraphics[width=6 cm]{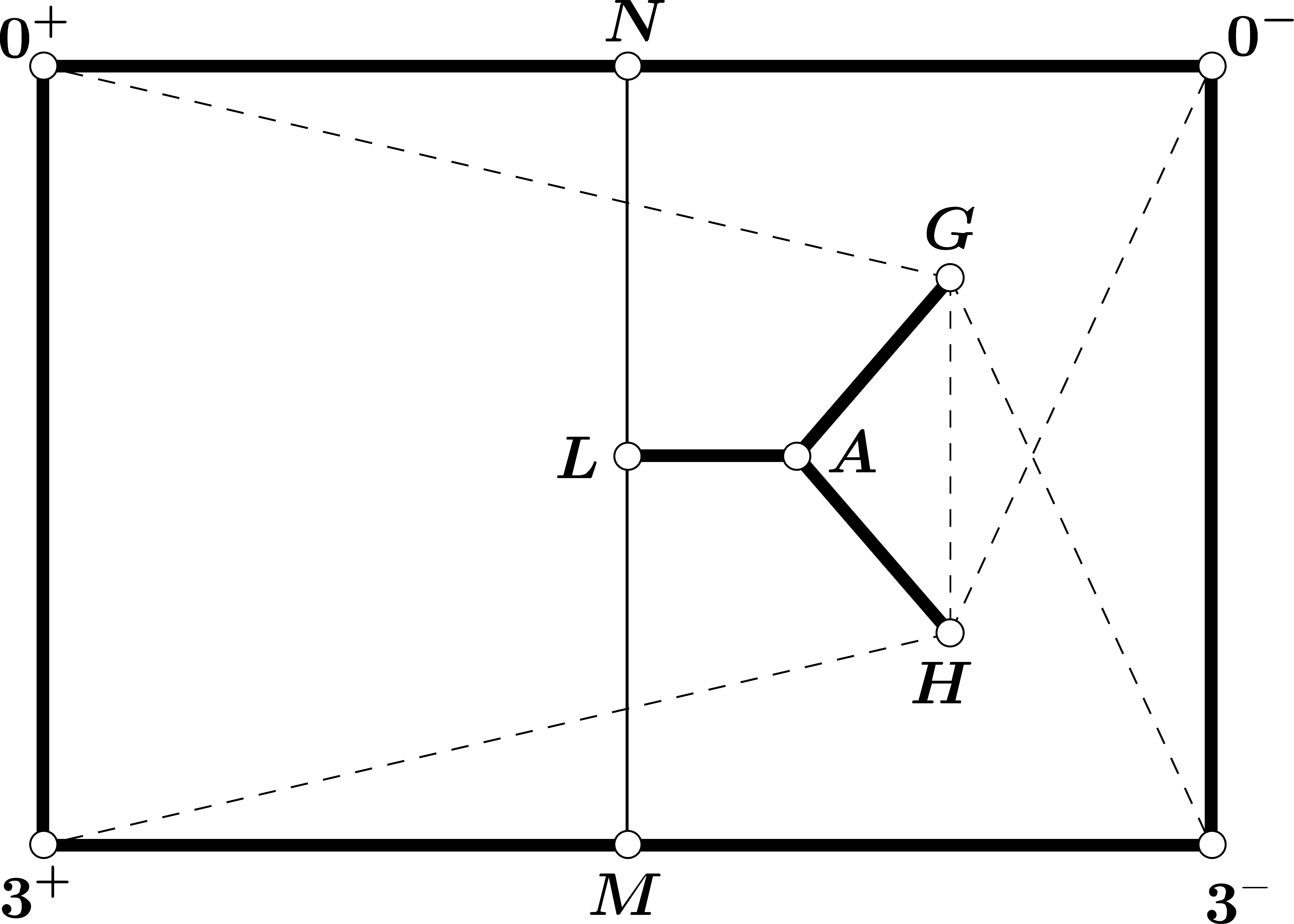}
\hspace{.2 cm}
\includegraphics[width=6 cm]{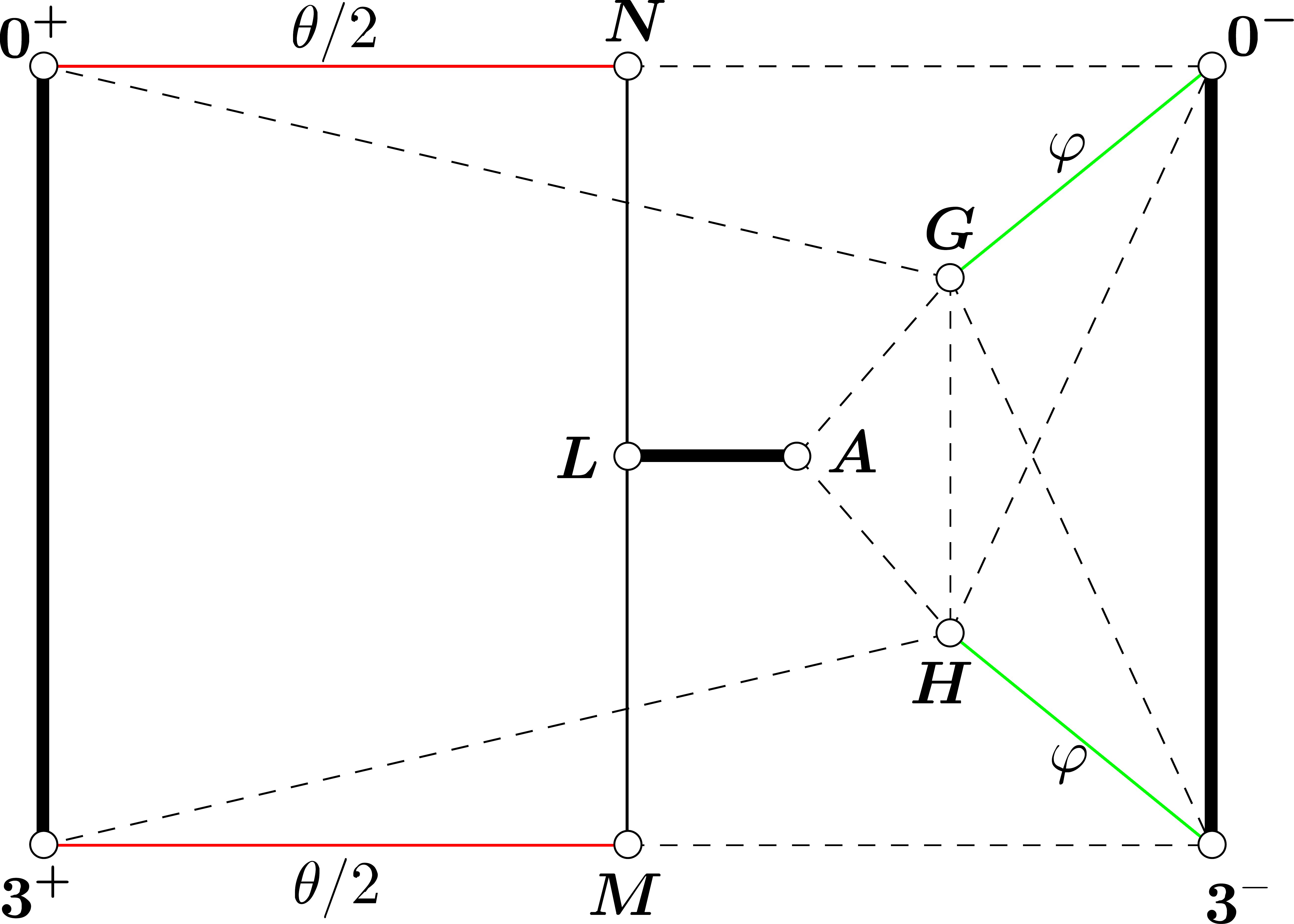}
\vspace{.5 cm}

\includegraphics[width=6 cm]{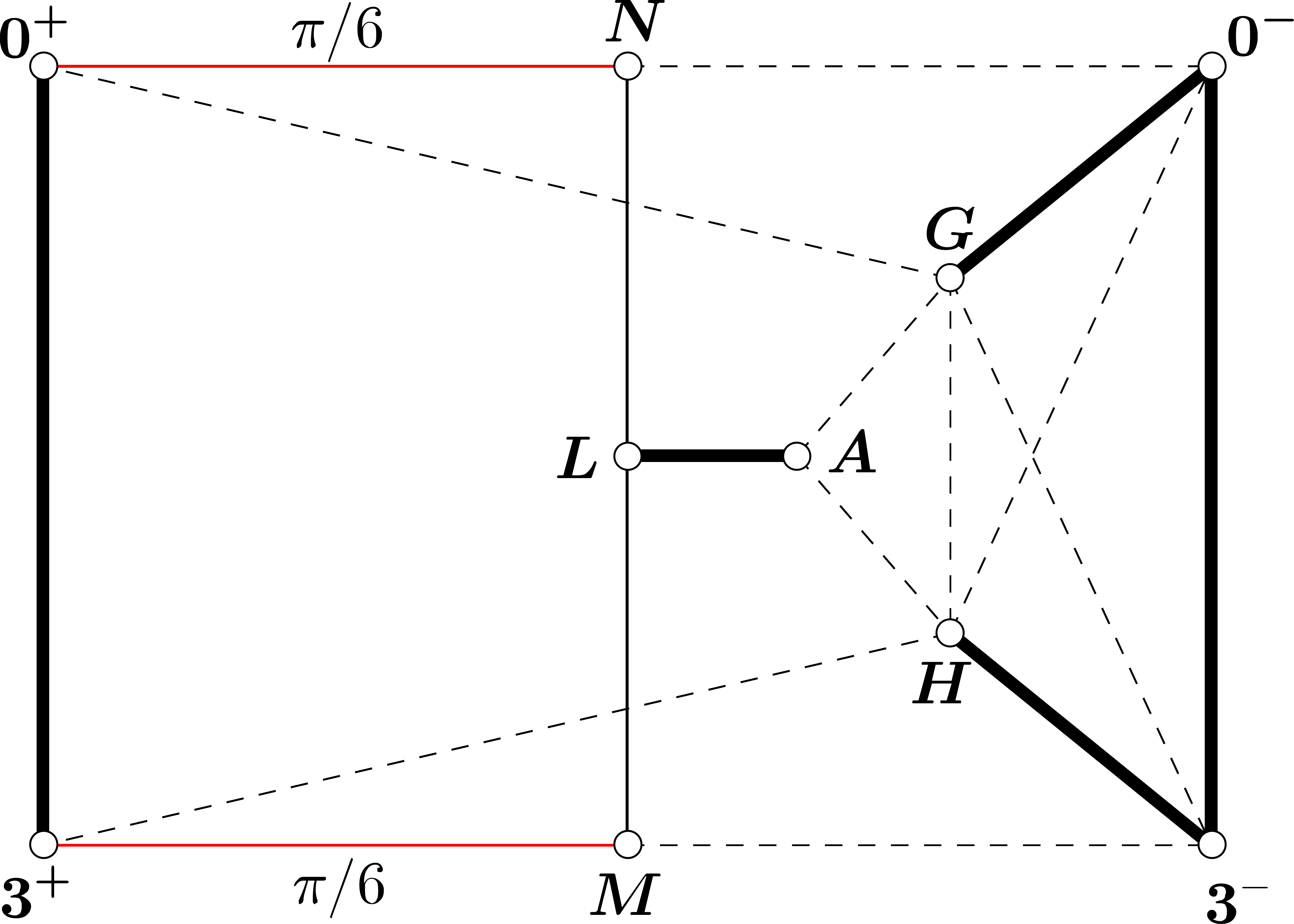}
\hspace{.2 cm}
\includegraphics[width=6 cm]{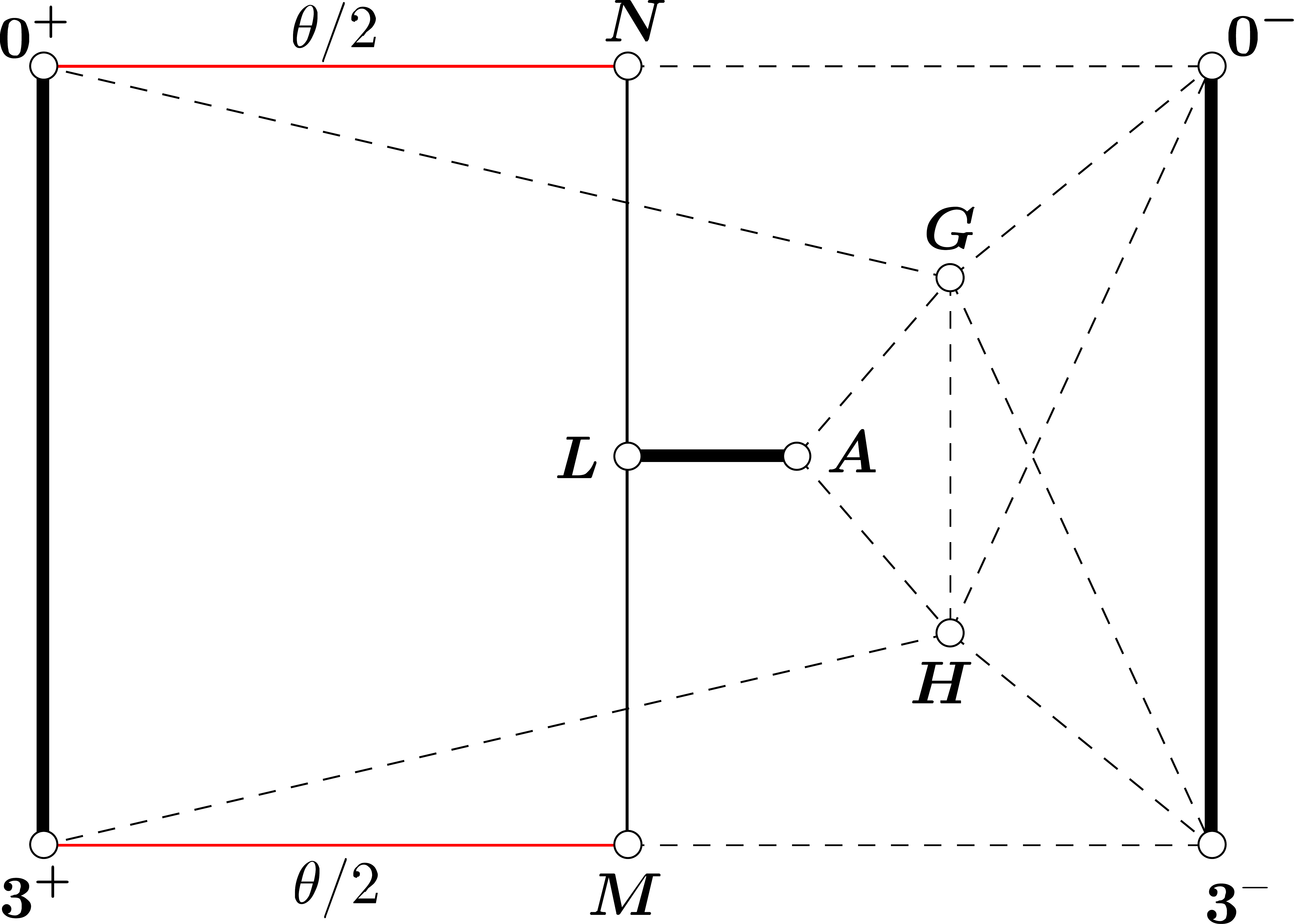}
\vspace{.5 cm}

\includegraphics[width=6 cm]{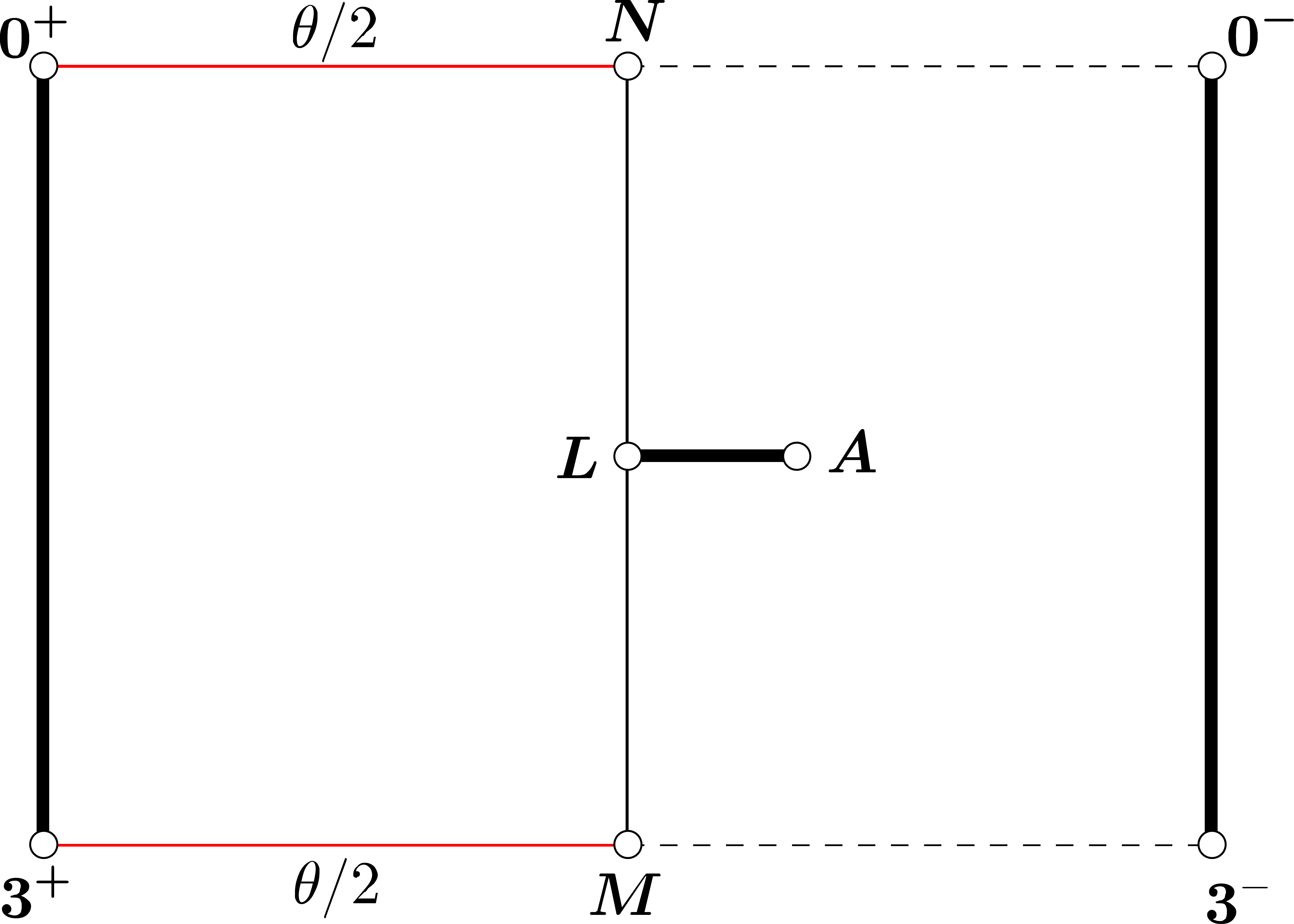}
\nota{The generalised Coxeter diagram $D_t$ of $Q_t$ when $t=1$, $t \in (t_1,1)$, $t=t_1$, $t\in (t_2,t_1)$, and $t\in (0,t_2]$, respectively. The green and red edges indicate the faces with varying dihedral angle $\varphi$ and $\frac \theta 2$. 
}\label{coxeter:fig}
\end{figure}

Note that $0<t_2<t_1<1$. We now show that the quotient polytope $Q_t$ is acute-angled for all $t\in (0,1]$ and may be fully described by some reasonable Coxeter diagrams, whose combinatorics changes at the critical times 1, $t_1$ and $t_2$. 

\begin{prop}\label{Q_t:prop}
The polytope $Q_t$ is acute-angled for all $t\in (0,1]$. Its generalised Coxeter diagram $D_t$ is shown in Figure \ref{coxeter:fig} for all $t \in (0,1]$. The dihedral angles $\frac \theta 2$ and $\varphi$ are such that
$$\cos \theta = \frac{3t^2-1}{1+t^2}, \qquad \cos \varphi = \frac{\sqrt 2 (1-t^2)}{\sqrt{(2t^2-1)(t^2+1)}}.$$
The dihedral angles $\frac\theta 2$ and $\varphi$ are defined for $t\in (0,1)$ and $t\in (t_1,1]$ respectively. They both vary strictly monotonically in $t$. We have:
$$\lim_{t\to 1}\tfrac \theta 2(t) = 0, \quad \tfrac \theta 2(t_1) = \tfrac \pi 6, \quad \lim_{t \to 0} \tfrac \theta 2(t) = \tfrac \pi 2, \qquad \varphi(1) = \tfrac \pi 2, \quad
\lim_{t\to t_1}\varphi (t) = 0.$$
\end{prop}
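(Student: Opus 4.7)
The plan is to read off the diagram $D_t$ by computing the Gram matrix of $Q_t$ directly from the ten space-like vectors of Table \ref{walls:table} together with $\l{L}, \l{M}, \l{N}$ (with $\l{G}$ and $\l{H}$ removed for $t\leq t_2$), and then applying formula (\ref{formula:eqn}) to each pair. Since $Q_t$ has at most ten walls there are only $\binom{10}{2}=45$ pairs, and the roll symmetry $R$ (which swaps $\p{0}\leftrightarrow\p{3}$, $\m{0}\leftrightarrow\m{3}$, $\l{G}\leftrightarrow\l{H}$, $\l{M}\leftrightarrow\l{N}$) cuts the work by about half.

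I would first tabulate the self-products $\langle\p{0},\p{0}\rangle=1+1/t^2$, $\langle\m{0},\m{0}\rangle=1+t^2$, $\langle\l{A},\l{A}\rangle=1$, $\langle\l{G},\l{G}\rangle=2t^2-1$ (positive exactly when $t>t_2$), and $\langle\l{L},\l{L}\rangle=2$. Most off-diagonal products vanish because of the sign structure in Table \ref{walls:table}: for instance $\langle\p{0},\m{0}\rangle=\langle\p{0},\l{A}\rangle=\langle\p{0},\l{H}\rangle=\langle\p{0},\l{L}\rangle=\langle\p{0},\l{M}\rangle=0$, and similar identities produce a large block of perpendicular pairs. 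A few pairs yield $\alpha=1$, in particular $\langle\p{0},\p{3}\rangle=-(1+1/t^2)$ and its roll-partner, showing that $\p{0}$ and $\p{3}$ are asymptotically parallel for every $t$.

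The two varying angles come from two orbits of pairs. Computing $\langle\p{0},\l{N}\rangle=-2$ one obtains
\[
\alpha \;=\; \frac{2}{\sqrt{2(1+1/t^2)}} \;=\; \frac{\sqrt{2}\,t}{\sqrt{t^2+1}},
\]
and the elementary identity $(1+\cos\theta)/2 = 2t^2/(t^2+1)$ with $\cos\theta=(3t^2-1)/(t^2+1)$ shows that this equals $\cos(\theta/2)$; by the roll $R$ the pair $\p{3},\l{M}$ gives the same value. The second varying angle comes from $\langle\m{0},\l{G}\rangle=\sqrt{2}(t^2-1)$, yielding
\[
\alpha \;=\; \frac{\sqrt{2}(1-t^2)}{\sqrt{(t^2+1)(2t^2-1)}} \;=\; \cos\varphi
\]
for $t>t_1$; a direct substitution shows that at $t=t_1$ this value is exactly $1$ (the two hyperplanes become asymptotically parallel), and for $t\in(t_2,t_1)$ it exceeds $1$, so the edge becomes dashed. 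The roll symmetry gives the same behaviour for $\m{3},\l{H}$.

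The remaining pairs fall into a small number of cases, each giving $\alpha\in\{0,\tfrac12,\tfrac1{\sqrt 2},1\}$ or $\alpha>1$; assembling all the entries reproduces each of the five diagrams in Figure \ref{coxeter:fig}. Since $\alpha_{ij}\geq 0$ throughout, the polytope $Q_t$ is acute-angled for every $t\in(0,1]$. The monotonicity and limits then reduce to one-variable calculus on the two explicit formulas: the derivative of $(3t^2-1)/(t^2+1)$ is $8t/(t^2+1)^2>0$, so $\cos\theta$ increases and $\theta/2$ decreases strictly on $(0,1)$, and a similar check handles $\cos\varphi$ on $(t_1,1]$. The limit values $\cos\theta(1)=1$, $\cos\theta(t_1)=1/2$, $\lim_{t\to 0}\cos\theta=-1$, $\cos\varphi(1)=0$, and $\lim_{t\to t_1}\cos\varphi=1$ all follow by substitution. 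The main obstacle is the bookkeeping of the many pairs and, above all, verifying that the combinatorial transitions of $D_t$ at the critical times $t=1$, $t_1$, $t_2$ are exactly the ones shown in Figure \ref{coxeter:fig}; once the Gram matrix is tabulated (as is essentially done in \cite{KS}), the proposition becomes a finite and mechanical check.
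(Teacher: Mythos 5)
Your proposal takes essentially the same route as the paper: compute the Gram matrix of $Q_t$ directly from Table \ref{walls:table} via formula (\ref{formula:eqn}), use the roll symmetry $R$ to halve the bookkeeping, single out the two time-varying pairs $\{\p 0,\l N\}$ (giving $\cos\tfrac\theta2=\sqrt2\,t/\sqrt{1+t^2}$) and $\{\m 0,\l G\}$ (giving $\cos\varphi$), and finish with elementary calculus for monotonicity and limits. One small detail the paper flags that your sketch lumps into the "remaining bookkeeping" is that the pairs $\{\m 0,\l N\}$ and $\{\l A,\l G\}$ are also time-dependent, passing from $\alpha=1$ at $t=1$ to $\alpha>1$ for $t<1$ (and your list of constant values $\{0,\tfrac12,\tfrac1{\sqrt2},1\}$ includes a spurious $\tfrac1{\sqrt2}$, which does not occur), but neither point affects the correctness of the argument.
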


We plot the functions $\theta(t)$ and $\varphi(t)$ in Figure \ref{plot_theta_phi:fig}.

\begin{figure}
\vspace{.5 cm}
\labellist
\small\hair 2pt
\pinlabel $\theta(t)$ at 250 400
\pinlabel $t_1$ at 480 0 
\pinlabel $t_2$ at 430 0 
\pinlabel $\bar t$ at 350 0 
\pinlabel $\pi$ at 0 450 
\pinlabel $\frac \pi 2$ at 0 240 
\pinlabel $\frac \pi 3$ at 0 170 
\endlabellist
\includegraphics[width=6 cm]{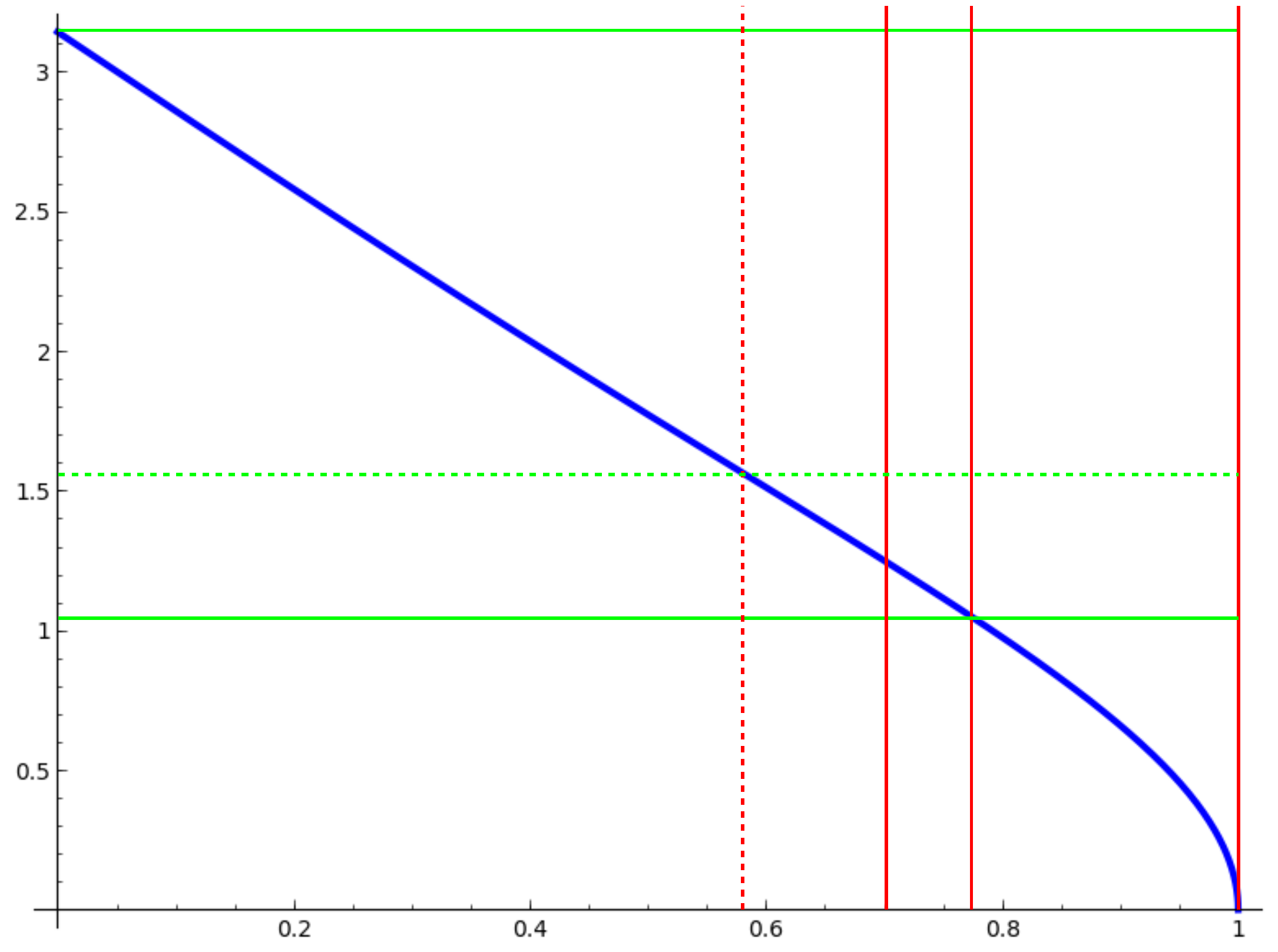}
\labellist
\small\hair 2pt
\pinlabel $\varphi(t)$ at 250 400
\pinlabel $t_1$ at 55 0 
\pinlabel $\frac \pi 2$ at 630 450 
\endlabellist
\includegraphics[width=6 cm]{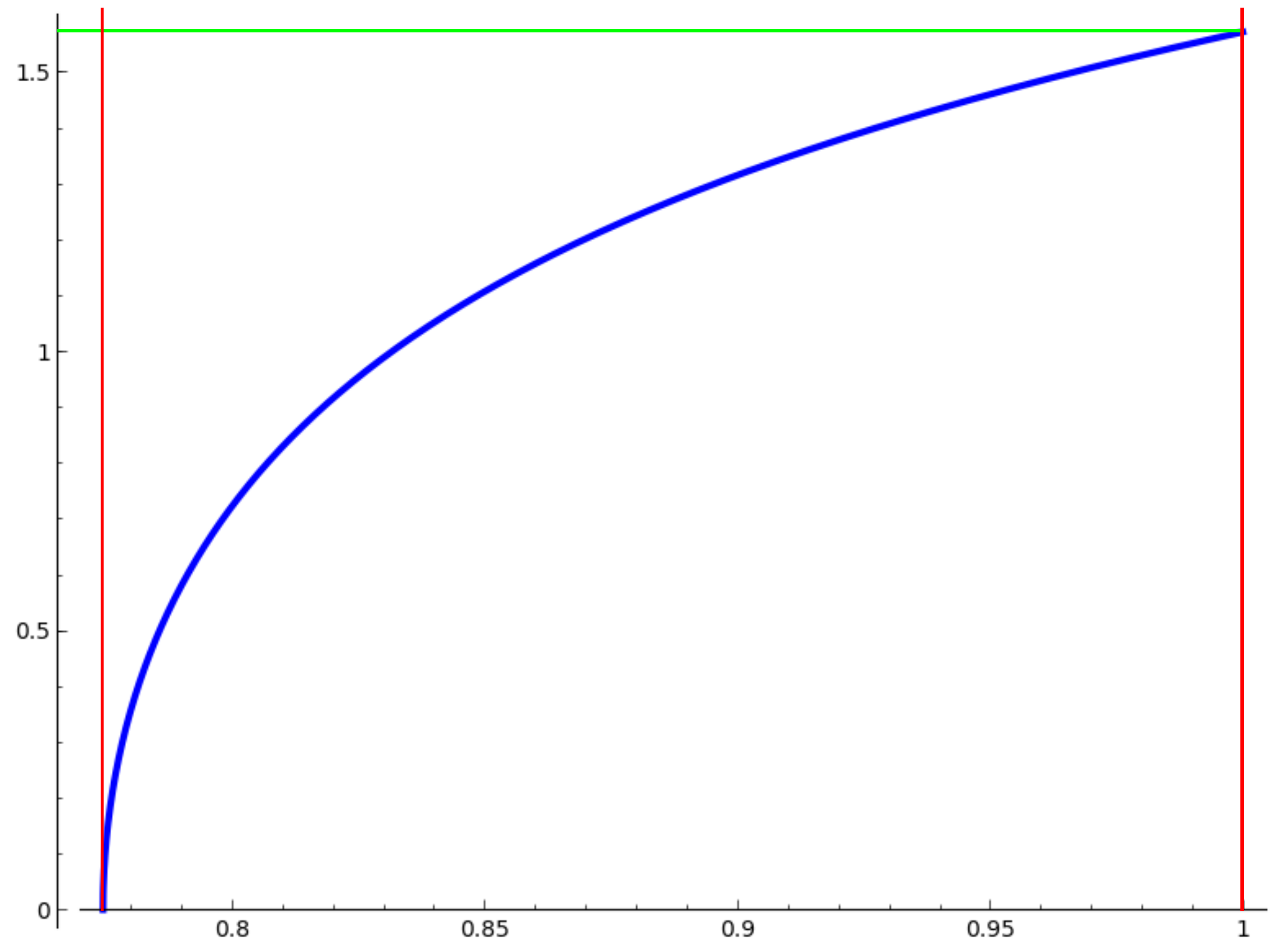}
\vspace{.3 cm}
\nota{The functions $\theta$ and $\varphi$. We note the critical times $t_2$ and $t_1$. At the non-critical time $\bar t = \sqrt{\frac 13}$ we get $\theta = \frac \pi 2$.}\label{plot_theta_phi:fig}
\end{figure}

\begin{proof}
We use the formula \eqref{formula:eqn} for every pair of walls in the set
\begin{equation} \label{defining:eqn}
\{\p 0, \m 0, \p 3, \m 3, \l A, \l G, \l H, \l L, \l M, \l N\}.
\end{equation}
We use the roll symmetry $R$ to reduce the number of pairs to be investigated. A simple inspection shows that we get $\alpha \geq 0$ for every pair and at every time $t\in (0,1]$. More precisely, for most pairs we get $\alpha >1$, $\alpha = 1$, $\alpha = \frac 12$, or $\alpha = 0$ for all $t\in (0,1]$, except (up to the roll symmetry) for the following:
\begin{enumerate}
\item with the pair $\{\p 0,\l N\}$ we get
$$\alpha = \frac{\sqrt 2 t}{\sqrt{1+t^2}}>0;$$
\item with the pair $\{\m 0,\l G\}$ we get 
$$\alpha = \frac{\sqrt 2 (1-t^2)}{\sqrt{(2t^2-1)(t^2+1)}} \geq 0;$$
recall that $\l G$ exists only for $t > t_2 = \sqrt{\frac 12}$;
\item with the pairs $\{\m 0, \l N\}$ and $\{\l A, \l G\}$ we get $\alpha = 1$ at $t=1$ and $\alpha>1$ for all $t<1$.
\end{enumerate}
Therefore $Q_t$ is acute-angled for all $t\in (0,1]$. Concerning the Coxeter diagrams, we note that:
\begin{enumerate}
\item with the pair $\{\p 0,\l N\}$, we get $\alpha =1$ at $t=1$ and $\alpha < 1$ for all $t<1$. Therefore when $t<1$ the walls intersect with dihedral angle $\frac \theta 2$ such that $\cos \frac \theta 2 = \alpha$, that is
$$\cos \theta = 2 \cos^2 \tfrac \theta 2 - 1 = 2 \alpha^2 - 1 = 2 \frac {2t^2}{1+t^2} -1  =\frac{3t^2 -1}{1+t^2}.$$
In particular when $t=t_1$ we get $\cos \theta = \frac 12$ and hence $\frac \theta 2 = \frac \pi 6$. By calculating the derivative one sees that $\theta$ varies strictly monotonically in $t$.
\item with the pair $\{\m 0,\l G\}$, we get $\alpha = 0$ at $t=1$. When $t\in (t_1,1)$ we get $0<\alpha <1$ and the half-spaces intersect with dihedral angle $\varphi$ such that $\cos \varphi = \alpha$. By calculating the derivative we see that $\varphi$ varies monotonically in $t$. When $t=t_1$ we get $\alpha = 1$ and when $t<t_1$ we get $\alpha > 1$. 
\end{enumerate}
The proof is complete.
\end{proof} 

The roll symmetry $R$ acts on the Coxeter diagram of $Q_t$ as a reflection with horizontal axis. 
The polytopes $Q_t$ are remarkable because they are acute-angled and have only few non-right dihedral angles, for every $t$.

\subsection*{Coxeter polytopes}
Recall that a Coxeter polytope is a polytope whose dihedral angles divide $\pi$. As noted in \cite{KS}, the polytope $Q_t$ is Coxeter at the times:
$$1,\quad t_1 = \sqrt{\frac 35}, \quad \frac{\cos \frac \pi 5}{\sqrt{1+\sin^2\frac \pi 5}} , 
\quad \sqrt {\frac 13}, \quad \sqrt{\frac 17}.$$
For these times, the dihedral angle $\frac \theta 2$ is respectively
$$0, \quad \frac \pi 6, \quad \frac \pi 5, \quad \frac \pi 4, \quad \frac \pi 3.$$
The dihedral angle $\varphi$ is $\frac \pi 2$ and $0$ in the first two cases. 
We get five Coxeter polytopes overall in the family $Q_t$. Using Vinberg's criterion, in \cite{KS} it is proved that they are all arithmetic, except the one with $\frac \theta 2 = \frac \pi 5$.

\subsection*{The walls}
We now describe the 3-dimensional walls of $Q_t$. 
Up to the roll symmetry $R$, there are only six walls to analyse in $Q_t$, namely
$$\m 0, \p 3, \l A, \l G, \l L, \l M.$$
Each such wall is an acute-angled polyhedron, because $Q_t$ is acute-angled. We are only interested in the first four $\m 0, \p 3, \l A$, $\l G$, that are quotients of some walls in $P_t$: understanding these will be enough to determine the combinatorics of all the walls in the original polytope $P_t$. We ignore the case $t=1$ for simplicity: we already know that $P_1$ is the ideal regular 24-cell.

\begin{figure}
\labellist
\small\hair 2pt
\pinlabel $\l A$ at -200 550
\endlabellist
\centering
\includegraphics[height=3.8 cm]{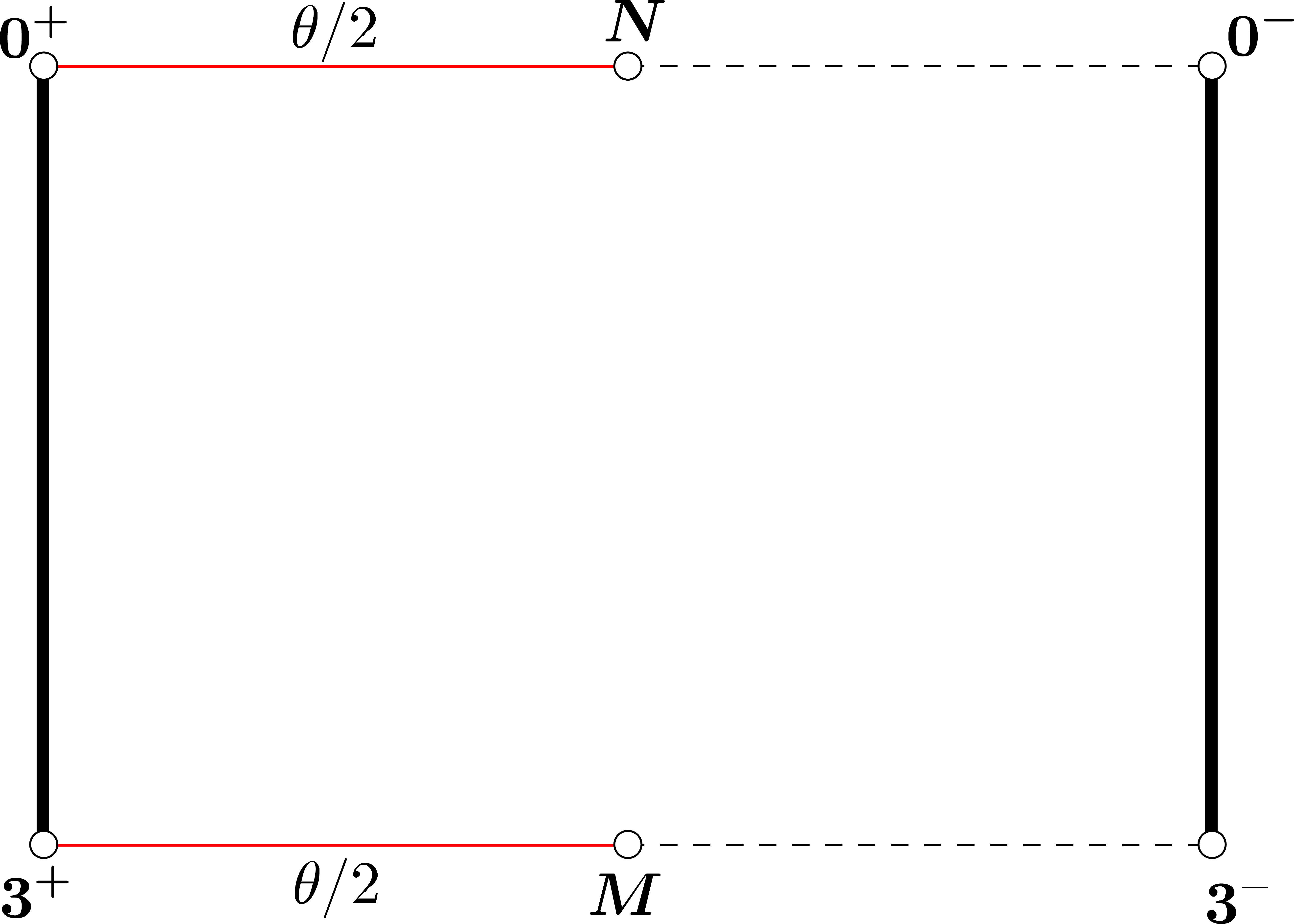}

\vspace{.5 cm}
\labellist
\small\hair 2pt
\pinlabel $\l G$ at -200 550
\pinlabel $(t_1,1)$ at -200 450
\pinlabel $\l G$ at 2900 550
\pinlabel $(t_2,t_1]$ at 2900 450
\endlabellist
\centering
\includegraphics[height=3.8 cm]{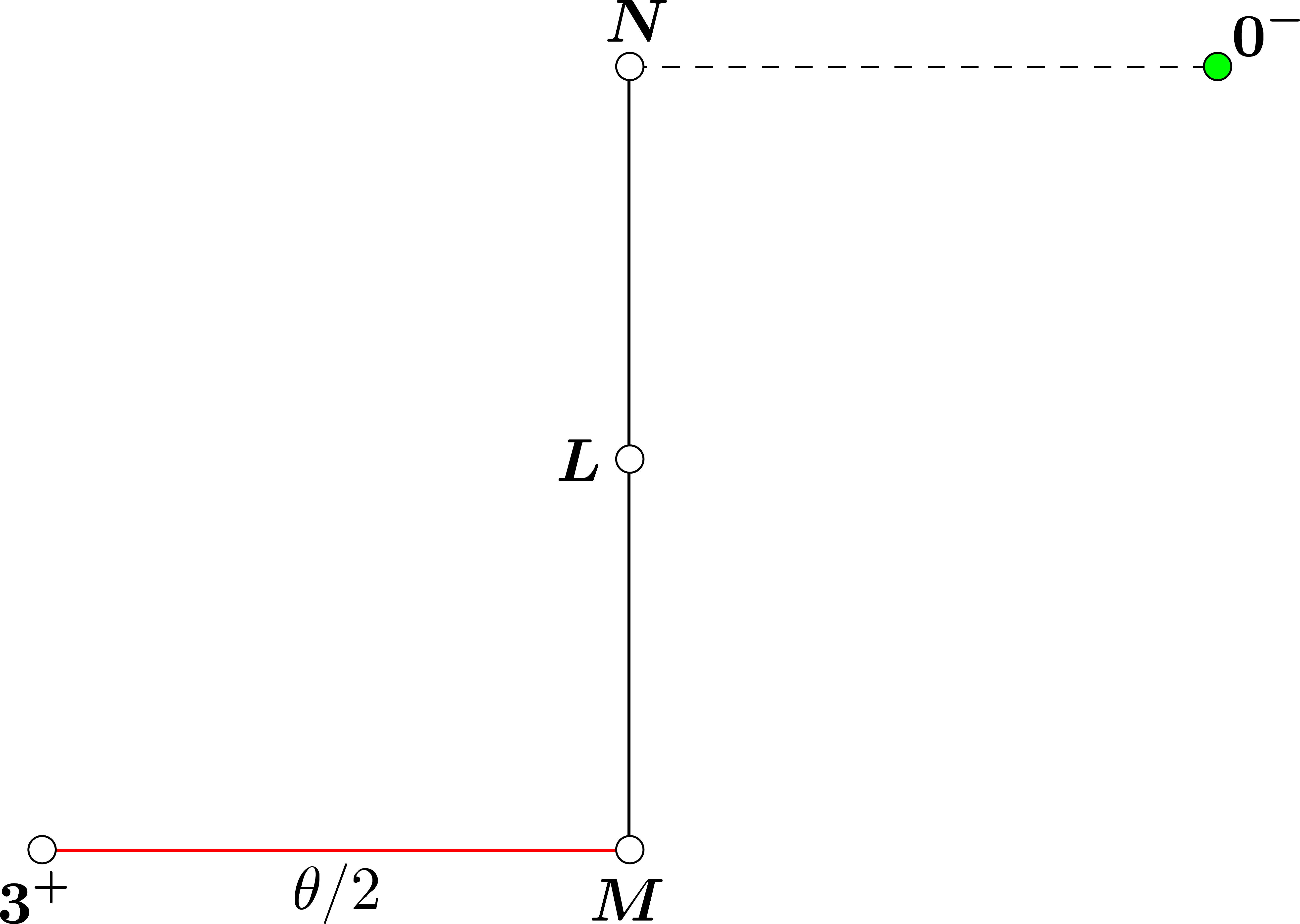}
\qquad
\includegraphics[height=3.8 cm]{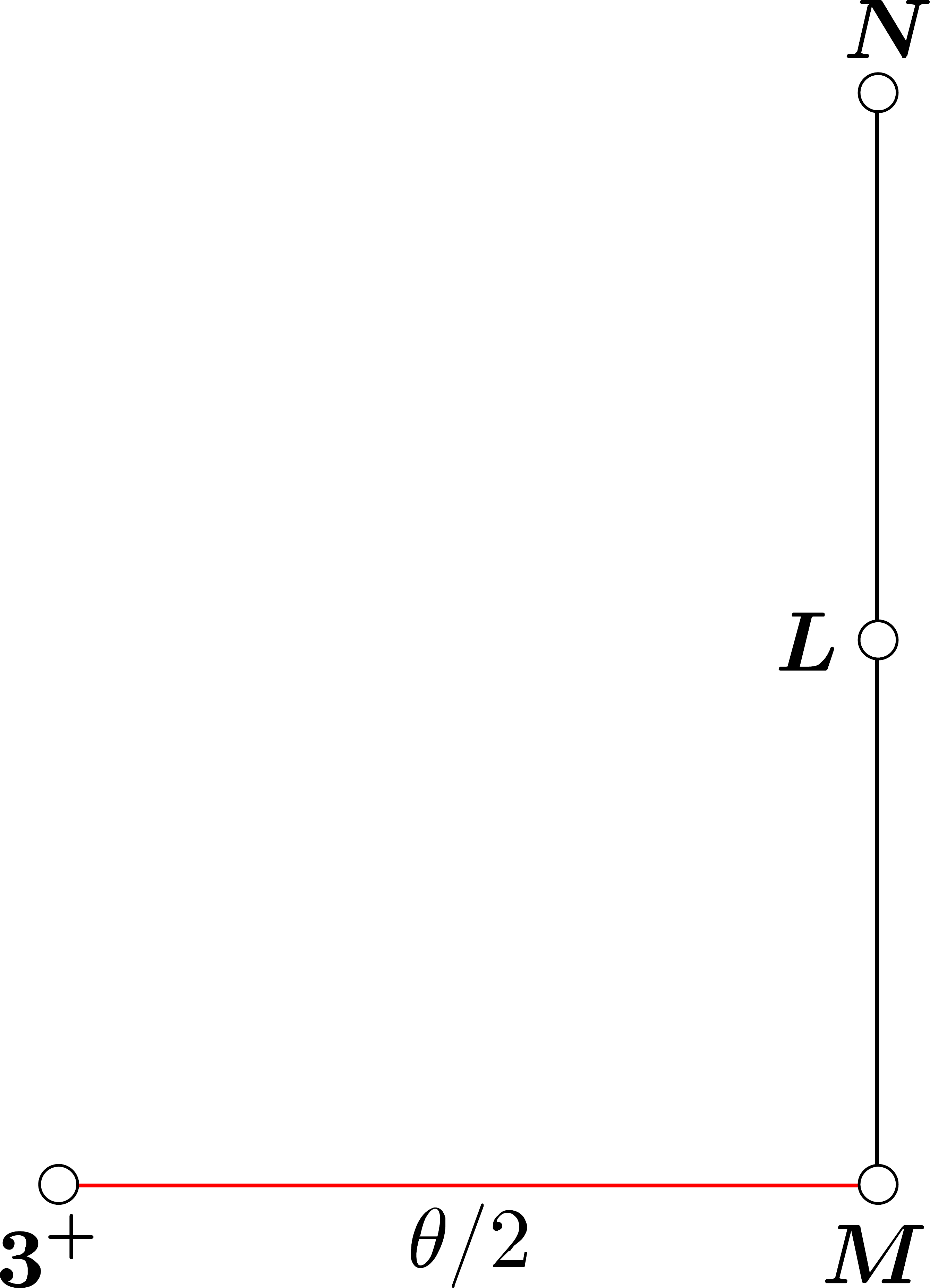}

\vspace{.5 cm}
\labellist
\small\hair 2pt
\pinlabel $\m 0$ at -220 550
\pinlabel $(t_1,1)$ at -220 450
\pinlabel $\m 0$ at 2800 550
\pinlabel $(0,t_1]$ at 2800 450
\endlabellist
\centering
\includegraphics[height=3.8 cm]{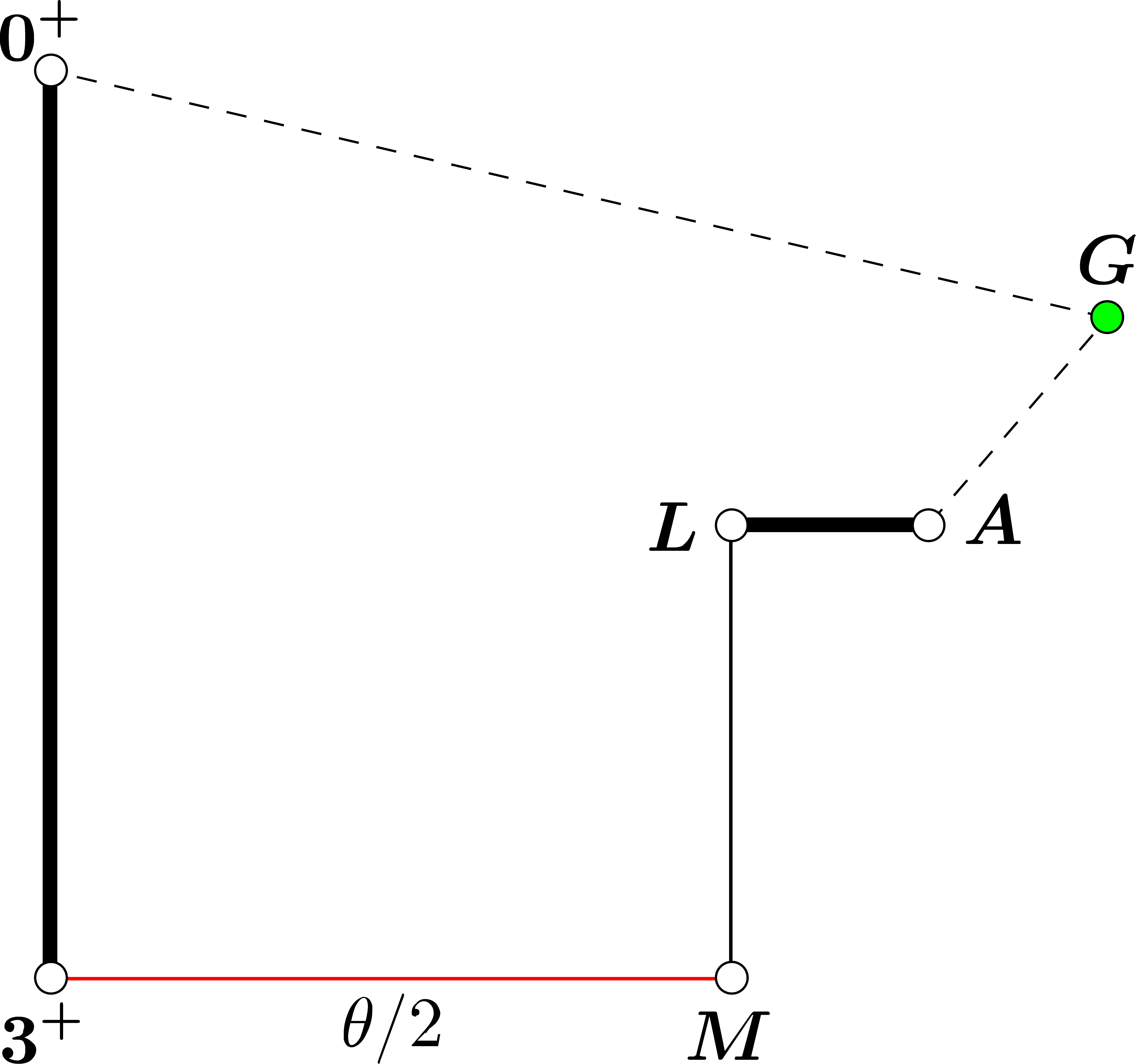}
\qquad
\includegraphics[height=3.8 cm]{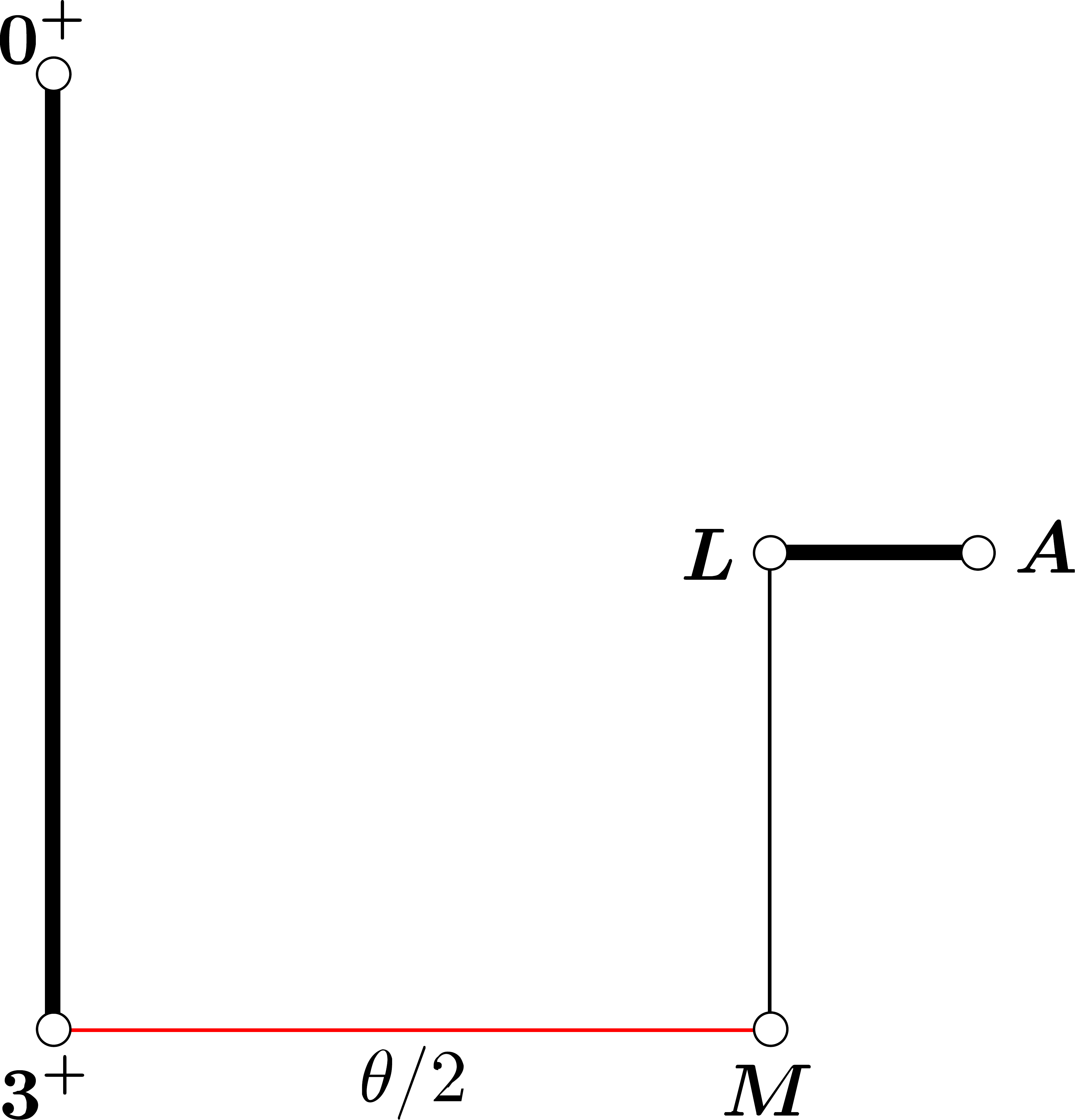}

\vspace{.5 cm}
\labellist
\small\hair 2pt
\pinlabel $\p 3$ at 400 -50
\pinlabel $(t_1,1)$ at 400 -150
\pinlabel $\p 3$ at 1300 -50
\pinlabel $t_1$ at 1300 -150
\pinlabel $\p 3$ at 2200 -50
\pinlabel $(t_2,t_1)$ at 2200 -150
\pinlabel $\p 3$ at 3100 -50
\pinlabel $(0,t_2]$ at 3100 -150
\endlabellist
\centering
\includegraphics[height=3.8 cm]{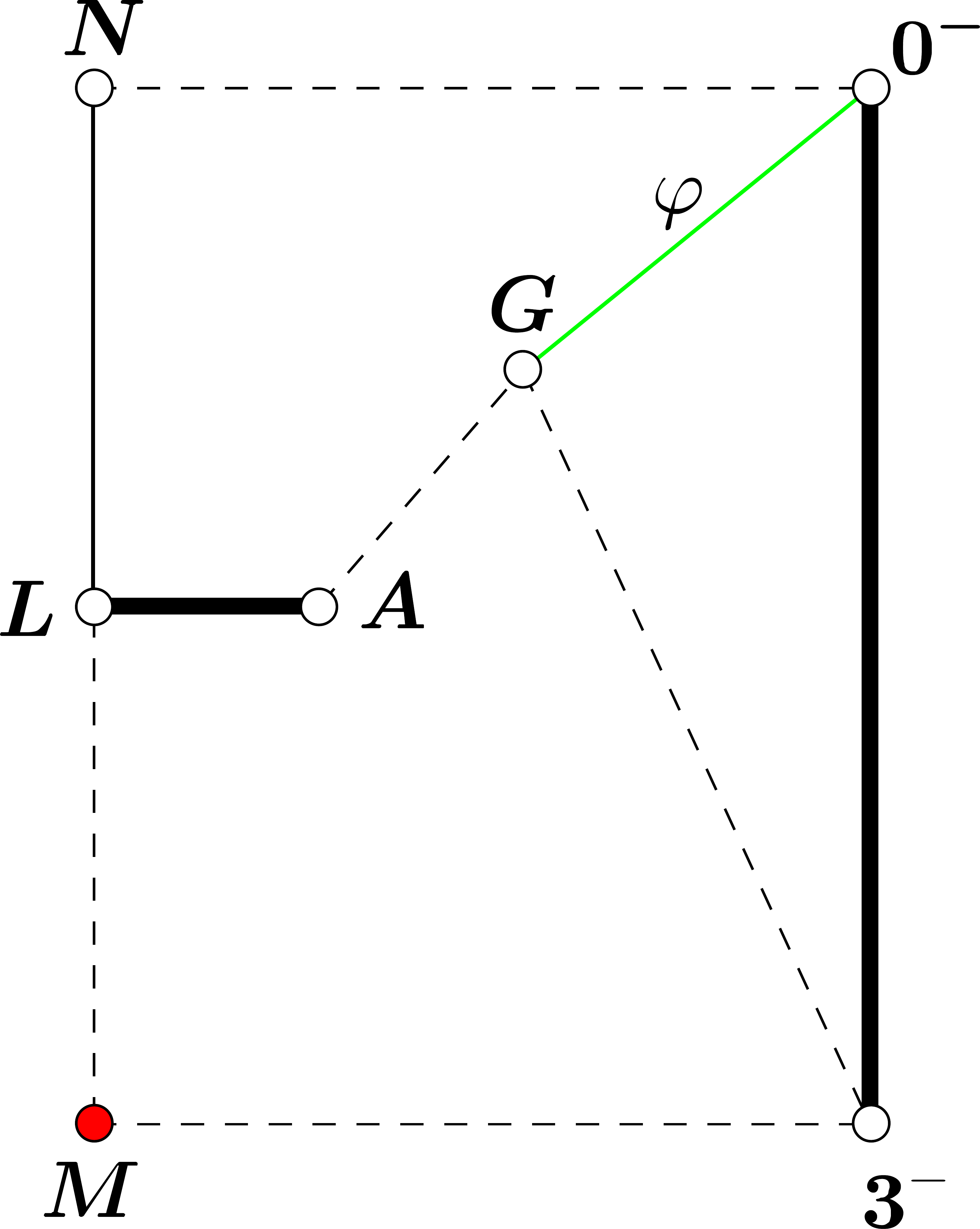}
\includegraphics[height=3.8 cm]{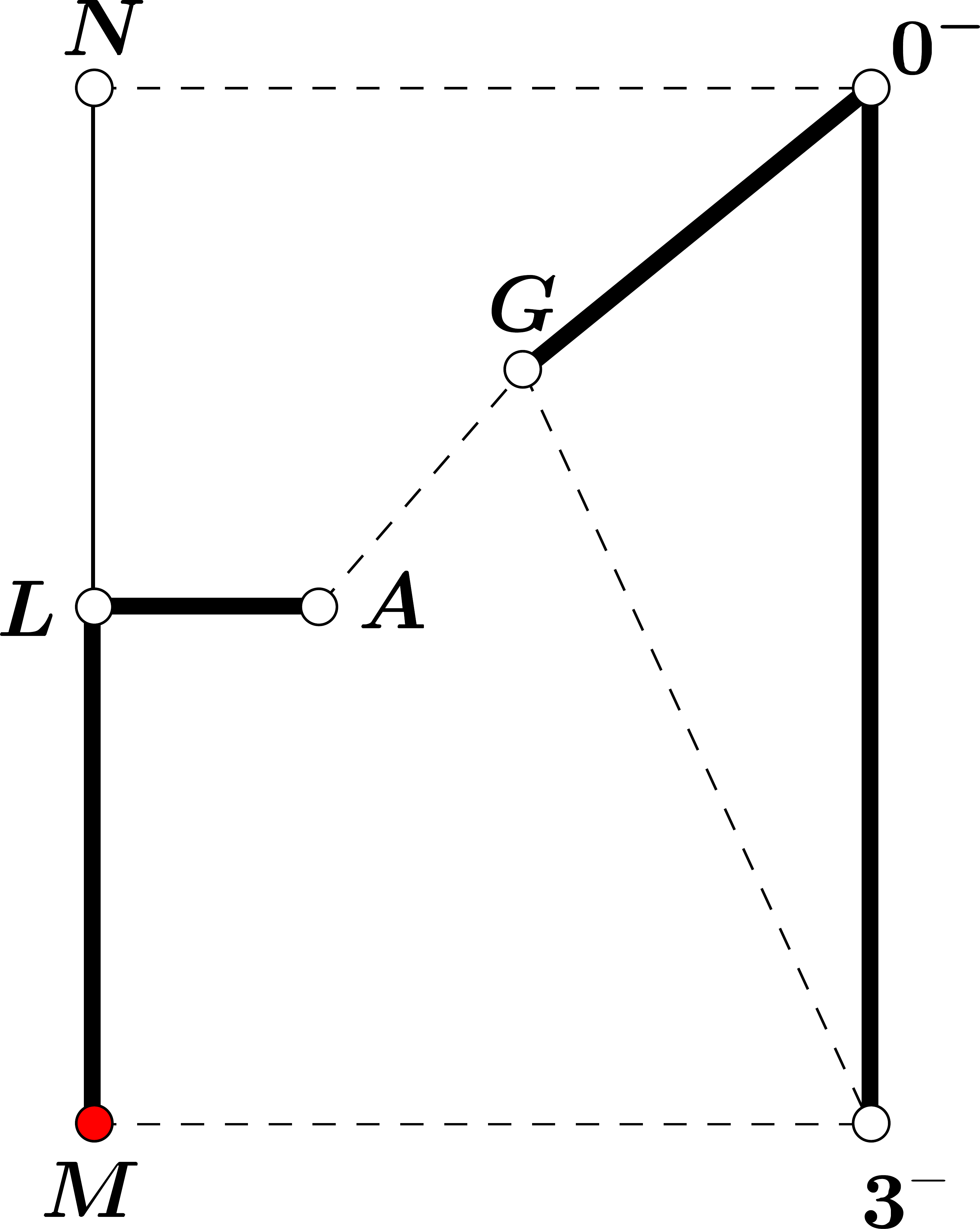}
\includegraphics[height=3.8 cm]{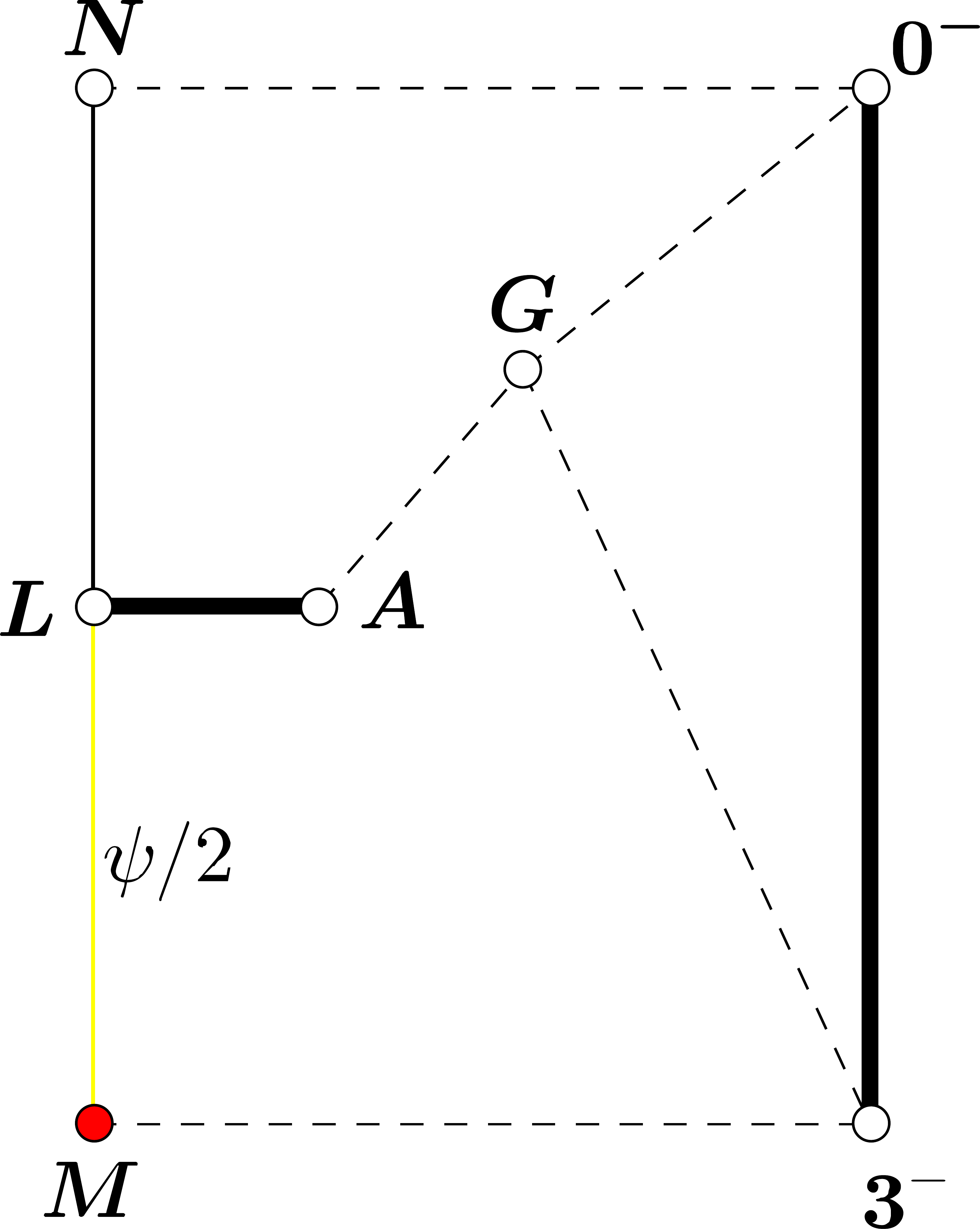}
\includegraphics[height=3.8 cm]{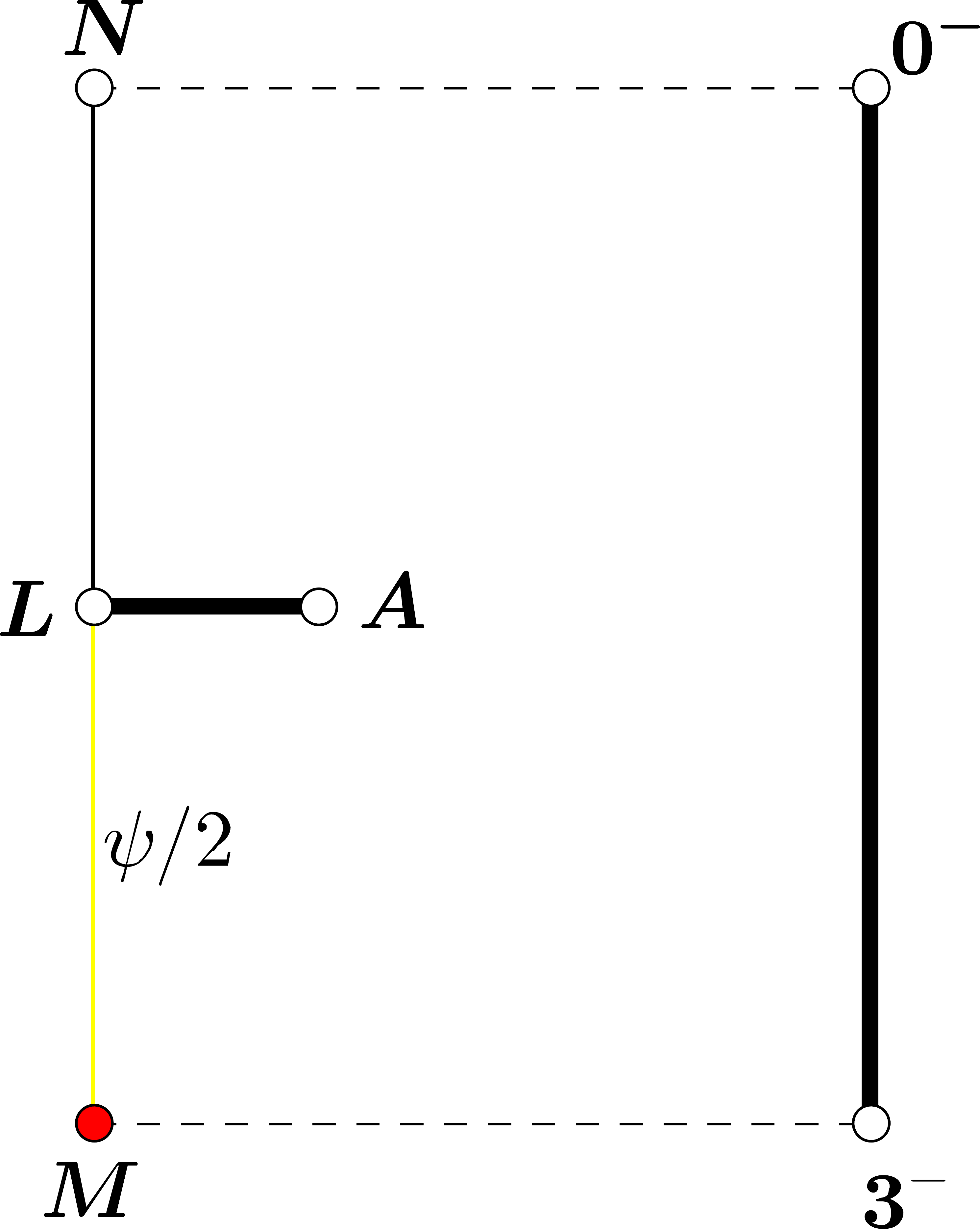}

\vspace{.5 cm}
\nota{The generalised Coxeter diagrams of some walls of $Q_t$. Specifically, that of
$\l A$ for $t\in (0,1)$;
$\l G$ for $t \in (t_1,1)$ and $t\in (t_2,t_1]$;
$\m 0$ for $t \in (t_1,1)$ and $t\in (0, t_1]$;
$\p 3$ for $t \in (t_1,1)$, $t=t_1$, $t\in (t_2,t_1)$, and $t\in (0,t_2]$.
The red and green vertices indicate the (two-dimensional) faces in $Q_t$ with non-right dihedral angles $\frac \theta 2$ and $\varphi$, coherently with Figure \ref{coxeter:fig}. The green, red, and yellow edges indicate the edges of the wall with varying dihedral angle $\varphi$, $\frac \theta 2$, and $\frac \psi 2$.
}\label{coxeter_walls:fig}
\end{figure}

\begin{lemma}\label{quotient_walls_coxeter:lemma}
The generalised Coxeter diagrams of the acute-angled polyhedra $\l A$, $\l G$, $\m 0$, and $\p 3$ are shown in Figure \ref{coxeter_walls:fig} for all $t\in (0,1)$.
The (yellow) dihedral angle $\frac \psi 2$ of $\p 3$ is defined for $t\in (0,t_1]$ and is such that
$$\cos\psi=\frac{\cos\theta}{1-\cos\theta}=\frac{1-3t^2}{2(t^2-1)}.$$
In particular, the angle $\frac\psi 2$ varies strictly monotonically in $t$. Its extremal values are $$\lim_{t\to t_1} \tfrac\psi 2 (t)=0,\quad\lim_{t\to 0}\tfrac\psi 2(t)=\tfrac\pi 3.$$
\end{lemma}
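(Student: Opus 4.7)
The plan is to apply, for each wall $v_i \in \{\l A, \l G, \m 0, \p 3\}$ of $Q_t$, the \emph{diagram-of-a-stratum} procedure recalled in Section \ref{preliminaries:section}. From the diagram $D_t$ of Figure \ref{coxeter:fig} one first extracts the sub-diagram $D_{v_i}$ consisting of the walls incident to $v_i$, namely every $v_j\neq v_i$ whose edge to $v_i$ in $D_t$ is neither dashed nor thickened. Each such $v_j$ is then replaced by its projection
\[ P(v_j) \;=\; v_j - \frac{\langle v_j, v_i\rangle}{\langle v_i, v_i\rangle}\, v_i \]
onto the hyperplane supporting $v_i$, and every entry of $D_{v_i}$ is recomputed via formula \eqref{formula:eqn}.

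Because $D_t$ contains only the two non-right dihedral angles $\theta/2$ and $\varphi$, for most pairs of walls $v_j,v_k$ incident to $v_i$ at least one of the two inner products $\langle v_j,v_i\rangle,\langle v_k,v_i\rangle$ vanishes (and usually both do), so the corresponding entry of $D_{v_i}$ coincides with the one inherited from $D_t$. Hence the bulk of each diagram in Figure \ref{coxeter_walls:fig} is a direct reading of the relevant sub-diagram of $D_t$; only a handful of edges require a genuine computation in each of the four cases and in each time-interval $(t_1,1)$, $\{t_1\}$, $(t_2,t_1)$, $(0,t_2]$ for which $D_{v_i}$ is combinatorially constant. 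The combinatorial transitions at $t_1$ and $t_2$ in the case $v_i=\p 3$ simply mirror the disappearance of $\l G, \l H$ at $t_2$ and the change of type (intersecting, parallel, ultraparallel) of certain green edges of $D_t$ at $t_1$.

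The only truly new non-right dihedral angle produced by the procedure is the yellow $\psi/2$ in $D_{\p 3}$, and identifying the pair of walls responsible for it is the main conceptual step. A direct check from Table \ref{walls:table} shows that $\l L$ is orthogonal to $\p 3$, that $\l M$ meets $\p 3$ at angle $\theta/2$ (the image under the roll symmetry $R$ of the edge $\{\p 0,\l N\}$ computed in Proposition \ref{Q_t:prop}), and that $\l L, \l M$ meet each other at angle $\pi/3$ inside $Q_t$. Normalising the three vectors to be unit space-like, so that
\[ \langle\hat{\l L},\hat{\p 3}\rangle=0,\qquad \langle\hat{\l M},\hat{\p 3}\rangle=-\cos(\theta/2),\qquad \langle\hat{\l L},\hat{\l M}\rangle=-\tfrac 12, \]
the projection formula yields
\[ \alpha'_{\l L,\l M} \;=\; \frac{\;1/2\;}{\sqrt{1-\cos^2(\theta/2)}} \;=\; \frac{1}{2\sin(\theta/2)} \;=\; \cos(\psi/2), \]
from which the double-angle identity gives
\[ \cos\psi \;=\; 2\cos^2(\psi/2)-1 \;=\; \frac{1}{1-\cos\theta}-1 \;=\;\frac{\cos\theta}{1-\cos\theta}, \]
as claimed. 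The three walls $\l L,\l M,\p 3$ actually meet in an edge of $Q_t$ (so that $\psi/2$ is a true dihedral angle) exactly when their link is a genuine spherical triangle, i.e.~when $\pi/3+\pi/2+\theta/2 > \pi$, which by the monotonicity of $\theta$ in Proposition \ref{Q_t:prop} is equivalent to $t<t_1$; at $t=t_1$ the link is Euclidean, the edge becomes ideal, and $\psi/2\to 0$.

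Strict monotonicity of $\psi/2$ and the remaining extremal value follow from the explicit formula: substituting $\cos\theta=(3t^2-1)/(1+t^2)$ rewrites $\cos\psi$ as the rational function $(1-3t^2)/(2(t^2-1))$, whose derivative in $t$ equals $2t/(t^2-1)^2>0$ on $(0,t_1)$; and $\cos\psi\to-1/2$ as $t\to 0^+$, so $\psi/2\to\pi/3$.
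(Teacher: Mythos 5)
Your proof is correct and follows essentially the same approach as the paper: both apply Vinberg's diagram-of-a-stratum procedure and observe that the only edge of $D_{\p 3}$ needing recomputation is $\{\l L,\l M\}$, because $\p 3$ is orthogonal to every incident wall except $\l M$, which in turn is not orthogonal to $\l L$. The difference is purely in presentation: the paper projects the raw coordinate vectors $\l L$, $\l M$ onto $\p 3^\perp$ and computes the new $\alpha$ as an explicit function of $t$, namely $\alpha = \tfrac{\sqrt{1+t^2}}{2\sqrt{1-t^2}}$, whereas you run the same projection abstractly in terms of the already-known angles ($a=0$, $b=\cos\tfrac{\theta}{2}$, $c=\tfrac12$) to get $\cos\tfrac{\psi}{2} = \tfrac{1}{2\sin(\theta/2)}$ and then substitute $\cos\theta = (3t^2-1)/(1+t^2)$; the two formulas are of course identical. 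Your spherical-triangle criterion for when the vertex $\{\p 3,\l L,\l M\}$ is finite vs.\ ideal (i.e.\ $\tfrac{\pi}{3}+\tfrac{\pi}{2}+\tfrac{\theta}{2} \gtrless \pi$) is a clean conceptual way to pin down the threshold $t_1$ that the paper gets by noting where $\alpha$ crosses $1$.

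One small imprecision worth flagging: the sentence ``at least one of the two inner products $\langle v_j,v_i\rangle,\langle v_k,v_i\rangle$ vanishes \ldots\ so the corresponding entry of $D_{v_i}$ coincides with the one inherited'' is not a valid implication on its own. If only one vanishes (say $a=0$, $b\neq 0$) then $\alpha' = c/\sqrt{1-b^2}$, which coincides with $c$ only when $c=0$; it is exactly this case (inherited angle $\pi/2$ together with one trivial projection) that keeps most entries right-angled. The parenthetical ``and usually both do'' shows you are aware of this, and the paper is similarly informal at this point, but stating the correct sufficient condition would make the bookkeeping airtight. The key computation itself is unaffected.
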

\begin{proof}
For every $\l W\in\lbrace\l A,\ \l G,\ \m 0,\ \p 3\rbrace$ and every time $t$, we construct the Coxeter diagram $D_{\l W,t}$ of $\l W$ at time $t$ following the instructions of Section \ref{acute:subsection}. 

The diagram $D_{\l W,t}$ is built from $D_t$ by removing $\l W$ and all the vertices that are connected to $\l W$ by either a dashed or a thickened edge.
We need then to recompute $\alpha$ from formula (\ref{formula:eqn}) for every pair of vectors. To do so we must substitute each space-like vector 
$$v\in \{\p 0, \m 0, \p 3, \m 3, \l A, \l G, \l H, \l L, \l M, \l N\}$$
with its projection $P(v)$ in the time-like hyperplane $\l W^\perp$, using the formula
$$P( v)=v-\frac{\langle v,\l W\rangle}{\langle\l W,\l W\rangle}\l W.$$
We then calculate the new values of $\alpha$ on every pair $P(v), P(w)$ instead of $v, w$. This will determine the labels on the edges of $D_{\l W, t}$.

Given the abundance of right-angles, in most cases $\alpha$ remains unaffected.
More specifically: 
\begin{itemize}
\item $\l A$ is orthogonal to all the incident walls, hence $P(v)=v$ for every such wall $v$ and all the values $\alpha$ remain unaffected: the diagram $D_{\l  A,t}$ is just a subdiagram of $D_t$ and is shown in Figure \ref{coxeter_walls:fig}-(first line) for all $t\in (0,1)$;
\item $\l G$ is orthogonal to all the incident walls except $\m 0$, which is however orthogonal to all the walls incident to both $\l G$ and $\m 0$: this implies easily that all the values $\alpha \leq 1$ remain unaffected also in this case; hence $D_{\l G,t}$ is just a subdiagram of $D_t$ as in Figure \ref{coxeter_walls:fig}-(second line) for the times $(t_1,1)$ and $(t_2,t_1]$ respectively;
\item $\m 0$ is orthogonal to all the incident walls except $\l G$, which is orthogonal to all the walls incident to both $\m 0$ and $\l G$: again the values $\alpha \leq 1$ are unaffected and $D_{\m 0,t}$ is a subdiagram of $D_t$ as in Figure \ref{coxeter_walls:fig}-(third line) for the times $(t_1,1)$ and $(0,t_1]$ respectively;
\item $\p 3$ is orthogonal to all the incident walls except $\l M$, which is in turn not orthogonal to $\l L$: this is the only label that changes from Figure \ref{coxeter:fig} to \ref{coxeter_walls:fig}, namely that of the edge connecting $\l M$ and $\l L$. We have 
$$P(\l L)=\l L, \qquad P(\l M) = \l M + \frac{2t^2}{t^2+1} \p 3$$
and we easily deduce that
$$\langle P(\l M),P(\l L)\rangle=-1,\quad \langle P(\l M),P(\l M)\rangle=2\frac{1-t^2}{1+t^2},\quad\langle P(\l L),P(\l L)\rangle=2$$
and therefore
$$\alpha = \frac{\sqrt{1+t^2}}{2\sqrt{1-t^2}} =\frac{1+t^2}{2\sqrt{1-t^4}}.$$
In particular:
\begin{itemize}
\item when $t\in (t_1, 1)$ we have $\alpha >1$ and the faces are ultraparallel;
\item when $t=t_1$ we have $\alpha = 1$ and the faces are asymptotically parallel;
\item when $t\in (0,t_1)$ the faces meet at a dihedral angle $\frac \psi 2$ that satisfies
$$\cos\tfrac\psi 2=\frac{\sqrt{1+t^2}}{2\sqrt{1-t^2}} =\frac{1+t^2}{2\sqrt{1-t^4}}.$$
\end{itemize}
The diagram $D_{\p 3, t}$ is shown in Figure \ref{coxeter_walls:fig}-(fourth line) at all times.
\end{itemize}
We note that
$$\cos \psi = 2 \cos^2 \tfrac \psi 2 - 1 = \frac{1+t^2}{2(1-t^2)} -1=\frac{1-3t^2}{2(t^2-1)}.$$
The proof is complete.
\end{proof}

We can now easily draw the walls $\l A$, $\l G$, $\m 0$, and $\p 3$ of $Q_t$ at all times.

\begin{figure}
\labellist
\small\hair 2pt
\pinlabel $\l A$ at 10 10
\pinlabel \textcolor{blue}{$\m 0$} at 10 100
\pinlabel {$\p 3$} at 32 100
\pinlabel \textcolor{blue}{$\p 0$} at 20 60
\pinlabel {$\m 3$} at 29 45
\pinlabel \textcolor{blue}{$\l M$} at 44 80
\pinlabel {$\l N$} at 42 60

\pinlabel $\l G$ at 90 10
\pinlabel $\m 0$ at 120 92
\pinlabel \textcolor{blue}{$\l N$} at 130 45
\pinlabel $\p 3$ at 110 70
\pinlabel $\l L$ at 138 70
\pinlabel \textcolor{blue}{$\l M$} at 127 75

\pinlabel $\m 0$ at 185 10
\pinlabel $\l G$ at 212 23
\pinlabel \textcolor{blue}{$\l M$} at 230 57
\pinlabel $\p 3$ at 215 45
\pinlabel \textcolor{blue}{$\l L$} at 202 57
\pinlabel $\l A$ at 225 78
\pinlabel $\p 0$ at 210 90

\pinlabel $\p 3$ at 275 10
\pinlabel $\l M$ at 320 50
\pinlabel \textcolor{blue}{$\l N$} at 320 75
\pinlabel $\l A$ at 305 73
\pinlabel $\m 0$ at 297 45
\pinlabel \textcolor{blue}{$\l G$} at 303 25
\pinlabel $\m 3$ at 300 90
\pinlabel \textcolor{blue}{$\l L$} at 289 55

\pinlabel $(t_1,1)$ at 340 110
\endlabellist
\centering
\includegraphics[width=12.5 cm]{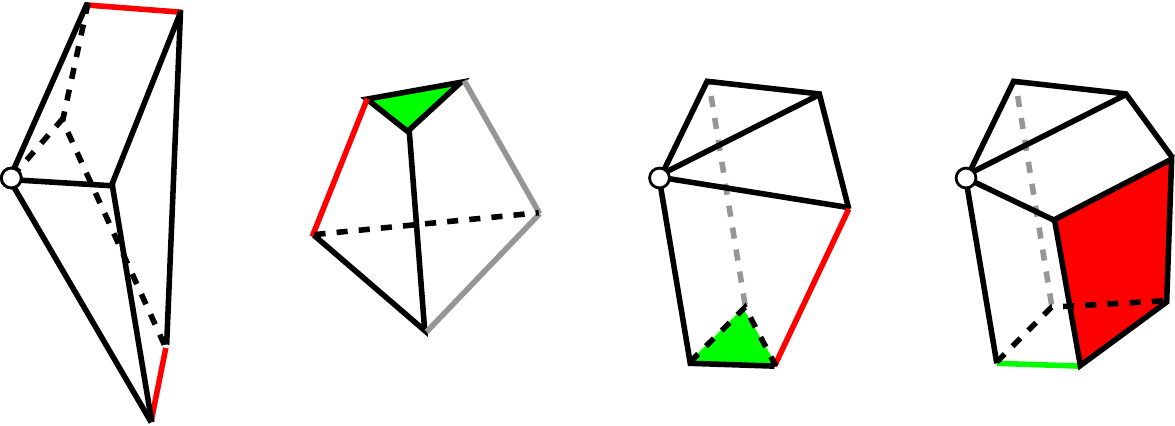}
\vspace{.5 cm}

\labellist
\small\hair 2pt
\pinlabel $\l A$ at 10 10
\pinlabel \textcolor{blue}{$\m 0$} at 10 100
\pinlabel {$\p 3$} at 32 100
\pinlabel \textcolor{blue}{$\p 0$} at 20 60
\pinlabel {$\m 3$} at 29 45
\pinlabel \textcolor{blue}{$\l M$} at 44 80
\pinlabel {$\l N$} at 42 60

\pinlabel $\l G$ at 90 10
\pinlabel \textcolor{blue}{$\l N$} at 130 45
\pinlabel $\p 3$ at 110 70
\pinlabel $\l L$ at 138 70
\pinlabel \textcolor{blue}{$\l M$} at 127 75

\pinlabel $\m 0$ at 185 10
\pinlabel \textcolor{blue}{$\l M$} at 230 57
\pinlabel $\p 3$ at 215 45
\pinlabel \textcolor{blue}{$\l L$} at 202 57
\pinlabel $\l A$ at 225 78
\pinlabel $\p 0$ at 210 90

\pinlabel $\p 3$ at 275 10
\pinlabel $\l M$ at 320 50
\pinlabel \textcolor{blue}{$\l N$} at 320 75
\pinlabel $\l A$ at 305 73
\pinlabel $\m 0$ at 292 45
\pinlabel \textcolor{blue}{$\l G$} at 305 30
\pinlabel $\m 3$ at 300 90
\pinlabel \textcolor{blue}{$\l L$} at 289 55

\pinlabel $t_1$ at 340 110
\endlabellist
\centering
\includegraphics[width=12.5 cm]{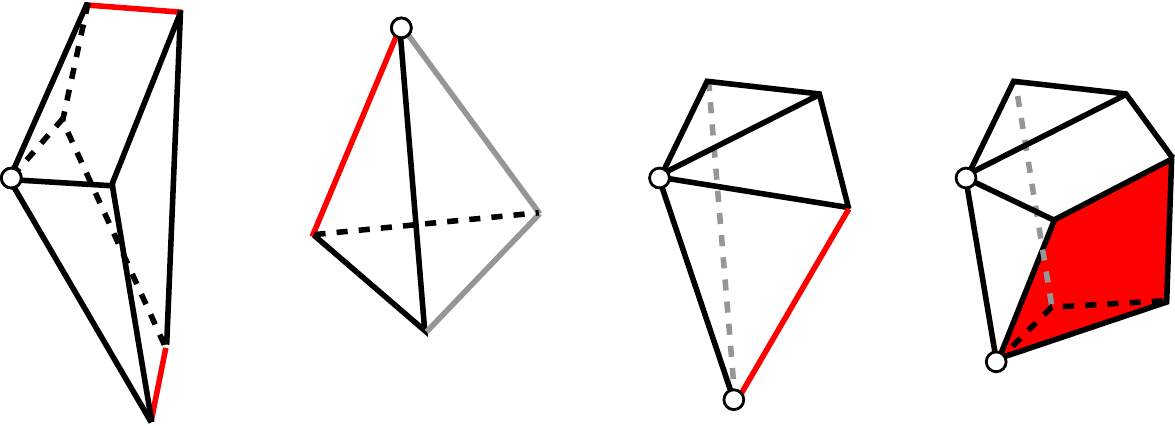}
\vspace{.5 cm}

\labellist
\small\hair 2pt
\pinlabel $\l A$ at 10 10
\pinlabel \textcolor{blue}{$\m 0$} at 10 100
\pinlabel {$\p 3$} at 32 100
\pinlabel \textcolor{blue}{$\p 0$} at 20 60
\pinlabel {$\m 3$} at 29 45
\pinlabel \textcolor{blue}{$\l M$} at 44 80
\pinlabel {$\l N$} at 42 60

\pinlabel $\l G$ at 90 10
\pinlabel \textcolor{blue}{$\l N$} at 130 45
\pinlabel $\p 3$ at 110 70
\pinlabel $\l L$ at 138 70
\pinlabel \textcolor{blue}{$\l M$} at 127 75

\pinlabel $\m 0$ at 185 10
\pinlabel \textcolor{blue}{$\l M$} at 230 57
\pinlabel $\p 3$ at 215 45
\pinlabel \textcolor{blue}{$\l L$} at 202 57
\pinlabel $\l A$ at 225 78
\pinlabel $\p 0$ at 210 90

\pinlabel $\p 3$ at 275 10
\pinlabel $\l M$ at 320 50
\pinlabel \textcolor{blue}{$\l N$} at 320 75
\pinlabel $\l A$ at 305 73
\pinlabel $\m 0$ at 290 55
\pinlabel \textcolor{blue}{$\l G$} at 307 29
\pinlabel $\m 3$ at 300 90
\pinlabel \textcolor{blue}{$\l L$} at 293 40

\pinlabel $(t_2,t_1)$ at 340 110
\endlabellist
\centering
\includegraphics[width=12.5 cm]{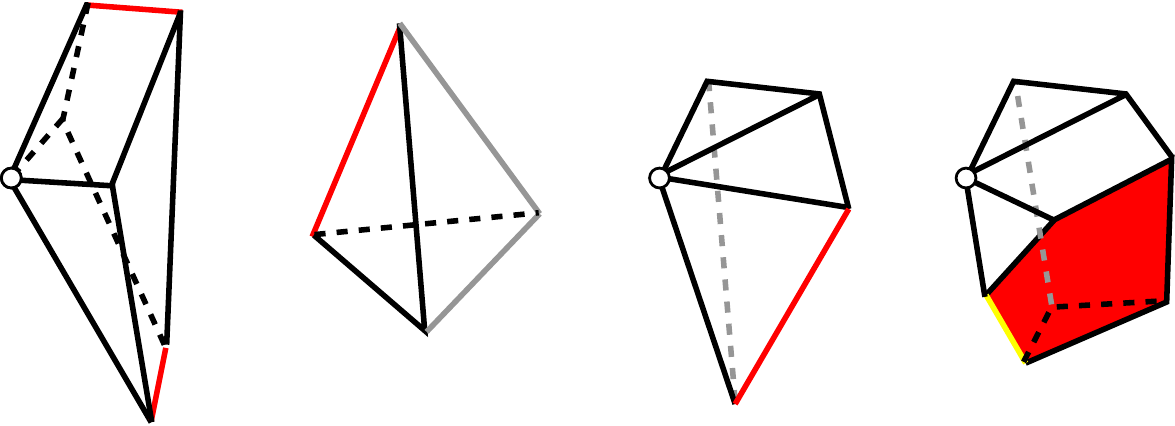}
\vspace{.5 cm}

\labellist
\small\hair 2pt
\pinlabel $\p 3$ at -5 0
\pinlabel \textcolor{blue}{$\l L$} at 30 20
\pinlabel \textcolor{blue}{$\l N$} at 50 50
\pinlabel $\l M$ at 50 30
\pinlabel $\l A$ at 25 45
\pinlabel $\m 0$ at 15 28
\pinlabel $\m 3$ at 22 62

\pinlabel $\p 3$ at 105 0
\pinlabel \textcolor{blue}{$\l L$} at 140 20
\pinlabel \textcolor{blue}{$\l N$} at 160 50
\pinlabel $\l M$ at 160 30
\pinlabel $\l A$ at 135 45
\pinlabel $\m 0$ at 125 28
\pinlabel $\m 3$ at 132 62

\pinlabel $t_2$ at 70 70
\pinlabel $(0,t_2)$ at 180 70
\endlabellist
\centering
\includegraphics[width=6.5 cm]{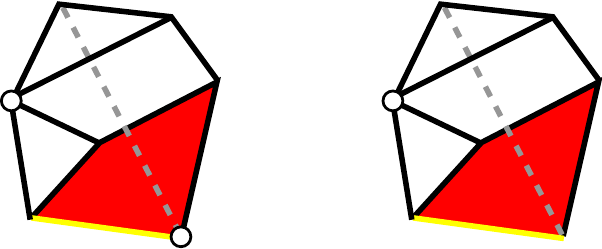}
\vspace{.3 cm}
\nota{The walls $\l A$, $\l G$, $\m 0$, and $\p 3$ of the quotient polytope $Q_t$ at the times $(t_1,1)$ in the first line, $t_1$  in the second line, and $(t_2,t_1)$ in the third line. The combinatorics of $\l A$ and $\m 0$ is constant in $(0, t_1)$, but that of $\p 3$ changes further at the times $t_2$ and $(0,t_2)$ as shown in the fourth line. Every face is labeled with the name of the adjacent wall: front faces are labeled in black, and back faces in blue. On each wall, the red, green, black, grey, and yellow edges have dihedral angle respectively $\frac \theta 2$, $\varphi$, $\frac \pi 2$, $\frac \pi 3$, and $\frac\psi 2$. Similarly, on the polytope $Q_t$ the red, green, and white faces have dihedral angle $\frac \theta 2$, $\varphi$, and $\frac \pi 2$. The ideal vertices are indicated as white dots. }\label{Q_walls:fig}
\end{figure}

\begin{cor}
The combinatorics and geometry of the polyhedra $\l A$, $\l G$, $\m 0$, and $\p 3$ of $Q_t$ is shown in Figure \ref{Q_walls:fig}. In particular, they all have finite volume.
\end{cor}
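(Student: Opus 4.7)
The plan is to apply Vinberg's combinatorial machinery from Section~\ref{acute:subsection} directly to the four generalised Coxeter diagrams produced by Lemma~\ref{quotient_walls_coxeter:lemma}, and then to invoke Theorem~\ref{Vinberg:teo} to conclude finite volume. Since each of $\l A$, $\l G$, $\m 0$, $\p 3$ is a three-dimensional acute-angled hyperbolic polyhedron (it sits inside the acute-angled $Q_t$), its full combinatorics is encoded by subdiagrams of the diagrams in Figure~\ref{coxeter_walls:fig}, and no further computation of inner products is needed.

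For each wall $\l W$ and each relevant time interval, the strategy is to read off the strata from $D_{\l W,t}$ as follows. Faces correspond to pairs of vertices joined by a labelled (solid, non-dashed) edge, with dihedral angle equal to the label. Edges correspond to triples of vertices whose subdiagram is a spherical Coxeter diagram of rank two (i.e.\ represents a spherical $2$-simplex). Finite vertices correspond to quadruples whose subdiagram represents a spherical $3$-simplex, while ideal vertices correspond to quadruples whose subdiagram represents a compact Euclidean acute-angled polyhedron, namely either a Euclidean $3$-simplex (giving an ideal vertex with tetrahedral link) or a product of two Euclidean $1$-simplices (giving an ideal vertex with square link). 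The subdiagrams involved in our diagrams are of the familiar types $A_k$, $B_k$, $\tilde A_1\times \tilde A_1$, $\tilde A_3$, $\tilde B_3$, $\tilde C_3$, so the classification is immediate. Carrying out this enumeration for each of the nine diagrams listed in Figure~\ref{coxeter_walls:fig} produces exactly the polyhedra drawn in Figure~\ref{Q_walls:fig}, with the face labels, edge types (colour-coded by dihedral angle as specified in Lemma~\ref{quotient_walls_coxeter:lemma} and Proposition~\ref{Q_t:prop}), and the indicated ideal vertices.

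Once the combinatorics is established, finite volume follows from Theorem~\ref{Vinberg:teo}: it suffices to verify that every edge of each polyhedron has both endpoints either finite or ideal, i.e.\ that no edge runs off to a hyper-ideal vertex. This reduces to checking that for each triple of vertices in $D_{\l W,t}$ whose subdiagram is spherical of rank two, there exists a fourth vertex completing it to either a spherical rank-three or a Euclidean rank-three subdiagram. This is a finite inspection for each of the nine diagrams.

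The bookkeeping is substantial but routine; the main obstacle is simply to be systematic about the case analysis, especially for $\p 3$ whose combinatorics changes across four regimes (at the times $1$, $t_1$, $t_2$, and $0$) because of the extra $\frac{\psi}{2}$-edge between $\l L$ and $\l M$ and because $\l G$ appears only for $t>t_2$. In each regime one must pay attention to the critical transitions: when a dashed edge becomes thickened an edge of the wall closes up at an ideal vertex, and when a thickened edge becomes a labelled edge two previously parallel faces meet along a new finite edge. Tracking these transitions carefully at $t=t_1$ and $t=t_2$ yields the combinatorial changes visible in Figure~\ref{Q_walls:fig}, and completes the proof.
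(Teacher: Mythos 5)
Your overall strategy coincides with the paper's: read the strata of each wall off its generalised Coxeter diagram via the Vinberg machinery of Section~\ref{acute:subsection}, verify the pictures in Figure~\ref{Q_walls:fig}, and then invoke Theorem~\ref{Vinberg:teo} (no hyperideal vertices) to conclude finite volume. However, your dimensional bookkeeping is shifted by one throughout, which would make the whole enumeration come out wrong. The walls $\l A$, $\l G$, $\m 0$, $\p 3$ are \emph{three-dimensional} polyhedra, so in their Coxeter diagrams $D_{\l W,t}$ the vertices of the diagram represent the $2$-dimensional \emph{faces} (codimension one); pairs of vertices with a labelled angle edge represent the $1$-dimensional \emph{edges}; finite \emph{vertices} of the polyhedron correspond to \emph{triples} of nodes whose subdiagram is a spherical $2$-simplex; and ideal vertices correspond to subdiagrams representing a compact $2$-dimensional Euclidean polygon, namely a Euclidean triangle (a $3$-node affine rank-$3$ diagram) or a Euclidean rectangle ($\tilde A_1\times\tilde A_1$, four nodes). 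You instead wrote that faces are pairs, edges are triples, and finite and ideal vertices are quadruples, which is the correspondence for the ambient $4$-dimensional $Q_t$, not for its $3$-dimensional walls. In particular, your finite-volume criterion (``for each triple... whose subdiagram is spherical of rank two, find a fourth vertex...'') is both internally inconsistent (a triple of nodes has rank three, not two) and aimed at the wrong strata: what Theorem~\ref{Vinberg:teo} requires here is that each \emph{pair} of diagram-vertices spanning an edge of the polyhedron admit two extensions to elliptic triples or Euclidean triples/quadruples, one for each endpoint. With this correction the proof matches the paper's; compare the paper's explicit example that $\l A$ has six finite vertices from elliptic three-node subdiagrams and one ideal vertex from the four-node Euclidean rectangle $\lbrace\m0,\p0,\m3,\p3\rbrace$.
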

\begin{proof}
All the strata of each acute-angled polyhedron are easily deduced from its corresponding Coxeter diagram, using the algorithms described in Section \ref{acute:subsection}, that allow one to determine first the edges and then the vertices of each polyhedron. 

Recall in particular that every finite vertex arises from a triple of nodes of the Coxeter diagram of elliptic type, and every ideal vertex arises from a triple or 4-uple of vertices of Euclidean type. The reader is invited to check that the vertices are those shown in Figure \ref{Q_walls:fig}, and in particular the crucial fact that every edge has two vertices as its endpoints: hence the polyhedra have all finite volume (there are no hyperideal vertices, see Theorem \ref{Vinberg:teo}).

For instance, one checks that the polyhedron $\l A$ contains $6$ finite vertices, that correspond to elliptic Coxeter subdiagram with tree nodes, and an ideal vertex, that corresponds to the Euclidean Coxeter subgraph with four nodes $\lbrace\m0,\p0,\m3,\p3\rbrace$, that represents a rectangle. 

Similarly, the polyhedron $\l G$ contains some finite vertices, and one ideal vertex only at the time $t=t_1$ corresponding to the subdiagram with nodes $\lbrace\p 3,\l L,\l M\rbrace$, which represents a Euclidean triangle with angles $\frac \pi 2$, $\frac \pi 3$, and $\frac \theta 2 = \frac \pi 6$. When $t<t_1$ we get $\frac \theta 2 > \frac \pi 6$ and the triple represents a finite vertex instead. The polyhedra $\m 0$ and $\p 3$ are treated similarly.
\end{proof}

Figure \ref{Q_walls:fig} shows both the four-dimensional dihedral angles along the faces and the three-dimensional dihedral angles of the single walls along the edges: on each wall, the red, green, black, grey, and yellow edges have dihedral angle respectively $\frac \theta 2$, $\varphi$, $\frac \pi 2$, $\frac \pi 3$, and $\frac \psi 2$. Similarly, on the polytope $Q_t$ the red, green, and white faces have dihedral angle $\frac \theta 2$, $\varphi$, and $\frac \pi 2$. The ideal vertices are indicated as white dots.

\begin{cor}
The polytope $Q_t$ has finite volume for all $ t\in (0,1]$. Its combinatorics is constant on each of the time intervals
$$(0,t_2), \quad (t_2,t_1), \quad (t_1,1)$$
and changes precisely at the critical times $t_2$, $t_1$, and $1$.
\end{cor}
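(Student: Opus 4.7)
The plan is to deduce finite volume of $Q_t$ from the finite volume of its walls via Vinberg's criterion (Theorem \ref{Vinberg:teo}), and to deduce constancy of the combinatorics on each interval from the fact that the generalised Coxeter diagram $D_t$ of Figure \ref{coxeter:fig} is of constant type on each interval and changes precisely at the critical times.

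First I would prove finite volume. Since $Q_t$ is acute-angled by Proposition \ref{Q_t:prop}, it is simple, and every edge is the intersection of exactly two of its walls. The walls $\l A$, $\l G$, $\m 0$, $\p 3$ have been shown to be finite-volume polyhedra in the corollary to Lemma \ref{quotient_walls_coxeter:lemma}. The remaining walls $\l H$, $\l L$, $\l M$, $\l N$ are treated in the same way: $\l H$ and $\l N$ are obtained from $\l G$ and $\l M$ respectively by the roll symmetry $R$, while the Coxeter diagrams of $\l L$ and $\l M$ are extracted from $D_t$ by the same projection procedure of Section \ref{acute:subsection} and one verifies that every edge has two finite or ideal endpoints. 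Hence every edge of $Q_t$ lies in some finite-volume wall and therefore inherits two endpoints. By Theorem \ref{Vinberg:teo}, $Q_t$ has finite volume for all $t \in (0,1]$.

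Next I would verify that the combinatorics is constant on each of the open intervals $(0,t_2)$, $(t_2,t_1)$, $(t_1,1)$. On each such interval, the quantities $\alpha$ computed in the proof of Proposition \ref{Q_t:prop} remain strictly inside the open range $(1,\infty)$ (dashed), or $=1$ (thickened), or strictly inside $(0,1)$ (labelled with some angle), so the edge-type of every edge of $D_t$ is constant throughout the interval. Since in an acute-angled polytope the strata are in bijection with the elliptic subdiagrams of the Coxeter diagram and the ideal vertices are in bijection with its Euclidean subdiagrams of appropriate rank, the combinatorics of $Q_t$ is itself constant on each interval.

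Finally, I would check that the combinatorics actually jumps at each critical time. At $t=1$, the angle $\varphi$ reaches $\frac{\pi}{2}$ and the green edge of $D_t$ disappears, producing new right angles and a change in combinatorics (the ideal $24$-cell). At $t=t_1$, the angle $\varphi$ reaches $0$: the pairs of walls $\{\m i, \l G\}$, $\{\m i, \l H\}$ become asymptotically parallel, and new ideal vertices appear (as already observed on the walls $\l G$ and $\p 3$ in Figure \ref{Q_walls:fig}). At $t=t_2$, the defining vectors of $\l G$ and $\l H$ become light-like, so these walls disappear from the defining list of $Q_t$ and $Q_t$ acquires a strictly smaller number of walls. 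The main obstacle is the careful bookkeeping at $t=t_1$, where one must track simultaneously the disappearance of the two finite vertices corresponding to the elliptic subdiagrams $\{\m 0, \l G, \l N\}$ and $\{\p 3, \l L, \l M\}$ (of each wall affected) and the appearance of the corresponding ideal vertex, to be sure that the changes on $Q_t$ match those on its walls.
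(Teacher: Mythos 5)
Your overall strategy matches the paper's: both invoke Vinberg's finiteness criterion (Theorem \ref{Vinberg:teo}) and reduce it to checking that every edge has two endpoints. The difference lies in the implementation of the edge check. You propose to verify separately that the three unanalyzed walls $\l L$, $\l M$, $\l N$ also have finite volume (by running the projection procedure of Section \ref{acute:subsection} on them), so that every edge of $Q_t$ is covered. The paper instead observes that it is not necessary to analyse those walls at all: since $Q_t$ is simple, every edge lies in exactly three walls, and if any one of them belongs to the already-analysed orbit $\{\l A, \l G, \l H, \m 0, \m 3, \p 0, \p 3\}$, the edge is already known to have two endpoints. Hence only one edge needs a separate check, namely $\{\l L, \l M, \l N\}$, and the paper verifies directly that this edge joins the vertices $\{\l L,\l M,\l N,\l G\}$ and $\{\l L,\l M,\l N,\l H\}$ (or $\{\l L,\l M,\l N,\p 3\}$ and $\{\l L,\l M,\l N,\p 0\}$ for $t\leq t_2$). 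Your route is correct but strictly more work.

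One small imprecision in your combinatorics argument: you pass from ``the edge-type of every edge of $D_t$ is constant on the interval'' directly to ``the elliptic/Euclidean subdiagrams are constant.'' That implication is not automatic, because whether a subdiagram is elliptic or Euclidean depends on the actual numerical values of the angles, not just on whether an edge is labelled/thickened/dashed: a triple with angles $\frac\pi2,\frac\pi3,\frac\theta2$ changes from elliptic to Euclidean exactly when $\frac\theta2$ crosses $\frac\pi6$, while the edge carrying $\frac\theta2$ keeps the same type (``labelled'') before and after. What saves you is the monotonicity of $\theta$, $\varphi$, $\psi$ established in Proposition \ref{Q_t:prop} and Lemma \ref{quotient_walls_coxeter:lemma}, which shows the crossing values occur precisely at the critical times and nowhere inside the open intervals. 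It is worth making that step explicit rather than reading it off from the diagram's edge types alone.
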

\begin{proof}
We only need to prove that $Q_t$ has finite volume. By Theorem \ref{Vinberg:teo} it suffices to check that every edge of $Q_t$ has two (finite or ideal) vertices as endpoints. All the edges that belong to one of the walls $\l A$, $\l G$, $\m 0$, or $\p 3$ have this property, as already checked. There is yet one last edge to investigate in Figure \ref{coxeter:fig}, determined by the triple
$\lbrace\l L,\l M,\l N\rbrace$. That edge joins the finite vertices $\{ \l L, \l M, \l N, \l G\}$ and $\{ \l L, \l M, \l N, \l H\}$ when $t > t_2$, and the vertices $\{ \l L, \l M, \l N, \p 3\}$ and $\{ \l L, \l M, \l N, \p 0\}$ when $t \leq t_2$, which are ideal at $t=t_2$ and finite when $t<t_2$.
\end{proof}

We now finally use all the information that we gathered on the quotient polytope $Q_t$ to analyse the original polytope $P_t$.

\subsection{Back to the original polytope $P_t$}\label{polytope:sec}
We recall that $P_t$ has 24 walls when $t>t_2$ and 22 when $t\leq t_2$, and up to the action of its symmetry group these walls reduce to four elements only:
$$\{\p 3, \m 0, \l A, \l G\}$$
where $\l G$ exists only for $t>t_2$.
We start by showing the following.
\begin{prop}\label{combinatoria:prop}
For all $t\in(0,1]$, the polytope $P_t$ has finite volume.
Moreover, its combinatorics is constant on each of the time intervals
$$(0,t_2), \quad (t_2,t_1), \quad (t_1,1)$$
and changes precisely at the critical times $t_2$, $t_1$, and $1$.
The combinatorics and geometry of the walls $\p 3, \m 0, \l A, \l G$
is fully described in Figures \ref{walls:fig}, \ref{walls1:fig}, \ref{walls_medi:fig}, \ref{walls_plus:fig}.
\end{prop}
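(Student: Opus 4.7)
\medskip

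\textbf{Proof proposal.} The plan is to bootstrap everything about $P_t$ from what we already know about the quotient polytope $Q_t$. Recall that the reflections $\l L, \l M, \l N$ generate the symmetry group $H \cong \mathcal{S}_4$ of $P_t$, and that $Q_t = P_t\cap\l L\cap\l M\cap\l N$. Hence $Q_t$ is a fundamental domain for the action of $H$ on $P_t$, so $P_t$ is tiled by the 24 copies $\{h\cdot Q_t\}_{h\in H}$ intersecting only along their boundaries. From this tiling two facts follow immediately: first, $\Vol(P_t) = 24\cdot\Vol(Q_t)$, which is finite for all $t\in(0,1]$ by the final corollary of Section \ref{quotient:section}; second, the combinatorial type of $P_t$ is determined by that of $Q_t$, so $P_t$ has constant combinatorics on each of the three open intervals $(0,t_2)$, $(t_2,t_1)$, $(t_1,1)$ and may only change its combinatorics at the critical times $t_2$, $t_1$, $1$ (the change at $t=1$ is already classical, and the change at $t=t_2$ reflects the disappearance of the walls $\l G, \l H$).

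Next I would describe each wall of $P_t$ explicitly by \emph{unfolding}. The walls of $Q_t$ of type $\l L, \l M, \l N$ are interior symmetry planes of $P_t$ and are not walls of $P_t$; the remaining ones $\p 3, \m 0, \l A, \l G$ (representative up to the full symmetry group $K$) are fundamental domains for the action of the stabilizer $\mathrm{Stab}_H(W)$ on the corresponding wall $W$ of $P_t$. For each such $W$ and each time interval, I would read off from Figure \ref{Q_walls:fig} the polyhedron $W\cap Q_t$, identify which of its faces lie on $\l L, \l M, \l N$ (the mirror faces), determine $\mathrm{Stab}_H(W)$ from how these mirrors meet at $W$, and then reflect repeatedly across those mirror faces to assemble the full wall $W\subset P_t$. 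The resulting three-dimensional polyhedra and their dihedral angles along edges are exactly those displayed in Figures \ref{walls:fig}, \ref{walls1:fig}, \ref{walls_medi:fig}, \ref{walls_plus:fig}; checking this reduces to a finite, case-by-case verification on each time interval.

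The genuinely delicate step, and the one I expect to be the main obstacle, is tracking the combinatorial transitions at the critical times $t_2$ and $t_1$ (the time $t=1$ being well known). At $t=t_1$ the angle $\varphi$ collapses to $0$, so the edge of $Q_t$ carrying dihedral angle $\varphi$ becomes ideal and then opens up into an ultraparallel pair; simultaneously the yellow angle $\tfrac\psi 2$ of the wall $\p 3$ appears from Lemma \ref{quotient_walls_coxeter:lemma}. At $t=t_2$ the walls $\l G$ and $\l H$ disappear and the corresponding finite vertices $\{\l L,\l M,\l N,\l G\}$ and $\{\l L,\l M,\l N,\l H\}$ coalesce into ideal vertices on the edge $\{\l L,\l M,\l N\}$. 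Each of these two transitions must then be unfolded by the corresponding $\mathrm{Stab}_H(W)$ and shown to produce exactly the combinatorial changes in $P_t$ depicted in the four figures, after which the finite-volume claim is a direct application of Theorem \ref{Vinberg:teo}: each edge of $P_t$ lives in the $H$-orbit of an edge of $Q_t$, and we already verified that every such edge has two (possibly ideal) vertices as endpoints.
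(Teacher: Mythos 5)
Your proposal is correct and follows essentially the same approach as the paper's proof, which consists of the single sentence: the walls of $P_t$ are obtained by mirroring the corresponding walls of $Q_t$ (Figure \ref{Q_walls:fig}) along the faces $\l L$, $\l M$, $\l N$. You have unpacked this observation much more explicitly — identifying $Q_t$ as a fundamental domain for $H$, deducing $\Vol(P_t)=|H|\cdot\Vol(Q_t)$, noting that constancy of the combinatorial type of $Q_t$ on each interval propagates to $P_t$ since the mirror faces are fixed, and sketching the wall-unfolding — but the core mechanism is identical.
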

\begin{proof}
The walls of $P_t$ are obtained by mirroring the corresponding walls of $Q_t$ from Figure \ref{Q_walls:fig} along the faces $\l L$, $\l M$, and $\l N$.
\end{proof}

\begin{figure}
\labellist
\small\hair 2pt
\pinlabel $\l A$ at 80 0
\pinlabel \textcolor{blue}{$\m 0$} at 24 104
\pinlabel $\p 3$ at 50 90
\pinlabel $\m 2$ at 100 106
\pinlabel \textcolor{blue}{$\p 1$} at 60 110
\pinlabel \textcolor{blue}{$\m 1$} at 60 80
\pinlabel \textcolor{blue}{$\p 0$} at 30 50
\pinlabel \textcolor{blue}{$\p 2$} at 70 50
\pinlabel $\m 3$ at 54 52

\pinlabel $\l G$ at 252 0
\pinlabel $\p 7$ at 192 70
\pinlabel $\p 5$ at 232 42
\pinlabel \textcolor{blue}{$\p 1$} at 202 40
\pinlabel \textcolor{blue}{$\p 3$} at 222 72
\pinlabel \textcolor{blue}{$\m 2$} at 167 53
\pinlabel \textcolor{blue}{$\m 0$} at 253 67
\pinlabel $\m 6$ at 215 18
\pinlabel $\m 4$ at 205 104
\endlabellist
\centering
\includegraphics[width=12 cm]{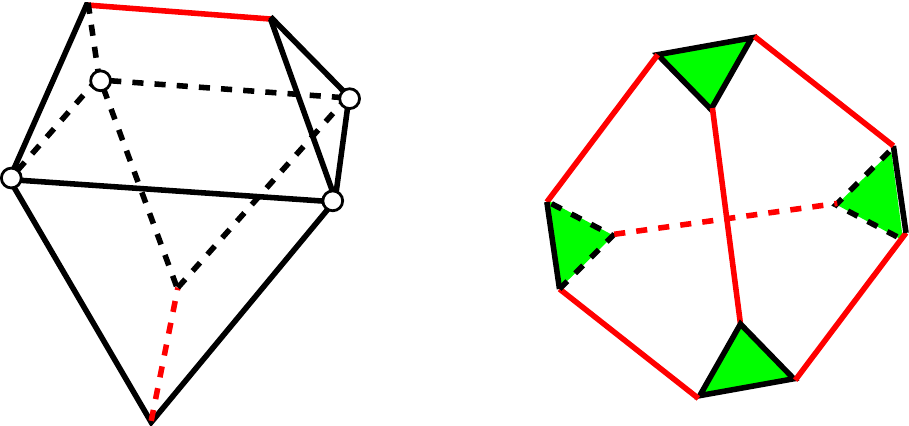}
\vspace{1 cm}

\labellist
\small\hair 2pt
\pinlabel $\m 0$ at 80 0
\pinlabel \textcolor{blue}{$\l A$} at 37 83
\pinlabel $\p 0$ at 38 92
\pinlabel $\l B$ at 16 75
\pinlabel $\l C$ at 60 75
\pinlabel \textcolor{blue}{$\p 3$} at 24 54
\pinlabel \textcolor{blue}{$\p 1$} at 50 54
\pinlabel $\p 5$ at 31 42

\pinlabel $\p 3$ at 250 0
\pinlabel \textcolor{blue}{$\G$} at 36 32
\pinlabel $\l B$ at 150 83
\pinlabel $\l A$ at 227 83
\pinlabel \textcolor{blue}{$\l D$} at 189 88
\pinlabel $\m 3$ at 189 97
\pinlabel $\m 0$ at 189 37
\pinlabel \textcolor{blue}{$\p 7$} at 189 60
\pinlabel $\p 5$ at 150 30
\pinlabel $\p 1$ at 227 30
\pinlabel \textcolor{blue}{$\m 4$} at 142 60
\pinlabel \textcolor{blue}{$\m 2$} at 235 60
\pinlabel \textcolor{blue}{$\G$} at 189 20
\endlabellist
\centering
\includegraphics[width=11 cm]{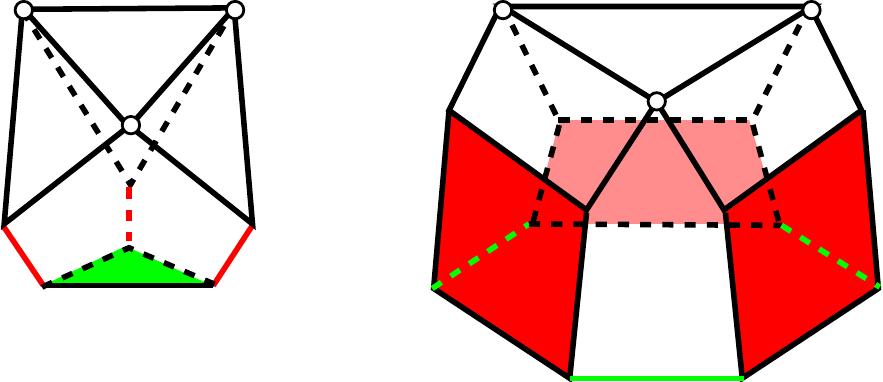}
\nota{Combinatorial pictures of the walls $\l A$, $\l G$, $\m 0$, and $\p 3$ of $P_t$ at the times $t\in(t_1,1)$. Every face is labeled with the name of the adjacent wall: front faces are labeled in black, and back faces in blue. On each wall, the red, green, and black edges have dihedral angle respectively $\theta$, $\varphi$, and $\frac \pi 2$. Similarly, on the polytope $P_t$ the red, green, and white faces have dihedral angle $\theta$, $\varphi$, and $\frac \pi 2$. The ideal vertices are indicated as white dots.}\label{walls:fig}
\end{figure}

\begin{figure}
\labellist
\small\hair 2pt
\pinlabel $\l A$ at 80 0
\pinlabel \textcolor{blue}{$\m 0$} at 24 104
\pinlabel $\p 3$ at 50 90
\pinlabel $\m 2$ at 100 106
\pinlabel \textcolor{blue}{$\p 1$} at 60 110
\pinlabel \textcolor{blue}{$\m 1$} at 60 80
\pinlabel \textcolor{blue}{$\p 0$} at 30 50
\pinlabel \textcolor{blue}{$\p 2$} at 70 50
\pinlabel $\m 3$ at 54 52

\pinlabel $\l G$ at 252 0
\pinlabel $\p 7$ at 192 70
\pinlabel $\p 5$ at 227 42
\pinlabel \textcolor{blue}{$\p 1$} at 202 40
\pinlabel \textcolor{blue}{$\p 3$} at 222 72
\endlabellist
\centering
\includegraphics[width=12 cm]{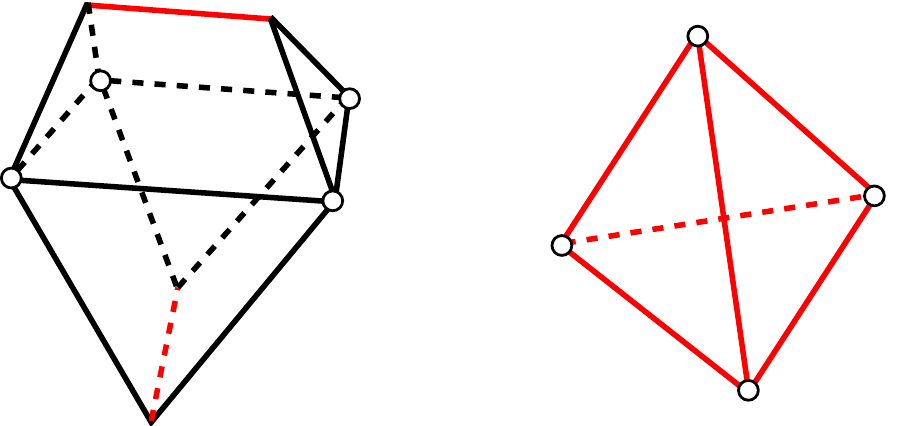}
\vspace{1 cm}

\labellist
\small\hair 2pt
\pinlabel $\m 0$ at 80 0
\pinlabel \textcolor{blue}{$\l A$} at 37 83
\pinlabel $\p 0$ at 38 92
\pinlabel $\l B$ at 16 75
\pinlabel $\l C$ at 60 75
\pinlabel \textcolor{blue}{$\p 3$} at 24 54
\pinlabel \textcolor{blue}{$\p 1$} at 50 54
\pinlabel $\p 5$ at 31 42

\pinlabel $\p 3$ at 250 0
\pinlabel $\l B$ at 163 85
\pinlabel $\l A$ at 215 85
\pinlabel \textcolor{blue}{$\l D$} at 189 88
\pinlabel $\m 3$ at 189 97
\pinlabel $\m 0$ at 189 34
\pinlabel \textcolor{blue}{$\p 7$} at 189 58
\pinlabel $\p 5$ at 160 47
\pinlabel $\p 1$ at 217 47
\pinlabel \textcolor{blue}{$\m 4$} at 150 75
\pinlabel \textcolor{blue}{$\m 2$} at 232 75
\pinlabel \textcolor{blue}{$\G$} at 189 20
\endlabellist
\centering
\includegraphics[width=11 cm]{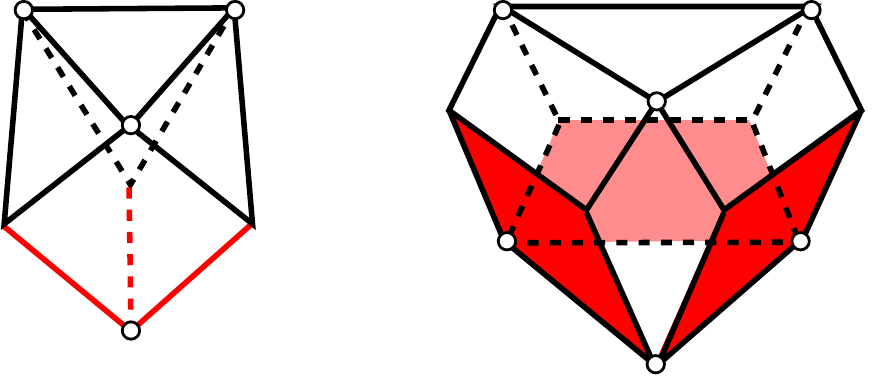}
\nota{Combinatorial pictures of the walls $\l A$, $\l G$, $\m 0$, and $\p 3$ at the critical time $t_1$. We use the same notations as in Figure \ref{walls:fig}. The dihedral angles are either $\frac \pi 3$ (on the red faces and edges) or $\frac \pi 2$ (on the rest).}\label{walls1:fig}
\end{figure}

\begin{figure}
\labellist
\small\hair 2pt
\pinlabel $\l A$ at 80 0
\pinlabel \textcolor{blue}{$\m 0$} at 24 104
\pinlabel $\p 3$ at 50 90
\pinlabel $\m 2$ at 100 106
\pinlabel \textcolor{blue}{$\p 1$} at 60 110
\pinlabel \textcolor{blue}{$\m 1$} at 60 80
\pinlabel \textcolor{blue}{$\p 0$} at 30 50
\pinlabel \textcolor{blue}{$\p 2$} at 70 50
\pinlabel $\m 3$ at 54 52

\pinlabel $\l G$ at 252 0
\pinlabel $\p 7$ at 192 70
\pinlabel $\p 5$ at 227 42
\pinlabel \textcolor{blue}{$\p 1$} at 202 40
\pinlabel \textcolor{blue}{$\p 3$} at 222 72
\endlabellist
\centering
\includegraphics[width=12 cm]{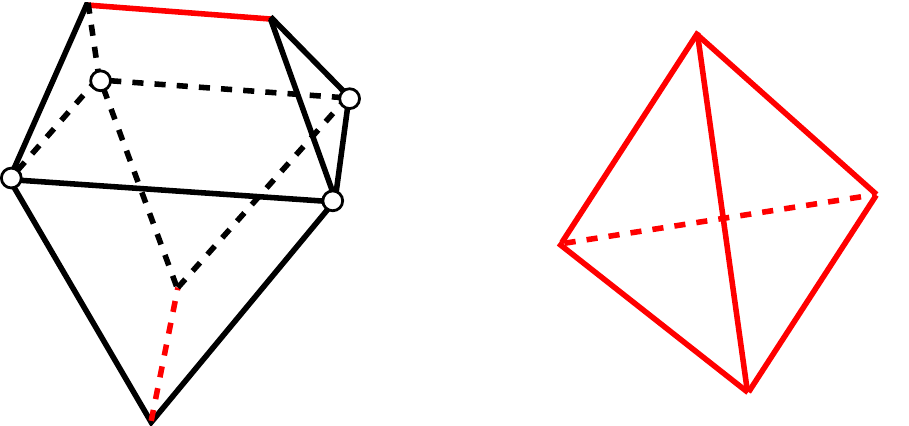}
\vspace{1 cm}

\labellist
\small\hair 2pt
\pinlabel $\m 0$ at 80 0
\pinlabel \textcolor{blue}{$\l A$} at 37 83
\pinlabel $\p 0$ at 38 92
\pinlabel $\l B$ at 16 75
\pinlabel $\l C$ at 60 75
\pinlabel \textcolor{blue}{$\p 3$} at 24 54
\pinlabel \textcolor{blue}{$\p 1$} at 50 54
\pinlabel $\p 5$ at 31 42

\pinlabel $\p 3$ at 250 0
\pinlabel $\l B$ at 163 85
\pinlabel $\l A$ at 215 85
\pinlabel \textcolor{blue}{$\l D$} at 189 88
\pinlabel $\m 3$ at 189 97
\pinlabel $\m 0$ at 189 50
\pinlabel \textcolor{blue}{$\p 7$} at 189 58
\pinlabel $\p 5$ at 160 40
\pinlabel $\p 1$ at 217 40
\pinlabel \textcolor{blue}{$\m 4$} at 150 75
\pinlabel \textcolor{blue}{$\m 2$} at 232 75
\pinlabel \textcolor{blue}{$\G$} at 195 15
\endlabellist
\centering
\includegraphics[width=11 cm]{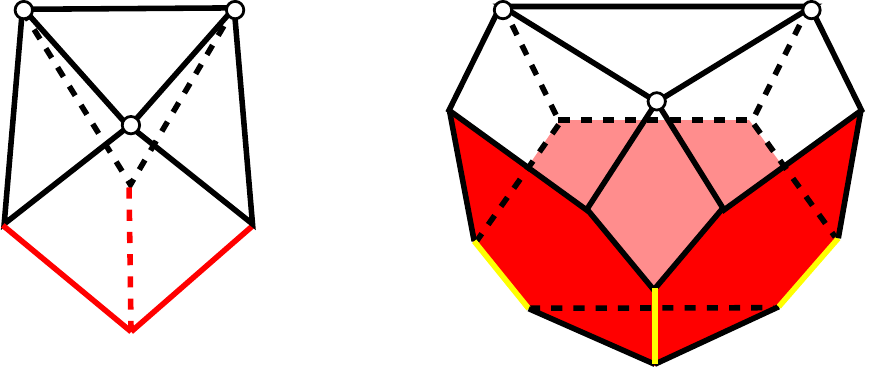}
\nota{Combinatorial pictures of the walls $\l A$, $\l G$, $\m 0$, and $\p 3$ at the times $t \in(t_2,t_1)$. We use the same notations as in Figure \ref{walls:fig}. The dihedral angles are either $\theta$ (on the red faces and edges), $\psi$ (on the yellow edges) or $\frac \pi 2$ (on the rest).}\label{walls_medi:fig}
\end{figure}

\begin{figure}
\labellist
\small\hair 2pt
\pinlabel $\p 3$ at 122 0
\pinlabel $\l B$ at 35 85
\pinlabel $\l A$ at 97 85
\pinlabel \textcolor{blue}{$\l D$} at 61 88
\pinlabel $\m 3$ at 61 97
\pinlabel $\m 0$ at 61 50
\pinlabel \textcolor{blue}{$\p 7$} at 61 58
\pinlabel $\p 5$ at 31 40
\pinlabel $\p 1$ at 89 40
\pinlabel \textcolor{blue}{$\m 4$} at 22 75
\pinlabel \textcolor{blue}{$\m 2$} at 104 75

\pinlabel $\p 3$ at 257 0
\pinlabel $\l B$ at 170 85
\pinlabel $\l A$ at 232 85
\pinlabel \textcolor{blue}{$\l D$} at 196 88
\pinlabel $\m 3$ at 196 97
\pinlabel $\m 0$ at 196 50
\pinlabel \textcolor{blue}{$\p 7$} at 196 58
\pinlabel $\p 5$ at 166 40
\pinlabel $\p 1$ at 224 40
\pinlabel \textcolor{blue}{$\m 4$} at 157 75
\pinlabel \textcolor{blue}{$\m 2$} at 239 75
\endlabellist
\centering
\includegraphics[width=11 cm]{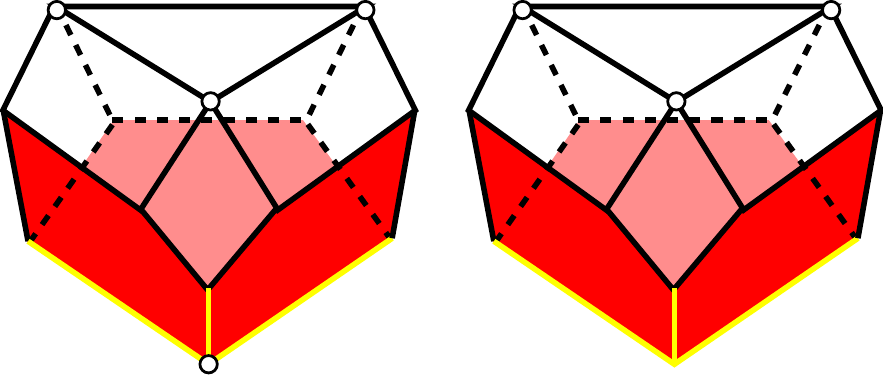}
\nota{Combinatorial pictures of the wall $\p 3$ at the critical time $t_2$ and in the interval $t\in(0,t_2)$. We use the same notations as in Figure \ref{walls:fig}. The four-dimensional and three-dimensional dihedral angles are either $\theta$ (on the red faces), $\psi$ (on the yellow edges) or $\frac \pi 2$ (on the rest). The only non-right angle of each red pentagon is at the bottom vertex. The only difference between the two figures is the bottom vertex which is either ideal (left) or finite (right). }\label{walls_plus:fig}
\end{figure}

\begin{figure}
\vspace{.5 cm}
\labellist
\small\hair 2pt
\pinlabel $(t_1,1)$ at 90 550 
\pinlabel $t_1$ at 250 550 
\pinlabel $(t_2,t_1)$ at 405 550 
\pinlabel $t_2$ at 565 550 
\pinlabel $(0,t_2)$ at 720 550 
\pinlabel $\l A$ at 150 403 
\pinlabel $\l G$ at 150 280 
\pinlabel $\m 0$ at 150 160 
\pinlabel $\p 3$ at 150 20 
\endlabellist
\centering
\includegraphics[width=12.5 cm]{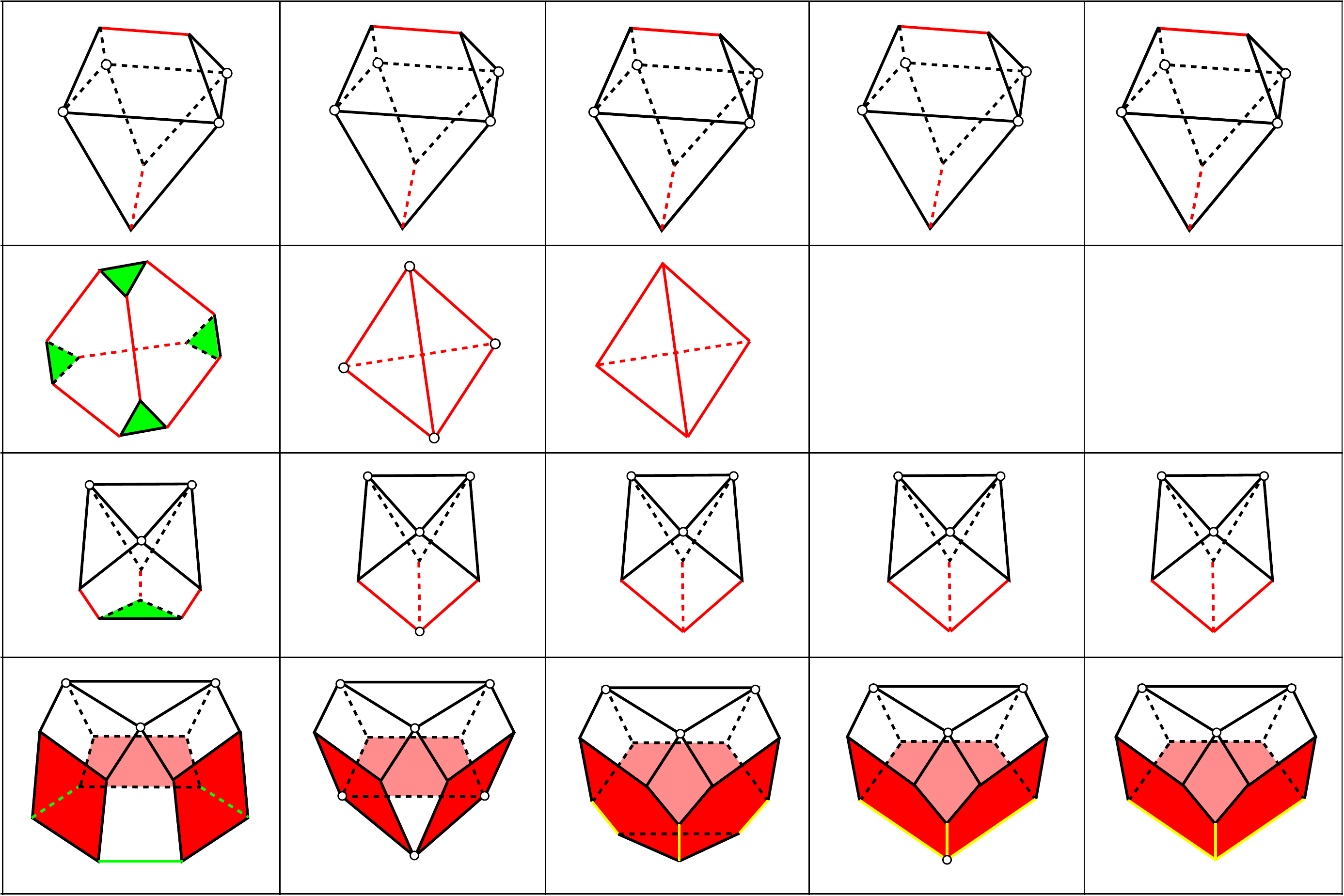}
\nota{An overview of the combinatorics of the evolving walls. At the initial time $t=1$ all the walls are regular ideal octahedra.}\label{walls_overview:fig}
\end{figure}

\begin{figure}
\labellist
\small\hair 2pt
\pinlabel $(t_1,1)$ at 90 300 
\pinlabel $t_1$ at 270 300 
\pinlabel $(t_2,t_1)$ at 445 300 
\pinlabel $t_2$ at 620 300
\pinlabel $(0,t_2)$ at 790 300 

\pinlabel $\theta$ at 62 190 
\pinlabel $\theta$ at 116 190 
\pinlabel $\theta$ at 88 236

\pinlabel $\varphi$ at 50 102
\pinlabel $\varphi$ at 135 102
\pinlabel $\ell$ at 92 28

\pinlabel $\ell$ at 272 28

\pinlabel $\ell$ at 452 28
\pinlabel $d$ at 452 135

\pinlabel $\ell$ at 620 28

\pinlabel $\eta$ at 795 114
\pinlabel $\ell$ at 795 28
\endlabellist
\centering
\vspace{.5 cm}
\includegraphics[width=12 cm]{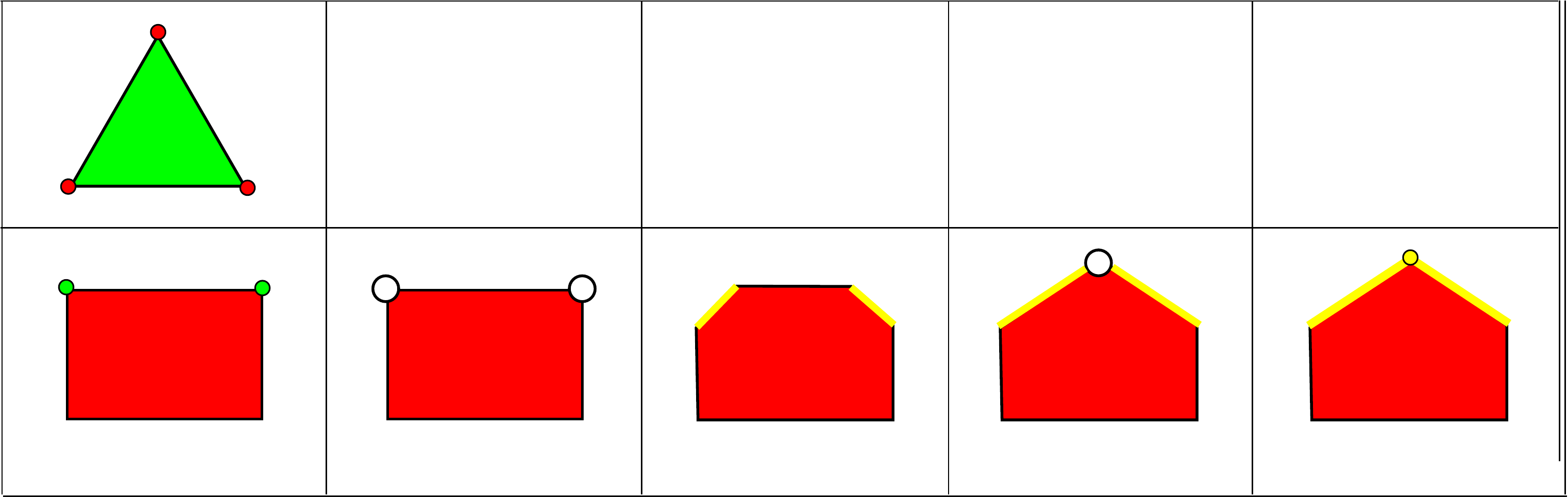}
\nota{The evolving green and red faces. The green face is an equilateral triangle and exists only for $t\in(t_1,1)$. Vertices with big white dots are ideal. All the finite vertices are right-angled, except those labeled with some explicit angle $\theta$, $\varphi$, or $\eta$. The angles $\theta, \varphi, \eta$ and the lengths $\ell$, $d$ depend on $t$. All the red faces are symmetric with respect to a vertical axis. A small green or red dot indicates the presence of an incident green or red face.}\label{polygons0:fig}
\end{figure}

The figures show both the four-dimensional dihedral angles along the faces and the three-dimensional dihedral angles of the single walls along the edges.
An overview of the evolving walls is shown in Figure \ref{walls_overview:fig}.

\subsection*{Dihedral angles}
A remarkable aspect of the deformation $P_t$ is that most of the dihedral angles stay constantly right during the whole process.  
In the following proposition we denote a face of $P_t$ as a pair of intersecting walls.

\begin{prop} \label{dihedral:prop}
All the faces of $P_t$ have right dihedral angles, except:
\begin{itemize}
\item the 8 green triangles
$$ \{\l{G}, \m 0\}, \  \{\l{G}, \m 2\}, \ \{\l{G}, \m 4\}, \ \{\l{G}, \m 6\},\  
\{\l{H}, \m 1\}, \  \{\l{H}, \m 3\}, \ \{\l{H}, \m 5\}, \ \{\l{H}, \m 7\}$$
have dihedral angle $\varphi$ when $t\in(t_1,1)$,
\item the 12 red polygons
$$\{\p{1},\p{3}\},\{\p{3},\p{5}\},\{\p{5},\p{7}\},\{\p{7},\p{1}\},\{\p{1},\p{5}\},\{\p{3},\p{7}\},$$
$$\{\p{2},\p{0}\},\{\p{0},\p{4}\}, \{\p{4},\p{6}\},\{\p{6},\p{2}\},\{\p{2},\p{4}\},\{\p{0},\p{6}\}$$  
have dihedral angle $\theta$ for all $t\in (0,1)$.
\end{itemize}
The evolution of the green and red faces is shown in Figure \ref{polygons0:fig}.
\end{prop}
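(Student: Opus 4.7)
The plan is to reduce the statement to the analysis of the quotient polytope $Q_t$ performed in Proposition \ref{Q_t:prop} and then transport the information back to $P_t$ via the $K$-action. First I would use the fact that $Q_t=P_t\cap\l L\cap\l M\cap\l N$ is a fundamental domain for the $H$-action on $P_t$: every $2$-face of $P_t$ is either the $H$-translate of a face of $Q_t$ lying on a non-mirror wall (in which case the dihedral angle is preserved), or the unfolding of a face of $Q_t$ lying on one of the mirror walls $\l L,\l M,\l N$ (in which case the dihedral angle in $P_t$ is twice the dihedral angle in $Q_t$, because the mirror is a reflection hyperplane).

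Second, I would read off from the Coxeter diagram $D_t$ of Figure \ref{coxeter:fig} that the only non-right angles in $Q_t$ are $\theta/2$ at the faces $\{\p 0,\l N\}$ and $\{\p 3,\l M\}$, and $\varphi$ at the faces $\{\m 0,\l G\}$ and $\{\m 3,\l H\}$ (the latter only for $t\in(t_1,1)$), up to the roll symmetry. The first pair lies on the mirror walls $\l N,\l M$, and a direct calculation from Table \ref{walls:table} shows that $\l N$ swaps $\p 0\leftrightarrow \p 2$ and $\l M$ swaps $\p 1\leftrightarrow \p 3$, so these unfold in $P_t$ to the faces $\{\p 0,\p 2\}$ and $\{\p 1,\p 3\}$ with dihedral angle $\theta$. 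The second pair does not touch a mirror wall and therefore appears unchanged in $P_t$ with dihedral angle $\varphi$.

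Third, I would spread these faces over the $K$-orbit and count. Since $H\cong\mathcal{S}_4$ acts as the full symmetric group on each of the four-element sets of positive even (resp.\ odd) walls, the orbit of $\{\p 0,\p 2\}$ consists of all $\binom{4}{2}=6$ unordered pairs of positive even walls, and the roll $R$ maps this set bijectively to the $6$ pairs on the odd side; together these yield the $12$ red polygons of the statement. Analogously, $H$ fixes $\l G$ (resp.\ $\l H$) and acts as $\mathcal{S}_4$ on the four negative even (resp.\ odd) walls, while $R$ swaps $\l G\leftrightarrow \l H$ and flips parity, so the $K$-orbit of $\{\m 0,\l G\}$ consists of the $8$ green triangles listed. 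As a sanity check, applying formula \eqref{formula:eqn} directly to the pairs $\{\p 0,\p 2\}$ and $\{\m 0,\l G\}$ using Table \ref{walls:table} will recover $\cos\theta=(3t^2-1)/(1+t^2)$ and $\cos\varphi=\sqrt 2(1-t^2)/\sqrt{(2t^2-1)(t^2+1)}$, in agreement with Proposition \ref{Q_t:prop}.

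Finally, the evolution of the green and red polygons pictured in Figure \ref{polygons0:fig} follows from Proposition \ref{combinatoria:prop} together with the wall pictures of Figures \ref{walls:fig}--\ref{walls_plus:fig}: the green triangles degenerate and disappear as $t$ crosses $t_1$ from above, while the red polygons change combinatorial type at $t=t_1$ and $t=t_2$ as some of their vertices cross between ideal and finite. The main conceptual obstacle is the bookkeeping required to ensure that no non-right face has been overlooked; this is guaranteed by the fact that the Coxeter diagram $D_t$ gives a complete classification of the faces of $Q_t$, so that any non-right face of $P_t$ must unfold from one of the finitely many non-right faces of $Q_t$ identified above.
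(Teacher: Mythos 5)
Your proposal is correct and reflects the same implicit strategy the paper uses (the paper does not spell out a proof of this proposition, but the surrounding analysis of $Q_t$ and the unfolding along $\l L,\l M,\l N$ in Proposition~\ref{combinatoria:prop} is exactly what you deploy). The unfolding-doubling argument, the identification of $\l N(\p 0)=\p 2$ and $\l M(\p 3)=\p 1$, and the orbit count under $K$ are all right, and your sanity-check computations of $\cos\theta$ and $\cos\varphi$ agree with Proposition~\ref{Q_t:prop}.

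One small imprecision: you write that the only non-right angles in $Q_t$ are $\theta/2$ and $\varphi$, but the generalised Coxeter diagram $D_t$ also contains edges labelled $\pi/3$ (those are the unlabelled edges, by convention). These occur only between the mirror walls, namely at $\{\l L,\l M\}$ and $\{\l L,\l N\}$, which do not correspond to faces of $P_t$ at all (they lie in the interior of $P_t$ after unfolding). So they do not affect the conclusion, but the claim as phrased is slightly too strong; you should say ``the only non-right faces of $Q_t$ that unfold to faces of $P_t$,'' or explicitly dispose of the $\pi/3$ edges by noting they involve two mirror walls. With that caveat recorded, the argument is complete.
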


It is remarkable that for all $t\in(t_1,1)$ the non right-angled faces intersect only in pairs at some vertices. Where this happens, the dihedral angle $\varphi$ or $\theta$ of one face equals the interior angle of the other, see Figure \ref{polygons0:fig}.

\begin{cor}
The polytope $P_t$ is acute-angled precisely when $t\geq \bar t = \sqrt{\frac 13}$. 
\end{cor}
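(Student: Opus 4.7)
The plan is to read off the acute-angled condition directly from the two previous propositions, since the only non-right dihedral angles of $P_t$ have already been identified and their sizes computed explicitly.

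First I would invoke Proposition \ref{dihedral:prop}, which states that the only faces of $P_t$ with non-right dihedral angles are the $8$ green triangles (present only for $t \in (t_1,1)$) with angle $\varphi$, and the $12$ red polygons (present for all $t \in (0,1)$) with angle $\theta$. Thus $P_t$ is acute-angled if and only if both $\varphi \leq \tfrac{\pi}{2}$ and $\theta \leq \tfrac{\pi}{2}$ hold at time $t$.

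Next I would handle the angle $\varphi$. By Proposition \ref{Q_t:prop}, $\varphi$ is defined only on $(t_1,1]$, varies monotonically with $\varphi(1)=\tfrac{\pi}{2}$ and $\lim_{t\to t_1}\varphi(t)=0$. Hence $\varphi \in (0,\tfrac{\pi}{2}]$ throughout, and the condition $\varphi \leq \tfrac{\pi}{2}$ gives no further restriction.

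Finally I would turn to $\theta$, where the restriction actually arises. Using the explicit formula from Proposition \ref{Q_t:prop},
\[
\cos\theta \;=\; \frac{3t^2-1}{1+t^2},
\]
the condition $\theta \leq \tfrac{\pi}{2}$ is equivalent to $\cos\theta \geq 0$, i.e.\ $3t^2-1 \geq 0$, i.e.\ $t \geq \sqrt{\tfrac{1}{3}} = \bar t$. Combining with the previous step, $P_t$ is acute-angled precisely for $t \geq \bar t$. No step here is a genuine obstacle: the content is entirely a one-line check against the formula for $\cos\theta$, once the enumeration of non-right dihedral angles in Proposition \ref{dihedral:prop} is taken as given.
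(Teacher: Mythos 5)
Your proof is correct and is precisely the argument the paper intends (the corollary is stated without an explicit proof, immediately after Proposition \ref{dihedral:prop} and the formula for $\cos\theta$, so reading off the condition $\cos\theta\geq 0 \iff 3t^2-1\geq 0 \iff t\geq\bar t$ is exactly the intended one-line check). The only caveat worth noting is that the paper defines ``acute-angled'' via the Gram matrix ($\alpha_{ij}\geq 0$ for all pairs, including non-adjacent ones), so strictly speaking one must also observe that the non-adjacent wall pairs of $P_t$ have $\alpha_{ij}\geq 1$ (they are parallel or ultra-parallel, as follows from the computations behind Proposition \ref{Q_t:prop}); once this is granted, your reduction to ``all dihedral angles $\leq\frac\pi 2$'' is exactly right.
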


The polytope $P_{\bar t}$ is right-angled. We will soon determine the Coxeter polytopes in the family $P_t$.

\subsection*{Simple polytopes}
During our analysis we have also proved the following.

\begin{prop}
The polytope $P_t$ is simple for all $t\in (0,1]$.
\end{prop}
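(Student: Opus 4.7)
The plan is to deduce simplicity of $P_t$ from the simplicity of the quotient polytope $Q_t$ combined with the explicit combinatorial description of the walls of $P_t$ that has already been assembled in Proposition \ref{combinatoria:prop}. By Proposition \ref{Q_t:prop}, $Q_t$ is acute-angled for every $t\in(0,1]$, and hence simple by the general result for acute-angled polytopes recalled in Section \ref{acute:subsection}. Since $P_t$ is the union of the $|H|=24$ copies of $Q_t$ obtained by unfolding along the mirrors $\l L$, $\l M$, $\l N$, and the walls of $P_t$ have been drawn in Figures \ref{walls:fig}--\ref{walls_plus:fig} (up to the action of $K$), the combinatorial data required to check simplicity is entirely under control.

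First I would verify, by direct inspection of Figures \ref{walls:fig}--\ref{walls_plus:fig}, that each of the four walls $\p 3$, $\m 0$, $\l A$, $\l G$ is itself a simple $3$-polytope: at each vertex exactly three edges meet, and at each edge exactly two $2$-faces meet. By the $K$-action this then holds for every wall of $P_t$. Second, the labels in those same figures match each $2$-face of a wall with a unique adjacent wall, which shows that every $2$-face of $P_t$ is the intersection of exactly two walls.

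From these two facts, simplicity follows by a standard dimension count. An edge $e$ contained in a wall $W$ lies in the two $2$-faces of $W$ through $e$ (by simplicity of $W$); each of these $2$-faces lies in exactly one further wall of $P_t$, so $e$ is contained in at least three walls. Any fourth wall $W'$ containing $e$ would share with $W$ a $2$-face through $e$, but that $2$-face is already in only two walls, a contradiction; hence $e$ is in exactly three walls. An analogous argument applied to the vertex figure of a wall (itself a $2$-simplex because $W$ is simple) shows that each vertex of $P_t$ is contained in exactly four walls, which is the defining property of a simple four-dimensional polytope.

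The main point to be careful about is the range $t<\bar t=\sqrt{1/3}$, where $P_t$ itself is no longer acute-angled and Vinberg's theorem cannot be applied to $P_t$ directly. The plan bypasses this obstacle by applying Vinberg only to the always acute-angled quotient $Q_t$ and then reading off the full combinatorics of $P_t$ from the wall pictures of Figures \ref{walls_medi:fig} and \ref{walls_plus:fig}, which are valid for all $t\in(0,t_1)$ including the non-acute regime.
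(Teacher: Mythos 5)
The paper's proof is shorter and takes a different route: for $t\geq\bar t=\sqrt{1/3}$ the polytope $P_t$ is acute-angled, hence simple by Vinberg; for $t<\bar t$ one invokes the already-established fact (Proposition \ref{combinatoria:prop}) that the combinatorics of $P_t$ is constant on the interval $(0,t_2)$, and since $\bar t<t_2-\varepsilon<t_2$ for small $\varepsilon$, the polytope $P_t$ is combinatorially isomorphic to the acute-angled $P_{t_2-\varepsilon}$ and hence simple. No direct inspection of vertex links is needed.

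Your dimension-count, as written, has a genuine gap at the step ``Any fourth wall $W'$ containing $e$ would share with $W$ a $2$-face through $e$.'' That implication is false for convex $4$-polytopes: two facets can meet precisely in an edge (or even only a vertex) without sharing a $2$-face. In fact the conjunction of your two verified facts --- each wall is a simple $3$-polytope, and each $2$-face lies in exactly two walls --- does \emph{not} imply simplicity. The Euclidean $16$-cell is a counterexample: its facets are tetrahedra (simple), every triangular $2$-face lies in exactly two facets, yet every edge lies in four facets, so the link of an edge is a quadrilateral rather than a triangle. In that situation the two facets corresponding to opposite sides of the quadrilateral meet only along the edge, and your contradiction never materialises. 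To close the gap you would have to verify directly from Figures \ref{walls:fig}--\ref{walls_plus:fig} that the link of every edge is a triangle (equivalently, that each finite vertex lies in exactly four walls), which is essentially the content the paper later records in Propositions \ref{combpiccoli}--\ref{combgrandi} and in those propositions is actually \emph{deduced} from simplicity, not used to prove it. A cleaner fix is simply to adopt the paper's argument: invoke Vinberg on the acute-angled range $t\geq\bar t$ and propagate by constancy of combinatorics on $(0,t_2)$.
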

\begin{proof}
The polytope $P_t$ is acute-angled and hence \cite[Section 3]{Vin}
simple for all $t \geq \sqrt {\frac 13}$. If $t < \sqrt{\frac 13}$ the polytope $P_t$ has the same combinatorics of $P_{t_2-\varepsilon}$ and is hence also simple.
\end{proof}

We are now interested in the links of the vertices of the polytope $P_t$. The initial polytope $P_1$ is the ideal 24-cell: it has 24 ideal vertices, each with a Euclidean cube as a link. We now study separately the first time interval $(t_1,1)$, the first critical time $t_1$, the second time interval $(t_2,t_1)$, and the last time interval $(0,t_2]$. (The discussion for $(0,t_2]$ also includes the second critical time $t_2$.)

\begin{figure} 
\labellist
\small\hair 2pt

\pinlabel \rotatebox{90}{$I_{\frac\pi2}*I_\theta$} at 700 300
\pinlabel \rotatebox{90}{\textcolor{blue}{N}} at 450 400
\pinlabel \rotatebox{90}{L} at 300 300
\pinlabel \rotatebox{90}{P} at 200 470
\pinlabel \rotatebox{90}{\textcolor{blue}{P}} at 150 300
\pinlabel \rotatebox{90}{(1)} at 700 550

\pinlabel \rotatebox{90}{$I_\varphi*I_\theta$} at 700 960
\pinlabel \rotatebox{90}{\textcolor{blue}{N}} at 450 1060
\pinlabel \rotatebox{90}{L} at 300 960
\pinlabel \rotatebox{90}{P} at 200 1130
\pinlabel \rotatebox{90}{\textcolor{blue}{P}} at 150 960
\pinlabel \rotatebox{90}{(2)} at 700 1200

\pinlabel \rotatebox{90}{\textcolor{blue}{P}} at 450 1760
\pinlabel \rotatebox{90}{P} at 300 1660
\pinlabel \rotatebox{90}{L} at 200 1830
\pinlabel \rotatebox{90}{\textcolor{blue}{P}} at 150 1660
\pinlabel \rotatebox{90}{(3)} at 700 1900

\pinlabel \rotatebox{90}{$\Delta^3(\theta)$} at 700 2360
\pinlabel \rotatebox{90}{\textcolor{blue}{P}} at 450 2460
\pinlabel \rotatebox{90}{P} at 300 2360
\pinlabel \rotatebox{90}{P} at 200 2530
\pinlabel \rotatebox{90}{\textcolor{blue}{P}} at 150 2360
\pinlabel \rotatebox{90}{(4)} at 700 2600

\endlabellist

\centering

\rotatebox{-90}{\includegraphics[width=2.5 cm]{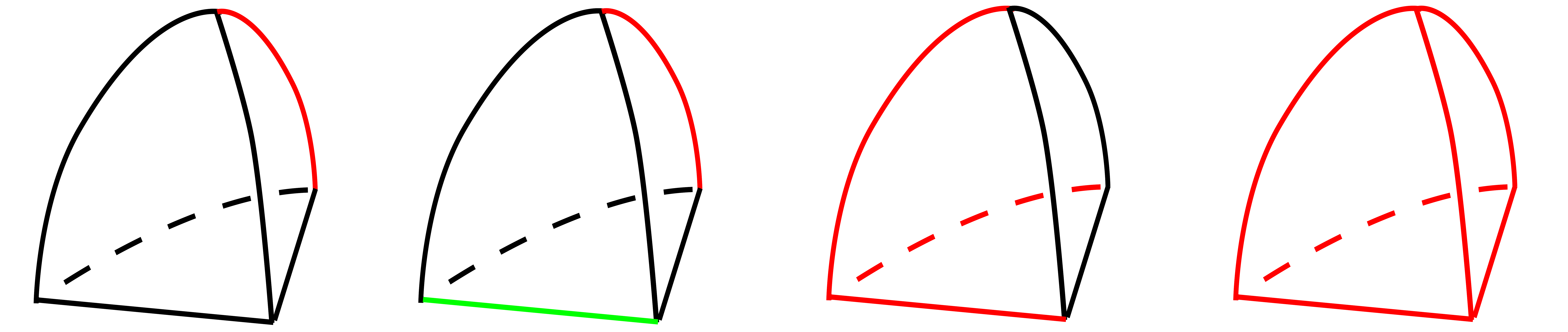}}

\vspace{1cm}

\nota{The links of the finite vertices of the polytope $P_t$ are spherical tetrahedra. The black, red and green edges have dihedral angle respectively $\frac\pi2$, $\theta$ and $\varphi$. The faces of these tetrahedra are labeled (front faces in black, back faces in blue) with the type of the corresponding wall of $P_t$: P for positive, N for negative and L for letter. The first two tetrahedra are spherical joins of segments $I_{\frac \pi 2} * I_\theta$ and $I_\varphi * I_\theta$, where $I_\alpha\subset S^1$ indicates a circular arc of length $\alpha$. The latter is the regular spherical tetrahedron $\Delta^3(\theta)$ with dihedral angles $\theta$.
}\label{links:fig}
\end{figure}

\begin{figure}
\definecolor{qqqqff}{rgb}{0,0,1}
\definecolor{ffqqqq}{rgb}{1,0,0}
\begin{tikzpicture}[line cap=round,line join=round,>=triangle 45,x=0.7cm,y=0.7cm]
\clip(-4.5,-1.92) rectangle (10.81,4.32);
\draw [line width=1.2pt] (-2,1)-- (-0.5,1);
\draw [line width=1.2pt] (-0.5,1)-- (-0.5,-1);
\draw [line width=1.2pt] (-0.5,-1)-- (-2,-1);
\draw [line width=1.2pt] (-2,1)-- (-4,2);
\draw [line width=1.2pt] (-4,2)-- (-2.5,2);
\draw [line width=1.2pt] (-2.5,2)-- (-0.5,1);
\draw [line width=1.2pt] (-2,-1)-- (-4,0);
\draw [line width=1.2pt] (-4,0)-- (-4,2);
\draw [line width=1.2pt,dash pattern=on 2pt off 2pt] (-4,0)-- (-2.5,0);
\draw [line width=1.2pt,dash pattern=on 2pt off 2pt] (-2.5,0)-- (-0.5,-1);
\draw [line width=1.2pt,dash pattern=on 2pt off 2pt] (-2.5,0)-- (-2.5,2);
\draw [line width=1.2pt] (-2,1)-- (-2,-1);
\draw [line width=1.2pt] (4,0.4)-- (3,-1);
\draw [line width=1.2pt] (3,-1)-- (5,-1);
\draw [line width=1.2pt] (5,-1)-- (4,0.4);
\draw [line width=1.2pt,color=ffqqqq] (4,0.4)-- (4,3.4);
\draw [line width=1.2pt] (4,3.4)-- (5,2);
\draw [line width=1.2pt,color=ffqqqq] (5,2)-- (5,-1);
\draw [line width=1.2pt] (4,3.4)-- (3,2);
\draw [line width=1.2pt,color=ffqqqq] (3,2)-- (3,-1);
\draw [line width=1.2pt,dash pattern=on 2pt off 2pt] (3,2)-- (5,2);
\draw [line width=1.2pt,color=ffqqqq] (8.37,-1.14)-- (8.81,2.03);
\draw [line width=1.2pt,color=ffqqqq] (8.81,2.03)-- (10,0);
\draw [line width=1.2pt,color=ffqqqq] (10,0)-- (8.37,-1.14);
\draw [line width=1.2pt,color=ffqqqq] (8.37,-1.14)-- (7.3,0.35);
\draw [line width=1.2pt,color=ffqqqq] (7.3,0.35)-- (8.81,2.03);
\draw [line width=1.2pt,dash pattern=on 2pt off 2pt,color=ffqqqq] (7.3,0.35)-- (10,0);

\draw [shift={(-.15 cm, .15 cm)}](-1.28,0.21) node[anchor=north west] {N};
\draw [color=qqqqff, shift={(-.15 cm, .15 cm)}](-3.38,1.15) node[anchor=north west] {N};
\draw [shift={(-.15 cm, .15 cm)}](-3.05,0.66) node[anchor=north west] {P};
\draw [color=qqqqff, shift={(-.15 cm, .15 cm)}](-1.7,0.7) node[anchor=north west] {P};
\draw [shift={(-.15 cm, .15 cm)}](-2.32,1.64) node[anchor=north west] {L};
\draw [color=qqqqff, shift={(-.15 cm, .15 cm)}](-2.48,-0.28) node[anchor=north west] {L};
\draw [shift={(-.15 cm, .15 cm)}](4.37,1.3) node[anchor=north west] {P};
\draw [shift={(-.15 cm, .15 cm)}](3.46,1.32) node[anchor=north west] {P};
\draw [color=qqqqff, shift={(-.15 cm, .15 cm)}](4.19,0.86) node[anchor=north west] {P};
\draw [shift={(-.15 cm, .15 cm)}](3.92,-0.28) node[anchor=north west] {L};
\draw [color=qqqqff, shift={(-.15 cm, .15 cm)}](3.66,2.72) node[anchor=north west] {N};
\draw [shift={(-.15 cm, .15 cm)}] (8.12,0.99) node[anchor=north west] {P};
\draw [color=qqqqff, shift={(-.15 cm, .15 cm)}](8.88,1.1) node[anchor=north west] {P};
\draw [color=qqqqff, shift={(-.15 cm, .15 cm)}](8.21,0.03) node[anchor=north west] {P};
\draw[shift={(-.15 cm, .15 cm)}] (9.02,0.60) node[anchor=north west] {P};

\draw [shift={(.8 cm, -.8 cm)}](-1.28,0.21) node[anchor=north west] {(1)};
\draw [shift={(4.7 cm, -.8 cm)}](-1.28,0.21) node[anchor=north west] {(2)};
\draw [shift={(7.7 cm, -.8 cm)}](-1.28,0.21) node[anchor=north west] {(3)};
\end{tikzpicture}

\nota{The Euclidean links of the ideal vertices of the polytope $P_t$.
The conventions as the same of Figure \ref{links:fig}. The link (1) is a rectangular parallelepiped, whose edge lengths vary smoothly on $t$. The link (2) is a prism with equilateral base and appears only at the time $t_1$, when the red edges have dihedral angle $\theta = \frac \pi 3$. The link (3) is a regular tetrahedron and it appears only at the time $t_2$ when the red edges have dihedral angle $\theta$ with $\cos \theta = \frac 13$. 
}\label{links_eucl:fig}
\end{figure}

\subsection*{The first time interval.}
When $t\in(t_1,1)$, the combinatorial change from the 24-cell $P_1$ consists in the substitution of 12 ideal vertices with 12 quadrilateral red faces. Each of these new 12 red faces is the intersection (with dihedral angle $\theta$) of two positive walls that were asymptotically parallel in $P_1$. 

Geometrically, all the other faces remain right-angled except six green triangles that were right-angled in the ideal 24-cell $P_1$ and have now dihedral angle $\varphi$.

\begin{prop}\label{combpiccoli}
When $t\in(t_1,1)$, the polytope $P_t$ has 24 walls, 108 faces, 144 edges and 60 vertices. The combinatorics can be recovered from Figure \ref{walls:fig}. In particular, the vertices are of three kinds:

\begin{enumerate}
\item 12 ideal vertices (which actually exist for all $t\in(0,1]$), whose link is a Euclidean rectangular parallelepiped,
represented in Figure \ref{links_eucl:fig}-(1).
For every odd $\l i\in \{\l0,\ldots,\l7\}$ there are three ideal vertices of type
$$\partial_\infty\p i\cap\partial_\infty\m i\cap\partial_\infty\p j\cap\partial_\infty\m j\cap\partial_\infty\l X\cap\partial_\infty\l Y$$ 
for some even $\l j$ and some letter walls $\l X,\l Y$ of type $\l A,\ldots,\l F$.
\item 24 finite vertices, whose link is the spherical tetrahedron
represented in Figure \ref{links:fig}-(1).
Each of these vertices is the intersection of two positive walls, a negative wall, and a letter wall of type $\l{A},\ldots,\l{F}$.
\item 24 finite vertices, whose link is the spherical tetrahedron
represented in Figure \ref{links:fig}-(2).
Each of these vertices is the intersection of two positive walls, a negative wall, and a wall $\l{G}$ or $\l{H}$.
\end{enumerate}
\end{prop}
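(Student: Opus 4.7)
The plan is as follows. By Proposition \ref{combinatoria:prop}, for $t\in(t_1,1)$ the combinatorics of $P_t$ is fully encoded by the four walls $\l A,\l G,\m 0,\p 3$ of Figure \ref{walls:fig}, which are representatives of the four $K$-orbits of walls of sizes $6,2,8,8$, giving $24$ walls in total. To count faces, edges and finite vertices, I sum the local counts over the four wall types weighted by orbit size and divide by the sharing multiplicity: $2$ for faces, $3$ for edges, $4$ for finite vertices, the last coming from the fact that $P_t$ is simple. Reading off Figure \ref{walls:fig} yields $108$ faces, $144$ edges, and $48$ finite vertices; adding the $12$ ideal vertices (the white dots in Figure \ref{walls:fig}) produces the announced total of $60$.

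For the classification, I use that the link of a finite vertex of a simple $4$-polytope is a spherical $3$-simplex whose $4$ faces correspond to the four incident walls and whose $6$ dihedral angles are the four-dimensional dihedral angles of $P_t$ along the six incident $2$-faces. By Proposition \ref{dihedral:prop}, each such angle is either $\theta$ (red face), $\varphi$ (green face), or $\frac\pi 2$ (otherwise). A direct inspection of Figure \ref{walls:fig} partitions the $48$ finite vertices into two $K$-orbits of $24$ elements each: those that do not lie on $\l G$ or $\l H$, each incident to two positive walls, one negative wall and one letter wall of type $\l A,\ldots,\l F$, whose link is the spherical tetrahedron of Figure \ref{links:fig}-(1); and those lying on $\l G$ or $\l H$, each incident to two positive walls, one negative wall and one of $\{\l G,\l H\}$, whose link is the tetrahedron of Figure \ref{links:fig}-(2). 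In each case one verifies directly that the resulting edge labels match Figure \ref{links:fig}.

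For the $12$ ideal vertices, I identify them with the ideal vertices of the ideal $24$-cell $P_1$ that survive the opening of the $12$ red quadrilateral faces as $t$ decreases from $1$. Using Table \ref{walls:table}, one checks that these are exactly the light-like directions of the form $\partial_\infty\p i\cap\partial_\infty\m i\cap\partial_\infty\p j\cap\partial_\infty\m j\cap\partial_\infty\l X\cap\partial_\infty\l Y$ for some odd $\l i$, even $\l j$, and some pair $\l X,\l Y\in\{\l A,\ldots,\l F\}$ determined by the signs of the nonzero entries of $\p i,\p j$. The six walls through such a vertex come in three pairs of parallel hyperplanes (the two positives, the two negatives, the two letters), so the Euclidean link is combinatorially a rectangular parallelepiped, as in Figure \ref{links_eucl:fig}-(1); its edge lengths are determined by the cross products of the corresponding horospherical cross-sections and vary smoothly with $t$.

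The main obstacle is the careful bookkeeping required to identify the precise $4$-tuples (respectively $6$-tuples) of walls meeting at each finite (respectively ideal) vertex, and to confirm the dihedral angle labels on the edges of the link tetrahedra. The $K$-symmetry reduces this to a single case per orbit, rendering the check finite and explicit; the pattern of signs in Table \ref{walls:table} then dictates which numbered walls are compatible with which letter walls at each vertex.
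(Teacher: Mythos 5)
Your proposal is correct, and it takes a noticeably different route from the paper's own proof. The paper's argument is more economical: it counts the $48$ finite vertices directly as the $4\times 12$ vertices of the twelve new red quadrilateral faces (which are disjoint since no three positive walls meet for $t\in(t_1,1)$), observes that $8\times 3 = 24$ of them also lie on the green triangles, and obtains the $12$ ideal vertices by letting the order-$48$ symmetry group $K$ act on the single ideal vertex $\partial_\infty\p3\cap\partial_\infty\m3\cap\partial_\infty\p0\cap\partial_\infty\m0\cap\partial_\infty\l A\cap\partial_\infty\l L$ of the quotient polytope $Q_t$ (stabilizer of order $4$). It does not bother to re-derive the face and edge counts in the proof itself. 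You instead read off the local incidence data of the four representative walls $\l A,\l G,\m 0,\p 3$ from Figure \ref{walls:fig}, weight by the orbit sizes $6,2,8,8$, and divide by the sharing multiplicities $2,3,4$ (valid because $P_t$ is simple), treating the ideal vertices separately since their multiplicity is $6$; you then identify the ideal vertices as the twelve vertices of the ideal $24$-cell that do not open into red quadrilaterals and verify their wall pattern directly from Table \ref{walls:table}. Your approach is more brute-force but actually verifies all four counts $24,108,144,60$ rather than only the vertex count, and your check that the arithmetic is internally consistent (each wall satisfies Euler's relation) is implicit in the figure reading. The paper's $Q_t$-plus-$K$-orbit argument buys a cleaner derivation of the ideal vertex combinatorics without sign chasing; your "persisting vertices of $P_1$" picture is conceptually illuminating and matches the narrative already set up in the surrounding text ("the combinatorial change from the 24-cell $P_1$ consists in the substitution of 12 ideal vertices with 12 quadrilateral red faces"). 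One small remark: your justification that the link of an ideal vertex is a box rests on the three pairs $(\p i,\p j)$, $(\m i,\m j)$, $(\l X,\l Y)$ being asymptotically parallel and mutually orthogonal — this does hold and can be read off formula~(\ref{formula:eqn}) from Table \ref{walls:table}, but it is worth stating explicitly that each pair has $\alpha=1$ while cross pairs have $\alpha=0$, since that is exactly what forces the horospherical section to be a rectangular parallelepiped rather than a more general right-angled polyhedron.
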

\begin{proof}
The 48 finite vertices are the $4\times 12$ vertices of the new 12 red quadrilateral faces; among these, $8\times 3=24$ are also vertices of the 8 triangular green faces. Recall that the polytope $P_t$ is simple. The links of the finite vertices are therefore tetrahedra, whose dihedral angles are all right except those corresponding to red or green faces.

The ideal vertex of the quotient polytope $Q_t$ is (see Figure \ref{coxeter:fig}) $$\partial_\infty\p3\cap\partial_\infty\m3\cap\partial_\infty\p0\cap\partial_\infty\m0\cap\partial_\infty\l A\cap\partial_\infty\l L.$$
Its link is a product of three intervals, that is, a Euclidean rectangular parallelepiped. Letting the group of symmetries $K$ act, we get the $4\times3=12$ ideal vertices of $P_t$.
Note that since that ideal vertex exists in $Q_t$ for all $t\in(0,1]$, these 12 ideal vertices of $P_t$ exist for all $t\in(0,1]$.
\end{proof}

We note in particular that the green and red faces intersect only at the 24 finite vertices of type (3).

\subsection*{The first critical time.}
At the critical time $t=t_1$, the 8 green triangular faces collapse into 8 new ideal vertices. The only non-right dihedral angle is now $\theta=\frac \pi 3$, hence $P_{t_1}$ is a Coxeter polytope.

\begin{prop}\label{combcrit1}
The Coxeter polytope $P_{t_1}$ has 24 walls, 100 faces, 120 edges and 44 vertices.
The combinatorics can be recovered from Figure \ref{walls1:fig}.
In particular, the vertices are of three kinds:
\begin{enumerate}
\item 12 ideal vertices, whose link is a Euclidean rectangular parallelepiped
represented in Figure \ref{links_eucl:fig}-(1). 
\item 8 ideal vertices, whose link is a Euclidean right prism over an equilateral triangle,
represented in Figure \ref{links_eucl:fig}-(2).
Each of these vertices is the ideal vertex of a negative wall, three positive walls, and a wall $\l G$ or $\l H$. 
\item 24 finite vertices, whose link is the spherical tetrahedron
represented in Figure \ref{links:fig}-(1).
Each of these vertices is the intersection of two positive walls, a negative wall, and a letter wall of type $\l{A},\ldots,\l{F}$.
\end{enumerate}
\end{prop}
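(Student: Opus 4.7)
The plan is to derive the combinatorics of $P_{t_1}$ from that of $P_t$ with $t\in(t_1,1)$, already established in Proposition \ref{combpiccoli}, by analyzing the single combinatorial event that occurs at the critical time $t_1$. By Proposition \ref{Q_t:prop}, the dihedral angle $\varphi$ tends to $0$ as $t\to t_1$: in the Gram-matrix description (\ref{formula:eqn}), the pairs $\{\l G,\m i\}$ with $i$ even and $\{\l H,\m i\}$ with $i$ odd transition from $\alpha<1$ (intersecting at angle $\varphi$) to exactly $\alpha=1$ (asymptotically parallel). Consequently, each of the 8 green triangular faces listed in Proposition \ref{dihedral:prop} collapses to a single ideal vertex as $t\to t_1$, while every other adjacency encoded in Figure \ref{walls:fig} persists.

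For the numerical counts, I would apply this single collapse move 8 times: each collapse removes one face, its three edges, and its three (formerly type-(3)) finite vertices, replacing them with one new ideal vertex. This yields $108\to 100$ faces, $144\to 120$ edges, and $60\to 44$ vertices, consistent with the Euler relation $V-E+F-W=0$. The 12 type-(1) ideal vertices survive with unchanged parallelepiped link, and the 24 surviving finite vertices are the former type-(2) vertices of Proposition \ref{combpiccoli}, whose tetrahedral links $I_{\pi/2}\ast I_\theta$ persist with the new value $\theta=\pi/3$.

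For each of the 8 new ideal vertices, I need to identify the Euclidean link. The vertex obtained by collapsing the green triangle $\{\l G,\m i\}$ is the intersection of exactly 5 walls: $\l G$, the negative wall $\m i$, and the three positive walls indexing the three edges of the triangle (one per edge, shared between its two endpoints). A direct inner-product check from Table \ref{walls:table} (for instance at $\{\l G,\m 0\}$, where the three positive walls are $\p 1,\p 3,\p 5$) confirms that each such positive wall is orthogonal to both $\l G$ and $\m i$, while the three positives meet pairwise at the red dihedral angle $\theta=\pi/3$; meanwhile $\l G$ and $\m i$ are exactly asymptotically parallel at $t_1$. The link therefore has three rectangular side faces joined along $\pi/3$ edges and two triangular end faces parallel to one another, that is, a Euclidean right prism over an equilateral triangle, as claimed.

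The combinatorial pictures in Figure \ref{walls1:fig} are then obtained by mirroring the $t_1$ column of Figure \ref{Q_walls:fig} along the faces $\l L,\l M,\l N$, in accordance with the general mirroring procedure of Proposition \ref{combinatoria:prop}; finite volume follows from Vinberg's criterion (Theorem \ref{Vinberg:teo}) since each edge still joins two (finite or ideal) endpoints after the collapse. The main obstacle in executing the plan is the careful bookkeeping of which three positive walls become incident at each new ideal vertex, which amounts to tracking the $K$-orbit of the pair $\{\l G,\m 0\}$ across the adjacency pattern of Figure \ref{walls:fig}; the remaining verifications are routine consequences of the Gram-matrix data and the simplicity of $P_t$.
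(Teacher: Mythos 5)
Your proposal is correct and follows essentially the same route as the paper, which states Proposition~\ref{combcrit1} without a separate proof block and relies on the immediately preceding observation that the 8 green triangles collapse to ideal vertices at $t=t_1$, together with the mirroring procedure of Proposition~\ref{combinatoria:prop} and Figure~\ref{walls1:fig}. Your explicit bookkeeping of the collapse move (each collapse removing one face, three edges, and three type-(3) vertices and inserting one ideal vertex) and your direct Lorentzian inner-product check that the new link is a right prism over an equilateral triangle simply make precise what the paper leaves to the reader via the figures.
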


\subsection*{The second time interval.}
When $t\in(t_2,t_1)$, the combinatorial change from the Coxeter polytope $P_{t_1}$ consists in the substitution of 8 ideal vertices with 8 new edges, drawn in yellow in Figure \ref{walls_medi:fig}. Each yellow edge is the intersection of three positive walls, and also of three red faces. Each red face is now a right-angled hexagon.

\begin{prop}\label{combmedi}
When $t\in(t_2,t_1)$, the polytope $P_t$ has 24 walls, 100 faces, 128 edges and 52 vertices. The combinatorics can be recovered from Figure \ref{walls_medi:fig}.
In particular, the vertices are of three kinds:
\begin{enumerate}
\item 12 ideal vertices, whose link is a Euclidean rectangular parallelepiped
represented in Figure \ref{links_eucl:fig}-(1).
\item 24 finite vertices, whose link is the spherical tetrahedron
represented in Figure \ref{links:fig}-(1).
Each of these vertices is the intersection of two positive walls, a negative wall, and a letter wall of type $\l{A},\ldots,\l{F}$.
\item 16 finite vertices, whose link is the spherical tetrahedron
represented in Figure \ref{links:fig}-(3).
Each of these vertices is the intersection of three positive walls and a negative wall, or three positive walls and a wall $\l{G}$ or $\l{H}$.
\end{enumerate}
\end{prop}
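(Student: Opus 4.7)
The plan is to determine the cell structure of $P_t$ for $t \in (t_2, t_1)$ from the wall data of Proposition~\ref{combinatoria:prop} (Figure~\ref{walls_medi:fig}), the action of the 48-element symmetry group $K$ of Section~\ref{symmetries:sec}, and the simplicity of $P_t$. Since $P_t$ is simple, every finite vertex has a spherical tetrahedral link determined by its 4 incident walls, and every ideal vertex has a 3-dimensional compact Euclidean polyhedron as link determined by the walls at infinity. Hence it suffices to enumerate, up to the $K$-action, the 4-tuples of walls whose Gram submatrix is positive-definite (finite vertices) together with the tuples whose Gram submatrix is parabolic of simplex-product type (ideal vertices).

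I would work by tracking the transition from the adjacent Coxeter polytope $P_{t_1}$ (Proposition~\ref{combcrit1}). Comparing the representative walls in Figures~\ref{walls1:fig} and~\ref{walls_medi:fig}, the only local change occurs on the wall $\p 3$: its bottom ideal vertex opens into a new yellow edge with dihedral angle $\frac{\psi}{2} > 0$ (Lemma~\ref{quotient_walls_coxeter:lemma}). Applying the group $K$, the polytope $P_t$ acquires $8$ new yellow edges in total, each lying on the intersection of three positive walls with pairwise dihedral angle $\theta$ (Proposition~\ref{dihedral:prop}); the two endpoints of each edge are incident respectively to a negative wall $\m i$ and to one of the walls $\l G$ or $\l H$, as read off from Figure~\ref{walls_medi:fig} together with the roll symmetry $R$.

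I would then tally the counts. The 12 ideal vertices of type~(1) exist for all $t \in (0,1]$ by Proposition~\ref{combpiccoli}. The 24 finite vertices of type~(2), each meeting a letter wall $\l A, \dots, \l F$, persist through $t_1$ because their tetrahedral link $I_{\frac{\pi}{2}} * I_\theta$ of Figure~\ref{links:fig}-(1) deforms smoothly in $t$. The 8 new yellow edges contribute $16$ new finite endpoints (type~(3)), whose link is the regular spherical tetrahedron $\Delta^3(\theta)$ of Figure~\ref{links:fig}-(3): the three pairwise dihedral angles among the incident positive walls equal $\theta$, while the three remaining angles are right. This yields $V = 12 + 24 + 16 = 52$, $E = 120 + 8 = 128$ (only the 8 yellow edges are new), $F = 100$ (the 12 red pentagons of $P_{t_1}$ become the red hexagons of Figure~\ref{polygons0:fig}, with no face created or destroyed), and $W = 24$. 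The Euler relation $V - E + F - W = 0$ for a 4-dimensional polytope gives a consistency check.

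The main obstacle is to justify rigorously the local bifurcation at $t_1$: namely, that each ideal vertex of $P_{t_1}$ with Euclidean prism link opens into a yellow edge with two finite endpoints of the stated types, rather than some other local configuration. This reduces to a direct Gram-matrix computation using formula~(\ref{formula:eqn}) and Table~\ref{walls:table}: at $t = t_1$ the $5 \times 5$ Gram matrix of the five walls meeting at the vertex is of parabolic prism type; for $t$ slightly less than $t_1$, the two $4 \times 4$ submatrices obtained by omitting respectively the letter wall $\l G$ (or $\l H$) and the negative wall both become positive-definite, yielding the two finite endpoints with link $\Delta^3(\theta)$, while the full five walls no longer share a common vertex, exactly matching the pattern of a yellow edge of positive length between the two finite endpoints.
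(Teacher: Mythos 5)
Your proof is correct and follows essentially the same route the paper implicitly takes: the paper itself prefaces Proposition~\ref{combmedi} with exactly your description of the $t_1$-bifurcation (``the combinatorial change from the Coxeter polytope $P_{t_1}$ consists in the substitution of 8 ideal vertices with 8 new edges''), and the combinatorics is meant to be read off Figure~\ref{walls_medi:fig} via the $K$-action, as you do. Your final paragraph on the sign of the $4\times 4$ Gram minors is a welcome rigorous justification of the opening of the prism cusps that the paper leaves tacit, and your numerical consistency check via $f_0 - f_1 + f_2 - f_3 = 0$ is sound.

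One small slip worth flagging: you describe the link of a type~(3) vertex as ``the regular spherical tetrahedron $\Delta^3(\theta)$ of Figure~\ref{links:fig}-(3),'' but in the paper's notation $\Delta^3(\theta)$ (with all six dihedral angles equal to $\theta$) is the \emph{type~(4)} link of Figure~\ref{links:fig}-(4), which only occurs for $t\le t_2$. The type~(3) link is not regular; it is the tetrahedron you actually describe in the very next clause, with three dihedral angles equal to $\theta$ concurrent at one vertex (the direction of the yellow edge, among the three positive walls) and the three opposite angles right. Your geometric description is correct; only the name and the word ``regular'' are misplaced, and this does not affect the argument.
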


\subsection*{The last time interval.}
When $t\in(0,t_2]$, the polytope $P_t$ coincides with the $\calF_t$ of \cite{KS}. 
At the critical time $t_2$ the walls $\l G$ and $\l H$ collapse into two new ideal vertices, that become finite as soon as $t<t_2$. Indeed, the vectors defining $\l G$ and $\l H$ transform from space-like to light-like and then time-like. The combinatorial change at $t_2$ is the inverse operation of a truncation.

The two new vertices in $P_t$ are quadruple intersections of positive walls. Their link is a regular tetrahedron with dihedral angles $\theta$.
At $t=t_2$ the two new vertices are ideal, we have $\cos\theta=\frac 1 3$ and the link is a regular Euclidean tetrahedron; as soon as $t<t_2$ the angle $\theta$ increases and the link is a regular spherical tetrahedron.

\begin{prop}\label{combgrandi}
When $t\in(0,t_2]$, the polytope $P_t$ has 22 walls, 92 faces, 116 edges and 46 vertices. The combinatorics can be recovered from Figure \ref{walls_plus:fig} for positive walls and from Figure \ref{walls_medi:fig} for the other walls.
In particular, the vertices are of four kinds:
\begin{enumerate}
\item 12 ideal vertices, whose link is a Euclidean rectangular parallelepiped
represented in Figure \ref{links_eucl:fig}-(1).
\item 24 finite vertices, whose link is the spherical tetrahedron
represented in Figure \ref{links:fig}-(1).
Each of these vertices is the intersection of two positive walls, a negative wall, and a letter wall of type $\l{A},\ldots,\l{F}$.
\item 8 finite vertices, whose link is the spherical tetrahedron
represented in Figure \ref{links:fig}-(3).
Each of these vertices is the intersection of three positive walls and a negative wall.
\item 2 vertices, ideal for $t=t_2$ and finite for $t<t_2$, whose link is the regular tetrahedron 
represented respectively in Figure \ref{links_eucl:fig}-(3) and Figure \ref{links:fig}-(4).
Each of these vertices is the intersection of four positive walls of the same parity.
\end{enumerate}
\end{prop}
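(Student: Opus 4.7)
The plan is to derive the combinatorics of $P_t$ on $(0,t_2]$ from the combinatorics on $(t_2,t_1)$ established in Proposition \ref{combmedi}, by tracking the unique combinatorial move occurring at the critical time $t_2$, and to assemble the final counts by invoking the symmetry group $K$ together with the Coxeter analysis of the quotient $Q_t$ carried out above.

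First, since the vectors $\l G$ and $\l H$ of Table \ref{walls:table} cease to be space-like for $t\leq t_2$, the set of defining half-spaces of $P_t$ reduces to the $22$ vectors $\p 0,\m 0,\ldots,\l E,\l F$, each of which continues to cut out a genuine wall as already proved: this gives the wall count. Next, up to the action of $K$ only the three wall types $\l A$, $\m 0$ and $\p 3$ remain. Comparing with Figure \ref{Q_walls:fig}, the Coxeter diagrams of the walls $\l A$ and $\m 0$ of $Q_t$ are constant on $(0,t_1)$, while the diagram of $\p 3$ undergoes a combinatorial change at $t_2$: the triple $\{\l L,\l M,\l N\}$ represents a Euclidean triangle (an ideal vertex of $\p 3$) exactly at $t=t_2$, and an elliptic triangle (a finite vertex) for $t<t_2$. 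Mirroring along the reflection hyperplanes $\l L,\l M,\l N$ that generate $H\cong\mathcal{S}_4$ then recovers the walls of $P_t$ drawn in Figures \ref{walls_medi:fig} and \ref{walls_plus:fig}.

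Next I would classify the vertices. The 12 ideal vertices of type~(1) and the 24 finite vertices of type~(2) persist from Proposition \ref{combpiccoli}. Of the 16 type~(3) vertices present on $(t_2,t_1)$, the 8 incident to $\l G$ or $\l H$ collapse in groups of four as the corresponding wall degenerates to a point, producing the two vertices of type~(4); each is a quadruple intersection of the four positive walls of a common parity. These walls meet pairwise at the red angle $\theta$, so the Coxeter subdiagram is a regular tetrahedron with dihedral angle $\theta$: it represents a Euclidean simplex (with link as in Figure \ref{links_eucl:fig}-(3), hence an ideal vertex) precisely when $\cos\theta=\tfrac 13$, i.e.\ at $t=t_2$, and a spherical simplex (with link the regular spherical tetrahedron of Figure \ref{links:fig}-(4), hence a finite vertex) for $t<t_2$. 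The other 8 type~(3) vertices, which involve a negative wall instead of $\l G$ or $\l H$, are unaffected.

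Finally, the face and edge counts follow by summing over $K$-orbits the face and edge numbers read off from Figures \ref{walls_medi:fig} and \ref{walls_plus:fig}, and can be cross-checked against the Euler identity $V-E+F-W=0$ valid for any simple four-dimensional polytope, which indeed yields $46-116+92-22=0$. The main obstacle is the careful bookkeeping at $t_2$: one must verify that the collapse of $\l G$ and $\l H$ is the exact inverse of a standard vertex truncation, so that no incidences other than those described change, and in particular each new vertex of type~(4) is genuinely a single quadruple intersection of the four relevant positive walls rather than some more complicated configuration.
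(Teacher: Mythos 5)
Your proposal is correct and follows essentially the same route as the paper: track the single combinatorial move at $t_2$ (the collapse of $\l G$ and $\l H$, inverse of a vertex truncation), use the symmetry group $K$ to reduce to the quotient polytope $Q_t$ and its Coxeter-diagram analysis from Proposition \ref{Q_t:prop} and Lemma \ref{quotient_walls_coxeter:lemma}, and identify the two new vertices as quadruple intersections of positive walls of one parity with regular tetrahedral links that cross from Euclidean to spherical at $t_2$ (where $\cos\theta=\tfrac13$). The Euler-identity cross-check $46-116+92-22=0$ is a sensible extra sanity check not spelled out in the paper; the ``main obstacle'' you flag is in fact handled automatically by the Vinberg machinery, since the strata of an acute-angled polytope are fully determined by the generalised Coxeter diagrams of $Q_t$ and its walls.
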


Note that in this time interval, the (yellow) angle
$$\psi=\arccos\left(\frac{\cos\theta}{1-\cos\theta}\right)$$
of Lemma \ref{quotient_walls_coxeter:lemma} equals the inner angle of a face of a regular spherical tetrahedron with dihedral angles $\theta$.
In the polytope $P_t$, the red faces are now pentagons with four right angles and a new angle $\eta$, that must equal the length of an edge of such a spherical tetrahedron.

\begin{prop}\label{eta:prop}
When $t\in(0,t_2]$, the inner angle $\eta$ between the two yellow edges of each red face is such that
$$\cos\eta=\frac{\cos\theta}{1-2\cos\theta}=\frac{3t^2-1}{3-5t^2}.$$ 
\end{prop}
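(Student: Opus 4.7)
The plan is to identify $\eta$ as a spherical length inside the link of a suitable vertex and then to evaluate that length by two successive applications of the spherical law of cosines.

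Fix a red pentagonal face $F$ of $P_t$ for $t\in(0,t_2)$, and let $v$ be the vertex where its two yellow edges meet. By Proposition \ref{combgrandi}, $v$ is a type-(4) vertex: the intersection of four positive walls of the same parity. Its link $L_v$ is therefore the regular spherical tetrahedron $\Delta^3(\theta)$ shown in Figure \ref{links:fig}-(4). Under the stratum/link correspondence of Section \ref{acute:subsection} (walls $\leftrightarrow$ faces of $L_v$, faces of $P_t$ $\leftrightarrow$ edges of $L_v$, edges of $P_t$ $\leftrightarrow$ vertices of $L_v$), the face $F$ corresponds to an edge $e$ of $L_v$ and the two yellow edges of $F$ at $v$ correspond to the two endpoints of $e$. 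By definition of link, the inner angle $\eta$ of $F$ at $v$ is the spherical length of $e$, which, by regularity of $L_v$, is the common edge length of $\Delta^3(\theta)$. (At the boundary time $t=t_2$ the vertex $v$ is ideal and $\eta=0$, consistent with the formula by continuity.)

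It remains to express the edge length of $\Delta^3(\theta)$ in terms of $\theta$. Each $2$-face of $\Delta^3(\theta)$ is an equilateral spherical triangle with side $\eta$ and inner angle $\alpha$; the spherical law of cosines gives
$$\cos\alpha=\frac{\cos\eta}{1+\cos\eta}.$$
The link of any vertex of $\Delta^3(\theta)$ is in turn an equilateral spherical triangle with sides $\alpha$ and angles $\theta$, so the same formula applied there yields
$$\cos\theta=\frac{\cos\alpha}{1+\cos\alpha}.$$
Inverting the second relation gives $\cos\alpha=\cos\theta/(1-\cos\theta)$, and then inverting the first yields
$$\cos\eta=\frac{\cos\alpha}{1-\cos\alpha}=\frac{\cos\theta}{1-2\cos\theta}.$$
Substituting $\cos\theta=(3t^2-1)/(1+t^2)$ from Proposition \ref{Q_t:prop} and simplifying produces the desired $(3t^2-1)/(3-5t^2)$.

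The only conceptual step is correctly reading off from the link picture that $\eta$ is an edge length of $\Delta^3(\theta)$ rather than some other combinatorial quantity; everything else is routine spherical trigonometry followed by a substitution.
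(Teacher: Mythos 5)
Your proof is correct. It shares the key geometric step with the paper's ``second way'': identifying $\eta$ as the edge length of the regular spherical tetrahedron $\Delta^3(\theta)$ that appears as the link of a type-(4) vertex (the paper states this identification just before the proposition and again inside the proof). Where you diverge is in the trigonometry: the paper evaluates that edge length by the sine law for spherical simplices (citing Derevnin--Mednykh--Pa\v{s}kevi\v{c} and computing ratios of Gram determinants), whereas you apply the spherical law of cosines twice --- once to a $2$-face of $\Delta^3(\theta)$ to relate $\eta$ to the face angle $\alpha$, and once to the vertex link of $\Delta^3(\theta)$ to relate $\alpha$ to $\theta$ --- and then compose. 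Your route is more elementary and self-contained, requiring no external formula, and it also makes visible the intermediate relation $\cos\alpha = \cos\theta/(1-\cos\theta)$, which recovers the paper's observation that $\alpha$ coincides with the angle $\psi$ from Lemma \ref{quotient_walls_coxeter:lemma}. The paper also offers a ``first way'' via an explicit orthogonal projection onto $(\p 3)^\perp\cap(\p 7)^\perp$ and Formula (\ref{formula:eqn}); your argument does not resemble that one.
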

\begin{proof}
\emph{First way.} Denote by $P$ the orthogonal projection of $\mathbb{R}^{1,4}$ onto the vector subspace $W^\perp=(\p 3)^\perp\cap(\p 7)^\perp$, where $W$ is generated by the vectors $\p 3$ and $\p 7$. An orthogonal basis for $W$ is given by $\mathbf u_1=\p 3$ and $\mathbf u_2=\p 7+\cos\theta\ \p 3$.
Therefore, denoting by $P_i$ the orthogonal projection onto the subspace $\mathbb{R}\mathbf u_i$ ($i=1,2$), for every $\mathbf v\in\mathbb{R}^{1,4}$
$$P(\mathbf v)=\mathbf v-P_1(\mathbf v)-P_2(\mathbf v).$$
The angle $\eta$ is thus given applying Formula (\ref{formula:eqn}) to the vectors
$$P(\p 1)=\p 1+\cos\psi\ \p 3+\cos\psi\ \p7,\ P(\p 5)=\p 5+\cos\psi\ \p 3+\cos\psi\ \p7.$$
\emph{Second way.} For every $n>1$, denote by $G_n$ the Gram matrix of a regular spherical $n$-simplex with dihedral angles $\theta$, that is the $(n+1)\times(n+1)$ matrix with 1's on the diagonal and $-\cos\theta$ on the other entries.
As we said,  $\eta$ is the length of an edge of a regular spherical 3-simplex with dihedral angles $\theta$.
By the sine law \cite{DMP} we get
$$\frac{\sin^2\eta}{\sin^2\theta}=\frac{\mathrm{det}(G_3)}{\mathrm{det}(G_2)^2}=\frac{1-3\cos\theta}{(1-2\cos\theta)^2(1+\cos\theta)}.$$
This easily implies the statement.
\end{proof}
The angle $\eta$ tends to $\arccos(-\frac 13)$ as $t\rightarrow0$.

\subsection*{The fixed ideal cuboctahedron}
Let $\matH^3\subset\mathbb{H}^4$ be the hyperplane $\{x_4=0\}$ defined by the space-like vector $(0,0,0,0,1)$.

\begin{lemma}
The 12 ideal vertices of $P_t$ that exist for all $t\in(0,1]$ are all in $\partial_\infty \matH^3$ and do not depend on $t$.
\end{lemma}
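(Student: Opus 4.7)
The plan is to read off the proof directly from Table \ref{walls:table}. The key observation is that for each index $i \in \{0,\ldots,7\}$, the vectors $\p i$ and $\m i$ agree in their first four coordinates and differ only in the last entry: the $x_4$-component of $\p i$ is $\pm 1/t$ and that of $\m i$ is $\mp t$ (with opposite signs). Likewise, every letter vector $\l A,\ldots,\l F$ has vanishing last coordinate.

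First I would show that a light-like $v=(v_0,v_1,v_2,v_3,v_4)$ satisfying $\langle v, \p i\rangle=0$ and $\langle v,\m i\rangle=0$ automatically has $v_4=0$. Indeed, subtracting the two linear equations gives
$$v_4\left(\tfrac{1}{t}+t\right)=0,$$
and since $\tfrac{1}{t}+t>0$ for $t\in(0,1]$ we conclude $v_4=0$. Once $v_4$ is killed, the two equations collapse into the same $t$-independent linear condition on $(v_0,v_1,v_2,v_3)$ (the common one obtained by ignoring the last coordinate of $\p i$ or $\m i$).

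Second, each letter wall $\l X\in\{\l A,\ldots,\l F\}$ is defined by a vector with $x_4=0$, so the condition $\langle v,\l X\rangle=0$ is automatically compatible with $v_4=0$ and is itself $t$-independent.

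Combining these two facts: by Proposition \ref{combpiccoli}, each of the 12 persistent ideal vertices is of the form
$$\partial_\infty\p i\cap\partial_\infty\m i\cap\partial_\infty\p j\cap\partial_\infty\m j\cap\partial_\infty\l X\cap\partial_\infty\l Y,$$
so any representative light-like vector $v$ must satisfy $v_4=0$ (whence the vertex lies in $\partial_\infty\matH^3$) and is then cut out by a system of linear equations in $(v_0,v_1,v_2,v_3)$ none of whose coefficients depend on $t$. Hence the vertex itself is independent of $t$. The argument is essentially one line of linear algebra; the only mild obstacle is just bookkeeping the indices $i,j$ of the two numbered pairs and the letters $\l X,\l Y$ coming from Proposition \ref{combpiccoli}, but the mechanism killing $v_4$ and the $t$-dependence is the same in every case.
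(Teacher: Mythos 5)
Your proof is correct, but it takes a genuinely different and more direct route than the paper. The paper first establishes that the 12 ideal vertices lie in $\partial_\infty\matH^3$ by invoking the symmetry framework of Section \ref{symmetries:sec}: the roll symmetry $R$ fixes the persistent ideal vertex of the quotient polytope $Q_t$, the fixed set of $R$ is contained in $\matH^3$, and the mirrors $\l L, \l M, \l N$ are orthogonal to $\matH^3$; the group $K$ then spreads this conclusion to all 12 vertices. Only afterwards does the paper solve one explicit linear system (for a single vertex) to show $t$-independence, and again appeals to symmetry for the remaining eleven. You bypass the symmetry machinery entirely and read everything off Table \ref{walls:table}: the structural coincidence that $\p i$ and $\m i$ share their first four coordinates while having opposite-sign last coordinates immediately forces $v_4 = 0$ for any light-like vector in the intersection, and once $v_4$ is killed every remaining constraint (from the numbered walls and from the letter walls $\l A,\ldots,\l F$, which all have vanishing last coordinate) is manifestly $t$-independent. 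Your argument is more elementary and self-contained; the paper's argument is less computational and integrates into the symmetry framework it has already built. One small remark: you should note, as your proof implicitly does by citing Proposition \ref{combpiccoli}, that the resulting $t$-independent linear system actually has a one-dimensional (light-like) solution line; but this is guaranteed by the fact that the ideal vertex exists for each $t$, so there is no real gap.
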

\begin{proof}
Recall Section \ref{symmetries:sec} and the quotient polytope $Q_t$.
The fixed points of the roll symmetry $R$ form a 2-plane contained in $\matH^3$.
The roll symmetry $R$ fixes the ideal vertex of $Q_t$ that exists for all $t$.
The hyperplanes $\l L$, $\l M$ and $\l N$ are orthogonal to $\matH^3$.
Therefore, letting the group $K$ act, we get that the 12 ideal vertices are contained in $\partial_\infty \matH^3$.

Now, by solving a simple linear system in $\mathbb{R}^{1,4}$, we get $$\p3^\perp\cap\m3^\perp\cap\p0^\perp\cap\m0^\perp\cap\l A^\perp\cap\l B^\perp=(\sqrt2,1,1,0,0)\mathbb{R},$$
showing that the ideal vertex
$$\partial_\infty\p3\cap\partial_\infty\m3\cap\partial_\infty\p0\cap\partial_\infty\m0\cap\partial_\infty\l A\cap\partial_\infty\l B$$
does not depend on $t$, nor hence the other 11 by symmetry.
\end{proof}

\begin{prop}\label{cuboct:prop}
The intersection $P_t\cap \matH^3$ does not depend on $t$ and is an ideal, right-angled cuboctahedron.
The quadrilateral faces are $\l X\cap \matH^3$ for every letter wall $\l X\in\lbrace\l A,\ldots,\l F\rbrace$,  while the triangular faces are the 2-faces of $P_t$ given by $\p i\cap\m i$ for every $\l i\in\lbrace\l0,\ldots,\l7\rbrace$.
Moreover, we have $P_t\cap \matH^3=P_0=\bigcap_s P_s$.
\end{prop}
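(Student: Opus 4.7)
The plan is to exploit the specific algebraic form of the defining vectors in Table \ref{walls:table}. First I would observe that for every $\l i \in \{\l 0, \ldots, \l 7\}$, the vectors $\p i$ and $\m i$ agree in their first four coordinates and differ only in the last. Hence for every $v=(x_0,x_1,x_2,x_3,0)\in\matH^3$ we have $\langle \p i, v\rangle = \langle \m i, v\rangle$, so on $\matH^3$ the two walls cut out the same half-space, which moreover does not depend on $t$. The vectors $\l G$ and $\l H$ have all entries zero except in the $x_0$ and $x_4$ coordinates, so on $\matH^3$ the two conditions $\langle \l G, v\rangle\leq 0$ and $\langle \l H, v\rangle\leq 0$ both reduce to $-x_0\leq 0$, which is automatic in $\matH^4$. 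The letter vectors $\l A,\ldots,\l F$ have zero last coordinate, hence restrict to $\matH^3$ independently of $t$. Collecting these facts, $P_t\cap\matH^3$ equals the intersection inside $\matH^3$ of 14 fixed half-spaces (six from the letter walls and eight from the pairs $\{\p i,\m i\}$), proving independence of $t$.

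To identify this intersection as the ideal right-angled cuboctahedron, I would work with the projections of the normals onto the tangent hyperplane of $\matH^3$: the letter vectors are unchanged, and each pair $\{\p i,\m i\}$ projects to a common vector $(\sqrt 2,\pm 1,\pm 1,\pm 1,0)$. Using formula (\ref{formula:eqn}), one checks directly that every projected pair vector is orthogonal to exactly three letter vectors, every letter vector is orthogonal to exactly four projected pair vectors, and no two projected pair vectors (and no two letter vectors) meet at finite angle: they are either asymptotically parallel, producing common ideal vertices, or ultra-parallel. This yields 8 triangular faces (from the pairs $\p i\cap\m i$) and 6 quadrilateral faces (from the letter walls) meeting only at right angles along edges; the 12 fixed ideal vertices supplied by the previous lemma are exactly the 12 vertices of the polyhedron, and no finite vertices arise. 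This is precisely the combinatorics and geometry of the ideal right-angled cuboctahedron.

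For the final identity $P_t\cap\matH^3=P_0=\bigcap_s P_s$, recall from Section \ref{poly_def:sec} that as $t\to 0$ the positive half-spaces converge (after rescaling) to $\{x_4\geq 0\}$ and $\{x_4\leq 0\}$, forcing $P_0\subset\matH^3$; together with the $t$-independent letter and pair constraints this gives $P_0=P_t\cap\matH^3$. Any $v\in P_0\subset\matH^3$ satisfies $\langle \p i, v\rangle\leq 0$ for every $s\in(0,1]$ and every $i$, because on $\matH^3$ that inequality coincides with $\langle \m i, v\rangle\leq 0$, which holds by definition; so $P_0\subset\bigcap_{s\in(0,1]}P_s$. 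Conversely, if $v\in\bigcap_s P_s$ then $\langle\p 0,v\rangle\leq 0$ and $\langle\p 7,v\rangle\leq 0$ hold for every $s>0$; since the last coordinates of $\p 0$ and $\p 7$ are $\pm 1/t$, sending $s\to 0$ forces $x_4=0$, so $v\in\matH^3$ and hence $v\in P_0$. The main obstacle is the combinatorial check identifying the cross-section as the cuboctahedron, but this reduces to a finite and completely elementary set of inner-product computations on $t$-independent integer vectors.
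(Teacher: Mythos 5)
Your proof is correct, and it takes a more directly computational route than the paper. The paper's proof observes that $\partial_\infty\l A\subset\partial_\infty\matH^3$, that the hyperplanes of $\p 0$, $\m 0$, $\matH^3$ share a $2$-plane, and then invokes the previous lemma (the twelve ideal vertices of $P_t$ are fixed and lie in $\partial_\infty\matH^3$) together with the symmetry group $K$ to propagate these two observations to all walls at once; the final identity $P_t\cap\matH^3=P_0=\bigcap_s P_s$ is then deduced from the fact that $P_0$ is the convex hull of its fixed ideal vertices. You instead read the $t$-independence directly off Table \ref{walls:table}: $\p i$ and $\m i$ have identical first four coordinates, hence restrict to the same ($t$-independent) half-space of $\matH^3$, while $\l G,\l H$ impose the vacuous condition $x_0>0$ and the letter normals are literally constant. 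This makes the $t$-independence of the cross-section immediate and produces the fourteen defining half-planes explicitly, after which identifying the cuboctahedron is a finite check on integer inner products, and the inclusion $P_0\subset\bigcap_s P_s$ and the converse (via $x_4=0$ being forced as $s\to 0$) are argued by hand rather than by the convex-hull shortcut. Both arguments lean on the earlier lemma about the twelve fixed ideal vertices to nail down the vertex set; yours is more elementary and self-contained at the cost of being longer, while the paper's is slicker once the group $K$ is in play. In short: same conclusion, genuinely different bookkeeping, both sound.
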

\begin{proof}
For every $t\in(0,1]$
we have $\partial_\infty\l A\subset\partial_\infty \matH^3$.
Thus $\l A\cap \matH^3$ must be the ideal quadrilateral containing the ideal points of $\l A$.
It is easy to see that the hyperplanes containing the walls $\m0$, $\p0$ and $\matH^3$ intersect in the same 2-plane. Therefore the ideal triangle $\m0\cap\p0 $ is contained in $\matH^3$.

By the previous lemma, such ideal polygons do not depend on $t$.
As before, since $\matH^3$ is orthogonal to the hyperplanes $\l L$, $\l M$ and $\l N$, it suffices to let the group $K$ act to conclude the same for the other walls.

Finally, since $P_0$ is the convex hull of its (ideal) vertices, that are fixed, the last statement is proved.
\end{proof}

All these intersections do not depend on $t$. Moreover, $\l X\perp \matH^3$ for all $t\in(0,1]$ and every $\l X\in\lbrace\l A,\ldots,\l F\rbrace$. What varies is the (acute) angle of intersection between $\matH^3$ and the numbered hyperplanes:

\begin{prop}\label{angle_V:prop}
The letter hyperplanes are orthogonal to the hyperplane $\matH^3$ for all $t\in[0,1]$.
Moreover, for every $\l i\in\lbrace\l0,\ldots,\l7\rbrace$, the functions $\mathrm{Angle}(\p i,\matH^3)$ and $\mathrm{Angle}(\m i,\matH^3)$ are strictly monotone in $t$, they take the value $\frac{\pi}{4}$ at $t=1$, and
$$\lim_{t\to0}\mathrm{Angle}(\p i,\matH^3)=0,\quad\lim_{t\to0}\mathrm{Angle}(\m i,\matH^3)=\tfrac{\pi}2.$$
\end{prop}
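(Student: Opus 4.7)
The proof should reduce, via the symmetry group $K$, to a handful of direct Lorentzian inner product computations against the unit space-like vector $v_0 = (0,0,0,0,1)$ that defines $\matH^3$, plugged into formula (\ref{formula:eqn}).

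The first step is to check that the whole group $K = \langle \l L, \l M, \l N, R\rangle$ preserves $\matH^3$ setwise. The generators $\l L, \l M, \l N$ act only on the coordinates $(x_1,x_2,x_3)$ and fix $x_4$, while $R(x_0,x_1,x_2,x_3,x_4) = (x_0,x_1,x_2,-x_3,-x_4)$ sends the hyperplane $\{x_4=0\}$ to itself. Since $K$ acts transitively on the eight positive walls and on the eight negative walls (as recorded in Section \ref{symmetries:sec}), all the angles $\mathrm{Angle}(\p i, \matH^3)$ coincide, and similarly for the $\m i$. It therefore suffices to compute the angles with $\matH^3$ of $\p 0$, $\m 0$, and of one letter wall.

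For the letter walls $\l A, \ldots, \l F$, a glance at Table \ref{walls:table} shows that the fifth coordinate of each defining vector vanishes, so $\langle \l X, v_0\rangle = 0$ and formula (\ref{formula:eqn}) gives orthogonality; since these vectors do not depend on $t$, the claim of orthogonality holds for all $t \in [0,1]$. For $\p 0 = (\sqrt 2, 1, 1, 1, 1/t)$ and $\m 0 = (\sqrt 2, 1, 1, 1, -t)$ I would substitute directly into (\ref{formula:eqn}) to obtain closed-form expressions of the type $\cos\mathrm{Angle}(\p 0, \matH^3) = 1/\sqrt{1+t^2}$ and $\cos\mathrm{Angle}(\m 0, \matH^3) = t/\sqrt{1+t^2}$, from which strict monotonicity in $t$ (one function decreasing, the other increasing), the common value $\pi/4$ at $t=1$, and the boundary limits $0$ and $\pi/2$ as $t\to 0$ can all be read off.

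I do not foresee a substantial obstacle: once the symmetry reduction is in place the whole statement collapses to inspecting three Lorentzian products. The only point that needs a bit of care is verifying that the roll symmetry $R$, despite acting non-trivially on $\matH^3$, preserves it as a set, so that the action of $K$ really does allow one to reduce from sixteen angle computations to just two.
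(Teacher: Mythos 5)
Your proposal is correct and takes essentially the same approach as the paper, whose own proof is just the one-line remark that the assertions follow from Formula~(\ref{formula:eqn}); your explicit Lorentzian products, and the resulting expressions $1/\sqrt{1+t^2}$ and $t/\sqrt{1+t^2}$, are precisely the computation being alluded to. One small note: the symmetry reduction via $K$ is not strictly needed here, since one can see directly from Table~\ref{walls:table} that all eight $\p i$ (resp.\ all eight $\m i$) share the same $\langle\cdot,\cdot\rangle$-norm and the same fifth-coordinate magnitude, so they all form the same angle with $\matH^3$ by inspection.
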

\begin{proof}
These assertions can be verified as usual by Formula (\ref{formula:eqn}).
\end{proof}

\subsection{Coxeter polytopes}
The dihedral angles $\theta$ and $\varphi$ are strictly monotone in $t$. We have
$$
\lim_{t \to 1}\theta (t) = 0, \qquad \theta (t_1) = \tfrac \pi 3, \qquad \theta \Big(\sqrt{\tfrac 13}\Big) = \tfrac \pi 2, \qquad \lim_{t \to 0} \theta (t) = \pi, 
$$
$$
\varphi(1) = \tfrac \pi 2, \qquad \lim_{t\to t_1}\varphi (t) = 0.\qquad\qquad\qquad\qquad\qquad\qquad\qquad\quad
$$
In particular the polytope $P_t$ is Coxeter at the times 
$$1, \quad t_1 = \sqrt{\frac 35}, \quad \bar t=\sqrt{\frac 13}.$$
The polytope $P_t$ is right-angled both at times $t=1$ and $t=\bar t$. Note that in $P_1$ all vertices are ideal, while $P_{\bar t}$ contains both ideal and finite vertices and is quite interesting. The Coxeter polytope $P_{t_1}$ has dihedral angles $\frac \pi 2$ and $\frac \pi 3$.

The orbifold Euler characteristic of these Coxeter polytopes is calculated below (for the 24-cell $P_1$, it is well-known that $\chi(P_1)=1$).

\begin{prop}\label{euler_t1:prop}
The Coxeter polytope $P_{t_1}$ has Euler characteristic $\chi(P_{t_1})=1$.
\end{prop}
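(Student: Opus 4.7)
The plan is to apply directly the orbifold Euler characteristic formula $\chi(P) = \sum_s (-1)^{\dim s}/|\mathrm{Stab}(s)|$ recalled in Section~\ref{acute:subsection}, where the sum ranges over all (non-ideal) strata of $P_{t_1}$, including the $4$-dimensional interior with trivial stabilizer (which contributes $+1$). All combinatorial data needed are already recorded in Propositions~\ref{combcrit1} and~\ref{dihedral:prop}, so the task reduces to identifying the Coxeter stabilizer of each stratum via its link and summing.

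The walls, faces and finite vertices are straightforward. The 24 walls each contribute $-1/2$, totalling $-12$. Among the 100 faces, the 12 red ones have dihedral angle $\pi/3$ and the other 88 have angle $\pi/2$, with stabilizers $D_3$ (order $6$) and $D_2$ (order $4$), contributing $2+22=24$. The 24 finite vertices are all of type (3), with link the spherical tetrahedron $I_{\pi/2} * I_{\pi/3}$; its Coxeter diagram on 4 nodes has a single edge of label $3$ and two isolated nodes, giving stabilizer $A_2 \times A_1 \times A_1$ of order $24$, hence total vertex contribution $24\cdot(1/24)=+1$. Ideal vertices are excluded.

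The main work is the classification of the 120 edges. The link of an edge is a spherical triangle whose three angles are the dihedral angles of the three faces meeting at the edge, hence with values in $\{\pi/2,\pi/3\}$. I would argue that only two link types actually occur: $(\pi/2,\pi/2,\pi/2)$, with Coxeter stabilizer $(\mathbb{Z}/2)^3$ of order $8$, and $(\pi/3,\pi/2,\pi/2)$, with stabilizer $S_3\times\mathbb{Z}/2$ of order $12$. A link containing two $\pi/3$ angles would force three positive walls of the same parity to share the edge; but by Proposition~\ref{combcrit1} such a triple meets only at an ideal vertex of type (2), with Euclidean prism link, and never at a one-dimensional stratum. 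Since every edge with a $\pi/3$ face in its link lies on exactly one of the 12 red quadrilateral faces, and conversely each red quadrilateral contains four edges, there are $48$ edges of the second type and $120-48=72$ of the first, contributing $-(9+4)=-13$.

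Summing all terms yields
$$\chi(P_{t_1}) = 1 \;-\; 12 \;+\; 24 \;-\; 13 \;+\; 1 \;=\; 1.$$
The principal obstacle is the edge classification: while Propositions~\ref{combcrit1} and~\ref{dihedral:prop} make it essentially automatic, one must rule out rigorously the $(\pi/3,\pi/3,\cdot)$ link types, which is precisely the combinatorial content of the fact that three positive walls of the same parity collapse to a type-(2) ideal vertex rather than producing a triangular edge.
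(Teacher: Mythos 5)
Your proof is correct and follows essentially the same approach as the paper's: apply the orbifold Euler characteristic formula to the stratification from Propositions~\ref{combcrit1} and~\ref{dihedral:prop}, tally identical stabilizer orders $(2,4,6,8,12,24)$ for walls, faces, edges, and finite vertices, and sum to $1$. The only difference is that you spell out the argument ruling out edges with two $\pi/3$-angles in their link (via the observation that a triple of same-parity positive walls meets only at an ideal vertex of type~(2)), which the paper leaves implicit; that justification is correct and a welcome addition.
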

\begin{proof}
The isomorphism classes of the stabilizers are obtained from the information about the dihedral angles of the faces of every dimension, that are either $\frac{\pi}3$, $\frac{\pi}2$ or $0$.
Precisely, Figure \ref{walls1:fig} and Proposition \ref{combcrit1} give:
\begin{itemize}
\item 24 walls (with stabilizer $\mathbb{Z}/_{2\mathbb{Z}}$);
\item 88 faces with stabilizer $\mathbb{Z}/_{2\mathbb{Z}}\times\mathbb{Z}/_{2\mathbb{Z}}$;
\item 12 faces with stabilizer the dihedral group $D_3$ (of order 6);
\item 72 edges with stabilizer $\mathbb{Z}/_{2\mathbb{Z}}\times\mathbb{Z}/_{2\mathbb{Z}}\times\mathbb{Z}/_{2\mathbb{Z}}$;
\item 48 edges with stabilizer $D_3\times\mathbb{Z}/_{2\mathbb{Z}}$;
\item 24 finite vertices with stabilizer $D_3\times\mathbb{Z}/_{2\mathbb{Z}}\times\mathbb{Z}/_{2\mathbb{Z}}$;
\item 20 ideal vertices (with infinite stabilizer).
\end{itemize}
Therefore, we get
\begin{align*}
\chi & = 1 + 24\cdot \frac{-1}2 + 88\cdot \frac 14 + 12\cdot \frac 16 + 72\cdot \frac{-1}8 + 48\cdot \frac{-1}{12} + 24\cdot \frac 1{24} = \\
 & = 1 - 12 + 22 + 2 - 9 - 4 + 1 = 1.
\end{align*}
The proof is complete.
\end{proof}

We will re-prove that $\chi(P_{t_1}) = \chi(P_1) = 1$ later on using two more different arguments.

\begin{prop}\label{euler_bart:prop}
The Coxeter polytope $P_{\bar t}$ has Euler characteristic $\chi(P_{\bar t})=\frac 58$.
\end{prop}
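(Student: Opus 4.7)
The plan is to apply directly the formula
$$\chi(P_{\bar t})=\sum_s\frac{(-1)^{\dim(s)}}{|\mathrm{Stab}(s)|}$$
recalled in Section \ref{acute:subsection}, summing over the (non-ideal) strata. First I would observe that $\bar t=\sqrt{\tfrac 13}$ lies in the interval $(0,t_2)$, since $\sqrt{\tfrac 13}<\sqrt{\tfrac 12}=t_2$, so the combinatorics of $P_{\bar t}$ is given by Proposition \ref{combgrandi}: there are $22$ walls, $92$ faces, $116$ edges, and $46$ vertices (of which $12$ are ideal). In particular the Euler relation $46-116+92-22=0$ provides a sanity check of the count.

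Next I would exploit the fact that $\theta(\bar t)=\tfrac\pi2$ (and $\varphi$ is not defined at $\bar t$, since $\bar t<t_1$), so all dihedral angles of $P_{\bar t}$ equal $\tfrac\pi 2$; the polytope is right-angled. Hence every stratum $s$ of codimension $k$ is the transverse intersection of $k$ mutually orthogonal walls, and its stabilizer inside the Coxeter reflection group is isomorphic to $(\mathbb Z/2\mathbb Z)^k$. For the 34 finite vertices (24 of type \ref{links:fig}-(1), 8 of type \ref{links:fig}-(3), and the 2 new ones of type \ref{links:fig}-(4) produced for $t<t_2$) one checks directly that all three tetrahedral links become the regular right-angled spherical tetrahedron $\Delta^3(\tfrac\pi2)$ when $\theta=\tfrac\pi 2$, so each such vertex contributes a stabilizer of order $16$.

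Putting these contributions together gives
$$\chi(P_{\bar t})=1+22\cdot\frac{-1}{2}+92\cdot\frac{1}{4}+116\cdot\frac{-1}{8}+34\cdot\frac{1}{16}
=1-11+23-\tfrac{29}{2}+\tfrac{17}{8}=\tfrac{5}{8},$$
where the 12 ideal vertices are excluded from the sum.

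The only real obstacle is making sure that the vertex count is correct at the time $\bar t$ (in particular, that one is reading off the combinatorics from the appropriate figures in the interval $(0,t_2)$ rather than from a neighbouring interval), and that at $\theta=\tfrac\pi 2$ the three different shapes of tetrahedral links really all collapse to the single right-angled regular model; both facts follow immediately from Proposition \ref{combgrandi} and from the descriptions of the links in Figure \ref{links:fig}, so the computation is entirely bookkeeping.
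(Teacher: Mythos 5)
Your proof is correct and is essentially identical to the paper's: both use that $P_{\bar t}$ is right-angled so every codimension-$k$ stratum has stabilizer $(\mathbb{Z}/2\mathbb{Z})^k$, and plug the face counts from Proposition \ref{combgrandi} into the orbifold Euler characteristic formula. The extra checks you include (that $\bar t<t_2$, the Euler relation, and that all three tetrahedral vertex links become the regular right-angled one) are sound but add nothing beyond what the right-angled observation already gives.
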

\begin{proof}
More easily than above: since the polytope is right-angled, the stabilizer of a $k$-dimensional face is isomorphic to $\left(\mathbb{Z}/_{2\mathbb{Z}}\right)^{4-k}$. Therefore Proposition \ref{combgrandi} gives:
\begin{align*}
\chi & = 1 + 22\cdot \frac{-1}2 + 92\cdot \frac 14 + 116\cdot \frac{-1}8 + 34\cdot \frac 1{16} = \\
 & = \frac 18\left(8 - 88 + 184 - 116 + 17\right) = \frac 58,
\end{align*}
and the proof is complete.
\end{proof}

There are also two more interesting times $t$ when $\theta$ equals $\frac{2\pi}{5}$ and $\frac{2\pi} 3$. In both cases the resulting $P_t$ is however not a Coxeter polytope, because the angles do not divide $\pi$.

\subsection{Volume.}
We now study the volume $\Vol(P_t)$ of the polytope $P_t$.
Instead of a long computation using the Poincar\'e formula, we just exhibit the value of the volume and verify it by the Schl\"afli formula. 
Recall that the Schl\"afli formula can be applied only while the combinatorics stays constant, therefore we need to consider three cases separately, for the first, second, and last time interval.
We know the initial data of these three differential equations, because the Gauss-Bonnet formula for 4-orbifolds 
$$\mathrm{Vol}(O)=\frac{4\pi^2}{3}\chi(O)$$
furnishes the volume of the Coxeter polytopes $P_1$ and of $P_{t_1}$. 

Instead of using $t$ as a parameter, it is much more convenient to write $\Vol(P_t)$ in function of the angles $\theta$ and $\varphi$.

\begin{prop}\label{vol_piccoli}
When $t\in [t_1,1]$, the volume of $P_t$ depends on the
dihedral angles $\theta$ and $\varphi$ as follows:
$$\mathrm{Vol}(P_t)=\frac{4\pi^2}{3}\left(2-\frac{3}{\pi}\theta-\frac{2}{\pi}\varphi+\frac{6}{\pi^2}\theta\varphi\right).$$
\end{prop}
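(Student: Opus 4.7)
My plan is to use the Schl\"afli formula on the interval $(t_1,1)$, where the combinatorics of $P_t$ is fixed, and to determine the integration constant from Gauss--Bonnet applied at the right-angled ideal 24-cell $P_1$.

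By Proposition \ref{dihedral:prop}, throughout $t\in(t_1,1)$ the only non-right dihedral angles of $P_t$ are the common value $\theta$ on the $12$ red quadrilateral faces and the common value $\varphi$ on the $8$ green triangular faces. The Schl\"afli formula therefore reduces to
$$d\mathrm{Vol}(P_t)=-\tfrac{1}{3}\bigl[12\,A_r\,d\theta+8\,A_g\,d\varphi\bigr],$$
where $A_r$ and $A_g$ denote the areas of a red and a green face.

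The main step is to compute these two areas, which by Gauss--Bonnet for hyperbolic polygons amounts to reading off the interior angles. These can be extracted from the spherical links of the finite vertices in Figure \ref{links:fig}, using the fact that in a spherical join $I_\alpha*I_\beta$ the central edge $I_\alpha$ has length $\alpha$ and carries dihedral angle $\beta$, the edge $I_\beta$ has length $\beta$ and carries dihedral angle $\alpha$, while the four spanning edges have length $\pi/2$ and dihedral $\pi/2$. At a type (2) vertex, with link $I_{\pi/2}*I_\theta$, the red face carries dihedral $\theta$ and thus corresponds to the edge of length $\pi/2$, giving interior angle $\pi/2$. At a type (3) vertex, with link $I_\varphi*I_\theta$, the red face (dihedral $\theta$) meets the green face (dihedral $\varphi$) and their interior angles are $\varphi$ and $\theta$ respectively. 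So the red quadrilateral has angles $(\pi/2,\pi/2,\varphi,\varphi)$ and the green equilateral triangle has angles $(\theta,\theta,\theta)$, yielding
$$A_r=\pi-2\varphi,\qquad A_g=\pi-3\theta.$$

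Substituting into Schl\"afli and using $8\varphi\,d\theta+8\theta\,d\varphi=8\,d(\theta\varphi)$, the formula integrates exactly to
$$\mathrm{Vol}(P_t)=-4\pi\theta-\tfrac{8\pi}{3}\varphi+8\theta\varphi+C.$$
To fix $C$ I would evaluate at $t=1$, where $(\theta,\varphi)=(0,\pi/2)$ and Gauss--Bonnet combined with $\chi(P_1)=1$ (the ideal right-angled 24-cell) gives $\mathrm{Vol}(P_1)=\tfrac{4\pi^2}{3}$; this forces $C=\tfrac{8\pi^2}{3}$ and yields the claimed formula. Continuity of the volume then extends the equality to $[t_1,1]$, and a sanity check at $t=t_1$, where $(\theta,\varphi)=(\pi/3,0)$, reproduces $\mathrm{Vol}(P_{t_1})=\tfrac{4\pi^2}{3}$, in agreement with Proposition \ref{euler_t1:prop}.

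The main potential obstacle is the correct bookkeeping of interior angles via the join structure of the links: swapping which of $I_\alpha$ or $I_\beta$ carries dihedral $\alpha$ versus $\beta$ would exchange the roles of $\theta$ and $\varphi$ in the two areas and give a wrong answer. A useful independent check is offered by Figure \ref{polygons0:fig}, which displays the interior angles of the evolving faces directly.
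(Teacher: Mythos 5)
Your proposal is correct and takes essentially the same route as the paper: Schl\"afli's formula on $(t_1,1)$ with the red quadrilateral areas $\pi-2\varphi$ and green triangle areas $\pi-3\theta$, anchored by Gauss--Bonnet at the ideal $24$-cell $P_1$. The only cosmetic difference is that you integrate the exact $1$-form directly and fix the constant, whereas the paper exhibits the closed-form expression and verifies that it solves the same Cauchy problem; your derivation of the interior angles from the spherical links $I_{\pi/2}*I_\theta$ and $I_\varphi*I_\theta$ supplies detail that the paper leaves to Figure \ref{polygons0:fig} and Proposition \ref{dihedral:prop}.
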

\begin{proof}
By Proposition \ref{dihedral:prop} the only non-constant dihedral angles are:
\begin{itemize}
\item $\theta$ at 12 red quadrilateral faces with angles $\frac{\pi}2$, $\frac{\pi}2$, $\varphi$, $\varphi$;
\item $\varphi$ at 8 green triangular faces with angles $\theta$, $\theta$, $\theta$.
\end{itemize}
Therefore, the Schl\"afli formula gives
$$\frac 1 8 {d\mathrm{Vol}}=\left(\varphi-\frac{\pi}2\right)d\theta+\left(\theta-\frac{\pi}3\right)d\varphi.$$
The orbifold Euler characteristic of the extremes is $\chi(P_1)=1=\chi(P_{t_1})$. The first equality is well-known, the second is proved in Proposition \ref{euler_t1:prop}. (Actually, we only need the first, and we re-obtain the second now, providing a new proof of Proposition \ref{euler_t1:prop}.) Hence, by Gauss-Bonnet, the initial and final value of the volume is $\frac{4\pi^2}3$.

It is easy to check that the formula in the statement of the proposition satisfies this Cauchy problem (recall that at the extremes the values of the angles are respectively $\theta=0,\ \varphi=\frac{\pi}{2}$ and $\theta=\frac{\pi}{3},\ \varphi=0$). By uniqueness of the solution, the statement is proved.
\end{proof}

In the second and last time intervals, the only non-constant dihedral angle is $\theta$, therefore the volume decreases with $\theta$ by the Schl\"afli formula. In the second time interval, the formula for the volume simplifies and becomes linear in $\theta$. 

\begin{prop}\label{vol_medi}
When $t\in [t_2,t_1]$, the volume of $P_t$ depends on the
dihedral angle $\theta$ as follows:
$$\mathrm{Vol}(P_t)=\frac{4\pi^2}{3}\left(2-\frac{3}{\pi}\theta\right).$$
\end{prop}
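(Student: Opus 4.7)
The plan is to apply the Schl\"afli formula on the interval $(t_2,t_1)$ and then fix the integration constant by continuity at $t=t_1$, using the value of $\mathrm{Vol}(P_{t_1})$ already established in Proposition~\ref{vol_piccoli}.

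First, by Proposition~\ref{combinatoria:prop} the combinatorics of $P_t$ is constant on $(t_2,t_1)$, so Schl\"afli applies throughout the interval. By Proposition~\ref{dihedral:prop} the only non-right four-dimensional dihedral angle is $\theta$, appearing along the 12 red faces (the 8 green triangles have now collapsed at the critical time $t_1$). Hence
$$\tfrac 13 \, d\mathrm{Vol}(P_t) = -\tfrac{12}{3}\,\mathrm{Area}(F_{\mathrm{red}})\,d\theta,$$
where $F_{\mathrm{red}}$ is any one of the twelve red faces.

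Second, I identify $\mathrm{Area}(F_{\mathrm{red}})$. From Figure~\ref{walls_medi:fig} (or equivalently Figure~\ref{polygons0:fig}) each red face is a hexagon, and inspection of the three-dimensional dihedral angles on the boundary of each such hexagon shows that all six of its inner angles are right. By the Gauss--Bonnet formula for hyperbolic polygons,
$$\mathrm{Area}(F_{\mathrm{red}})=(6-2)\pi-6\cdot\tfrac{\pi}{2}=\pi.$$
Plugging in, $d\mathrm{Vol}(P_t) = -4\pi\,d\theta$, and integrating gives
$$\mathrm{Vol}(P_t)=-4\pi\,\theta+C$$
for some constant $C$.

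Finally, I pin down $C$ via the boundary value at $t=t_1$. Either by invoking $\chi(P_{t_1})=1$ from Proposition~\ref{euler_t1:prop} together with Gauss--Bonnet $\mathrm{Vol}(P_{t_1})=\tfrac{4\pi^2}{3}\chi(P_{t_1})$, or by taking the limit $t\to t_1$ of the formula in Proposition~\ref{vol_piccoli} (which at $\theta=\pi/3$, $\varphi=0$ yields the same value $\tfrac{4\pi^2}{3}$), one gets $\mathrm{Vol}(P_{t_1})=\tfrac{4\pi^2}{3}$. Solving $\tfrac{4\pi^2}{3}=-4\pi\cdot\tfrac{\pi}{3}+C$ gives $C=\tfrac{8\pi^2}{3}$, and therefore
$$\mathrm{Vol}(P_t)=\tfrac{8\pi^2}{3}-4\pi\,\theta=\tfrac{4\pi^2}{3}\left(2-\tfrac{3}{\pi}\theta\right),$$
which is the desired formula.

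There is no real obstacle here: the only small point requiring care is to be sure that every angle of the red hexagon is right (so that its area collapses to the clean value $\pi$), and to match the continuity at $t_1$ with the formula of Proposition~\ref{vol_piccoli}, which is automatic since $\theta(t_1)=\pi/3$ and $\varphi(t_1)=0$.
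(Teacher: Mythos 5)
Your plan is correct and matches the paper's own proof: both apply Schl\"afli with the twelve right-angled red hexagons (each of area $\pi$), obtain $d\mathrm{Vol}=-4\pi\,d\theta$, and fix the constant using $\mathrm{Vol}(P_{t_1})=\tfrac{4\pi^2}{3}$. (Minor slip: in your displayed Schl\"afli line the extra factor $\tfrac13$ on the left-hand side should not be there, since you have already divided by $3$ on the right; your subsequent conclusion $d\mathrm{Vol}=-4\pi\,d\theta$ is nevertheless the correct one.)
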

\begin{proof}
The non-constant dihedral angle is $\theta$ at 12 right-angled red hexagons. Therefore, the Schl\"afli formula gives
$$d\mathrm{Vol}=-4\pi d\theta.$$
Moreover, we know that $\mathrm{Vol}(P_{t_1})=\frac{4\pi^2}{3}$ and $\theta(t_1) = \frac \pi 3$.
\end{proof}

We now analyse the last time interval. Recall the final collapse as $t\to0$.

\begin{prop}\label{vol_grandi}
When $t\in[0,t_2]$, 
The volume of $P_t$ depends on the dihedral angle $\theta$, as follows:
$$\mathrm{Vol}(P_t)=\frac{4\pi^2}{3}\left(2-\frac 3{\pi}\theta+\frac 3{\pi^2}\int_a^\theta\eta(\tilde\theta)d\tilde\theta\right),$$
where $a=\arccos\frac 13$ and $\eta$ depends on $\theta$ as prescribed by Proposition \ref{eta:prop}.
Moreover, the volume tends to zero as $t\to0$.
\end{prop}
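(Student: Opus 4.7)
The plan is to follow the pattern of Propositions \ref{vol_piccoli} and \ref{vol_medi}: on the interval $(0, t_2)$ the combinatorics of $P_t$ is constant by Proposition \ref{combinatoria:prop}, so the Schläfli formula can be integrated, with the initial value at $t = t_2$ supplied by Proposition \ref{vol_medi}.

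First I would locate the faces contributing to the Schläfli sum. By Proposition \ref{dihedral:prop}, only the 12 red polygons have a non-right dihedral angle, namely $\theta$. In the present range they are pentagons with four right angles and one angle $\eta(\theta)$ prescribed by Proposition \ref{eta:prop} (see Figure \ref{walls_plus:fig}), so each has area $3\pi - 4\cdot\frac{\pi}{2} - \eta = \pi - \eta(\theta)$. The Schläfli formula therefore reads
$$d\mathrm{Vol}(P_t) = -\frac{1}{3}\cdot 12\bigl(\pi - \eta(\theta)\bigr)\,d\theta = -4\pi\,d\theta + 4\eta(\theta)\,d\theta.$$
Since $\cos\theta(t_2) = \frac{3t_2^2-1}{1+t_2^2} = \frac{1}{3}$ by Proposition \ref{Q_t:prop}, one has $\theta(t_2) = a$, and Proposition \ref{vol_medi} gives $\mathrm{Vol}(P_{t_2}) = \frac{4\pi^2}{3}\bigl(2 - \frac{3a}{\pi}\bigr)$. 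A direct integration and rearrangement produce the claimed formula.

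For the final assertion that $\mathrm{Vol}(P_t) \to 0$ as $t \to 0$, I would argue geometrically rather than attempt to evaluate $\int_a^{\pi} \eta(\tilde\theta)\,d\tilde\theta$ in closed form. Since $\langle\p i,\p i\rangle = 1 + 1/t^2$, the normalized defining vectors of the eight positive walls all converge to $(0,0,0,0,\pm 1)$ as $t \to 0$, so each positive wall collapses onto the hyperplane $\matH^3 = \{x_4 = 0\}$. The negative and letter walls stay transverse to $\matH^3$ and, by Proposition \ref{cuboct:prop}, cut out the fixed ideal cuboctahedron $P_0$ there. Thus $P_t$ is trapped in a slab about $\matH^3$ whose hyperbolic thickness shrinks to zero, while its projection to $\matH^3$ remains bounded, forcing the 4-dimensional volume to tend to zero.

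The main obstacle is to make this squeeze quantitative, because 4-dimensional volume is not in general continuous under Hausdorff convergence of subsets of $\matH^4$. A workable route is to compute the hyperbolic distance from $\matH^3$ to the wall $\p i$ using formula \eqref{formula:eqn}, which tends to $0$ at an explicit rate in $t$, and to dominate $P_t$ by the corresponding slab intersected with a bounded region around $P_0$. As a byproduct, this limit automatically yields the integral identity $\int_a^{\pi}\eta(\tilde\theta)\,d\tilde\theta = \frac{\pi^2}{3}$, which would otherwise be rather unpleasant to derive directly from the expression for $\eta$ in Proposition \ref{eta:prop}.
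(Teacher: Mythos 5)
Your Schl\"afli computation is exactly the paper's: the 12 red pentagons have area $\pi-\eta$, giving $d\Vol=-4(\pi-\eta)\,d\theta$, integrated from the known datum $\Vol(P_{t_2})=\frac{4\pi^2}{3}(2-\frac{3a}{\pi})$ with $\theta(t_2)=a$; this part is correct and identical in method.

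For the final assertion the two arguments diverge. The paper does not run the geometric collapse: it reduces the claim to the identity $\int_a^\pi\arccos\bigl(\frac{\cos\tilde\theta}{1-2\cos\tilde\theta}\bigr)\,d\tilde\theta=\frac{\pi^2}{3}$ and evaluates it by recognizing $\eta(\theta)$ as the edge length of a regular spherical tetrahedron with dihedral angle $\theta$, then applying the spherical Schl\"afli formula $dV=3\eta\,d\theta$ to that tetrahedron between the degenerate endpoints $\theta=a$ (a point, $V=0$) and $\theta=\pi$ (a hemisphere, $V=\pi^2$). This is a short closed-form argument that sidesteps any continuity-of-volume issue entirely, and it buys you the integral identity with essentially no analytic estimates.

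Your proposed geometric squeeze is the route the paper mentions in passing and then deliberately does not take, and as written it has a real gap that you yourself circle around but do not close. Dominating $P_t$ by ``a slab about $\matH^3$ intersected with a bounded region around $P_0$'' cannot work literally: $P_0$ is an ideal polyhedron and $P_t$ has $12$ ideal vertices on $\partial_\infty\matH^3$ that persist for all $t$ (see Proposition \ref{combgrandi}), so no bounded region contains $P_t$, and a thin slab around $\matH^3$ has infinite $4$-volume on its own. To make the collapse argument rigorous you would have to separately estimate the volume of $P_t$ inside $12$ horoball neighbourhoods of its cusps (using the degenerating shape of the Euclidean parallelepiped link as the angle $\mathrm{Angle}(\p i,\matH^3)\to 0$), and then apply the slab estimate only on the compact complement. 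That is doable but substantially more work, and it is precisely what the spherical-tetrahedron computation avoids. Also, a minor imprecision: the hyperplane carrying $\p i$ meets $\matH^3$, so ``the hyperbolic distance from $\matH^3$ to $\p i$'' is $0$; the quantity that actually tends to $0$ and controls the thickness is $\mathrm{Angle}(\p i,\matH^3)$ (Proposition \ref{angle_V:prop}), and the translation from a small angle to a volume bound is nontrivial near the line at infinity where the two hyperplanes meet.
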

\begin{proof}
Looking at Figure \ref{walls_plus:fig}, the non-constant dihedral angle is $\theta$ at the 12 red pentagons of Proposition \ref{eta:prop}.
Therefore, the Schl\"afli formula gives
$$d\mathrm{Vol}=-4(\pi-\eta)d\theta.$$
We know the initial datum at $t=t_2$ from Proposition \ref{vol_medi}.
The Schl\"afli formula is satisfied and the first statement is proved.

The last statement may be proved geometrically by showing that $P_t$ collapses onto the three-dimensional $P_0$, with its ideal vertices staying fixed and the finite ones converging to $\matH^3$. Alternatively, we can show that the value of the following \emph{Coxeter integral} is
$$\int_a^\pi\arccos\left(\frac{\cos\theta}{1-2\cos\theta}\right)d\theta=\frac{\pi^2}{3}.$$
This integral is not easy to compute directly; we rather give a geometric argument.
The Schl\"afli formula for a spherical polyhedron $P$ is
$$d\mathrm{Vol}(P)=\frac 12\sum_il_id\alpha_i.$$
We apply that formula to the regular spherical tetrahedron $T$ with dihedral angles $\theta$.
Recall that $\eta$ is the length of an edge of $T$. Therefore, denoting by $V(\theta)$ the volume of $T$, the formula becomes 
$$dV(\theta)=3\eta\ d\theta.$$
Now, to get the initial and final data of the last differential equation, we analyse the limit cases where $\theta=a=\arccos\frac13$ and $\theta=\pi$.
In the first case, the tetrahedron is a point, thus
$$V(\theta)=3\int_a^\theta\eta(\tilde\theta)d\tilde\theta$$
(this is not so surprising: compare with Poincar\'e formula in Section \ref{polytopes_prelim:section}).
When $\theta=\pi$, instead, the tetrahedron becomes a halfspace of $S^3$ (the surface of the tetrahedron becomes $S^2$ tessellated by four regular spherical triangles with inner angles $\psi=\frac{2\pi}{3}$), therefore
$$V(\pi)=\frac12\mathrm{Vol}(S^3)=\pi^2$$ 
which gives the desired value for the Coxeter integral.
\end{proof}

\begin{figure}
\vspace{.5 cm}
\labellist
\small\hair 2pt
\pinlabel $t_1$ at 480 0 
\pinlabel $t_2$ at 430 0 
\pinlabel $\bar t$ at 350 0 
\pinlabel $\frac 43\pi^2$ at 0 420 
\pinlabel $\frac 56\pi^2$ at 0 270 
\endlabellist
\centering
\includegraphics[width=8 cm]{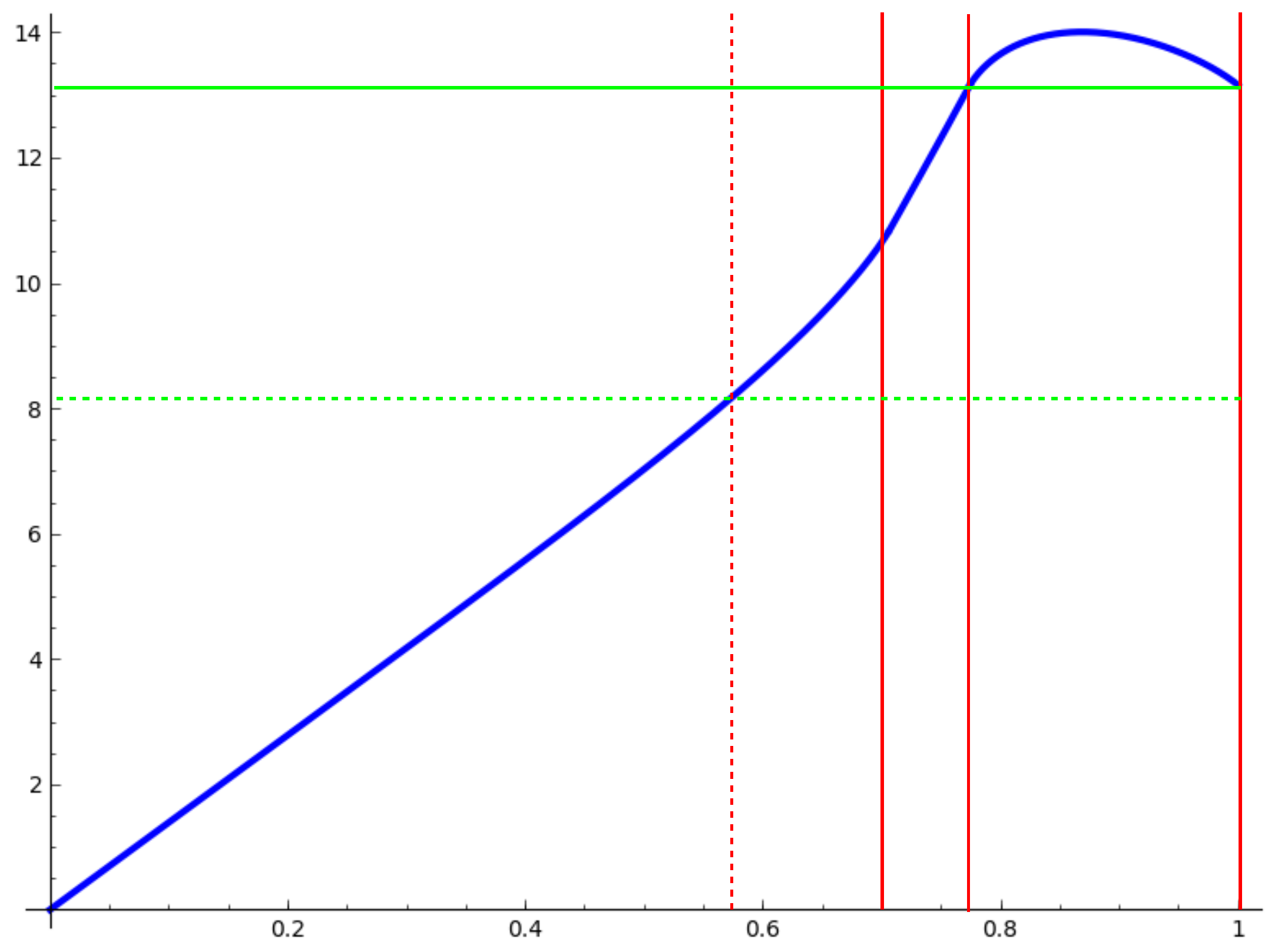}
\vspace{.3 cm}
\nota{The function $\Vol(P_t)$. The polytope $P_t$ changes its combinatorics at the times $t_2, t_1, 1$ and is Coxeter at the times $\bar t, t_1, 1$.}\label{vol_plot_thick:fig}
\end{figure}

\begin{cor}
The function $t \mapsto \Vol(P_t)$ is of class $C^1$ and shown in Figure \ref{vol_plot_thick:fig}.
\end{cor}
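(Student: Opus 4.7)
The plan is to verify the corollary by gluing together the three explicit formulas from Propositions \ref{vol_piccoli}, \ref{vol_medi}, \ref{vol_grandi}. On each of the three open intervals $(t_1,1)$, $(t_2,t_1)$, $(0,t_2)$, the formulas express $\Vol(P_t)$ as a smooth function of $\theta(t)$ and (on the first interval) $\varphi(t)$, both of which are analytic in $t$ by Proposition \ref{Q_t:prop}. Hence smoothness on each open interval is automatic, and the only work is to check the two endpoint matchings at the critical times $t_1$ and $t_2$.

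First I would verify continuity. At $t=t_1$ we have $\theta=\pi/3$ and $\varphi=0$, and plugging into Proposition \ref{vol_piccoli} gives $\frac{4\pi^2}{3}(2-1-0+0)=\frac{4\pi^2}{3}$, which is exactly $\Vol(P_{t_1})$ from Proposition \ref{vol_medi}. At $t=t_2$ we have $\theta=a=\arccos\tfrac13$, and both Propositions \ref{vol_medi} and \ref{vol_grandi} yield $\frac{4\pi^2}{3}(2-\tfrac{3a}{\pi})$, since the integral in \ref{vol_grandi} vanishes at $\theta=a$. So the volume is continuous on $[0,1]$.

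For the $C^1$ matching, the key observation is that the Schl\"afli derivatives coming from the two sides already agree, thanks to the vanishing of the relevant angular factor at the critical time. At $t_1$, the Schl\"afli computation on $(t_1,1)$ (from the proof of Proposition \ref{vol_piccoli}) gives $\frac18 d\Vol = (\varphi-\tfrac\pi2)\,d\theta+(\theta-\tfrac\pi3)\,d\varphi$; evaluated at $\theta=\pi/3,\varphi=0$ the second term drops out and this reduces to $d\Vol=-4\pi\,d\theta$, which is exactly the Schl\"afli formula used on $(t_2,t_1)$ (Proposition \ref{vol_medi}). At $t_2$, the formula on $(0,t_2)$ is $d\Vol=-4(\pi-\eta)\,d\theta$; the explicit expression $\cos\eta=\cos\theta/(1-2\cos\theta)$ from Proposition \ref{eta:prop} gives $\cos\eta=1$, hence $\eta=0$, at $\theta=a$, so it matches $d\Vol=-4\pi\,d\theta$ on the other side. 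Composing with the analytic function $t\mapsto\theta(t)$ (whose derivative is bounded and nonzero at $t_1,t_2$), the one-sided derivatives $\frac{d\Vol}{dt}$ agree at each critical time, giving $C^1$ regularity across the whole interval $[0,1]$.

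The only genuinely interesting step is the matching at $t_2$: it relies on the non-obvious geometric fact that the pentagon angle $\eta$ vanishes exactly when the tetrahedral link becomes Euclidean, which is precisely the moment at which the two new ideal vertices appear. This is a manifestation of the smooth way in which $P_t$ passes through the truncation-creation event at $t_2$, and is what ultimately forces the volume to remain $C^1$ despite the combinatorial change. The shape in Figure \ref{vol_plot_thick:fig} then follows from the monotonicity of the three pieces (each is a monotone function of $\theta$ alone, or of $(\theta,\varphi)$ with $\theta,\varphi$ monotone in $t$) and the values $\Vol(P_1)=\Vol(P_{t_1})=\tfrac{4\pi^2}{3}$, $\Vol(P_{\bar t})=\tfrac{4\pi^2}{3}\chi(P_{\bar t})=\tfrac{5\pi^2}{6}$ from Propositions \ref{euler_t1:prop} and \ref{euler_bart:prop}, together with $\Vol(P_t)\to 0$ as $t\to 0$ from Proposition \ref{vol_grandi}.
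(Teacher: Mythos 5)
Your strategy—prove smoothness on each open interval and then match one-sided derivatives via Schläfli at the two critical times—is the natural one, and your continuity checks and your matching at $t_2$ are correct. But there is a genuine gap in the $C^1$ matching at $t_1$, which happens to be the only place where the corollary is not routine.

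The difficulty is that $\varphi(t)$ is \emph{not} differentiable from the right at $t_1$: since $\cos\varphi(t)$ is analytic in $t$ with $\cos\varphi(t_1)=1$ and nonzero $t$-derivative there, one has $\varphi(t)\sim c\sqrt{t-t_1}$ and $\dot\varphi(t)\to +\infty$ as $t\to t_1^+$. (Your sentence "both of which are analytic in $t$ by Proposition \ref{Q_t:prop}" is therefore false at the endpoint $t_1$; it holds only on the open interval.) Consequently the statement that "the second term drops out" because the coefficient $(\theta-\tfrac\pi3)$ of $d\varphi$ vanishes at $(\theta,\varphi)=(\tfrac\pi3,0)$ is only a statement about the $1$-form on the $(\theta,\varphi)$-plane; to conclude about the one-sided derivative $d\Vol/dt$ you must pull back by $t\mapsto(\theta(t),\varphi(t))$ and control the limit of the product $\bigl(\theta(t)-\tfrac\pi3\bigr)\dot\varphi(t)$, which is of indeterminate type $0\cdot\infty$. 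The resolution is that $\theta(t)$ \emph{is} analytic at $t_1$ (because $\cos\theta(t_1)=\tfrac12\neq\pm1$) with nonzero derivative, so $\theta(t)-\tfrac\pi3=O(t-t_1)$, and hence $\bigl(\theta(t)-\tfrac\pi3\bigr)\dot\varphi(t)=O(\sqrt{t-t_1})\to 0$. Once this estimate is inserted your argument closes, and the $C^1$ claim is correct; but as written the proof silently treats $\dot\varphi$ as harmless and only composes with $\theta(t)$. Note by contrast that the angle $\eta$ has the same square-root singularity at $t_2$, but there it causes no trouble because $\eta$ enters the Schläfli expression only additively in the bounded factor $\pi-\eta$ and is never differentiated; so your assessment that $t_2$ is "the only genuinely interesting step" has it backwards—it is the $t_1$ matching that requires a real argument.
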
 

\section{The manifolds.} \label{manifolds:section}
We now use the deforming polytopes $P_t$ to construct some deforming hyperbolic cone four-manifolds $W_t$, $N_t$, and $M_t$, each tessellated into a fixed number of copies of $P_t$. The manifolds $W_t$ and $M_t$ are those needed for Theorems \ref{main2:teo} and \ref{main:teo}.

\subsection*{Overview}
We first construct a hyperbolic cone-manifold $W_t$ tessellated into eight copies of $P_t$. The manifold $W_t$ is constructed by mirroring $P_t$ three times, one for each wall octet: this is a particularly simple application of a colouring technique that we introduce in Section \ref{8copie:sec}. In fact $W_t$ is the simplest interesting cone-manifold that we can construct from $P_t$.

The deforming cone manifold $W_t$ has many symmetries and is relatively easy to analyse, so we do this with some detail. As usual, we think of $t$ moving backwards from the initial time $1$ in the interval $(0,1]$. Along the path in $(0,1]$ we discover various types of hyperbolic Dehn surgeries, and a final degeneration at $t\to 0$ similar to the one described by Thurston in his notes \cite{bibbia}. This proves Theorem \ref{main2:teo}.

When $t$ varies in the interval $[t_1,1]$, the manifold $W_t$ is quite like the one needed for Theorem \ref{main:teo}, except that it interpolates between a manifold and an \emph{orbifold}. To promote the orbifold to a manifold, we need to modify the construction: we build a new cone-manifold deformation $N_t$ via a more complicated pattern, and then further quotient it to get the $M_t$ of Theorem \ref{main:teo}.

The cone-manifolds $W_t, M_t, N_t$ that we construct here are not special in any sense: there are many ways one can modify their construction to produce different deforming cone-manifolds from $P_t$ with different types of behaviour. By taking finite covers one can also get infinitely many examples of various kinds. The only difficulty in the overall process is, of course, that we are working in dimension four and hence the combinatorial patterns are more complicated than in dimension three. 

\subsection{The colouring technique} \label{8copie:sec}
How can we construct a hyperbolic cone manifold from a single polytope $P$? A simple method consists of colouring its walls and then mirroring $P$ iteratively along them.

That is, we take a palette $\{c_1,\ldots, c_k\}$ of colours and assign arbitrarily a colour to every wall of $P$ (we suppose that each colour $c_i$ is assigned to $P$ at least once); then we mirror $P$ iteratively $k$ times along its walls, one colour at a time. 

More specifically, for every $I = (i_1,\ldots, i_k) \in \{0,1\}^k$ we fix a copy $P^I$ of $P$, and we identify every point in a wall of $P^I$ coloured with $c_i$ with the corresponding point in $P^{I'}$ where $I'$ differs from $I$ only in its $i$-th coordinate.

The resulting space is a hyperbolic cone manifold $M$ tessellated into $2^k$ copies of $P$. If $P$ is right-angled, and every pair of adjacent walls have different colours, then $M$ is a hyperbolic manifold (with no singularities). 

This construction works in all dimensions and was used for instance in \cite{KM} with the standard three-colouring of the ideal 24-cell $P_1$. It is now natural to extend it to $P_t$ for all $t\in (0,1]$.

\subsection{A family $W_t$ of hyperbolic cone four-manifolds} \label{Wt:subsection}
We now apply the colouring technique to our family $P_t$ of deforming polytopes, for all $t\in (0,1]$.

Each polytope $P_t$ in the family has either 24 or 22 walls, partitioned into \emph{letter}, \emph{negative}, and \emph{positive} walls. We interpret this as a colouring of the walls of $P_t$ with three colours $\{$L, N, P$\}$,
and we define $W_t$ to be the space obtained from $P_t$ by mirroring it as prescribed by this colouring, as explained above.

The space $W_t$ is a hyperbolic cone-manifold for all $t\in (0,1]$. It is tessellated into $2^3=8$ copies $P_t^{ijk}$ of $P_t$, whose walls are identified according to the following cubic scheme:
$$
\xymatrix@!0{ 
& P_t^{000} \ar@{-}^{\rm N}[rr]\ar@{-}'[d][dd]_{\rm L} 
& & P_t^{001} \ar@{-}[dd]^{\rm L} 
\\ 
P_t^{100} \ar@{-}[ur]^{\rm P}\ar@{-}[rr]^{\ \ \rm N}\ar@{-}[dd]_{\rm L} 
& & P_t^{101} \ar@{-}[ur]^{\rm P}\ar@{-}[dd]^<<<{{\rm L}}
\\
& P_t^{010} \ar@{-}'[r]^{\rm N}[rr]
& & P_t^{011} 
\\
P_t^{110} \ar@{-}[rr]_{\rm N}\ar@{-}[ur]^{\rm P} 
& & P_t^{111} \ar@{-}[ur]_{\rm P} 
}
$$
When $t=1$ the polytope $P_1$ is the right-angled ideal 24-cell and $W_1$ is a nice and very symmetric hyperbolic four-manifold with 24 cusps, each cusp having a cubic 3-torus section: this hyperbolic four-manifold was first described in \cite[Example 2.9]{KM}. We now study $W_t$ when $t < 1$.

\subsection*{The singular set $\Sigma$}
When $t<1$ the polytope $P_t$ is not right-angled anymore, hence some singularities appear in $W_t$. Luckily, only few faces in $P_t$ are not right-angled, so the singularities are easily detected. 

\begin{prop} \label{solo:prop}
The singular set $\Sigma$ of $W_t$ is the union of the green and red faces of the eight copies of $P_t$.
\end{prop}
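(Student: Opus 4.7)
The strategy is to compute the cone angle at each codimension-two face of $W_t$ and then apply Proposition \ref{dihedral:prop} to decide when it differs from $2\pi$.

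First I would use the fact that $P_t$ is simple, so every face $F$ is the intersection of exactly two walls $W_a, W_b$ with colours $c_a, c_b \in \{\textrm{L},\textrm{N},\textrm{P}\}$, and then track how the eight copies of $P_t$ indexed by $\{0,1\}^3$ wrap around $F$: traversing $F$ corresponds to alternately flipping the $c_a$-th and $c_b$-th coordinates of the index, so the orbit closes after $4$ mirrorings when $c_a \neq c_b$ and after only $2$ mirrorings when $c_a = c_b$. Hence the cone angle at $F$ in $W_t$ equals $4\alpha_F$ in the first case and $2\alpha_F$ in the second, where $\alpha_F$ is the dihedral angle of $P_t$ at $F$.

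The next step is a classification of face adjacencies by colour pair, read off from Figures \ref{walls:fig}--\ref{walls_plus:fig}: no two letter walls are adjacent, no two negative walls are adjacent, and two positive walls are adjacent precisely along one of the $12$ red polygons listed in Proposition \ref{dihedral:prop}. Combined with Proposition \ref{dihedral:prop}, the faces therefore split into three types: the $12$ red polygons, with colour pair $(\textrm{P},\textrm{P})$ and $\alpha_F=\theta$; the $8$ green triangles (present only for $t\in(t_1,1)$), with colour pair $(\textrm{L},\textrm{N})$ and $\alpha_F=\varphi$; and all remaining faces, with walls of distinct colours and $\alpha_F=\tfrac{\pi}{2}$.

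Plugging into the formula yields cone angles $2\theta$ on red faces, $4\varphi$ on green faces, and $2\pi$ on every other face. Since $2\theta\neq 2\pi$ for $t\in(0,1)$ and $4\varphi\neq 2\pi$ for $t\in(t_1,1)$, the codimension-two part of the singular set is exactly the union of red and green faces in the eight copies; the lower-dimensional strata are then swept up by the closure of these faces, giving the claimed $\Sigma$. I expect the main obstacle to be nothing deeper than careful bookkeeping of the wall adjacencies behind the classification above; the cone-angle computation itself is just the standard colouring argument of \cite{KM}, and the three time-interval combinatorics of $P_t$ already established in Section \ref{polytope:sec} makes the inspection routine.
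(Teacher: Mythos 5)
Your argument follows essentially the paper's, made more explicit: both hinge on the observation that outside the closed red and green faces every codimension-two face of $P_t$ is right-angled between walls of distinct colours, so the colouring construction yields a total cone angle of $2\pi$ there. Where the paper states this in one line and concludes smoothness, you compute the number of copies of $P_t$ wrapping around each face and the resulting cone angle explicitly, which is a fine elaboration. The only point worth sharpening is your closing clause that the lower-dimensional strata are ``swept up'' by the closure of the red and green faces: $\Sigma$ is defined as the set of points with non-standard unit tangent space and is not \emph{a priori} the closure of the singular codimension-two strata, so you should check the codimension-three and -four points directly. Your face classification already supplies exactly what is needed --- at any edge or vertex of $P_t$ lying in no closed red or green face, every pair of incident walls has distinct colours (since the only same-colour adjacencies are the red positive-positive pairs) and every incident codimension-two face is right-angled, so the assembled spherical link is the standard $S^2$ or $S^3$ and the point is regular --- but the step should be stated rather than left implicit.
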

\begin{proof}
At every point $x\in \partial P$ that does not lie in a green or red face, the polytope is locally right-angled and the adjacent walls have distinct colours. Therefore $x$ becomes a smooth point in $W_t$.
\end{proof}

\begin{figure}
\vspace{.5 cm}
\labellist
\small\hair 2pt
\pinlabel $(t_1,1)$ at 120 400 
\pinlabel $t_1$ at 370 400 
\pinlabel $(t_2,t_1)$ at 605 400 
\pinlabel $t_2$ at 760 400
\pinlabel $(0,t_2)$ at 875 400 

\pinlabel $2\theta$ at 145 298 
\pinlabel $2\theta$ at 83 298
\pinlabel $2\theta$ at 115 317 

\pinlabel $4\varphi$ at 52 110 
\pinlabel $4\varphi$ at 170 110 

\pinlabel $\eta$ at 885 92 
\pinlabel $\eta$ at 885 144 
\pinlabel $\eta$ at 860 47 
\pinlabel $\eta$ at 860 189 
\endlabellist
\centering
\includegraphics[width=12.5 cm]{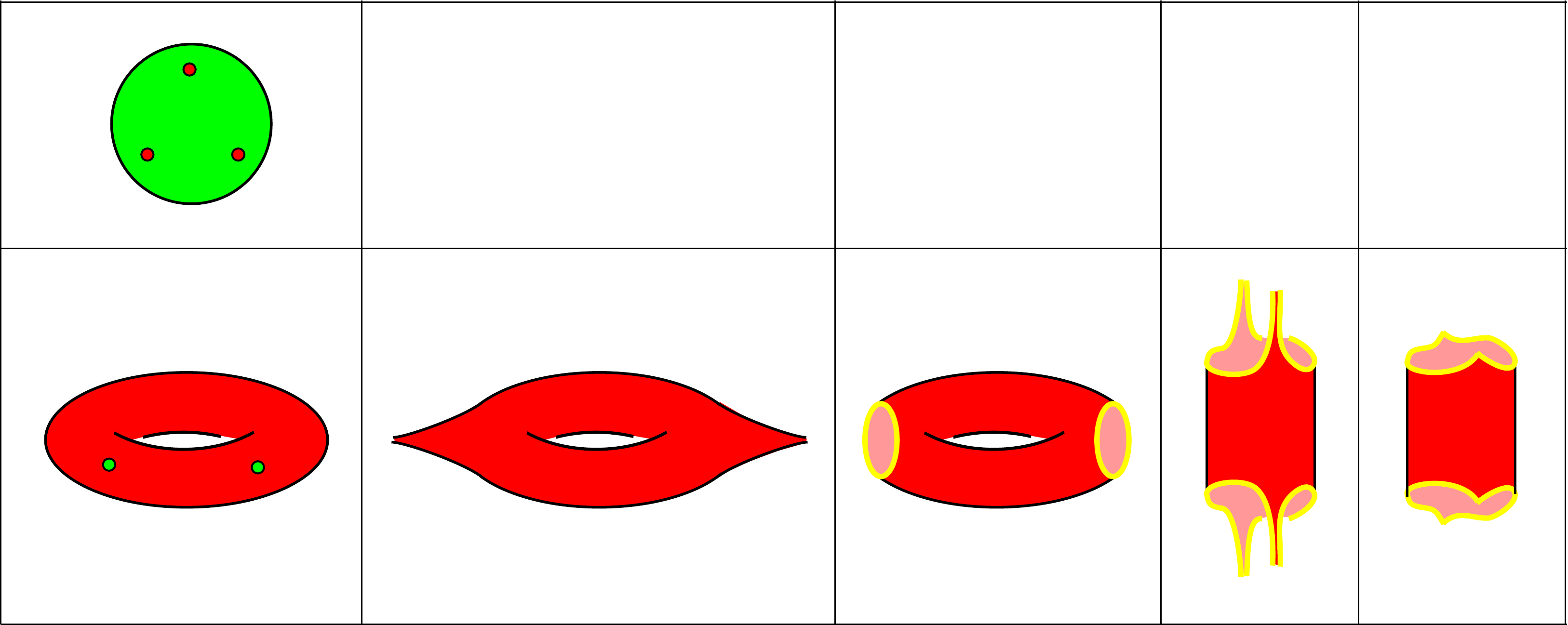}
\vspace{.3 cm}
\nota{Each closed 2-stratum of the singular set $\Sigma$ of $W_t$ is either green or red and its topology is shown here, depending on $t\in (0,1)$. The green closed stratum is a sphere with 3 cone points of angle $2\theta$ (the cone points are 0-strata) and arises only when $t>t_1$. The red closed stratum is a cone torus for $t>t_1$, a twice-punctured torus for $t=t_1$, and a compact twice-holed torus with geodesic boundary for $t\in (t_2,t_1)$; the topology of the red closed stratum changes at $t=t_2$ into an annulus: the geodesic boundary is non compact at $t=t_2$, and two boundary cone points arise when $t\in (0,t_2)$ with some angle $\eta$.}\label{polygons1:fig}
\end{figure}

In particular $\Sigma$ is the closure of its 2-strata and we can describe it quite easily. 
Recall from Figure \ref{cone_3_manifolds:fig} the names of some elliptic cone three-manifolds. We will also use the following terminology.

\begin{defn} \label{Sn:defn}
We denote by $S^n(\alpha)$ the (hyperbolic, Euclidean, or spherical) cone $n$-manifold obtained by doubling the regular (hyperbolic, Euclidean, or spherical) $n$-simplex with dihedral angle $\frac\alpha 2$ (when it exists). All the $(n-2)$-dimensional strata in $S^n(\alpha)$ have cone angle $\alpha$. In the Euclidean case we have $\cos \frac \alpha 2= \frac 1n$ and $S^n(\alpha)$ is defined only up to rescaling.
\end{defn}

We call a \emph{closed $k$-stratum} the closure of a $k$-stratum.

\begin{prop} \label{Sigma:W:prop}
Each closed 2-stratum of $\Sigma \subset W_t$ is either a green or red hyperbolic surface as shown in Figure \ref{polygons1:fig}. Its cone angle is respectively $4\varphi$ and $2\theta$.

There are 1-strata only when $t\in (0,t_1)$. The unit tangent space at a point in a 1-stratum is $S^0*S^2(2\theta)$.

There are 0-strata only in two disjoint time intervals, and these are the following:
\begin{itemize}
\item when $t\in(t_1,1)$, there are 24 points with unit tangent space $C_{2 \theta} * C_{4\varphi}$;
\item when $t\in (0,t_2)$, there are 8 points with unit tangent space $S^3(2\theta)$.
\end{itemize}
\end{prop}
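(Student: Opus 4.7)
I would prove the proposition by analysing the strata of $\Sigma$ in turn, using Proposition \ref{solo:prop} and the combinatorial/geometric information on $P_t$ from Section \ref{polytopes:section}.

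\textbf{Cone angles and topology of 2-strata.} For a face $F = W_1 \cap W_2$ of $P_t$, the number of copies of $P_t$ meeting the image of $F$ in $W_t$ equals $8/|\mathrm{Stab}(F)|$, where the stabiliser is generated by the mirror reflections corresponding to the colours of $W_1, W_2$; this number is $4$ if the two colours differ and $2$ if they agree. Green faces (L$\cap$N) thus give cone angle $4\varphi$ and red faces (P$\cap$P) give cone angle $2\theta$. The closed 2-strata are obtained by gluing the $8$ copies of each such face along their boundary edges via the mirror identifications; tracing these gluings case-by-case from Figures \ref{walls:fig}--\ref{walls_plus:fig} and \ref{polygons0:fig} recovers the surfaces of Figure \ref{polygons1:fig}.

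\textbf{1-strata.} A 1-stratum appears at an edge of $P_t$ where more than two 2-strata of $\Sigma$ meet after mirroring. Inspecting the face boundaries across all time intervals, the only such edges are the yellow edges of the positive walls (triple intersections of P-walls meeting three red faces), present precisely for $t \in (0,t_1)$. The transverse link of a yellow edge in $P_t$ is a spherical triangle with three sides of dihedral angle $\theta$ lying on P-walls; the three P-doublings turn this triangle into the prime spherical cone-surface $S^2(2\theta)$ of Definition \ref{Sn:defn}, so the unit tangent space at every point of the 1-stratum is $S^0 * S^2(2\theta)$.

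\textbf{0-strata.} A finite vertex $v$ of $P_t$ contributes 0-strata to $W_t$ iff the cone 3-manifold link assembled from $8/|\mathrm{Stab}(v)|$ copies of the tetrahedral link of $v$ in $P_t$ is prime (with $|\mathrm{Stab}(v)|$ equal to $2$ raised to the number of distinct colours of walls through $v$). Ideal vertices give cusps. Running through the vertex types in Propositions \ref{combpiccoli}--\ref{combgrandi}: (i) vertices with link type (1) in Figure \ref{links:fig} (colours $2\mathrm{P}+\mathrm{N}+\mathrm{L}$, present for all $t$) have only the P-P edge singular, whose P-doubling folds it into a single circle with cone angle $2\theta$, yielding the non-prime link $S^1 * C_{2\theta}$; (ii) vertices with link type (2) (also $2\mathrm{P}+\mathrm{N}+\mathrm{L}$ but with the L-wall $\l G$ or $\l H$, present only for $t \in (t_1,1)$) preserve the join structure $I_\varphi * I_\theta$ under the colour-doublings --- the P-doubling closes the arc $I_\varphi$ cyclically into a curve of length $4\varphi$ with cone angle $2\theta$, while the L- and N-doublings close $I_\theta$ into a curve of length $2\theta$ with cone angle $4\varphi$ --- giving the prime cone 3-manifold $C_{2\theta} * C_{4\varphi}$, and since $v$ meets all three colours the orbit has size $1$, so the $24$ such vertices yield $24$ points; (iii) vertices with link type (3) (colours $3\mathrm{P}+\mathrm{N}$ or $3\mathrm{P}+\mathrm{L}$, present for $t \in (0,t_1]$) assemble into the non-prime suspension $S^0 * S^2(2\theta)$ and therefore lie as interior points on the closed yellow 1-stratum of Step 3, contributing no 0-stratum; (iv) vertices with link type (4) (colours $4\mathrm{P}$, present for $t \in (0,t_2)$) have stabiliser of order $2$ (only the P-reflection fixes $v$), so the orbit has size $4$, and the P-doubling alone doubles $\Delta^3(\theta)$ along its four P-faces into the prime cone 3-manifold $S^3(2\theta)$; the $2$ such vertices in $P_t$ thus yield $2 \cdot 4 = 8$ points.

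The main obstacle is the combinatorial bookkeeping in the 0-strata analysis, especially for link types (2) and (3): one must carefully track the identifications of the vertices, edges, and faces of the link tetrahedron under all three colour-reflections, to verify that in case (ii) the two singular arcs close up as Hopf circles of the prime join $C_{2\theta} * C_{4\varphi}$ and that in case (iii) the four tetrahedra glue to the non-prime suspension $S^0 * S^2(2\theta)$, so that no extra 0-strata arise in the intermediate time intervals $\{t_1\}$, $(t_2,t_1)$, $\{t_2\}$.
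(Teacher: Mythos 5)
Your proof follows the paper's strategy exactly: classify the finite vertices of $P_t$ by the colours of the walls through them, mirror their spherical tetrahedral links according to the colouring, and read off the stratum type from the prime decomposition of the resulting spherical cone 3-manifold; all your conclusions match the proposition. Two small bookkeeping slips, both harmless since the values you then state are correct: the number of copies of $P_t$ glued around a face, or assembled around the link of a vertex, is $|\mathrm{Stab}|$, the order of the subgroup of $(\matZ/2\matZ)^3$ generated by the colours present, not $8/|\mathrm{Stab}|$, which instead counts the distinct images in $W_t$; and in case (ii) you swap the roles of the two mirrorings --- the P-mirror closes the $I_\theta$ arc (the N--L edge of the link tetrahedron, which carries dihedral angle $\varphi$) into $C_{2\theta}$, while the N- and L-mirrors close the $I_\varphi$ arc (the P--P edge, carrying dihedral angle $\theta$) into $C_{4\varphi}$, so you arrive at the same $C_{2\theta}*C_{4\varphi}$ but with the opposite assignment of factors.
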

\begin{proof}
To understand $\Sigma$, we analyse all the vertices $v$ of $P_t$ and determine the unit tangent space of their images in $W_t$. The vertices of $P_t$ are fully described in Propositions \ref{combpiccoli}, \ref{combcrit1}, \ref{combmedi}, and \ref{combgrandi}, and we refer to them.

We analyse the finite vertices $v$ of $P_t$ case by case. The link of $v$ in $P_t$ is always some spherical tetrahedron $\Delta$ whose four faces are naturally coloured like the walls they are contained in.
We refer to Figure \ref{links:fig}.

The unit tangent space of $v$ in $M_t$ is obtained by mirroring $\Delta$ along its faces according to the colours.

We note that a spherical tetrahedron with 4 right dihedral angles $\frac \pi 2$ and two opposite edges with dihedral angles $\alpha$ and $\beta$ is a spherical join $I_\alpha * I_\beta$ of two circle arcs of length $\alpha$ and $\beta$.

\begin{enumerate}
\item For every $t\in(0,1)$ the polytope $P_t$ has 24 finite vertices $v$ with link the spherical join $\Delta = I_\theta * I_{\frac{\pi}{2}}$. The 4 faces of $\Delta$ are coloured as P, P, N, L, with:
\begin{itemize}
\item the edge $I_{\frac \pi 2}$ lying between the two faces coloured by P, that form a dihedral angle $\theta$, and 
\item the edge $I_\theta$ lying between N and L, that form a dihedral angle $\frac \pi 2$. 
\end{itemize}
By mirroring $\Delta$ along L we get $I_\theta*I_\pi$ and by then mirroring along N we get $I_\theta*S^1$. Finally, by mirroring the result along P we get $C_{2\theta}*S^1$. Therefore the vertex $v$ in $W_t$ is an interior point of some 2-stratum of $\Sigma$.
\item When $t\in(t_1,1)$ the polytope $P_t$ has 24 vertices $v$ with link $I_\theta * I_\varphi$. Similarly as before, the resulting unit tangent space in $W_t$ is $C_{2\theta} * C_{4\varphi}$.
\item When $t\in(0,t_1)$, the polytope $P_t$ contains some (either 16 or 8) vertices $v$ with link a spherical tetrahedron with three edges sharing a vertex having dihedral angle $\theta$, while the other three have dihedral angle $\frac \pi 2$. Three faces are coloured with P and one with either N or L. By mirroring along N or L we get $S^0*T$, where $T$ is the equilateral spherical triangle with inner angles $\theta$. By mirroring the result along P we get $S^0*S^2(2\theta)$. Therefore $v$ in $W_t$ belongs to the 1-stratum of $\Sigma$.
\item When $t\in(0,t_2]$, the polytope $P_t$ contains 2 vertices $v$ with link a spherical regular tetrahedron with all dihedral angles $\theta$ and all faces coloured by P. By mirroring it we get $S^3(2\theta)$. 
\end{enumerate}
This discussion determines the possible unit tangent spaces at every point of $W_t$ for all times $t\in (0,1)$, since the vertices contain all the relevant information.

The 2-strata in Figure \ref{polygons1:fig} are obtained by analyzing the effect of the mirroring to the green and red polygons of Figure \ref{polygons0:fig}. Each side $e$ of every green or red polygon $f$ is naturally coloured by the colour of the unique wall that is incident to $e$ but does not contain $f$ (every edge in a simple polytope is incident to three walls). By applying the mirroring technique we get the 2-stratum. Here are the details:
\begin{itemize}
\item the three sides of the green triangles are coloured with P, the triangle is mirrored and gives a green sphere $S^2(2\theta)$ with three cone points of angle $2\theta$, and this is a closed 2-stratum;
\item the horizontal and vertical sides of the red polygon in Figure \ref{polygons0:fig} are coloured by L and N, so at $t> t_1$ the polygon is a quadrilateral and is mirrored twice to give a torus with two cone points of angle $4\varphi$, and each torus is tessellated by four rectangles and forms a closed stratum; when $t<t_1$, the diagonal sides are coloured with P and are not mirrored: they form the (yellow) boundary of the 2-stratum (which consists of closed 1-strata).
\end{itemize}
The proof is complete.
\end{proof}

\begin{cor}
When $t\in (t_1,1)$ the singular set $\Sigma$ is an an immersed geodesic surface made of 12 cone-tori and 8 cone-spheres, intersecting in 24 points.
\end{cor}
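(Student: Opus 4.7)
The statement is essentially a direct reading of Proposition~\ref{Sigma:W:prop}, and the plan is to (i) verify that $\Sigma$ satisfies the definition of an immersed geodesic cone-surface from Section~\ref{cone:intro:subsection}, then (ii) count the closed 2-strata and (iii) count their pairwise intersections using the mirroring structure of $W_t$.

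For (i), Proposition~\ref{Sigma:W:prop} shows that when $t\in(t_1,1)$ one has $W_t[1]=\emptyset$ and the unit tangent space at every 0-stratum is $C_{2\theta}*C_{4\varphi}$. By the definition recalled at the end of Section~\ref{cone:intro:subsection}, this is precisely the condition for $\Sigma$ to be the image of a geodesic immersion $\tilde\Sigma\looparrowright W_t$ of a hyperbolic cone-surface $\tilde\Sigma$, obtained by resolving the 24 double points of $\Sigma$ lying in $W_t[0]$.

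For (ii), I would use the $(\matZ/2\matZ)^3$-action permuting the 8 copies $P_t^{(p,n,l)}$ of $P_t$ that tessellate $W_t$. A green 2-face of $P_t$ is the intersection of an N-wall with an L-wall, so exactly 4 copies (those obtained by flipping $n$ and $l$) meet along it; hence each of the 8 green triangles of $P_t$ contributes $8/4=2$ green faces in $W_t$, which Proposition~\ref{Sigma:W:prop} identifies as the two halves of a sphere mirrored along the P-coloured edges, giving 8 green spheres. A red 2-face of $P_t$ is the intersection of two P-walls, along which only 2 copies meet (both walls flip the same coordinate $p$), so each of the 12 red polygons contributes $8/2=4$ singular faces in $W_t$; by the proof of Proposition~\ref{Sigma:W:prop}, each red torus is built from 4 such quadrilaterals glued along their L- and N-coloured edges, giving 12 red tori.

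For (iii), the 24 intersection points are precisely the 0-strata of $W_t$. As a cross-check, the 3 vertices of each green triangle in $P_t$ lift to cone points on the corresponding green sphere (giving $8\cdot3=24$), while the 2 non-right-angled vertices of each red polygon (those with interior angle $\varphi$) lift to cone points on the corresponding red torus (giving $12\cdot2=24$); the two counts agree and both enumerate the same set of 0-strata. At each such point one red and one green 2-stratum meet transversely, realising the transverse-join structure $C_{2\theta}*C_{4\varphi}$ of the tangent space. The only mild obstacle is the orbit-stabiliser bookkeeping of faces across the 8 copies, which is handled cleanly by the symmetry group action; there is no new geometric content beyond what Proposition~\ref{Sigma:W:prop} provides.
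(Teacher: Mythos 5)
Your argument is correct and takes the same approach as the paper, which states this corollary without a separate proof as an immediate reading-off of the preceding proposition and its proof. The orbit-stabiliser bookkeeping you carry out across the $2^3$ copies of $P_t$ (one green sphere per green triangle, one red torus per red quadrilateral) and the double-count of the $24$ cone points as $8\cdot 3 = 12\cdot 2$ simply make explicit the tacit computation behind the paper's statement.
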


The intersection pattern of the red cone-tori and green cone-spheres is shown in Figure \ref{sphere_tori:fig}-(left). The figure then shows the evolution of $\Sigma$ when $t>t_2$.

Note that for all $t\in(0,1]$ the unit tangent spaces are cone-manifolds always supported on the sphere $S^3$. Therefore the cone-manifold $W_t$ is always supported on a four-manifold.

Here is another important consequence of Proposition \ref{Sigma:W:prop}.

\begin{cor}
When $t=t_1$ the hyperbolic cone manifold $W_{t_1}$ is an orbifold. Its singular set $\Sigma$ consists of 12 red twice-punctured tori with cone angle $\frac {2\pi} 3$.
\end{cor}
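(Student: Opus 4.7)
The plan is to specialize all the data already accumulated in the paper at the critical time $t=t_1$: read off the values of $\theta$ and $\varphi$, then use them in Proposition \ref{Sigma:W:prop} to identify every stratum of $\Sigma$, and finally verify that the surviving cone angle is an integer submultiple of $2\pi$.

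First I would evaluate Proposition \ref{Q_t:prop} at $t_1 = \sqrt{3/5}$ to get $\theta(t_1) = \pi/3$ and $\varphi(t_1) = 0$. Plugging these into Proposition \ref{Sigma:W:prop} rules out both lower-dimensional types of singularity: the $1$-strata occur only for $t \in (0,t_1)$, while the $0$-strata with link $C_{2\theta}*C_{4\varphi}$ exist only for $t \in (t_1,1)$ and those with link $S^3(2\theta)$ only for $t \in (0,t_2)$. Hence at $t=t_1$ the singular locus is the union of the closed $2$-strata alone.

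Next I would sort the $2$-strata into the two colours. The green closed strata have cone angle $4\varphi = 0$; geometrically this means they degenerate to cusps rather than singularities. Concretely, the $8$ collapsed green triangles account precisely for the $8$ new ideal vertices of $P_{t_1}$ appearing in Proposition \ref{combcrit1} case $(2)$, so each green sphere disappears into a cusp of $W_{t_1}$ and contributes no singular stratum. The $12$ red closed strata, on the other hand, are exactly the surfaces depicted at $t = t_1$ in Figure \ref{polygons1:fig}: twice-punctured tori, where the two punctures on each are the two (now ideal) ex-vertices of the two adjacent green triangles. Each carries cone angle $2\theta = 2\pi/3$.

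To conclude that $W_{t_1}$ is an orbifold, I would observe that $2\pi/3 = 2\pi/n$ with $n=3$, so locally along the smooth part of each twice-punctured torus the cone metric is the quotient of $\matH^4$ by the cyclic group of order $3$; there are no other singular strata to worry about, so $W_{t_1}$ is a genuine hyperbolic $4$-orbifold with $\mathbb{Z}/3\mathbb{Z}$ stabilizers along $\Sigma$. There is no real obstacle: once the combinatorial evolution of $P_t$ and the description of $\Sigma$ are in place, the corollary is pure bookkeeping at $t=t_1$, the key numerical fact being the coincidence $\theta(t_1) = \pi/3$.
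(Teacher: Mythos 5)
Your argument is correct and follows the same logic as the paper, which condenses the whole thing into the single observation that $2\theta = 2\pi/3$ at $t=t_1$ (with Proposition \ref{Sigma:W:prop} and Figure \ref{polygons1:fig} doing the heavy lifting implicitly). Your write-up simply makes explicit the bookkeeping — that the green spheres and all $0$- and $1$-strata have vanished at $t_1$, that the red strata are the twice-punctured tori of Figure \ref{polygons1:fig}, and that a pure cone angle of $2\pi/3$ gives $\matZ/3\matZ$ local orbifold charts — so it is a correct, slightly expanded version of the paper's proof rather than a different route.
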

\begin{proof}
At $t=t_1$ we have $2\theta = \frac {2\pi} 3$.
\end{proof}

We have shown that the family $W_t$ with $t\in[t_1,1]$ interpolates between a manifold for $t=1$ and an orbifold for $t=t_1$. We now analyse the cusps of the whole family.

\begin{figure}
\labellist
\small\hair 2pt
\pinlabel $(t_1,1)$ at 100 420
\pinlabel $t_1$ at 355 420
\pinlabel $(t_2,t_1)$ at 595 420
\pinlabel $W_t$ at 100 200
\pinlabel $W_{t_1}$ at 355 200
\pinlabel $W_t$ at 595 200
\endlabellist
\centering
\vspace{.5 cm}
\includegraphics[width=11 cm]{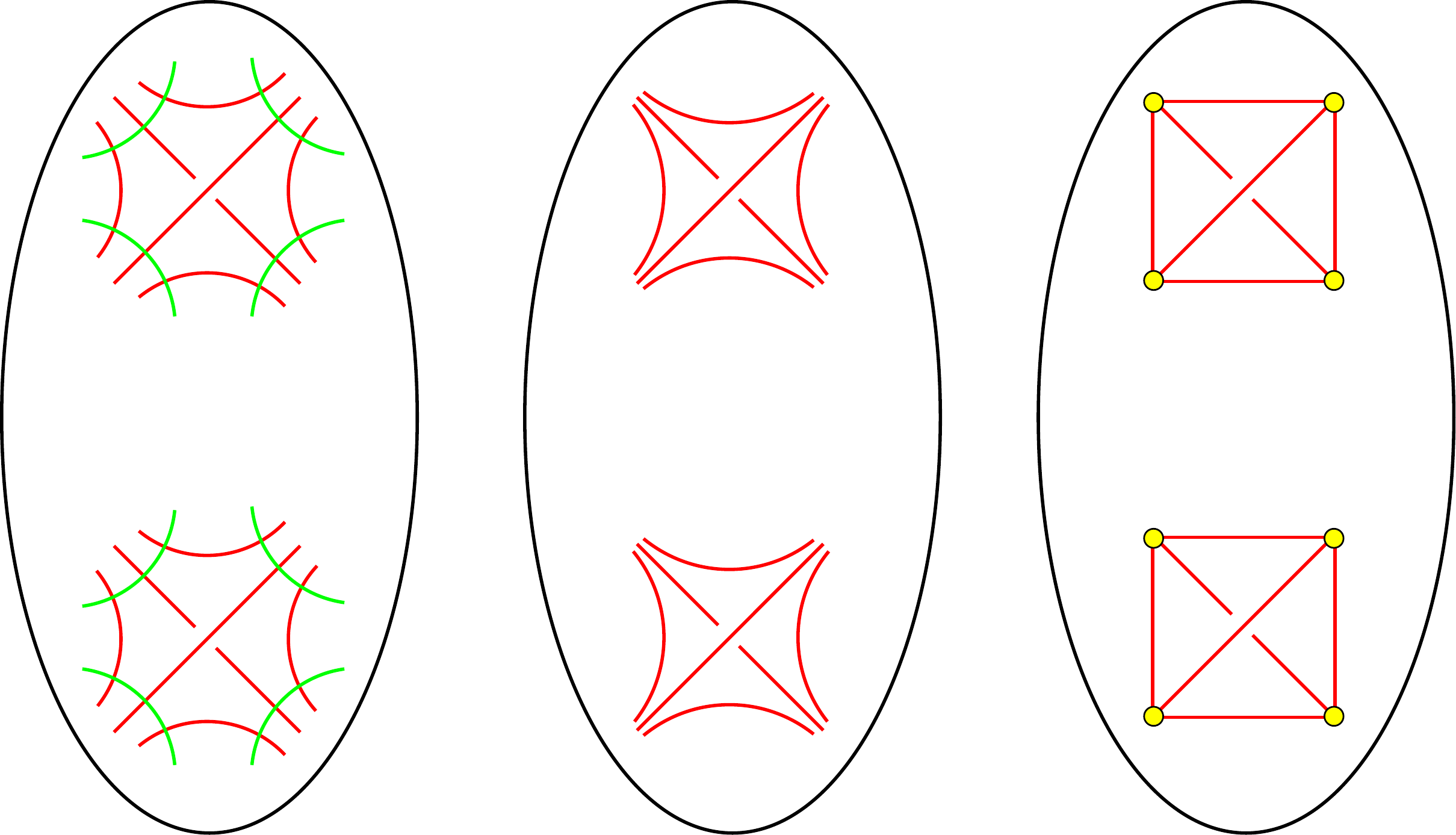}
\nota{The evolution of the singular locus $\Sigma$ of $W_t$. When $t\in (t_1,1)$ the singular locus $\Sigma$ consists of 12 red cone-tori (with two singular points) and 8 green cone-spheres (with three singular points) that intersect transversely precisely in their 24 singular points (left). When $t=t_1$ the cone-spheres disappear to infinity and the 12 cone-tori transform into punctured tori: triples of punctures of distinct tori go to the same cusp in $W_{t_1}$ (centre). When $t\in (t_2,t_1)$ the cusps in $W_{t_1}$ are filled with small simple closed geodesics and each twice-punctured torus transforms into a twice-holed compact torus with geodesic boundary consisting of two of these small geodesics; twice-holed tori and closed geodesics are represented as red edges and yellow vertices, respectively (right). The evolution continues with the interval $(0,t_2]$, but we do not draw it here.}\label{sphere_tori:fig}
\end{figure}

\subsection*{The cusps}
Recall the notation introduced in Definition \ref{Sn:defn}. The \emph{type} of a cusp is the homeomorphism type of a Euclidean cone 3-manifold section (we only determine the homeomorphism type, not the isometry type.)

\begin{prop}
For every $t\in (0,1]$ the hyperbolic cone four-manifold $W_t$ has 12 cusps of three-torus type, plus some additional cusps only at the critical times:
\begin{itemize}
\item when $t=1$ there are 12 additional cusps of three-torus type,
\item when $t=t_1$ there are 8 additional cusps of type $S^2(\frac{2\pi}3) \times S^1$, 
\item when $t = t_2$ there are 8 additional cusps of type $S^3(2\arccos \frac 13)$.
\end{itemize}
\end{prop}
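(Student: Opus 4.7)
The plan is to count and identify cusps by analyzing, for each ideal vertex $v$ of $P_t$, the orbit of its $8$ copies $v^I$ (one in each $P_t^I$, $I\in\{0,1\}^3$) under the wall identifications defining $W_t$. A wall identification relates $P_t^I$ to $P_t^{I'}$ with $I'$ differing from $I$ only in the coordinate corresponding to the colour of the wall. Hence the orbit of $v^I$ has size $2^{|C_v|}$, where $C_v\subseteq\{\mathrm L,\mathrm N,\mathrm P\}$ is the set of colours of walls of $P_t$ incident to $v$. Consequently $v$ contributes exactly $2^{3-|C_v|}$ cusps to $W_t$, whose common Euclidean cross-section is built by gluing $2^{|C_v|}$ copies of the link $L_v$ via the colour identifications.

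To apply this principle I go through the list of ideal vertices in Propositions \ref{combpiccoli}--\ref{combgrandi}. The $12$ ideal vertices present for every $t\in(0,1]$ are incident to two positive, two negative and two letter walls (Proposition \ref{combpiccoli}, case (1)), so $|C_v|=3$ and each contributes one cusp, yielding the $12$ cusps common to all $W_t$. The extra ideal vertices appear only at the critical times: the $12$ extra ideal vertices of the $24$-cell $P_1$, with cubic links carrying two walls of each colour ($|C_v|=3$); the $8$ extra ideal vertices at $t=t_1$ with right triangular prism links incident to $3$ P, $1$ N and $1$ L walls (Proposition \ref{combcrit1}, case (2); $|C_v|=3$); and the $2$ extra ideal vertices at $t=t_2$ with regular tetrahedral links whose four walls are all positive (Proposition \ref{combgrandi}, case (4); here $|C_v|=1$, giving $4$ cusps each). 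This produces the claimed extra cusp totals of $12$, $8$, $8$ at $t=1,\ t_1,\ t_2$.

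The final step is to determine the flat cone 3-manifold structure of each cusp. In the parallelepiped and cubic cases, the pair of same-colour walls must be opposite (two positive walls of different parity cannot share a face by Proposition \ref{dihedral:prop}, and every 3-colouring of a cube's faces pairs same-colour faces as opposite), so the $8$ boxes tile a combinatorial $2\times 2\times 2$ block with external opposite faces identified pairwise, producing the three-torus $T^3$. For the triangular prism link at $t=t_1$ I would first glue along the three P walls: doubling the equilateral triangular cross-section along its three edges yields the flat cone sphere $S^2(2\pi/3)$, so each pair of prisms becomes $S^2(2\pi/3)\times I$. There are four such cylinders, indexed by the N and L coordinates; the N-identification glues them in pairs along one boundary sphere, and the L-identification then glues the two resulting longer cylinders along both remaining boundary spheres, producing $S^2(2\pi/3)\times S^1$. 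Finally, at $t=t_2$ each cusp section is the double of a regular Euclidean tetrahedron along its four faces, which by Definition \ref{Sn:defn} is $S^3(2\arccos\tfrac13)$.

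The main obstacle is the bookkeeping for the prism case at $t=t_1$: one must track the two-step gluing (first the P-doubling, then the N and L identifications on the resulting cylinders) and check that no extra identifications or twists are introduced, so that the outcome is genuinely $S^2(2\pi/3)\times S^1$ with the expected cone singularity along three parallel circles. The other cases reduce to direct combinatorial verifications from the vertex data in the cited propositions.
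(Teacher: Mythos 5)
Your proposal is correct and follows essentially the same approach as the paper: you enumerate the ideal vertices of $P_t$ from Propositions~\ref{combpiccoli}--\ref{combgrandi}, identify the coloured Euclidean link of each (parallelepiped, prism, or regular tetrahedron), and obtain the cusp section by mirroring the link according to the three colours. Your systematic count that each ideal vertex with colour set $C_v$ produces $2^{3-|C_v|}$ cusps is a clean way to package what the paper handles more informally (the paper's parenthetical remark at $t=t_2$ that mirroring along an absent colour simply produces two disjoint copies, applied twice for the missing N and L, is exactly your $|C_v|=1$ case). Your step-by-step verification of the prism gluing at $t=t_1$ correctly yields $S^2(\frac{2\pi}{3})\times S^1$ with no twist, since all the colour identifications are identity maps on walls; the paper states this outcome without spelling out the intermediate cylinders.
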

\begin{proof}
Every ideal vertex $v$ of $P_t$ has a Euclidean link $\Delta$, a Euclidean polyhedron whose faces are coloured by the walls in $P_t$ they are contained in.
Each ideal vertex of $P_t$ gives rise to some cusps in $W_t$ whose Euclidean sections are obtained by mirroring $\Delta$ according to the colours.
We refer to Figure \ref{links_eucl:fig}.
Here are the details:

\begin{itemize}
\item For every $t\in(0,1)$ the polytope $P_t$ has 12 ideal vertices $v$ whose link is a parallelepiped, with opposite faces coloured with P, N, and L. Each parallelepiped gives rise to a cusp of three-torus type.
\item When $t=1$ the 24-cell $P_1$ has 12 more ideal vertices, identical to the 12 analysed above.
\item When $t=t_1$, the polytope $P_t$ has 8 additional ideal vertices, whose link is a right prism with triangular base. The two base triangles are coloured in N and L, while the lateral faces have P. By mirroring we get the 8 additional cusps of type $S^2(\frac{2\pi}3) \times S^1$.
\item When $t=t_2$, the polytope $P_t$ has 2 additional ideal vertices, whose link is a regular tetrahedron $\Delta$, with all faces coloured with P. By mirroring we get 8 cusps of type $S^3(2\arccos \frac 13)$. (If we mirror along a colour that is not there, we just take two disjoint copies of the object, and this applies here twice to the missing colours L and N.)
\end{itemize}
The proof is complete.
\end{proof}

\subsection*{The surgeries}
At the critical times $1,t_1$, and $t_2$ the cone-manifold $W_t$ changes by some surgeries that we now analyse. Recall that $W_1$ is a cusped hyperbolic four-manifold with 24 cusps and no singularities. As usual, we start with $W_1$ and we run $t$ backwards.

\begin{prop}
As soon as $t<1$, the cone-manifold $W_t$ modifies from $W_1$ by Dehn filling twelve cusps with twelve red cone-tori.
\end{prop}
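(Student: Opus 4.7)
The plan is to verify two things locally near each of the twelve red cone-tori of $\Sigma \subset W_t$: that the twelve cusps being filled correspond to twelve specific ideal vertices of the $24$-cell $P_1$ which open up when $t$ drops below $1$, and that the resulting topological change is a genuine four-dimensional Dehn filling.

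First I would identify the twelve vanishing cusps of $W_1$. By Proposition \ref{combpiccoli}, the polytope $P_t$ has, for all $t \in (0,1]$, exactly twelve ideal vertices with parallelepiped link, each involving two letter walls. The $24$-cell $P_1$ has twelve additional ideal vertices, each at the intersection of two asymptotically parallel positive walls. As soon as $t < 1$ these twelve vertices open up: the two positive walls now meet at dihedral angle $\theta$ and their intersection becomes one of the twelve red quadrilateral faces listed in Proposition \ref{dihedral:prop} and pictured in Figure \ref{polygons0:fig}. Under the $3$-coloring mirroring, each ideal vertex of $P_t$ contributes a single cusp of $W_t$ (the parallelepiped link, whose opposite faces share the same color, mirrors to a $3$-torus), so the twelve vanishing cusps of $W_1$ are in canonical bijection with the twelve disappearing ideal vertices, hence with the twelve newly born red quadrilaterals.

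Next I would analyze the local geometry near one such red quadrilateral $R$ and show that its mirror image is a torus with trivial normal bundle along which the filling is performed. The face $R$ sits in two positive walls $\p i$ and $\p j$ (same color P), while its four boundary edges lie alternately in walls of colors L and N, as can be read off Figure \ref{walls:fig}. Mirroring the eight copies of $P_t$ around $R$ via the L and N reflections assembles four copies of $R$ into a closed $2$-surface $T$, while the P-identification of the two sides of $\p i$ and $\p j$ folds $T$ into the singular locus with cone angle $2\theta$, matching Proposition \ref{Sigma:W:prop}. I would then check, using the explicit symmetries recorded in Section \ref{symmetries:sec}, that the four L- and N-mirrorings glue $R$ into a torus rather than a Klein bottle, and that the normal bundle of $T$ is trivial; the latter follows from the transverse intersection of $\p i$ and $\p j$ along $R$ together with orientability of the local product.

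With trivial normal bundle in hand, a small closed tubular neighborhood of $T$ in $W_t$ is diffeomorphic to $T^2 \times D^2$, with boundary $T^3$. Letting $t \to 1^-$ collapses $R$ to the corresponding ideal vertex of $P_1$, so $T$ collapses to a single point at infinity and the punctured neighborhood $T^2 \times (D^2 \setminus \{0\}) \cong T^3 \times [0,1)$ is identified with the Euclidean cusp $T^3 \times [0,\infty)$ of $W_1$. This exhibits the passage from $W_1$ to $W_t$ near each such site as the standard four-dimensional Dehn filling that replaces a $T^3$ cusp by a tubular neighborhood of a $T^2$, collapsing an $S^1$-fiber; performing this simultaneously at the twelve vanishing cusps yields the statement. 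The main technical hurdle is the orientability bookkeeping needed to conclude that four copies of $R$ glue to a torus rather than a Klein bottle; this is a finite combinatorial verification using the color pattern on the edges of $R$ but it must be carried out carefully.
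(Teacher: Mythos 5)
Your proposal is correct and follows essentially the same route the paper takes, namely matching the twelve ideal vertices of $P_1$ that open up when $t<1$ (Proposition \ref{combpiccoli}) with the twelve red quadrilaterals (Proposition \ref{dihedral:prop}), whose mirror orbits are the twelve red cone-tori of Proposition \ref{Sigma:W:prop}, and then observing that each corresponding $T^3$ cusp of $W_1$ is replaced by a tubular neighbourhood $T^2\times D^2$ of the new core cone-torus. The paper itself does not give a formal proof; the proposition is asserted as a consequence of the preceding combinatorial analysis and the explanatory paragraph that follows, which your write-up reproduces in expanded form.

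One small correction: the ``main technical hurdle'' you flag -- verifying that the four mirror copies of a red quadrilateral $R$ glue to a torus rather than a Klein bottle -- is not a hurdle at all, and the tool you point to (the symmetry group $K$ of Section \ref{symmetries:sec}) is not the relevant one, since $K$ consists of symmetries of the polytope $P_t$, not the mirroring reflections used to build $W_t$. The relevant observation is immediate from the colouring technique itself: $R$ has opposite sides in $\l L$-coloured walls and opposite sides in $\l N$-coloured walls, and the $\l L$-mirroring identifies $R^{0j}$ with $R^{1j}$ along \emph{both} $\l L$-edges via the identity, producing an annulus; the $\l N$-mirroring then doubles the annulus along both boundary circles, again via the identity. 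The standard folding map $[0,2]^2\to T^2$ realizes this quotient as a torus with no orientation bookkeeping required. The triviality of the normal bundle of the resulting torus can then be read off from the fact that the boundary of its tubular neighbourhood is the $T^3$ cusp cross-section obtained in the limit $t\to 1$, which you already have in hand.
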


Topologically, each of these 12 cusp is diffeomorphic to $S^1\times S^1\times S^1 \times [0,+\infty)$ and is replaced by a ``solid torus'' $S^1 \times S^1 \times D^2$. Each new red cone-torus is a core $S^1\times S^1 \times \{0\}$ of one such solid torus: its area $4\pi - 8\varphi$ and its cone angle $2\theta$ are both arbitrarily small when $t$ is close to 1, and they increase as $t$ tends to $t_1$, like in the familiar three-dimensional hyperbolic Dehn filling picture. When $t\to t_1$ the cone angle $2\theta$ tends to $\frac {2\pi}3$.

Recall that the singular set $\Sigma$ contains also 8 green cone-spheres whose cone angles vary from $2 \pi$ to $0$ as $t$ goes from 1 to $t_1$.

\begin{prop}
At the critical time $t_1$ the 8 green cone-spheres are drilled and create 8 new cusps. As soon as $t<t_1$, the 8 cusps are filled with 8 yellow small closed geodesics.
\end{prop}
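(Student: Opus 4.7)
The plan is to cross the critical time $t_1$ while tracking separately the closed 2-strata and the closed 1-strata produced by the mirroring construction of Section \ref{Wt:subsection}, using the characterization of $\Sigma$ in Proposition \ref{Sigma:W:prop} and the evolving combinatorics of $P_t$ recorded in Propositions \ref{combpiccoli}--\ref{combmedi}.

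For $t\in(t_1,1)$ the 8 green closed 2-strata of $W_t$ are supported by the 8 green triangular faces of $P_t$ and carry cone angle $4\varphi$, with three cone points of angle $2\theta$. As $t\to t_1^+$, Proposition \ref{Q_t:prop} gives $\varphi\to 0$ and $2\theta\to\tfrac{2\pi}3$, so the cone angle of each green cone-sphere vanishes and the three cone points acquire the orbifold angle $\tfrac{2\pi}3$. Combinatorially, by Propositions \ref{combpiccoli}--\ref{combcrit1} each green triangular face of $P_t$ collapses onto exactly one of the 8 new ideal vertices of $P_{t_1}$, whose Euclidean link is the triangular prism of Figure \ref{links_eucl:fig}-(2). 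The immediately preceding cusp proposition has already converted these 8 ideal vertices, via mirroring, into 8 cusps of $W_{t_1}$ of type $S^2(\tfrac{2\pi}3)\times S^1$, and matching green triangle with ideal vertex gives the required bijection. This proves the drilling statement.

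For $t\in(t_2,t_1)$, each of the 8 ideal vertices of $P_{t_1}$ has reopened into a short yellow edge of $P_t$, namely the intersection of three positive walls (Proposition \ref{combmedi}). At an interior point of such an edge the unit tangent space in $W_t$ is $S^0*S^2(2\theta)$ by case (3) of the proof of Proposition \ref{Sigma:W:prop}, so each yellow edge contributes a 1-stratum of $W_t$ with transverse link $S^2(2\theta)$. The two endpoints of the edge are finite vertices of type (3) of Proposition \ref{combmedi}, lying in three positive walls and one additional wall of colour $\mathrm{N}$, $\mathrm{L}$, $\mathrm{G}$, or $\mathrm{H}$; the closed geodesic continues across this unique non-positive wall into the adjacent copy of $P_t$ and along its yellow edge. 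Iterating this procedure closes up the geodesic. Finally, as $t\to t_1^-$ the yellow edge of $P_t$ shrinks to the single ideal vertex from which it emerged, so its hyperbolic length tends to $0$, and summing over the finitely many yellow edges in each orbit shows that each closed geodesic of $W_t$ is small near $t_1$.

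The main obstacle I anticipate is the bookkeeping: confirming that this closure procedure produces exactly 8 pairwise distinct closed loops, in canonical bijection with the 8 cusps of $W_{t_1}$ rather than being fused into fewer, longer geodesics. I would settle this by using the symmetry group $K$ of Section \ref{symmetries:sec} acting transitively on the relevant yellow edges together with the adjacency data of Figure \ref{walls_medi:fig}, orbit-counting the $8$-fold tessellation of $W_t$ under the mirror involutions; once this count is confirmed, the matching to the 8 cusps is automatic from the common origin at the 8 ideal vertices of $P_{t_1}$.
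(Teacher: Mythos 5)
Your proposal is correct and essentially follows the paper's route: the paper states this proposition without a separate proof block, treating it as a direct consequence of Proposition~\ref{Sigma:W:prop}, the cusp count at $t_1$, and the combinatorics of Propositions~\ref{combpiccoli}--\ref{combmedi}, with the two post-proposition paragraphs giving precisely the drilling/filling description you reproduce. The bookkeeping you flag as a potential obstacle is in fact immediate and needs no orbit-count: each yellow edge $e$ of $P_t$ lies in the triple intersection of three positive walls, so the P-mirroring identifies its $8$ copies in pairs and leaves $4$ copies in $W_t$; since by Proposition~\ref{combcrit1}-(2) (equivalently Proposition~\ref{combmedi}-(3)) one endpoint of $e$ lies in a negative wall and the other in $\l G$ or $\l H$, the N- and L-reflections string exactly these $4$ copies into a single closed loop that meets no copy of any other yellow edge, yielding $8$ pairwise distinct geodesics in canonical bijection with the $8$ yellow edges, hence with the $8$ ideal vertices of $P_{t_1}$ and the $8$ cusps. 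One small slip: $\l G$ and $\l H$ are themselves L-coloured walls, so phrasing the fourth wall's colour as ``$\mathrm N$, $\mathrm L$, $\mathrm G$, or $\mathrm H$'' is redundant; it is either $\mathrm N$ or $\mathrm L$, and in the L case specifically $\l G$ or $\l H$.
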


Every green cone-sphere has a tubular neighborhood homeomorphic to $S^2\times D^2$, and the drilling substitutes it with a cusp homeomorphic to $S^2 \times S^1 \times [0,+\infty)$. Recall that we are in a cone-manifold (or orbifold) context: the $S^2$ factor is the \emph{flat} cone sphere $S^2(\frac {2\pi}3)$, hence $S^2\times S^1$ is a flat cone three-manifold.

As soon as $t<t_1$, each such cusp is substituted with a $D^3 \times S^1$. The new core closed curve $\{0\} \times S^1$ is a small closed geodesic.

\begin{rem}
The substitution of a $S^2$ (with trivial normal bundle) with a $S^1$ is a common topological surgery in dimension four: it consists in replacing an embedded $S^2\times D^2$ with $D^3 \times S^1$, glued along the same boundary $S^2\times S^1$.  
We have just discovered an example where the surgery may be realized as a smooth path of hyperbolic cone four-manifolds. Both the cores $S^2$ and $S^1$ are geodesic all along the path. We call this path a \emph{hyperbolic Dehn surgery} in Theorem \ref{main2:teo}.
\end{rem}

A similar, but different, kind of hyperbolic surgery arises at the next critical time. We start by noticing the following.

\begin{prop}
When $t\in (t_2,t_1)$ the manifold $W_t$ contains four geodesic copies of the hyperbolic cone three-manifold $S^3(2\theta)$, that collapse when $t\to t_2$. At the critical time $t_2$ these are drilled and create 8 new cusps. As soon as $t<t_2$, the 8 cusps are filled with 8 four-balls.
\end{prop}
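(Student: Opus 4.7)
The plan is to exhibit the four copies of $S^3(2\theta)$ as the \emph{doubles} of the $\l G$- and $\l H$-walls of $P_t$ — which for these times reduce to regular hyperbolic tetrahedra — and then to read off the drilling-and-filling picture directly from the polytope deformation. I will begin by showing that for $t\in (t_2,t_1)$ the 3-polytope $\l G\subset \matH^4$ is a compact regular hyperbolic tetrahedron with all six dihedral angles equal to $\theta$ and whose four 2-faces are P-coloured. By the Coxeter diagram of $\l G$ in Figure \ref{coxeter_walls:fig} (second row), the only walls of $P_t$ meeting $\l G$ are the four positive walls $\p{1},\p{3},\p{5},\p{7}$: the negative walls $\m{0},\m{2},\m{4},\m{6}$, which met $\l G$ along the green triangles for $t\in(t_1,1)$, have become ultra-parallel to $\l G$ in this range (the value of $\alpha$ for the pair $\{\m 0,\l G\}$ computed in the proof of Proposition \ref{Q_t:prop} satisfies $\alpha>1$ there). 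Proposition \ref{combmedi} confirms that the four vertices of $\l G$ are all finite, of type $3P+\l G$. Since $\l G$ is orthogonal to each of its four incident positive walls (Lemma \ref{quotient_walls_coxeter:lemma}), the dihedral angle at each of the six edges of $\l G$ coincides with the dihedral angle of $P_t$ at the corresponding red 2-face $\{\p i,\p j\}$, which by Proposition \ref{dihedral:prop} equals $\theta$. The same analysis applies verbatim to $\l H$ with the even positive walls $\p{0},\p{2},\p{4},\p{6}$.

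Next I will trace how these tetrahedra assemble in $W_t$. Across the eight copies $P_t^{ijk}$ of $P_t$, the L-identification glues the $\l G$-wall of $P_t^{0jk}$ with that of $P_t^{1jk}$, yielding four images $G^{jk}$ of $\l G$ inside $W_t$ indexed by $(j,k)\in\{0,1\}^2$. Because $\l G$ has no N-coloured 2-face at these times, the N-identification does not link images with different $j$. The P-identification, by contrast, glues the four P-faces of $G^{j,0}$ to those of $G^{j,1}$, so for each fixed $j$ the pair $G^{j,0}\cup G^{j,1}$ is the double of the regular hyperbolic tetrahedron along its entire boundary. By Definition \ref{Sn:defn} this is the cone 3-manifold $S^3(2\theta)$. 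Hence $\l G$ contributes two disjoint geodesic copies of $S^3(2\theta)$ to $W_t$; the same argument applied to $\l H$ yields two further copies, giving four in total.

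I then account for the collapse and the surgery. As $t\to t_2^+$, Proposition \ref{Q_t:prop} gives $\theta \to \arccos\tfrac13$, so the regular hyperbolic tetrahedra $\l G$ and $\l H$ become ideal Euclidean tetrahedra (their four vertices all tend to the same ideal point, reflecting that the defining vectors $\l G,\l H$ become light-like at $t_2$), and each copy of $S^3(2\theta)$ degenerates to the flat cone 3-manifold $S^3(2\arccos\tfrac13)$, which is exactly the cusp cross-section appearing at $t=t_2$ by the previous proposition. Each geodesic $S^3$ separates a product tubular neighbourhood in $W_t$, so drilling it opens two cusps, one on each side; the four drilled copies thus account for all $4\cdot 2=8$ new cusps of $W_{t_2}$. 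For $t\in(0,t_2)$ the two ideal vertices of $P_{t_2}$ become finite vertices with regular spherical tetrahedral links $\Delta^3(\theta)$ (Proposition \ref{combgrandi}), and applying the mirror construction to such a link (all of whose faces are P-coloured) produces a 0-stratum in $W_t$ with spherical unit tangent space $S^3(2\theta)$. These eight 0-strata are precisely the centres of the four-balls that fill the eight cusps; the passage through $t_2$ is analytic because the data defining $P_t$ are real-analytic in $t$.

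The main verification is the combinatorial claim of the first step — that $\l G$ really is a tetrahedron with only P-coloured faces for $t\in(t_2,t_1)$ — because this is exactly what makes the N-identification fail to connect the images $G^{jk}$, and hence what produces four components of $S^3(2\theta)$ rather than two. Once this is granted, the rest of the argument is a direct unwinding of the three-colour mirror construction that defines $W_t$.
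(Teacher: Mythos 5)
Your proposal is correct and follows essentially the same route as the paper: identify the walls $\l G$, $\l H$ as regular hyperbolic tetrahedra with dihedral angle $\theta$ and all four 2-faces P-coloured for $t\in(t_2,t_1)$, observe that mirroring in $W_t$ doubles each along its full boundary to produce $S^3(2\theta)$, and track the collapse of the defining vectors to light-like at $t_2$ and to time-like for $t<t_2$. You supply a more explicit combinatorial tracing of the eight copies $P_t^{ijk}$ and why they yield exactly four disjoint copies of $S^3(2\theta)$ (two from $\l G$ and two from $\l H$), which the paper leaves to the reader; the only small imprecision is the phrase ``become ideal Euclidean tetrahedra,'' since the walls actually collapse to ideal \emph{vertices} whose \emph{links} are regular Euclidean tetrahedra.
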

\begin{proof}
When $t\in (t_2,t_1)$ each letter wall $\l G$ and $\l H$ is a hyperbolic regular tetrahedron with dihedral angle $\theta$; when mirrored in $W_t$, these walls form four geodesic copies of $S^3(2\theta)$. When $t \to t_2$ these walls collapse to ideal vertices, which become finite as soon as $t<t_2$.
\end{proof}

Each geodesic $S^3(2\theta)$ has a tubular neighborhood homeomorphic to $S^3 \times [-1,1]$, and the drilling substitutes it with two cusps, each homeomorphic to $S^3 \times [0,+\infty)$. Here $S^3$ is the flat $S^3(2\theta)$, since $\cos (\theta) = \frac 13$ at the critical time $t_2$. 

As soon as $t<t_2$, each cusp is filled with a $D^4$.
We will determine the topology of $W_t$ when $t<t_2$ in the next section.

\begin{rem}
The substitution of a $S^3$ (with trivial normal bundle) with a $S^0$ is another common topological surgery in dimension four: we substitute $S^3 \times D^1$ with $D^4 \times S^0$, glued along the same boundary $S^3\times S^0$, and we have just discovered that it can also be realized as a smooth path of hyperbolic cone-manifolds. It is also called a hyperbolic Dehn surgery in Theorem \ref{main2:teo}.
\end{rem}

\begin{rem} \label{topology:rem}
The topology of $W_t$ in the last interval $(0,t_2)$ is surprisingly simple: we will show in Proposition \ref{product:prop} below that $W_t$ is diffeomorphic to a product $C\times S^1$, where $C$ is some cusped hyperbolic 3-manifold, when $t\in (0,t_2)$. 

Therefore the manifold $W_t$ for $t\in (t_2,t_1)$ is obtained from $C\times S^1$ by a simple surgery, the replacement of four copies of $S^0$ with four $S^3$, and hence $W_t$ is diffeomorphic to $(C\times S^1) \#_4 (S^1\times S^3)$ when $t\in (t_2,t_1)$. 

Finally, the manifold $W_t$ for $t\in (t_1,1)$ is obtained from the latter by one more surgery, that replaces eight copies of $S^1$ with eight $S^2$. We can build a five-dimensional film interpretation of this topological process: start with $C\times D^2$, then add four 1-handles, and eight 2-handles.
\end{rem}

\subsection*{Orbifolds}
We have already noted that $W_t$ is an orbifold at $t=t_1$, whose singular locus is a surface with cone angle $\frac{2\pi}3$. There is also one more orbifold in the family $W_t$, of a quite different nature: at the time $t=\bar t$ the singular set $\Sigma$ is a \emph{foam} (a two-dimensional complex with generic singularities) with all cone angles $\pi$; the singularities are locally like those of the double of a right-angled polytope. 

Summing up, the cone-manifold $W_t$ is an orbifold at the times
$$1, \quad t_1 = \sqrt{\frac 35}, \quad \bar t=\sqrt{\frac 13}.$$
These correspond to the times when $P_t$ is a Coxeter polytope. In fact the colouring technique furnishes regular orbifold coverings such that 
$$P_1=W_1/_{(\mathbb{Z}/_{2\mathbb{Z}})^3}, \qquad P_{t_1}=W_{t_1}/_{(\mathbb{Z}/_{2\mathbb{Z}})^3}, \qquad P_{\bar t}=W_{\bar t}/_{(\mathbb{Z}/_{2\mathbb{Z}})^3}.$$
The three orbifolds $W_t$ are arithmetic, since $P_t$ is (see Section \ref{quotient:section}). Moreover 
$$\chi(W_1) = 8, \qquad \chi(W_{t_1}) = 8, \qquad \chi (W_{\bar t}) = 5$$
as a consequence of Propositions \ref{euler_t1:prop} and \ref{euler_bart:prop}. 
We will prove in Proposition \ref{product:prop} that the underlying space of $W_{\bar t}$ is topologically a product $C\times S^1$.

\subsection*{The final degeneration}
We now study $W_t$ as $t\to 0$ and show that $W_t$ degenerates to a hyperbolic three-manifold.

\begin{figure}
\vspace{.5 cm} 
\includegraphics[angle=270]{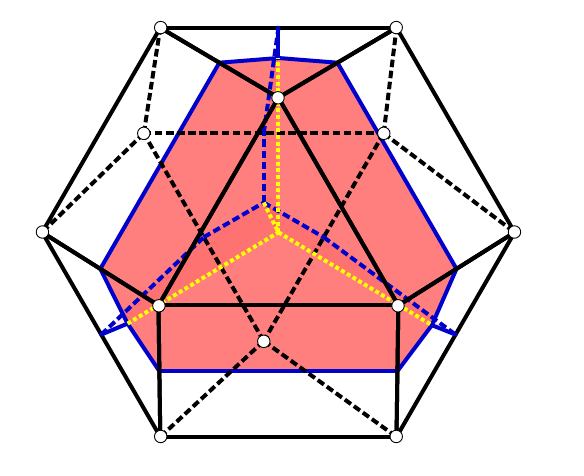}
\nota{
In the last time interval $(0,t_2)$, the 4 odd (or even) positive walls of the polytope $P_t$ form an ideal right-angled cuboctahedron $Q$ with centre $v$, pleated along six red pentagons.
In the picture, the edges of $Q$ are black.
The faces of $Q$ are divided as follows: (i)
four ideal triangles, each a common face of an odd  wall $\mathbf{i}^+$ with its negative counterpart $\mathbf{i}^-$;
(ii) four ideal triangles, each subdivided by the red pleats into three quadrilaterals, all faces of \emph{the same} even negative wall;
(iii) six ideal quadrilaterals, each subdivided by a red pleat into two quadrilaterals, both faces of \emph{the same} letter wall.  
The edges of each positive wall are coloured as follows:
the black edges are contained in edges of $Q$, the blue edges are contained in faces of $Q$ (they are the red edges of the even negative and letter walls in Figure \ref{walls_medi:fig}), the yellow edges are contained in the interior of $Q$ and intersect in the centre $v$ of $Q$.
Each pleating pentagon is a red face of an odd positive wall and in the picture has three blue edges and two yellow edges.
}\label{altogether:fig}
\end{figure}

We already know that the polytope $P_t$ tends to the three-dimensional ideal right-angled cuboctahedron $P_0$ shown in Figure \ref{altogether:fig}. The cuboctahedron $P_0$ can be naturally coloured with two colours, one assigned to the triangles and the other to the quadrilaterals. Let $C$ be the hyperbolic three-manifold constructed from $P_0$ by mirroring it according to this colouring: the three-manifold $C$ is tessellated into four copies of $P_0$ and we call it the \emph{cuboctahedral manifold}. 
The cuboctahedral manifold has 12 toric cusps, one for each ideal vertex of $P_0$.

We now completely determine the topology of $W_t$ in the last interval $(0,t_2)$, as anticipated in Remark \ref{topology:rem}.

\begin{prop}\label{product:prop}
When $t\in(0,t_2)$, the manifold $W_t$ is diffeomorphic to $C\times S^1$.
\end{prop}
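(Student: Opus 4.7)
The plan is to identify $W_t$ with $C\times S^1$ by first establishing a product decomposition $P_t\cong P_0\times[-1,1]$ of the polytope, and then checking that the three-colour mirroring construction of $W_t$ becomes the standard product of the two-colour mirroring that produces $C$, times $S^1$. For $t\in(0,t_2)$ the cuboctahedron $P_0$ sits inside $P_t$ as the totally geodesic slice $P_t\cap\matH^3$, by Proposition \ref{cuboct:prop}, and I would argue that $P_t$ is topologically a thickening of $P_0$. Under a diffeomorphism $P_t\cong P_0\times[-1,1]$, the goal is to identify the walls of $P_t$ as follows: the 6 letter walls correspond to the lateral faces $Q_j\times[-1,1]$ above the 6 quadrilateral faces of $P_0$; the 8 negative walls $\m i$ correspond to the lateral faces $T_i\times[-1,1]$ above the 8 triangular faces $T_i=\p i\cap\m i\cap\matH^3$; the 4 odd positive walls partition the top cap $P_0\times\{1\}$ into 4 regions meeting along the red pleats of Figure \ref{altogether:fig}; and the 4 even positive walls partition the bottom cap $P_0\times\{-1\}$ analogously. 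This identification can be constructed starting from the orthogonal projection $P_t\to P_0$ onto $\matH^3$, suitably smoothed near the triangular faces $T_i$ where the fibres degenerate; the combinatorial input is provided by Propositions \ref{combgrandi} and \ref{eta:prop} and by the pictures in Figures \ref{walls_plus:fig} and \ref{walls_medi:fig}.

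Granting this product decomposition, I would perform the three mirrorings of the colouring technique in the order L, N, P. Under the identification above, the L and N walls together cover the entire lateral part $\partial P_0\times[-1,1]$, and the L and N mirrorings act as mirrorings of $P_0$ along its quadrilateral, resp.\ triangular, faces, times the identity on $[-1,1]$. Performing these two mirrorings on the 4 copies $P_t^{ij0}$ with $i,j\in\{0,1\}$ therefore yields a space diffeomorphic to $C\times[-1,1]$, since by definition $C$ is precisely the 2-coloured mirroring of $P_0$ along its triangular and quadrilateral faces; the same applies to the other 4 copies $P_t^{ij1}$. The remaining P mirroring identifies the top caps and the bottom caps of these two copies of $C\times[-1,1]$: on a single $P_t$ the union of the 4 odd (resp.\ even) positive walls is a topological copy of $P_0$ at height $1$ (resp.\ $-1$), and after L and N mirroring these unions become two copies of $C$ at the two ends of the interval. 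Gluing two copies of $C\times[-1,1]$ along both $C\times\{1\}$ and $C\times\{-1\}$ yields $C\times S^1$.

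The main obstacle will be the first step: justifying the diffeomorphism $P_t\cong P_0\times[-1,1]$ with the claimed wall correspondence. The subtlety is that the two positive caps bend along the red pleats, so they are not smoothly embedded in $\matH^4$; nevertheless each cap is topologically a 3-ball with a natural subdivision into 4 regions, one for each positive wall of the corresponding parity, and the full boundary $\partial P_t$ is a 3-sphere decomposing as the union of the two caps together with the lateral part $\partial P_0\times[-1,1]$. Once this wall correspondence is established, the remainder of the argument is combinatorial bookkeeping. A cleaner alternative, which I would also consider, is to exhibit a continuous map $W_t\to S^1$ induced by a suitably patched version of the coordinate $x_4$ across the P mirrorings, whose level sets are all diffeomorphic to $C$, and which directly produces the product structure.
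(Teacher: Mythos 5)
Your proposal is correct and takes essentially the same route as the paper: both identify $P_t$ as a combinatorial prism over an ideal cuboctahedron, with the letter and negative walls as lateral walls and the two parities of positive walls forming the top and bottom caps, and then run the colouring technique on this prism. The paper's phrasing differs slightly — it treats the four odd positive walls as a single pleated cuboctahedral wall $Q$ and states the combinatorial isomorphism $P_t \cong Q\times I$ (explicitly ignoring the pleating, ``we pretend that $\theta=\pi$''), rather than working directly with the geodesic slice $P_0 = P_t\cap\matH^3$ — but the underlying argument is the same.
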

\begin{proof}
When $t\in (0,t_2)$, we see from Figure \ref{walls_plus:fig} that the 4 positive walls of the same parity (say odd: hence $\p 1, \p 3, \p 5$, and $\p 7$) intersect in a vertex $v$ of $P_t$ whose link is a regular tetrahedron with dihedral angles $\theta$: there are two vertices like that, see Proposition \ref{combgrandi}-(4).

We now consider these four positive walls altogether as a single wall $Q$, pleated along some faces: Figure \ref{altogether:fig} shows that $Q$ is a cuboctahedron, pleated along six red pentagons with pleating angle $\theta$. Since we are interested only in the topology of $W_t$, we may ignore the pleating (that is, we pretend that $\theta=\pi$).

Combinatorially, the polytope $P_t$ is isomorphic to the prism $Q \times I$ over $Q$. The horizontal walls $Q \times \{0,1\}$ are the two positive (even and odd) cuboctahedra. The lateral walls are: 
\begin{itemize}
\item the 6 letter walls, that are prisms over the ideal quadrilaterals of $Q$, and 
\item the 8 negative walls, that are prisms over the ideal triangles.
\end{itemize}
(Remember that we ignore the pleats and treat two faces of a wall adjacent along a red edge as the same face). The manifold $W_t$ is obtained from $Q\times I$ via the colouring technique and is hence diffeomorphic to $C\times S^1$.
\end{proof}

Now, recall the fixed cuboctahedron $P_0=P_t\cap \matH^3$ described et the end of Section \ref{polytope:sec}.
As $t\to0$, the non-right dihedral angles of $P_t$ tend to $\pi$ and the polytope collapses to the polyhedron $P_0$.
Correspondingly, when the cone-angles of the hyperbolic cone-manifold $W_t$ tend to $2\pi$, the hyperbolic structure degenerates to that of the cuboctahedral manifold $C$, in a way that we now state precisely.

Let a \emph{holonomy representation} of a hyperbolic cone-manifold be a holonomy representation of its regular locus (the representation is unique up to conjugation). Here our construction furnishes for every $t\in(0,1)$ a holonomy representation
$$\rho_t:\pi_1(W_t\backslash\Sigma_t)\longrightarrow\mathrm{Isom}(\mathbb{H}^4).$$
Let $\rho\colon \pi_1(C) \to \Iso(\matH^3) < \Iso(\matH^4)$ be the faithful and discrete representation of the cuboctahedral manifold $C$.
 
\begin{prop}
As $t\to 0$, the representation $\rho_t$ converges algebraically to a representation $\rho_0$ with ${\rm Im}(\rho_0) = {\rm Im}(\rho)$.
\end{prop}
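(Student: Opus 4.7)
\emph{Sketch of proof.} The plan is to express $\rho_t$ via the hyperbolic reflections in the hyperplanes of the walls of $P_t$, and then pass to the limit $t\to 0$ wall-by-wall. For every wall $W$ of $P_t$ let $r_W^{(t)}\in\Iso(\matH^4)$ denote the reflection in its supporting hyperplane. The coloured mirror construction of Section \ref{8copie:sec} shows that $\mathrm{Im}(\rho_t)$ is the subgroup of $\Iso(\matH^4)$ generated by the products $r_W^{(t)}r_{W'}^{(t)}$ with $W,W'$ of the same colour L, N, or P; equivalently, every element of $\mathrm{Im}(\rho_t)$ is a product $r_{W_1}^{(t)}\cdots r_{W_{2n}}^{(t)}$ in which each colour appears an even number of times. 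The combinatorics of $P_t$ is constant on $(0,t_2)$, so $\pi_1(W_t\setminus\Sigma_t)$ is canonically fixed along the path and $\rho_t$ is an analytic family.

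The first step is to compute the $t\to 0$ limit of each generator by normalising the space-like vectors of Table \ref{walls:table} to unit vectors:
\begin{itemize}
\item the unit dual of $\p{i}$ (divided by $\sqrt{1+1/t^2}$) tends to $(0,0,0,0,\pm 1)$, so \emph{all} the positive reflections $r_{\p{i}}^{(t)}$ converge to one and the same isometry $R_0$, namely the reflection across the fixed hyperplane $\matH^3=\{x_4=0\}$;
\item the vector of $\m{i}$ tends to a unit space-like vector with vanishing last coordinate, so $r_{\m{i}}^{(t)}$ converges to a reflection $r_{\m{i}}^{(0)}$ in a hyperplane of $\matH^4$ orthogonal to $\matH^3$, whose intersection with $\matH^3$ is the hyperplane carrying the triangular face $\p{i}\cap\m{i}$ of the ideal cuboctahedron $P_0$ (Proposition \ref{cuboct:prop});
\item each letter reflection $r_{\l X}^{(t)}=r_{\l X}$ is constant in $t$ and is a reflection in a hyperplane orthogonal to $\matH^3$ which carries a quadrilateral face of $P_0$.
\end{itemize}

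The second step is a short algebraic collapse. The isometry $R_0$ acts as $-\mathrm{id}$ on the $x_4$-direction and trivially on $\matH^3$, whereas every $r_{\m{i}}^{(0)}$ and $r_{\l X}$ acts as a reflection on $\matH^3$ and trivially on the $x_4$-direction. Hence $R_0$ commutes with all of them, and $R_0^2=\mathrm{id}$. Given $g\in\pi_1(W_t\setminus\Sigma_t)$ I write $\rho_t(g)$ as an even-coloured word in the $r_W^{(t)}$ and let $t\to 0$: every positive letter becomes $R_0$, the $R_0$'s may be shuffled to one side by commutation and cancel in pairs, and what remains is a word in the $r_{\m{i}}^{(0)}$ and $r_{\l X}$ with an even number of reflections of each of the two remaining colours. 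Such a word acts trivially on $x_4$, so lies in $\Iso(\matH^3)<\Iso(\matH^4)$; restricted to $\matH^3$ it is a product of reflections in the triangular and quadrilateral faces of $P_0$, with each colour even, hence an element of the holonomy image of the $2$-coloured mirror construction on $P_0$. That image is $\mathrm{Im}(\rho)$, so $\mathrm{Im}(\rho_0)\subseteq\mathrm{Im}(\rho)$; the reverse inclusion is automatic since each generator $r_{\m{i}}^{(0)}r_{\m{j}}^{(0)}$ or $r_{\l X}r_{\l Y}$ of $\mathrm{Im}(\rho)$ is visibly the $t\to 0$ limit of a same-colour pair generator of $\mathrm{Im}(\rho_t)$.

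The main delicate point in this plan is the first step: one must normalise $\p{i}$ by the correct factor $\sqrt{1+1/t^2}$ to see that the hyperplane of $\p{i}$ really converges to $\matH^3$ (the raw vector would tend to a light-like direction). Once this and the analogous trivial computations for negative and letter walls are carried out, the remainder of the argument is purely algebraic and relies only on $R_0^2=\mathrm{id}$ and the commutation of $R_0$ with the negative and letter limit reflections.
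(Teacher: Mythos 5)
Your proposal is correct and follows the same strategy as the paper: express each $\rho_t(\gamma)$ as a word in wall reflections, observe that as $t\to 0$ the normalised positive vectors converge to $\pm(0,0,0,0,1)$ while the negative and letter vectors converge to unit vectors orthogonal to $\matH^3$, and conclude that the limit lands in $\mathrm{Im}(\rho)$. You make explicit the small algebraic step the paper leaves implicit — that the even number of positive reflections all become the single reflection $R_0$ across $\matH^3$, which commutes with the remaining limit reflections and cancels in pairs — which is exactly what is needed to justify the paper's sentence that the limit lies in $\mathrm{Im}(\rho)$.
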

\begin{proof}
For every $g\in \pi_1(W_t \setminus \Sigma_t)$ the isometry $\rho_t(g)$ is a composition of reflections along the hyperplanes defining the polytope $P_t$. 

As $t\to 0$, each half-space $\p 0, \m 0, \ldots \l E, \l F$ converges to some half-space whose boundary hyperplane is either $\matH^3$ or orthogonal to $\matH^3$. This shows that $\rho_t(g)$ converges to a $\rho_0(g)$ contained in the image of $\rho$.
By analyzing the generators of $\pi_1(W_t \setminus \Sigma_t)$ we get ${\rm Im}(\rho_0) = {\rm Im}(\rho)$.
\end{proof}

This degeneration is similar to the one famously described by Thurston \cite{bibbia} where a family of hyperbolic cone-structures on a Seifert fibered manifold degenerates to the hyperbolic structure of the base orbifold as the cone-angle approaches $2\pi$.

The proof of Theorem \ref{main2:teo} is complete.

\subsection*{The family is analytic}
We remark that the deformation $W_t$ is \emph{analytic} in the following sense: the holonomy $\rho_t(\gamma)$ of an element $\gamma\in\pi_1(W_t\setminus \Sigma_t)$ varies analytically in $t$, because it is a product of reflections along hyperplanes dual to space-like vectors that vary analytically in $t$.

Note that the topology of $W_t \setminus \Sigma_t$ changes only at the critical times $1$ and $t_2$. One can check that there is a natural embedding $\pi_1(W_{t_2 - \varepsilon}\setminus \Sigma_{t_2-\varepsilon}) \hookrightarrow \pi_1(W_{t_2 + \varepsilon}\setminus \Sigma_{t_2+\varepsilon})$, so the above definition actually makes sense also when $t$ crosses $t_2$.

\subsection{The deforming cone-manifolds $N_t$}
In the previous section we have constructed an interpolation between a manifold $W_1$ and an orbifold $W_{t_1}$ through hyperbolic cone-manifolds $W_t$ with $t \in [t_1,1]$, whose singular locus $\Sigma$ is an immersed surface with varying cone angles. This interpolation is similar to the one required by Theorem \ref{main:teo}, the main difference being that $W_{t_1}$ is ``only'' an orbifold and not a manifold. In order to prove Theorem \ref{main:teo}, in this section we now need to promote the orbifold $W_{t_1}$ to a manifold. To do so, we construct a new manifold $N_t$ by assembling some copies of $P_t$ via a more complicated pattern than the one realizing $W_t$.

The orbifold $W_{t_1}$ contains a singular red surface (that consists of some punctured tori) with cone angle $\frac {2\pi} 3 = 2\theta$. We get this cone angle because every red quadrilateral in $P_t$ has dihedral angle $\theta$, and meets 2 copies of $P_t$ in $W_t$. We now modify the construction of the previous section, so that each quadrilateral will meet 6 copies of $P_t$: this will make a total cone angle $6\theta = 2\pi$ at $t_1$ and hence the singularity will disappear.

To this purpose, we still use the P/N/L colouring of the walls of $P_t$, we still mirror $P_t$ along N and L, but we glue the positive walls altogether with a more complicate pattern, that ensures that each red quadrilateral in the resulting complex has valence 6 instead of 2. This more complicate pattern is constructed by transposing into this context the famous triangulation of the figure-eight complement with two tetrahedra: the nice feature of this triangulation is that all edges have valence 6, and this is exactly what we need here.

\subsection*{The figure-eight knot pattern}
We start by studying the symmetries of $P_t$.

\begin{lemma}\label{symm_positive:lemma}
For every bijection
$$\sigma:\lbrace\p1,\p3,\p5,\p7\rbrace\longrightarrow\lbrace\p0,\p2,\p4,\p6\rbrace$$
there exists a unique symmetry $s\in K$ of the polytope $P_t$ such that  $s(\p i)=\sigma(\p i)$ for every $\p i\in\lbrace\p1,\p3,\p5,\p7\rbrace$ and for all $t\in(0,1]$.
\end{lemma}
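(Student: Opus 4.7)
My plan is to use the coset structure of $K$ together with the known fact that $H$ acts as the full symmetric group on the four odd positive walls.

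First, I would recall the structure of $K$. The generators $\l L, \l M, \l N$ only permute the coordinates $x_1, x_2, x_3$, so $H = \langle \l L, \l M, \l N\rangle$ preserves the parity of each numbered wall. In contrast, the roll symmetry $R: (x_0,x_1,x_2,x_3,x_4)\mapsto(x_0,x_1,x_2,-x_3,-x_4)$ reverses the parity, as is immediate from Table \ref{walls:table}. Since $R\notin H$, we get the coset decomposition $K = H \sqcup HR$ of the order-48 group. Consequently, the 24 elements of $K$ that send $\{\p 1,\p 3,\p 5,\p 7\}$ to $\{\p 0,\p 2,\p 4,\p 6\}$ are precisely those of the form $Rh$ with $h\in H$.

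Next, I would exhibit the base bijection. A direct inspection of Table \ref{walls:table} gives
$$R(\p 1)=\p 2,\quad R(\p 3)=\p 0,\quad R(\p 5)=\p 4,\quad R(\p 7)=\p 6,$$
which defines a particular bijection $\sigma_0\colon\{\p 1,\p 3,\p 5,\p 7\}\to\{\p 0,\p 2,\p 4,\p 6\}$. For any target $\sigma$, one factors uniquely $\sigma=\sigma_0\circ\tau$ with $\tau$ a permutation of $\{\p 1,\p 3,\p 5,\p 7\}$. By the fact recalled in Section \ref{symmetries:sec} that $H$ acts as the full symmetric group $\mathcal{S}_4$ on the four odd positive walls, there is a unique $h\in H$ with $h|_{\{\p 1,\p 3,\p 5,\p 7\}}=\tau$. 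Setting $s:=Rh$, we obtain $s(\p i)=R(\tau(\p i))=\sigma_0(\tau(\p i))=\sigma(\p i)$ for every odd $\p i$, proving existence.

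Finally, for uniqueness: if $s,s'\in K$ both realize $\sigma$, then $s'^{-1}s\in K$ fixes each of the four odd positive walls. In particular it preserves their parity, and hence lies in the parity-preserving subgroup $H$; since $H$ acts faithfully as $\mathcal{S}_4$ on those four walls, only the identity can fix them all, so $s=s'$. I do not expect any real obstacle: the statement is essentially an orbit-stabilizer count, resting on two facts already established, namely $K=H\sqcup HR$ and the faithful $\mathcal{S}_4$-action of $H$ on the odd positive walls. The only routine verification is the explicit computation of $R$ on those four walls, which is independent of $t$ and hence gives the symmetry $s$ for all $t\in(0,1]$ simultaneously.
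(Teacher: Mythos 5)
Your proof is correct and follows the same approach as the paper's, which simply observes that $H$ acts on each of the two four-element sets as its full permutation group and that $R$ exchanges them; you have spelled out the existence-and-uniqueness argument (via the coset decomposition $K=H\sqcup HR$, the explicit base bijection $\sigma_0$ induced by $R$, and the faithfulness of the $\mathcal{S}_4$-action of $H$) that the paper leaves implicit.
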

\begin{proof}
Recall from Section \ref{symmetries:sec} the group of symmetries $K$ of $P_t$ and its subgroup $H$. The group $H$ acts 
on both sets $\lbrace\p1,\p3,\p5,\p7\rbrace$ and $\lbrace\p0,\p2,\p4,\p6\rbrace$ as their permutation group, and the roll symmetry $R$ exchanges the two sets. \end{proof}

Note that, when $t\in(t_1,1)$, each positive wall is adjacent to all the other positive walls of the same parity and there are no triple intersections among positive walls, see Figure \ref{walls:fig}.
Therefore, the four odd (resp.~even) positive walls are arranged with the combinatorial pattern of a three-dimensional regular ideal tetrahedron:
each wall corresponds to a face of the tetrahedron, while each red quadrilateral (intersection of two walls) corresponds to an edge of the ideal tetrahedron.

\begin{figure}
\labellist
\small\hair 2pt
\pinlabel \textcolor{blue}{$\p 3$} at 30 80
\pinlabel {$\p 1$} at 30 60
\pinlabel \textcolor{blue}{$\p 7$} at 140 95
\pinlabel {$\p 5$} at 130 115

\pinlabel \textcolor{blue}{$\p 4$} at 220 80
\pinlabel {$\p 0$} at 220 60
\pinlabel \textcolor{blue}{$\p 2$} at 330 95
\pinlabel {$\p 6$} at 320 115
\endlabellist
\centering
\includegraphics[width=9 cm]{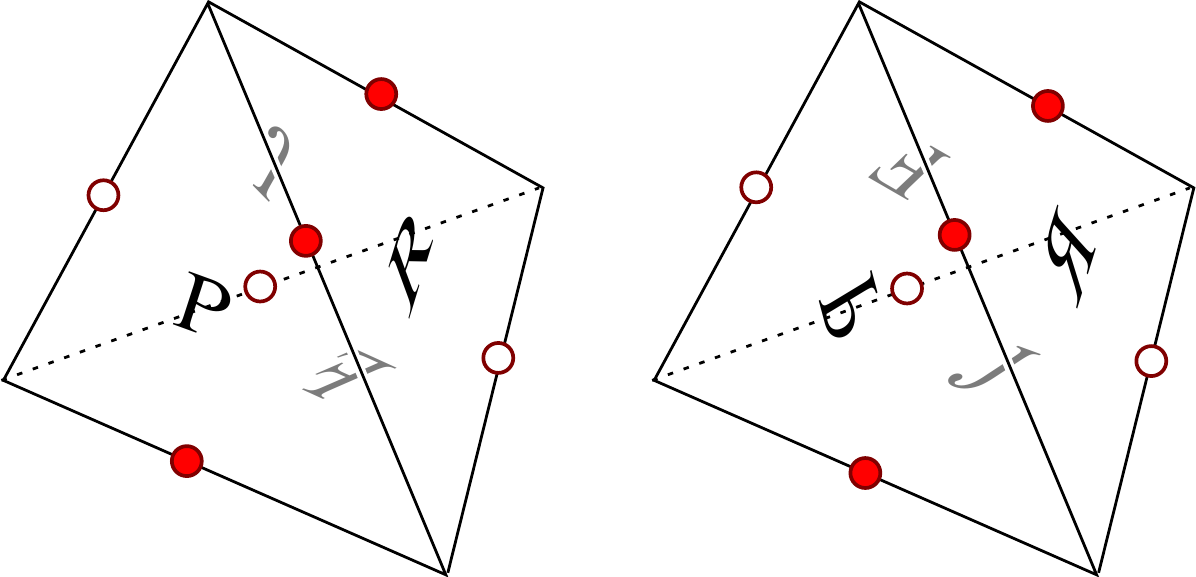}
\nota{This is the ideal triangulation of the figure eight knot complement. We identify the odd (even) positive walls with the four faces of the left (right) tetrahedron, as shown here. Front faces are labeled in black, and back faces in blue. The two resulting edges of the triangulation have valence six and are dotted in red and white.}\label{triangulation_figure8:fig}
\end{figure}

Consider the two ideal regular tetrahedra of Figure \ref{triangulation_figure8:fig}. We identify each four-uple of walls
$$\lbrace\p1,\p3,\p5,\p7\rbrace,\qquad \lbrace\p0,\p2,\p4,\p6\rbrace$$
with the faces of the left and right ideal tetrahedron, as shown in the figure.

The letters $F$, $J$, $P$, $R$ in the figure determine a well-known face-pairing of the two tetrahedra: this is the face-pairing giving rise to the ideal triangulation of the figure-eight knot complement. It has the following nice combinatorial features:
\begin{itemize}
\item each edge in the resulting combinatorial triangulation has valence 6,
\item the return maps around the two edges are trivial.
\end{itemize} 

The face-pairing of Figure \ref{triangulation_figure8:fig} induces a wall-pairing 
$$S = \big\{s_{\p 1}, s_{\p 3}, s_{\p 5}, s_{\p 7}\big\}$$
between the odd and even positive walls of $P_t$. Each $s_{\p i}$ is an isometry from $\p i$ to some even positive wall, determined as follows. Every $\p i \in \{\p1,\p3,\p5,\p7\}$ corresponds to a face of the left tetrahedron, which is glued to some face of the right one according to the pattern shown in Figure \ref{triangulation_figure8:fig}.
The gluing extends to a unique isometry between the two tetrahedra, that induces a bijection
$$\sigma \colon \lbrace\p1,\p3,\p5,\p7\rbrace \longrightarrow \lbrace\p0,\p2,\p4,\p6\rbrace.$$
The bijection in turns determines a symmetry $s_{\p i}$ of $P_t$ by Lemma \ref{symm_positive:lemma},
that restricts to an isometry between $\p i$ and $\sigma(\p i)$.
Note that the face-pairing $S$ glues the wall $\l G$ to $\l H$ exactly with the pattern of Figure \ref{triangulation_figure8:fig}.

For instance, the symmetry $s_{\p 1}$ sends $\p 1$ to $\p 0$ (the two faces in Figure \ref{triangulation_figure8:fig} have the same letter $P$), and by looking at the orientation of the letter $P$ we also see that $s_{\p 1}$ acts as follows:
$$\p 3 \to \p 4, \quad \p 7 \to \p 6, \quad \p 5 \to \p 2.$$
This determines the isometry $s_{\p 1}$ between the walls $\p 1$ and $\p 0$. Following this recipe, it is not difficult to check that the wall-pairings in $S$ are restrictions of the following isometries of $\matH^4$ and symmetries of $P_t$:
\begin{align*}
s_{\p1}\colon (x_0,x_1,x_2,x_3,x_4) & \longmapsto(x_0,x_3,x_1,-x_2,-x_4), \\
s_{\p3}\colon (x_0,x_1,x_2,x_3,x_4) & \longmapsto(x_0,x_2,x_3,-x_1,-x_4), \\ 
s_{\p5}\colon (x_0,x_1,x_2,x_3,x_4) & \longmapsto(x_0,-x_3,x_1,x_2,-x_4), \\
s_{\p7}\colon (x_0,x_1,x_2,x_3,x_4) & \longmapsto(x_0,x_2,-x_3,x_1,-x_4).
\end{align*}
Note that all such symmetries are orientation-preserving. This implies that the resulting cone-manifold $N_t$ (defined in the following paragraph) will \emph{not} be orientable.

\subsection*{The manifolds $N_t$}
Finally, we are ready to define the desired cone-manifold $N_t$.
\begin{defn}\label{4copie:def}
Let $N_t$ be the hyperbolic cone-manifold obtained by picking four copies $P_t^{ij}, i,j\in\{0,1\}$ of the polytope $P_t$ and by pairing their walls as follows:
\begin{enumerate}
\item identify every L wall in $P_t^{0j}$ with the corresponding wall in $P_t^{1j}$;
\item identify every N wall in $P_t^{i0}$ with the corresponding wall in $P_t^{i1}$;
\item identify the P walls in $P_t^{ij}$ in pairs via the wall-pairing $S$. 
\end{enumerate}
In (1) and (2) we identify the corresponding walls using the identity map. We do the identifications (1), (2), and (3) for all $i,j \in \{0,1\}$.
\end{defn}

The hyperbolic cone-manifold $N_t$ is defined for all $t\in (0,1]$, but we will be interested essentially in the interval $[t_1,1]$. 

The idea lying behind this construction is that everything should work locally like with $W_t$, except that now every red quadrilateral is incident to 6 copies of $P_t$ instead of 2 and hence $N_{t_1}$ will be a manifold and not an orbifold.
We now analyse $N_t$ carefully. 

\subsection*{The singular set $\Sigma$} 
As for $W_t$, we start by analyzing the singular set $\Sigma$. 

\begin{prop} \label{solo2:prop}
The singular set $\Sigma$ of $N_t$ is the union of the green and red faces of the four copies of $P_t$.
\end{prop}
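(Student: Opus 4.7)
\textit{Proof proposal.} The plan is to follow the strategy of Proposition \ref{solo:prop} essentially verbatim, adapting the bookkeeping to the new identification scheme in Definition \ref{4copie:def}. The goal is to show that every point $x$ in one of the four copies $P_t^{ij}$ not lying on a green or red face becomes a smooth point of $N_t$. If $x$ is interior to a wall, exactly one of the identifications (1), (2), or (3) acts on $x$ and glues two copies of $P_t$ along a totally geodesic hyperplane, so $x$ is smooth. If $x$ is interior to a face $F$ which is neither red nor green, then $F$ is right-angled and its two walls $W_1, W_2$ carry different colours (same-colour walls are never adjacent); the task reduces to checking that the link of $F$ in $N_t$ is a full circle of length $2\pi$, i.e.\ that exactly four copies of $P_t$ meet along the image of $F$.

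The case $F = L \cap N$ works exactly as for $W_t$: the L-identification toggles $i$, the N-identification toggles $j$, and the monodromy traces the loop $P_t^{00} \to P_t^{10} \to P_t^{11} \to P_t^{01} \to P_t^{00}$ at the same face $F$, totalling $4\cdot\tfrac{\pi}{2}=2\pi$. The essential new case is $F = L \cap P_{\mathrm{odd}}$ (symmetrically $F = N \cap P_{\mathrm{odd}}$), where $P_{\mathrm{odd}} \in \{\p 1,\p 3,\p 5,\p 7\}$ is paired within its own copy with $\sigma(P_{\mathrm{odd}})$ by the symmetry $s_P \in S$. Starting at $F$ in $P_t^{00}$, I would trace the four arcs of the link as
\[
(P_t^{00}, F) \xrightarrow{L} (P_t^{10}, F) \xrightarrow{P_{\mathrm{odd}}} (P_t^{10}, s_P(F)) \xrightarrow{s_P(L)} (P_t^{00}, s_P(F)) \xrightarrow{\sigma(P_{\mathrm{odd}})} (P_t^{00}, F),
\]
where the last arrow uses $s_P^{-1}$ inside $P_t^{00}$. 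The loop closes after four arcs of length $\pi/2$ because $S$ gives a bijective matching between odd and even positive walls (Lemma \ref{symm_positive:lemma}): the final P-crossing through $\sigma(P_{\mathrm{odd}})$ undoes the first P-crossing through $P_{\mathrm{odd}}$, bringing us back to the original face $F$ in the original copy $P_t^{00}$.

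Finally, for $x$ an edge or vertex point of $P_t^{ij}$ outside the green/red locus, the link of $x$ in $P_t^{ij}$ is a spherical polytope whose $2$-faces are the links of the faces through $x$; by the previous paragraph each such $2$-face closes up to a $2\pi$-circle in $N_t$, so the link of $x$ in $N_t$ is a closed spherical $3$-manifold, and by induction on dimension (each iterated link is again a smooth spherical manifold) it is isometric to $S^3$, making $x$ smooth. The main obstacle is the case analysis of the second paragraph; it is overcome by the fact that the pairing $S$ was designed precisely to mimic the figure-eight knot face-pairing, whose combinatorial feature of having well-behaved edge cycles translates here into the closing-up of the link after exactly four arcs.
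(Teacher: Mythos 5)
Your main argument — the explicit arc-tracing for faces and the mirror-toggle bookkeeping for walls — is correct and amounts to a detailed elaboration of the paper's terser proof, which simply observes that the L/N identifications are mirrors while the P identification preserves the L/N colouring, so any right-angled corner of $P_t$ with distinctly coloured walls becomes a smooth point of $N_t$. A small wording point: ``same-colour walls are never adjacent'' is not quite accurate once $t<1$ (two positive walls of the same parity \emph{are} adjacent, meeting in a red face); what matters is that the two walls of any non-red face carry distinct colours, which is what you actually use.

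The final paragraph, however, has a genuine logical gap: a compact spherical cone-manifold with no cone points need not be a round sphere — it is a spherical space form, possibly $\matRP^3$ or a lens space, and the ``induction on dimension'' you invoke does not rule these out. In the situation at hand the gap is harmless, but it should be closed explicitly. For $t\in(t_1,1)$ there are no finite vertices of $P_t$ outside the green/red locus at all: by Proposition \ref{combpiccoli} each finite vertex is the intersection of two positive walls of the same parity, hence lies on a red face. For an edge $E$ outside the green/red locus (contained in one positive, one negative and one letter wall, pairwise at right angles), the link of $E$ in $N_t$ is tiled by exactly eight copies of a right-angled spherical triangle of area $\pi/2$ — four from the copies $(P_t^{ij},E)$ and four from $(P_t^{ij},s_P(E))$, where $s_P$ is the wall-pairing symmetry — giving total area $4\pi$; combined with your face-link analysis showing the absence of cone points, Gauss--Bonnet forces the link surface to be $S^2$, so $S_x(N_t)\cong S^0*S^2\cong S^3$ as required.
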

\begin{proof}
Let $x\in \partial P_t$ be a point that does not lie in a green or red face. The point $x$ is contained in one, two, or three walls that are pairwise at right angles and have distinct colors. Since the identifications of the walls L and N are just mirrors, and that of the walls P preserves the colourings L and N, one sees easily that $x$ becomes a smooth point in $N_t$. 
\end{proof}

As for $W_t$, to understand the singular set $\Sigma$ of $N_t$ it suffices to analyse the green and red faces of $P_t$. 

\begin{prop} \label{Nt:prop}
When $t\in (t_1,1)$ the singular set $\Sigma$ is a geodesically  immersed surface $\Sigma = T_0 \cup T_0' \cup T_1$, union of two disjoint red cone-tori $T_0 \sqcup T_0'$ and a green cone-torus $T_1$, with cone angles $6\theta$ and $4\varphi $ respectively, intersecting in four points. The three tori have trivial normal bundles.
\end{prop}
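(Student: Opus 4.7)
My plan is as follows.

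By Proposition~\ref{solo2:prop}, the singular set $\Sigma \subset N_t$ equals the union of the images of the green and red faces of the four copies of $P_t$, so I will analyze these two families separately and then their intersection.

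For the green triangles, I observe that each green face sits between a letter wall ($G$ or $H$) and a negative wall, with dihedral angle $\varphi$. Since the $L$- and $N$-walls are identified in $N_t$ by the identity maps exactly as in $W_t$, the local cone-angle computation around each green face is unchanged: four copies of $P_t$ meet and the cone angle is $4\varphi$, as in Proposition~\ref{Sigma:W:prop}. The three edges of every green triangle lie on positive walls and are therefore identified via the pairing $S$. I will run through the orbits of the $4\times 8 = 32$ green triangles under the combined action of the $L$- and $N$-mirrors and the $S$-pairings, using the explicit formulas for $s_{\p1},\ldots,s_{\p7}$ together with the $K$-symmetry from Section~\ref{symmetries:sec}, and show that all of them lie in a single orbit, assembling into the single closed 2-stratum $T_1$.

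For the red quadrilaterals, the key fact is that, combinatorially, the four odd (respectively four even) positive walls of $P_t$ are glued together by $S$ exactly according to the figure-eight ideal triangulation of Figure~\ref{triangulation_figure8:fig}; each red quadrilateral plays the role of an edge of a tetrahedron. Since the two edges of the figure-eight triangulation have valence $6$, each red quadrilateral in $N_t$ is incident to exactly $6$ copies of $P_t$ (possibly with repetition), yielding cone angle $6\theta$; moreover the $4\times 12=48$ red quadrilaterals organise, under the identifications, into exactly two orbits corresponding to the two edge-classes of the figure-eight triangulation, producing the disjoint closed 2-strata $T_0$ and $T_0'$.

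Each intersection point of a red 2-stratum with the green one corresponds to a vertex of $P_t$ of type~(2) in Proposition~\ref{combpiccoli}, whose link is the spherical tetrahedron $I_\theta * I_\varphi$. At each such 0-stratum in $N_t$, the tangent link assembles to $C_{6\theta} * C_{4\varphi}$, which is tiled by $\tfrac{(6\theta)(4\varphi)}{\theta\,\varphi} = 24$ copies of $I_\theta * I_\varphi$. Since the four copies of $P_t$ contain $4 \times 24 = 96$ vertices of this type, we obtain $96/24 = 4$ intersection points, matching the claim. With intrinsic cone angles $4\varphi$ on $T_0, T_0'$ and $6\theta$ on $T_1$ read off from the link structure, Gauss--Bonnet forces each of the three closed 2-strata to be a topological torus. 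Finally, triviality of the normal bundles follows because each torus is assembled from geodesic $2$-disks in $\matH^4$ whose orthogonal $2$-plane bundles can be coherently oriented: the reflections along $L$- and $N$-walls and the $S$-identifications all preserve an orientation of the normal plane of each red or green face, so the normal bundle is an orientable rank-$2$ bundle over a torus of Euler number $0$.

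The main obstacle is the global combinatorial orbit analysis: confirming that the $S$-pairings, combined with the $L$- and $N$-mirrors, produce exactly two red orbits (hence $T_0, T_0'$) and exactly one green orbit (hence $T_1$), with the intersection points correctly distributed as two on each red torus and four on the green torus.
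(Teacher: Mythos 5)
Your plan follows essentially the same route as the paper's proof: read off the singular set from Proposition~\ref{solo2:prop}, analyse the links of finite vertices (mirroring along $L$ and $N$, then using the valence-$6$ structure with trivial return maps of the figure-eight pattern to assemble six $P_t$-wedges around each red face), obtain cone angles $4\varphi$ and $6\theta$, and identify the 0-strata with unit tangent space $C_{4\varphi}*C_{6\theta}$. Your count of intersection points via $96/24=4$ is a nice consistency check that matches the paper's conclusion.

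There is, however, a genuine gap in the last step. Gauss--Bonnet, applied after the orbit analysis supplies the number of components, the cone-point distribution, and the areas, can only conclude $\chi=0$ for each closed 2-stratum; it cannot distinguish a torus from a Klein bottle (both have $\chi=0$, and a hyperbolic cone metric with two or four cone points exists on both). This distinction is substantive: indeed in the quotient $M_t=N_t/\iota$ the green 2-stratum \emph{is} a Klein bottle. The paper avoids the issue by a direct combinatorial identification: the green 2-stratum decomposes into eight equilateral triangles glued exactly as the torus cusp cross-section of the figure-eight knot complement triangulation, which is known to be orientable, and the red strata are recognised as the same cone-tori that appear in $W_t$. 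To close your argument you would need either to track orientations through the $L$/$N$ mirrors and the $S$-pairings (noting that $s_{\p 1},\ldots,s_{\p 7}$ are orientation-preserving and the mirrors restrict to orientation-reversing maps of the respective faces, so the induced identifications on the cone-surfaces can be shown to be orientation-coherent), or to carry out the explicit identification with the figure-eight cusp link as the paper does. A secondary issue is that you defer the global orbit analysis, which is exactly the step needed before Gauss--Bonnet can even be invoked, since Gauss--Bonnet alone only constrains the \emph{sum} of the Euler characteristics over components.
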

\begin{proof}
To understand $\Sigma$, we analyse all the finite vertices $v$ in $P_t$ and determine the unit tangent space of their images in $N_t$, as in the proof of Proposition \ref{Sigma:W:prop}.
We refer to Figure \ref{links:fig}. 

By Proposition \ref{combpiccoli} there are two types of vertices $v$ to analyse, with spherical link $\Delta = I_{\frac \pi 2} * I_\theta$ or $I_\varphi*I_\theta$. The two types are considered similarly, so we only focus on $I_{\varphi} * I_\theta$. The 4 faces of $\Delta$ are coloured as P, P, N, L.
After mirroring along negative walls, the link becomes $I_{2\varphi}*I_\theta$ and then, mirroring along the letter walls, we get a link $C_{4\varphi}*I_\theta$.   

The join $C_{4\varphi}*I_\theta$ has two ``faces'' coloured as P, each isometric to a spherical disc with a cone point $4\varphi$ in its centre, tessellated into four triangles. As opposite to $W_t$, the P faces here are not doubled: they are paired according to the pattern of Figure \ref{triangulation_figure8:fig}. Since every edge has valence 6 in this pattern, 6 copies of $C_{4\varphi}*I_\theta$ are glued cyclically. Since the return map around every edge in Figure \ref{triangulation_figure8:fig} is the identity (and not an edge reversal), the 6 copies are glued cyclically also with a trivial return map, giving rise to $C_{4\varphi}*C_{6\theta}$. 

We have discovered that the link of $v$ is $S^1*C_{6\theta}$ or $C_{4\varphi}*C_{6\theta}$, according to the vertex type. We deduce that $\Sigma$ is an immersed geodesic surface, made up of embedded orthogonal red and green surfaces having cone angles $6\theta$ and $4\varphi$. 

A simple analysis on the topology of $\Sigma$ shows that it consists of:
\begin{itemize}
\item two red cone-tori as in Figure \ref{polygons1:fig}-(bottom-left), each with two cone points of angle $4\varphi$, as we had in $W_t$; 
\item one green cone-torus with four cone points of angle $6\theta$, which decomposes into eight green equilateral triangles like the single torus cusp section of the figure-eight knot complement triangulation in Figure \ref{triangulation_figure8:fig}. 
\end{itemize}
It is also quite easy to check that their normal bundles are trivial. 
\end{proof}

\begin{cor}
When $t\in [t_1,1]$ the family $N_t$ interpolates analytically between two cusped hyperbolic manifolds $N_1$ and $N_{t_1}$. 
\end{cor}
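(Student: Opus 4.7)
The plan is to identify $N_1$ and $N_{t_1}$ explicitly as complete finite-volume hyperbolic manifolds, and then verify that the path $t\mapsto\rho_t$ of holonomies extends analytically to the closed interval $[t_1,1]$.

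First I would evaluate the cone angles $6\theta$ and $4\varphi$ of Proposition~\ref{Nt:prop} at the endpoints using the boundary values of $\theta$ and $\varphi$ from Proposition~\ref{Q_t:prop}. At $t=1$ we have $\theta=0$ and $\varphi=\pi/2$, so the two red cone-tori acquire cone angle $0$ (they are pinched off and become cusps of $N_1$) while the green cone-torus acquires cone angle $2\pi$ and disappears into the regular locus. Symmetrically, at $t=t_1$ we have $\theta=\pi/3$ and $\varphi=0$, so the green cone-torus is drilled and the two red cone-tori become smooth. Hence both $N_1$ and $N_{t_1}$ are genuine hyperbolic manifolds with no cone singularities; each is tessellated into four copies of the finite-volume polytope $P_t$ (Proposition~\ref{combinatoria:prop}) and is therefore itself of finite volume. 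Completeness and the flat three-manifold shape of the new cusps follow from the Euclidean links of the relevant ideal vertices of $P_t$ (Figure~\ref{links_eucl:fig}-(1) at $t=1$ and Figure~\ref{links_eucl:fig}-(2) at $t=t_1$), which assemble under the gluings of Definition~\ref{4copie:def} into compact flat $3$-manifolds.

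For the analyticity, I would note that on the open interval $(t_1,1)$ the combinatorics of $P_t$ is constant (Proposition~\ref{combinatoria:prop}), the four positive wall-pairings $s_{\p{1}}, s_{\p{3}}, s_{\p{5}}, s_{\p{7}}$ are $t$-independent isometries, and each of the remaining wall-reflections is determined by a space-like vector of Table~\ref{walls:table} that depends analytically on $t$. Therefore $\pi_1(N_t\setminus\Sigma)$ is a fixed finitely presented group on $(t_1,1)$ and the holonomy $\rho_t\colon \pi_1(N_t\setminus\Sigma)\to \Iso(\matH^4)$ is an analytic one-parameter family of representations. The same formulas continue to define $\rho_t$ at $t=1$ and $t=t_1$; in each limit, the meridian around the cone-surface whose angle reaches $2\pi$ lands in the kernel, so $\rho_t$ descends to a holonomy representation of the corresponding limiting hyperbolic manifold.

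The main obstacle is verifying that the two endpoint structures really are cusped hyperbolic manifolds, rather than more singular collapses. This is settled by the explicit analysis of the Euclidean links above: at $t=1$ the identifications of Definition~\ref{4copie:def} glue the parallelepiped sections surrounding each red cone-torus cyclically into a flat $3$-torus cusp cross-section of $N_1$, while at $t=t_1$ the triangular prisms surrounding the green cone-torus assemble, according to the figure-eight knot pattern of Figure~\ref{triangulation_figure8:fig}, into a flat rank-three cusp cross-section of $N_{t_1}$.
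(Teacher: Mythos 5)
Your proof is correct and rests on the same key observation as the paper's: by Proposition \ref{Q_t:prop}, at $t=1$ we have $(6\theta,4\varphi)=(0,2\pi)$ and at $t=t_1$ we have $(6\theta,4\varphi)=(2\pi,0)$, so the cone angles of the red and green surfaces of Proposition \ref{Nt:prop} are each $0$ or $2\pi$ at the endpoints, leaving no cone singularities. The paper records exactly this and stops there, deferring the cusp structure to the subsequent Proposition \ref{cusps_Nt:prop} and the analyticity to the discussion of $\rho_t$ that appears earlier for $W_t$; you fold those supporting points into the proof directly, which is more self-contained but is not a different route.
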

\begin{proof}
The cone-manifolds $N_1$ and $N_{t_1}$ have no singularities, since $4\varphi$ and $6\theta$ are either 0 or $2\pi$ for that values.
\end{proof}

In the interpolation, the red tori are drilled at $t=1$ and the green tori are drilled at $t=t_1$, producing new cusps. 

\subsection*{The cusps}We now study the cusps of $N_t$. 

\begin{prop}\label{cusps_Nt:prop}
The cone-manifold $N_t$ has
\begin{itemize}
\item three cusps at $t=t_1$,
\item two cusps when $t\in (t_1,1)$,
\item four cusps at $t=1$.
\end{itemize}
The section of each cusp is a flat three-torus.
\end{prop}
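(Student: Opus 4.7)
The plan is to compute the cusps of $N_t$ by analyzing the orbits of ideal vertices of $P_t$ under the identifications (1), (2), (3) of Definition \ref{4copie:def}. Each cusp corresponds to such an orbit, and its Euclidean section is obtained by gluing the Euclidean vertex-links (parallelepipeds from Proposition \ref{combpiccoli}(1), plus triangular prisms from Proposition \ref{combcrit1}(2) at $t = t_1$) along their corresponding faces.

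For a parallelepiped-type ideal vertex $v$ whose two P-faces sit on an odd wall $\p i$ and an even wall $\p j$, the four copies $v_{ij} \in P_t^{ij}$ are all identified by the N- and L-gluings (1), (2), and the four parallelepiped links glue along their N- and L-faces (via the identity) into a flat region $T^2 \times I$, with two boundary tori lying on $\p i$ and $\p j$ respectively. The P-gluing (3) then chains such regions via the wall-pairings $s_{\p i}\colon \p i \to \sigma(\p i)$, which restrict to bijections between the ideal vertices of $P_t$ on $\p i$ and those on $\sigma(\p i)$. Since each positive wall contains three parallelepiped ideal vertices when $t \in (t_1, 1)$ (and six at $t = 1$), this endows the set of vertex-classes with a $2$-regular graph structure, whose components are in bijection with the resulting cusps. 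A direct combinatorial check using the wall-adjacencies of the $24$-cell (Table \ref{walls:table} and \cite[Figure 3.1]{KS}) and the explicit formulas for the four $s_{\p i}$ from Section \ref{Wt:subsection} should show that this graph splits into two $6$-cycles for $t \in (t_1, 1)$ and into four $6$-cycles at $t = 1$ (the extra vertices coming from the ideal limit of the red quadrilaterals), yielding the claimed counts of $2$ and $4$ parallelepiped cusps.

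For the eight prism-type ideal vertices appearing at $t = t_1$, a parallel analysis applies: after (1) and (2), the four copies of each prism link glue cyclically along their triangular N- and L-faces into a solid torus $D^2_\triangle \times S^1$ whose boundary consists of three annular P-strips. The identification (3), being precisely the figure-eight knot face-pairing of Figure \ref{triangulation_figure8:fig}, identifies all eight combinatorial ideal vertices of the two tetrahedra into a single orbit, mirroring the well-known one-cusp structure of the figure-eight knot complement; this produces exactly one additional cusp at $t = t_1$, for $3$ total.

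Finally, to see that each cusp section is a flat three-torus, I would observe that identifications (1) and (2) are the identity on the relevant faces, and that each $s_{\p i}$, being an orientation-preserving isometry of $\matH^4$ permuting axis-aligned parallelepiped-faces through the cusp, acts on the flat cusp link as a pure translation. Hence the holonomy of each cusp is a lattice of translations, and the orientable flat section must be $T^3$. The main obstacle is the combinatorial verification of the cycle decomposition in the parallelepiped case: it requires a careful finite enumeration using the specific wall-labeling of the $24$-cell, to confirm that the $12$ (resp.\ $24$) vertex-classes partition into two (resp.\ four) $6$-cycles under the four $s_{\p i}$.
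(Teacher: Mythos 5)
Your overall strategy matches the paper's: for the ideal vertices with parallelepiped links, mirror along N and L to get pieces $T^2\times I$, chain them through the P-pairings $s_{\p i}$, and count the resulting cycles. You correctly identify this cycle-count as the crux, but you defer it (``should show that this graph splits into two $6$-cycles''); the paper carries out this enumeration explicitly and records the two cycles of $C_{ij}$'s. For the counts at the critical times $t=1$ and $t=t_1$, the paper takes a shortcut you don't use: the singular set was already shown (Proposition \ref{Nt:prop}) to consist of two red tori and one green torus, so the extra cusps are the drillings of those tori. Your figure-eight one-cusp observation for the prism vertices is a nice alternative, but the paper's route avoids any new combinatorics.

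The genuine gap is in your final paragraph. You assert that each $s_{\p i}$ ``acts on the flat cusp link as a pure translation'' because it is an orientation-preserving isometry of $\matH^4$. That inference is invalid and contradicts the paper's computation: the paper shows each $s_{\p i}$ \emph{inverts} the natural orientation of the P-rectangle (it exchanges the N-parity of the edges), so the gluing between consecutive $T^2$-cross-sections has monodromy $\matr{-1}{0}{0}{1}$, not the identity. An orientation-preserving isometry of $\matH^4$ need not restrict to an orientation-preserving map of a two-dimensional cross-section. The fact that the cusp section is $T^3$ is therefore \emph{not} automatic; it depends on the cycle having \emph{even} length, so that $\matr{-1}{0}{0}{1}^6=\matr{1}{0}{0}{1}$. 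Your fallback claim, that an orientable flat $3$-manifold must be $T^3$, is also false (there are six affine diffeomorphism classes of orientable closed flat $3$-manifolds, five of which are nontrivial $T^2$-bundles over $S^1$). So as written your proof both leaves the cycle decomposition unverified and gives an incorrect justification of the three-torus conclusion; the parity of the cycle length is a genuinely load-bearing fact that your argument never touches.
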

\begin{proof}
We refer to Figure \ref{links_eucl:fig} for the links of the ideal vertices of $P_t$.

For $t\in(t_1,1)$, consider the 12 ideal vertices $v$ of $P_t$. The link of $v$ is a parallelepiped with faces coloured in P, N, L. Recall that opposite faces share the same colour, and if their colour is P or N, then they have opposite parity. By mirroring the parallelepiped along N and L we get $S^1\times S^1\times I$. The pairing of the P faces then form some cycles. Each cycle gives a cusp and is a flat mapping torus with fibre $S^1\times S^1$.

We now determine these cycles and the resulting mapping tori.
We denote the 12 parallelepipeds as
$$C_{01},\ C_{21},\ C_{61},\ C_{03},\ C_{23},\ C_{43},\ C_{05},\ C_{45},\ C_{65},\ C_{27},\ C_{47},\ C_{67},$$ 
where $C_{ij}$ is the link of the ideal vertex of $P_t$ adjacent to the four numbered walls $\ensuremath{\boldsymbol i}^\pm$ and $\ensuremath{\boldsymbol j}^\pm$ (and two letter walls, see Proposition \ref{combpiccoli}). A computation shows that there are two cycles: 
$$C_{01}\cup_{s_{\p1}}C_{03}\cup_{s_{\p3}}C_{23}\cup_{s_{\p3}}C_{27}\cup_{s_{\p7}}C_{45}\cup_{s_{\p5}}
C_{61}\cup_{s_{\p1}}C_{01},$$
$$C_{05}\cup_{s_{\p5}}C_{65}\cup_{s_{\p5}}C_{67}\cup_{s_{\p7}}C_{47}\cup_{s_{\p7}}
C_{43}\cup_{s_{\p3}}C_{21}\cup_{s_{\p1}}C_{05}.$$ 
Therefore there are two cusps. The fact that each cycle has an even number of elements implies that both cusp sections are three-tori. Indeed, each $s_{\p i}$ glues the odd P rectangle of a parallelepiped to the even P rectangle of the subsequent one; 
the opposite edges of each such rectangle are both coloured in N or L, and $s_{\p i}$ preserves the colouring but exchanges the parity of N; it also inverts the natural orientation of the rectangle; however, since we compose an even number $6$ of them, we get a mapping torus with monodromy ${\matr {-1}001}^{6} = \matr 1001$.

The additional cusps are obtained by drilling tori having trivial normal bundles, therefore they are also of three-torus type.
\end{proof}
\begin{rem}
We have here a third independent argument to show that $\mathrm{Vol}(P_{t_1})=\mathrm{Vol}(P_1) =\frac{4\pi^2}{3}$, after Proposition \ref{euler_t1:prop} and Proposition \ref{vol_piccoli}. The manifold $N_{t_1}$ is topologically obtained from $N_1$ by Dehn surgeries (first filling and then drilling along different tori), and these operations do not modify the Euler characteristic of a four-manifold. Therefore $\chi (N_{t_1}) = \chi(N_1)$, which implies $\Vol(N_{t_1}) = \Vol(N_1)$ and hence $\Vol(P_{t_1}) = \Vol(P_1)$.

Actually, an analogous reasoning could have been done in the previous section for $W_1$ and $W_{t_1}$ in the orbifold context.\end{rem}

\begin{rem} \label{figure8:rem}
The hyperbolic manifold $N_{t_1}$ contains a geodesic hypersurface diffeomorphic to the figure-eight knot complement. It comes from gluing together the walls $\l G$ and $\l H$ in $P_t^{00}$, which are regular ideal tetrahedra when $t=t_1$. This confirms the recent discovery that the figure-eight knot complement embeds geodesically \cite{S}.
\end{rem}

The cone-manifold $W_t$ is tessellated into eight copies of $P_t$, while $N_t$ is tessellated into only four. Therefore we have $\chi(N_1) = \chi(N_{t_1}) = 4$. (Recall that $\Vol(N) = \frac{4\pi^2}{3}\chi(N)$ for every hyperbolic 4-dimensional orbifold $N$.) 

In the next section we will quotient $N_t$ to a new cone-manifold $M_t$ and further cut the Euler characteristic by two.

\subsection*{Another Dehn filling}
We only say few words on the cone-manifolds $N_t$ when $t< t_1$. We note that as soon as $t<t_1$ the cone-angle $6\theta$ is greater than $2\pi$ and $N_t$ is not supported on a manifold any more. Indeed, as soon as $t<t_1$, the topology of $N_t$ changes from that of $N_{t_1}$ by a Dehn filling that is different from the ones already considered and that was mentioned in the introduction: it consists of the collapsing of one $S^1\times S^1$ factor in the $S^1\times S^1\times S^1$ shape of the cusp, which produces a small simple closed geodesic (as was mentioned in the introduction). This type of Dehn filling was already considered in \cite{FM,FM2}. 

\subsection{The manifolds $M_t$}
The family $N_t$ with $t\in [t_1,0]$ is quite like the $M_t$ required for proving Theorem \ref{main:teo}, except that the singular set $\Sigma$ contains two red tori instead of one and a green torus instead of a green Klein bottle (see Proposition \ref{Nt:prop}). We now construct $M_t$ as a quotient $M_t= N_t/_\iota$ where $\iota$ is an appropriate fixed-point-free isometric involution that interchanges the two red tori (and the two cusps of $M_t$).

To construct $\iota$, we exploit the well-known fact that the figure-eight knot complement has a fixed-point-free isometric involution $\rho$ that permutes the two ideal tetrahedra in Figure \ref{triangulation_figure8:fig} and the two edges, producing the non-orientable Gieseking manifold as a quotient (with a single tetrahedron and a single edge). Looking at Figure \ref{triangulation_figure8:fig}, the involution $\rho$ sends the left tetrahedron to the right by acting on the faces as follows:
$$\p 1 \to \p 4, \quad \p 3 \to \p 6, \quad \p 5 \to \p 2, \quad \p 7 \to \p 0.$$
This corresponds to the following isometric involution of $P_t$:
$$r\colon (x_0, x_1, x_2, x_3, x_4) \longmapsto (x_0, -x_1, -x_2, -x_3, -x_4).$$
The fact that $\rho$ is an isometry of the figure-eight knot complement implies that $\rho$ preserves the identifications of the faces in Figure \ref{triangulation_figure8:fig}, and this translates into the following equalities for $r$ that one can verify directly, since $s_{\p 7} = s_{\p 1}^{-1}$, $s_{\p 5} = s_{\p 3}^{-1}$, and $r$ commutes with them:
$$r = s_{\p 7} r s_{\p 1} = s_{\p 5} r s_{\p 3} = s_{\p 3} r s_{\p 5} = s_{\p 1} r s_{\p 7}.$$
These equalities say that $r$ preserves the identification between the positive walls of $P_t$, and since $r$ also preserves the N and L colours it descends to an isometric involution $r\colon N_t \to N_t$ that acts as described on each copy $P_t^{ij}$ of $P_t$.

The involution $r\colon N_t \to N_t$ has four fixed points: the four centers of the $P_t^{ij}$ (there is no $x\in P_t$ that is identified with $r(x)$ through the wall-pairing $S$). To eliminate these fixed points, we define 
$$\iota = h\circ r$$
where $h$ is the isometric involution of $N_t$ that sends $P_t^{ij}$ to $P_t^{1-i,1-j}$ via the identity map for each $i,j \in \{0,1\}$. (The isometries $h$ and $r$ commute.) The isometry $\iota$ is fixed-point-free.
Therefore the quotient $M_t = N_t/_\iota$ is a hyperbolic cone-manifold.

The involution $\iota$ exchanges the two red tori in the singular set of $M_t$, hence the singular set $\Sigma$ of $M_t$ contains a single red torus; it acts on the green torus as a fixed-point-free orientation-\emph{reversing} involution, hence $\Sigma$ also contains a green Klein bottle, tessellated into four equilateral green triangles like in a cusp section of the Gieseking manifold. (Similar to Remark \ref{figure8:rem}, the hyperbolic manifold $M_{t_1}$ contains a geodesically embedded copy of the Gieseking manifold.)

\begin{prop}
Both $T$ and $K$ have trivial normal bundle in $M_t$.
\end{prop}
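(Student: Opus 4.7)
The plan is to leverage the $2$-fold covering $\pi\colon N_t \to M_t$ (with deck transformation $\iota$) and lift the triviality of the normal bundles in $N_t$ from Proposition \ref{Nt:prop}. The two cases will require different treatments because $\pi$ is trivial over $T$ but not over $K$.

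For $T$, I would observe that $\iota$ exchanges the disjoint tori $T_0, T_0' \subset N_t$, so the preimage $\pi^{-1}(T) = T_0 \sqcup T_0'$ is a trivial double cover and the restriction $\pi|_{T_0}\colon T_0 \to T$ is a diffeomorphism. A tubular neighborhood of $T$ in $M_t$ is therefore diffeomorphic, as a disk bundle, to a tubular neighborhood of $T_0$ in $N_t$, which is trivial.

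For $K$ the preimage $\pi^{-1}(K) = T_1$ is a connected double cover, so one cannot simply transport a framing. I would instead construct an explicit $\iota$-equivariant framing of $NT_1$ that descends to $NK$. The candidate: on each of the $8$ green triangles $\{\l X, \m k\} \subset T_1$ (with $\l X \in \{\l G, \l H\}$ and $\m k$ a negative wall), take as local frame the two unit normal vectors $(n_{\l X}, n_{\m k})$, which span the normal fiber since the dihedral angle $\varphi$ between the walls is positive. Consistency across the edge identifications follows because each edge lies on some positive wall $\p i$ and is identified via the isometry $s_{\p i} \in S$, which sends $\l X, \m k$ to the walls of the adjacent green triangle, so its differential carries one local frame to the other. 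For $\iota$-equivariance, one uses that $r$ swaps $\l G \leftrightarrow \l H$ and permutes the negative walls via the sign change $x_i \mapsto -x_i$, so $d\iota = dr$ maps $(n_{\l X}, n_{\m k})$ to $(n_{r(\l X)}, n_{r(\m k)})$ at $\iota(x)$; composition with $h$ (which is trivial on the wall structure within each polytope copy) does not affect this.

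The main obstacle I anticipate is verifying consistency of this local frame at the $4$ vertices of $T_1$, where $6$ green triangles meet at a cone point. Matching of frames across a single shared edge is a short wall-pairing computation, but closing the loop around such a vertex requires following the entire fan of $6$ triangles and checking that the cumulative monodromy from the various $s_{\p i}$'s acts as the identity on the frame. I expect this to reduce to a finite, tractable calculation using the explicit formulas for $r$ and the wall-pairings $s_{\p 1}, s_{\p 3}, s_{\p 5}, s_{\p 7}$ from Section \ref{manifolds:section}, after which the equivariant framing descends to the desired trivialization of $NK$.
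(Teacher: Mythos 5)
Your framework---pulling everything through the two-fold covering $\pi\colon N_t\to M_t$ with deck transformation $\iota$---is exactly the paper's. The treatment of $T$ is identical: $\pi^{-1}(T)=T_0\sqcup T_0'$ is a trivial cover, so the triviality of $NT_0$ in $N_t$ transports to $T$.

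For $K$ you take a genuinely different route. The paper's argument is shorter and more abstract: it asserts that, in a suitable trivialization $NT_1\cong T_1\times D^2$, the involution $\iota$ acts as $(x,z)\mapsto(\iota(x),-z)$, i.e.\ by $-\mathrm{id}$ on the fiber (not by a reflection), and then observes that the quotient bundle $(T_1\times D^2)/\iota$ over $K$ is trivial --- which is an elementary check: a section of the form $z=e^{i\theta_1}$ in coordinates where $\iota(\theta_1,\theta_2)=(\theta_1+\pi,-\theta_2)$ is $\iota$-anti-invariant, and together with $iz$ gives a global frame downstairs. The crucial point is only the \emph{sign} of the fiber action (if $\iota$ acted by a reflection $z\mapsto\bar z$, the quotient would be $\mathbb{R}\oplus L$ with $L$ the orientation line bundle of $K$, which is nontrivial). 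You instead try to build an explicit $\iota$-\emph{equivariant} frame on $T_1$ out of the wall normals $(n_{\l X},n_{\m k})$; if this succeeds it would exhibit a trivialization on which $\iota$ acts by $+\mathrm{id}$, which is compatible with the paper (the two trivializations differ by a gauge $g\colon T_1\to SO(2)$ with $g\circ\iota=-g$). What your approach buys is concreteness and an independent verification of the sign issue; what it costs is the monodromy computation you correctly flag as the outstanding gap: you must check that traversing the fan of six green triangles around each of the four cone points of $T_1$ (where $T_1$ meets $T_0\sqcup T_0'$) brings the frame $(n_{\l X},n_{\m k})$ back to itself. That this holds is plausible --- the proof of Proposition \ref{Nt:prop} notes the return map around each such edge is trivial, so six applications of the various $s_{\p i}$ restore both the letter wall (parity of $\l G/\l H$ flips six times) and the negative wall --- but you have not actually carried it out, so as written your proof is incomplete, whereas the paper sidesteps the explicit framing entirely and never needs to inspect the cone points. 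To be fair, the paper's own justification of the $-z$ action and of the initial claim ``$NT_0,NT_0',NT_1$ trivial'' is itself terse, so the gap you flag is of a similar flavor to ones the paper leaves to the reader.
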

\begin{proof}
The tori $T_0, T_0', T_1$ in $N_t$ have trivial normal bundles. Therefore $T$ also has, and the normal bundle of $K$ is $(T_1 \times D^2)/_{\iota}$ where $\iota$ sends $(x,z)$ to $(i(x),-z)$. The resulting bundle is easily seen to be isomorphic to $K\times D^2$.
\end{proof}

The proof of Theorem \ref{main:teo} is complete -- it only remains to rescale and invert linearly the time parameter $t$ from $[1,t_1]$ to $[0,1]$.

\subsection{Commensurability}
We prove here the following.
\begin{prop}
The hyperbolic arithmetic four-manifolds $M_0$ and $M_1$ of Theorem \ref{main:teo} are not commensurable.
\end{prop}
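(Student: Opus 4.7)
The plan is to distinguish the two commensurability classes via a local arithmetic invariant of the underlying quadratic form. Both $M_0$ and $M_1$ are arithmetic lattices of simplest type in $\Iso(\matH^4)$: $M_0$ is commensurable with the integral group of the standard Lorentz form $I_{4,1} = \langle -1, 1, 1, 1, 1 \rangle$ over $\matQ$, and $M_1$ is commensurable with the Coxeter reflection group $\Gamma_{t_1}$ of $P_{t_1}$ (since it is tiled by finitely many copies of $P_{t_1}$), which is arithmetic by Kerckhoff--Storm. A standard fact is that the commensurability class of such a lattice is uniquely determined by the pair consisting of the totally real field $k$ of definition and the similarity class over $k$ of its underlying signature-$(4,1)$ form $q$. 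Moreover, for forms of odd dimension $5$ the scaling factor $(\lambda, -1)_p^{10}(\lambda, (\det q)^{4})_p$ is trivial, so the local Hasse--Witt invariants $\epsilon_p$ at every prime $p$ are similarity invariants. The plan is therefore to exhibit a prime at which $\epsilon_p(q_{t_1}) \neq \epsilon_p(I_{4,1}) = 1$.

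To extract $q_{t_1}$ I would select five linearly independent walls of $P_{t_1}$, say $\p 0, \p 1, \p 2, \p 3, \l A$, and compute the normalised Gram matrix from Table~\ref{walls:table} at $t = t_1 = \sqrt{3/5}$. A direct calculation shows that all entries are rational:
\[
G_{t_1} \;=\; \begin{pmatrix} 1 & -1 & -\tfrac 12 & -1 & 0 \\ -1 & 1 & -1 & -\tfrac 12 & 0 \\ -\tfrac 12 & -1 & 1 & -1 & 0 \\ -1 & -\tfrac 12 & -1 & 1 & 0 \\ 0 & 0 & 0 & 0 & 1 \end{pmatrix},
\]
so $\Gamma_{t_1}$ is defined over $\matQ$ and $q_{t_1}$ lies in the similarity class of $G_{t_1}$. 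The upper-left $4 \times 4$ block is circulant, with eigenvalues $-\tfrac 32, \tfrac 52, \tfrac 32, \tfrac 32$ attained on the Euclidean-orthogonal rational eigenvectors $(1, 1, 1, 1)$, $(1, -1, 1, -1)$, $(1, 0, -1, 0)$, $(0, 1, 0, -1)$ of squared norms $4, 4, 2, 2$; diagonalising the form accordingly gives $q_{t_1} \sim \langle -6, 10, 3, 3, 1 \rangle$ over $\matQ$.

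The main step is then a Hilbert symbol computation at $p = 2$. After cancelling the squared symbols and the pairs involving the unit entry, the invariant reduces to $\epsilon_2(q_{t_1}) = (-6, 10)_2 \cdot (3, -1)_2$. The first symbol is $+1$, as the equation $z^2 + 6 x^2 = 10 y^2$ admits the integer solution $(x, y, z) = (1, 1, 2)$; the second is $-1$, as reducing $z^2 + y^2 = 3 x^2$ modulo $8$ yields an insoluble congruence. Hence $\epsilon_2(q_{t_1}) = -1 \neq 1 = \epsilon_2(I_{4,1})$, the forms are not similar over $\matQ$, and the two lattices $\Gamma_{t_1}$ and $\mathrm O(I_{4,1}; \matZ)$ are not commensurable. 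Consequently $M_0$ and $M_1$ are not commensurable. Beyond the routine bookkeeping of verifying that the Gram matrix entries indeed simplify to rationals (despite the presence of $\sqrt 2$ and $\sqrt{3/5}$ in the individual defining vectors), the only nontrivial obstacle is the Hilbert symbol calculation at $p = 2$.
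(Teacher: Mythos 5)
The high-level strategy is identical to the paper's: reduce commensurability of the arithmetic lattices to similarity of their associated rational quadratic forms of signature $(4,1)$, and then distinguish the forms by a local Hasse--Witt invariant. Your preliminary observations (that the Hasse invariant is a similarity invariant in dimension $5$, that $M_0$ is commensurable with the standard integral lattice, and the actual Hilbert-symbol computations at $p=2$) are all correct.

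However, there is a genuine gap in the way you extract the form $q_{t_1}$. Choosing five wall normals whose normalised Gram matrix happens to be rational does \emph{not} guarantee that their $\matQ$-span is the canonical $\matQ$-structure preserved by the reflection group; the Vinberg--Maclachlan algorithm (which the paper follows) specifies rescalings obtained by multiplying along chains of non-zero Gram entries, and these rescalings matter. Concretely, at $t=t_1$ one has $\langle e_{\p 0}, e_{\l A}\rangle = 0$, and the shortest chain $\p 0 \to \p 4 \to \l A$ has weight $g_{\p 0,\p 4}\, g_{\p 4,\l A} = (-\tfrac 12)(-\sqrt 3) = \tfrac{\sqrt 3}{2}$. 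Equivalently, a direct check shows
$$e_{\p 4} \;=\; e_{\p 0} + e_{\p 2} + e_{\p 3} - \sqrt 3\, e_{\l A},$$
so the reflection in the wall $\p 4$ does \emph{not} preserve $\mathrm{span}_\matQ\{e_{\p 0},\dots,e_{\p 3},e_{\l A}\}$. The correct lattice vector to take is $\sqrt 3\, e_{\l A}$, whence the form is $\langle -6, 10, 3, 3, \boldsymbol 3\rangle$, not $\langle -6, 10, 3, 3, 1\rangle$. (The former is indeed $\matQ$-similar to the paper's $\langle 5,-1,3,1,1\rangle$ after scaling by $3$.) The two candidate forms are genuinely inequivalent: for $\langle -6,10,3,3,1\rangle$ one finds $\epsilon_3=-1$ and $\epsilon_5=1$, while for $\langle -6,10,3,3,3\rangle$ one finds $\epsilon_3=1$ and $\epsilon_5=-1$; the associated ramification sets are $\{2,3\}$ versus $\{2,5\}$, so your form misidentifies the commensurability class of $M_1$.

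It is a coincidence that the prime you chose, $p=2$, gives $\epsilon_2=-1$ for \emph{both} forms, so your final inequality $\epsilon_2(q_{t_1})\neq\epsilon_2(I_{4,1})$ happens to be true and the conclusion of the proposition is salvaged. But the argument as written is not sound: you have not verified that $\Gamma_{t_1}$ acts $\matQ$-rationally on your chosen lattice (it does not), and a computation at $p=3$ or $p=5$ with your form would contradict the correct one. To repair the proof, either verify which rescalings of the wall vectors lie in the canonical $\matQ$-structure by chasing Gram entries as above (so that the last basis vector is $\sqrt 3\, e_{\l A}$), or follow the Maclachlan algorithm directly as the paper does.
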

\begin{proof}
We first prove that the manifolds $M_0$ and $M_1$ are commensurable to the orbifolds $P_1$ and $P_{t_1}$, respectively (recall the time reparametrisation for $M_t$ at the end of the last section). 

The manifold $M_0$ is clearly commensurable with $N_1$.
The manifold $N_1$ is constructed by gluing some identical copies of $P_1$ along some isometrical pairings of their facets. The isometrical pairings that we used are in fact all restrictions of some isometry of $P_1$, hence $M_0$ is a covering of the orbifold $P_1/_{\Iso(P_1)}$ and therefore $M_0$ and $P_1$ are commensurable. The argument for $M_1$ and $P_{t_1}$ is the same.

The thesis now follows from Proposition \ref{not:comm:prop} below.
\end{proof}

We now concentrate on the Coxeter polytopes $P_1$ and $P_{t_1}$, and actually on their quotients $Q_1$ and $Q_{t_1}$. We already know that they are both arithmetic, hence the manifolds $M_0$ and $M_1$ also are.

Recall that two subgroups $\Gamma_1, \Gamma_2 < \Iso(\matH^n)$ are \emph{commensurable} (in a wide sense) if there is a $g\in \Iso(\matH^n)$ such that the intersection of $g^{-1}\Gamma_1g$ and $\Gamma_2$ has finite index in both. This is an equivalence relation.

We briefly describe a procedure due to Maclachlan \cite{Mac} to detect the commensurability class of any arithmetic hyperbolic reflection group $\Gamma<\mathrm{Isom}(\matH^n)$ of finite co-volume. We assume for simplicity that $n=4$ and $\Gamma$ is not co-compact (thus the field of definition is $\matQ$). We also refer to \cite[Section 4]{GJK} and \cite[Section 5.1.2]{J}.

\subsection*{Notations and facts}
We use the following notations:
\begin{itemize}
\item for $a,b\in\matQ^*$, we denote by $(a,b)$ the associated quaternion algebra over $\matQ$;
\item the symbol $\otimes$ is the tensor product over $\matQ$;
\item $\mathrm{Br}(\matQ)$ is the Brauer group of the field $\matQ$;
\item for a central simple $\matQ$-algebra $B$, we let $\left[B\right]\in\mathrm{Br}(\matQ)$ be the Brauer equivalence class of $B$.
\end{itemize}

Recall that the Brauer group is an Abelian group. The group operation is given by $[B_1]\cdot[B_2]=[B_1\otimes B_2]$. In the Brauer group, the class of any quaternion algebra has order two. 
Viceversa, any order-two element of $\mathrm{Br}(\matQ)$ is represented by a quaternion algebra.

For any $\matQ$-quaternion algebra $B$ there are algorithms to compute its \emph{ramification set}, which is a finite set of even cardinality whose elements are prime numbers or $\infty$. The ramification set is a complete invariant of the isomorphism class of $B$ as a quaternion algebra. It is empty $\Leftrightarrow$ $B\simeq M_2(\matQ)$ $\Leftrightarrow$ $[B]=1\in\mathrm{Br}(\matQ)$.

Moreover, for any $\matQ$-quaternion algebras $B_1$ and $B_2$, up to equivalence there exists a unique quaternion algebra $B$ such that $[B_1]\cdot[B_2]=[B]\in\mathrm{Br}(\matQ)$. Hence, it makes sense to talk about the \emph{ramification set} of $[B_1\otimes B_2]$ as the ramification set of the quaternion algebra $B$. This set is the symmetric difference of the ramification sets of $B_1$ and $B_2$.

The commensurability classes of non-uniform arithmetic lattices of the Lie group $\mathrm{Isom}(\matH^4)$ are in bijection with the isomorphism classes of quaternion algebras over $\matQ$, which are classified by their ramification sets.
\subsection*{The algorithm}
Given $N$ \emph{unit} space-like vectors $e_i\in\matR^{1,4}$ ($i=1,\ldots,N$) defining the reflection group $\Gamma$, the following algorithm gives a finite set of prime numbers or $\infty$ which characterises the commensurability class of $\Gamma$.
\begin{enumerate}
\item Compute the Gram matrix $G=(g_{ij})_{ij}$ of $\Gamma$, that is, $g_{ij}=\langle e_i,e_j\rangle$. 
\item Determine all vectors of the form $v_{i_1,\ldots,i_k}=g_{1,i_1}g_{i_1,i_2}\ldots g_{i_{k-1},i_k}e_{i_k}$.
\item The $\matQ$-vector space $V=\mathrm{span}_\matQ\lbrace v_{i_1,\ldots,i_k}\rbrace$ has dimension 5.
Determine a $\matQ$-basis $\mathcal{B}=\lbrace v_1,\ldots,v_5\rbrace$ of $V$.
\item Consider the associated quadratic form $q_G$ over $V$: it is of signature $(4,1)$.
Compute the matrix $Q$ of the form $q_G$ with respect to the basis $\mathcal{B}$. Diagonalise the form, to get a diagonal matrix: $D=\mathrm{diag(a_1,\ldots,a_5)}$, with $a_i\in\matQ^*$.
\item Compute the Hasse invariant $s(q_G)=\left[\bigotimes_{i<j}(a_i,a_j)\right]\in\mathrm{Br}(\matQ)$ and the Witt invariant $c(q_G)=s(q_G)\cdot[(-1,-1)]\in\mathrm{Br}(\matQ)$.
\item Compute the ramification sets of $s(q_G)$ and $c(q_G)$. To this aim, we will often use \cite[Propositions 4.13, 4.15]{GJK}.
\end{enumerate}

We apply the algorithm to discover the following. 
\begin{prop} \label{not:comm:prop}
The 24-cell $P_1$ is commensurable with $P_{\bar t}$ and is \emph{not} commensurable with $P_{t_1}$.
\end{prop}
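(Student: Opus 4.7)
The plan is to apply Maclachlan's algorithm, just outlined, to each of the three Coxeter polytopes $P_1$, $P_{\bar t}$ and $P_{t_1}$, extract the ramification set of the Witt invariant $c(q_G)$ in each case, and then check that the sets for $P_1$ and $P_{\bar t}$ coincide while that of $P_{t_1}$ differs. Since Maclachlan's invariant is a complete commensurability invariant for non-cocompact arithmetic lattices in $\mathrm{Isom}(\matH^4)$, this suffices.

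First, I would set up the defining unit space-like vectors. For $P_1$ this is standard: the 24-cell reflection group is commensurable with $O(4,1;\matZ)$, so the associated form is (rationally equivalent to) $q = \langle 1,1,1,1,-1 \rangle$. The Hasse invariant is then trivial, so $c(q_1) = [(-1,-1)]$, whose ramification set is $\{2,\infty\}$. Next, for $P_{\bar t}$ with $\bar t = \sqrt{1/3}$ and for $P_{t_1}$ with $t_1 = \sqrt{3/5}$, I would rescale the vectors of Table \ref{walls:table} to unit length; although this produces irrational entries, the pairwise Gram entries $g_{ij} = \langle e_i,e_j\rangle$ remain rational (indeed, for an acute-angled Coxeter polytope the cosines of the dihedral angles are rational or irrational in a controlled way, and the squared norms $\langle v_i,v_i\rangle$ are rational). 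It is enough to work on the quotient polytope $Q_t$, since $P_t$ and $Q_t$ define commensurable reflection groups.

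I would then run the algorithm: apply step (2) to the chosen unit vectors, extract a $\matQ$-basis $\mathcal{B}=\{v_1,\dots,v_5\}$ of $V=\mathrm{span}_\matQ\{v_{i_1,\dots,i_k}\}$ in step (3), write down the matrix $Q$ of the Lorentzian form in that basis, and diagonalise $Q$ over $\matQ$ into $\mathrm{diag}(a_1,\dots,a_5)$ with $a_i\in\matQ^*$. From this diagonal form the Hasse invariant
\[
s(q_G) = \Bigl[\,\bigotimes_{i<j}(a_i,a_j)\Bigr] \in \mathrm{Br}(\matQ)
\]
and the Witt invariant $c(q_G)=s(q_G)\cdot [(-1,-1)]$ are read off by computing the Hilbert symbols $(a_i,a_j)$ one by one and symmetric-differencing their ramification sets. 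The target is to verify that for $P_{\bar t}$ one recovers $\{2,\infty\}$ (hence commensurability with $P_1$), while for $P_{t_1}$ one gets a set involving the prime $3$ or $5$, thus different from $\{2,\infty\}$.

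The main obstacle will be the bookkeeping in step (3)-(5): choosing a sensible $\matQ$-basis, simplifying diagonal entries modulo squares so that the $a_i$ become small square-free integers, and then correctly computing the ramifications of the Hilbert symbols $(a_i,a_j)$. The dihedral angle $\theta=\tfrac{\pi}{3}$ at $t=t_1$ brings the irrationality $\sqrt{3/5}$ of $t_1$ into the entries of $Q$ as the squared quantity $3/5$, introducing the primes $3$ and $5$ into the diagonalisation, and this is precisely what ultimately shifts the ramification set away from $\{2,\infty\}$. By contrast, at $\bar t=\sqrt{1/3}$ the walls are right-angled and the entries conspire to keep the form rationally equivalent to a scalar multiple of the standard form, so the ramification set stays at $\{2,\infty\}$. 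Once the two ramification sets are in hand and seen to differ, the non-commensurability of $P_1$ and $P_{t_1}$ (and hence of $M_0$ and $M_1$) follows immediately.
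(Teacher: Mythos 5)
Your proposal follows essentially the same route as the paper: apply Maclachlan's algorithm to the quotient Coxeter polytopes $Q_1$, $Q_{\bar t}$, $Q_{t_1}$, diagonalise the associated rational quadratic form, compute the relevant Brauer-group invariant, and compare ramification sets. (The paper works with the Hasse invariant $s(q_G)$ rather than the Witt invariant $c(q_G)$, but these differ by the fixed class $[(-1,-1)]$, so the comparison is equivalent; the paper finds $s$ trivial for $Q_1$ and $Q_{\bar t}$ and ramified at $\{2,5\}$ for $Q_{t_1}$, matching your prediction that the prime $5$ should appear.)

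One small inaccuracy worth noting: you assert that after normalising the vectors to unit length ``the pairwise Gram entries $g_{ij}=\langle e_i,e_j\rangle$ remain rational.'' This is not true here --- the paper's Gram matrices for $Q_{t_1}$ and $Q_{\bar t}$ contain entries such as $-\sqrt{15}$ and $-\tfrac{\sqrt 3}{2}$. What is true, and what steps (2)--(3) of the algorithm are designed to exploit, is that the vectors $v_{i_1,\dots,i_k}$ obtained by multiplying a chain of Gram entries span a $5$-dimensional $\matQ$-vector space $V$ on which the Lorentzian form restricts to a $\matQ$-valued form. So the procedure you outline is sound; only the parenthetical justification is off.
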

\begin{proof}
We apply the algorithm to the arithmetic Coxeter polytopes $Q_1$, $Q_{t_1}$ and $Q_{\bar t}$. 
Recall that the vectors $\l A$, $\l L$, $\l M$, $\l N$ are constant, in contrast with 
$\p 0, \m 0, \p 3$, $\m 3, \l G, \l H $ 
that depend on $t$.

We start with $Q_1$ and find:
\begin{align*}
e_{\p0} =\left(1,\tfrac{\sqrt2}{2},\tfrac{\sqrt2}{2},\tfrac{\sqrt2}{2},\tfrac{\sqrt2}{2}\right), &
 \quad e_{\m0}=\left(1,\tfrac{\sqrt2}{2},\tfrac{\sqrt2}{2},\tfrac{\sqrt2}{2},-\tfrac{\sqrt2}{2}\right),\\
e_{\p3} =\left(1,\tfrac{\sqrt2}{2},\tfrac{\sqrt2}{2},-\tfrac{\sqrt2}{2},-\tfrac{\sqrt2}{2}\right), &
\quad e_{\m3}=\left(1,\tfrac{\sqrt2}{2},\tfrac{\sqrt2}{2},-\tfrac{\sqrt2}{2},\tfrac{\sqrt2}{2}\right), \\
e_{\l G}=\left(1,0,0,0,-\sqrt2\right), & \quad e_{\l H}=\left(1,0,0,0,\sqrt2\right), \\
e_{\l A}=\left(1,\sqrt2,0,0,0\right), & \quad e_{\l L}=\left(0,-\tfrac{\sqrt2}2,\tfrac{\sqrt2}2,0,0\right), \\
e_{\l M}=\left(0,0,-\tfrac{\sqrt2}2,\tfrac{\sqrt2}2,0\right), &
\quad e_{\l N}=\left(0,0,-\tfrac{\sqrt2}2,-\tfrac{\sqrt2}2,0\right).
\end{align*}
The Gram matrix is
$$G=
\begin{bmatrix}
			1 & 0 & -1 & 0 & -2 & 0 & 0 & 0 & 0 & -1 \\
			0 & 1 & 0 & -1 & 0 & -2 & 0 & 0 & 0 & -1 \\
			-1 & 0 & 1 & 0 & 0 & -2 & 0 & 0 & -1 & 0 \\
			0 & -1 & 0 & 1 & -2 & 0 & 0 & 0 & -1 & 0 \\
			-2 & 0 & 0 & -2 & 1 & -3 & -1 & 0 & 0 & 0 \\
			0 & -2 & -2 & 0 & -3 & 1 & -1 & 0 & 0 & 0 \\
			0 & 0 & 0 & 0 & -1 & -1 & 1 & -1 & 0 & 0 \\
			0 & 0 & 0 & 0 & 0 & 0 & -1 & 1 & -\frac12 & -\frac12 \\
			0 & 0 & -1 & -1 & 0 & 0 & 0 & -\frac12 & 1 & 0 \\
			-1 & -1 & 0 & 0 & 0 & 0 & 0 & -\frac12 & 0 & 1				  
\end{bmatrix}.
$$
We can choose $\mathcal{B}=\left(e_{\l H},e_{\l A},e_{\l L},e_{\l M},e_{\l N}\right)$, so that in this case $Q$ is just a submatrix of $G$:
$$Q=
\begin{bmatrix} 1 & -1 & 0 & 0 & 0 \\
			 -1 & 1 & -1 & 0 & 0 \\
			0 & -1 & 1 & -\frac12 & -\frac12 \\
			0 & 0 & -\frac12 & 1 & 0 \\
			0 & 0 & -\frac12 & 0 & 1				  
\end{bmatrix}.
$$
A diagonal form is $D=\mathrm{diag}(1,1,-1,1,1)$, thus the Hasse invariant is trivial.

We now turn to $Q_{t_1}$ and find
\begin{align*}
e_{\p0}=\left(\tfrac{\sqrt3}2,\tfrac{\sqrt6}4,\tfrac{\sqrt6}4,\tfrac{\sqrt6}4,\tfrac{\sqrt{10}}4\right), & \quad
e_{\m0}=\left(\tfrac{\sqrt5}2,\tfrac{\sqrt{10}}4,\tfrac{\sqrt{10}}4,\tfrac{\sqrt{10}}4,-\tfrac{\sqrt6}4\right) \\
e_{\p3}=\left(\tfrac{\sqrt3}2,\tfrac{\sqrt6}4,\tfrac{\sqrt6}4,-\tfrac{\sqrt6}4,-\tfrac{\sqrt{10}}4\right), & \quad
e_{\m3}=\left(\tfrac{\sqrt5}2,\tfrac{\sqrt{10}}4,\tfrac{\sqrt{10}}4,-\tfrac{\sqrt{10}}4,\tfrac{\sqrt6}4\right) \\
e_{\l G}=\left(\sqrt5,0,0,0,-\sqrt6\right), & \quad
e_{\l H}=\left(\sqrt5,0,0,0,\sqrt6\right) \\
e_{\l A}=\left(1,\sqrt2,0,0,0\right), & \quad
e_{\l L}=\left(0,-\tfrac{\sqrt2}2,\tfrac{\sqrt2}2,0,0\right) \\
e_{\l M}=\left(0,0,-\tfrac{\sqrt2}2,\tfrac{\sqrt2}2,0\right), & \quad
e_{\l N}=\left(0,0,-\tfrac{\sqrt2}2,-\tfrac{\sqrt2}2,0\right).
\end{align*}
The Gram matrix is
$$G=
\begin{bmatrix}
			1 & 0 & -1 & 0 & -\sqrt{15} & 0 & 0 & 0 & 0 & -\tfrac{\sqrt3}2 \\
			0 & 1 & 0 & -1 & -1 & -4 & 0 & 0 & 0 & -\tfrac{\sqrt5}2 \\
			-1 & 0 & 1 & 0 & 0 & -\sqrt{15} & 0 & 0 & -\tfrac{\sqrt3}2 & 0 \\
			0 & -1 & 0 & 1 & -4 & -1 & 0 & 0 & -\tfrac{\sqrt5}2 & 0 \\
			-\sqrt{15} & -1 & 0 & -4 & 1 & -11 & -\sqrt5 & 0 & 0 & 0 \\
			0 & -4 & -\sqrt{15} & -1 & -11 & 1 & -\sqrt5 & 0 & 0 & 0 \\
			0 & 0 & 0 & 0 & -\sqrt5 & -\sqrt5 & 1 & -1 & 0 & 0 \\
			0 & 0 & 0 & 0 & 0 & 0 & -1 & 1 & -\tfrac12 & -\tfrac12 \\
			0 & 0 & -\tfrac{\sqrt3}2 & -\tfrac{\sqrt5}2 & 0 & 0 & 0 & -\tfrac12 & 1 & 0 \\
			-\tfrac{\sqrt3}2 & -\tfrac{\sqrt5}2 & 0 & 0 & 0 & 0 & 0 & -\tfrac12 & 0 & 1				  
\end{bmatrix}.
$$
We can choose $\mathcal{B}=\left(\sqrt{5}e_{\l H}, e_{\l A}, e_{\l L}, e_{\l M}, e_{\l N}\right)$, to get
$$Q=
\begin{bmatrix} 5 & -5 & 0 & 0 & 0 \\
			 -5 & 1 & -1 & 0 & 0 \\
			0 & -1 & 1 & -\tfrac12 & -\tfrac12 \\
			0 & 0 & -\tfrac12 & 1 & 0 \\
			0 & 0 & -\tfrac12 & 0 & 1				  
\end{bmatrix}.
$$
A diagonal form is $D=\mathrm{diag}(5,-1,3,1,1)$. Thus, the Hasse invariant is 
$$[(5,-1)] \cdot [(5,3)] \cdot [(-1,3)] = [(5,-3)] \cdot [(-1,3)].$$
The ramification points of $[(5,-3)]$ and $[(-1,3)]$ are respectively $\{3,5\}$ and $\{2,3\}$ hence the ramification points of the product are $\{2,5\}$ and hence the element is non-trivial in the Brauer group $\mathrm{Br}(\matQ)$.

We finally look at $Q_{\bar t}$ and find
\begin{align*}
e_{\p0}=\left(\tfrac{\sqrt2}2,\tfrac12,\tfrac12,\tfrac12,\tfrac{\sqrt3}2\right), \quad &
e_{\m0}=\left(\tfrac{\sqrt6}2,\tfrac{\sqrt3}2,\tfrac{\sqrt3}2,\tfrac{\sqrt3}2,-\tfrac12\right) \\
e_{\p3}=\left(\tfrac{\sqrt2}2,\tfrac12,\tfrac12,-\tfrac12,-\tfrac{\sqrt3}2\right), \quad &
e_{\m3}=\left(\tfrac{\sqrt6}2,\tfrac{\sqrt3}2,\tfrac{\sqrt3}2,-\tfrac{\sqrt3}2,\tfrac12\right) \\
e_{\l A}=\left(1,\sqrt2,0,0,0\right), \quad & e_{\l L}=\left(0,-\tfrac{\sqrt2}2,\tfrac{\sqrt2}2,0,0\right) \\
e_{\l M}=\left(0,0,-\tfrac{\sqrt2}2,\tfrac{\sqrt2}2,0\right), \quad &
e_{\l N}=\left(0,0,-\tfrac{\sqrt2}2,-\tfrac{\sqrt2}2,0\right). 
\end{align*}
The Gram matrix is
$$G=
\begin{bmatrix}
			1 & 0 & -1 & 0 & 0 & 0 & 0 & -\tfrac{\sqrt2}2 \\
			0 & 1 & 0 & -1 & 0 & 0 & 0 & -\tfrac{\sqrt6}2 \\
			-1 & 0 & 1 & 0 & 0 & 0 & -\tfrac{\sqrt2}2 & 0 \\
			0 & -1 & 0 & 1 & 0 & 0 & -\tfrac{\sqrt6}2 & 0 \\
			0 & 0 & 0 & 0 & 1 & -1 & 0 & 0 \\
			0 & 0 & 0 & 0 & -1 & 1 & -\tfrac12 & -\tfrac12 \\
			0 & 0 & -\tfrac{\sqrt2}2 & -\tfrac{\sqrt6}2 & 0 & -\tfrac12 & 1 & 0 \\
			-\tfrac{\sqrt2}2 & -\tfrac{\sqrt6}2 & 0 & 0 & 0 & -\tfrac12 & 0 & 1				  
\end{bmatrix}.
$$
We can choose $\mathcal{B}=\left(\sqrt3e_{\m3},\sqrt2e_{\l A},\sqrt2e_{\l L},\sqrt2e_{\l M},\sqrt2e_{\l N}\right)$, to get:
$$Q=
\begin{bmatrix} 3 & 0 & 0 & -3 & 0 \\
			 0 & 2 & -2 & 0 & 0 \\
			0 & -2 & 2 & -1 & -1 \\
			-3 & 0 & -1 & 2 & 0 \\
			0 & 0 & -1 & 0 & 2				  
\end{bmatrix}.
$$
A diagonal form is $D=\mathrm{diag}(3,2,2,-1,2)$. Thus, the Hasse invariant is $$[(3,2)]^3\cdot[(3,-1)]=[(3,2)]\cdot[(3,-1)]=[(3,-2)]=[(3,1-3)]=1\in\mathrm{Br}(\matQ).$$
This completes the proof.
\end{proof}

\end{document}